\theoremstyle{plain}
\newtheorem{theo}{Theorem}[section]
\newtheorem{prop}[theo]{Proposition}
\newtheorem{lemm}[theo]{Lemma}
\newtheorem{coro}[theo]{Corollary}
\newtheorem{defi}[theo]{Definition}
\theoremstyle{definition}
\newtheorem{rema}[theo]{Remark}
\newtheorem{nota}[theo]{Notation}
\DeclareMathOperator{\cnx}{div}
\DeclareMathOperator{\cn}{div}
\DeclareMathOperator{\RE}{Re}
\DeclareMathOperator{\supp}{supp}
\DeclareSymbolFont{pletters}{OT1}{cmr}{m}{sl}
\DeclareMathSymbol{s}{\mathalpha}{pletters}{`s}
\def\lDx#1{\langle D_x\rangle^{#1}\,}
\def\B{B }
\def\cnxy{\cn_{x,y}}
\def\da{a}
\def\defn{\mathrel{:=}}
\def\deta{\eta}
\def\dpsi{\psi}
\def\dzeta{\zeta}
\def\dB{B}
\def\dV{V}
\def\Deltax{\Delta}
\def\Deltayx{\Delta_{x,y}}
\def\eps{\varepsilon}
\def\eval{\arrowvert_{y=\eta}}
\def\Hs{\mathcal{H}_\sigma}
\def\la{\left\vert}
\def\lA{\left\Vert}
\def\le{\leq}
\def\les{\lesssim}
\def\leo{}
\def\ma{a}
\def\mez{\frac{1}{2}}
\def\partialx{\nabla}
\def\partialyx{\nabla_{x,y}}
\def\ra{\right\vert}
\def\rA{\right\Vert}
\def\s{\sigma}
\def\tdm{\frac{3}{2}}
\def\xC{\mathbf{C}}
\def\xN{\mathbf{N}}
\def\xR{\mathbf{R}}
\def\xS{\mathbf{S}}
\def\xZ{\mathbf{Z}}
\def\va{\varphi}
\def\K{\mathcal{K}}
\def\mR{\mathcal{R}}
\def\VCe{\lA V\rA_{C^{1+\eps}_*}}
\def\pO{\lA p\rA_{L^\infty}}
\numberwithin{equation}{section}
\title{On the Cauchy problem for gravity water waves}
\author{
T. Alazard, 
%\address{D\'epartement de Math\'ematiques, UMR 8553 du CNRS, Ecole Normale Sup\'erieure, 45, rue d'Ulm 75005 Paris Cedex, France}
N. Burq, 
%\address{Laboratoire de Math\'ematiques d'Orsay, UMR 8628 du CNRS, Universit\'e Paris-Sud, 91405 Orsay Cedex, France et D\'epartement de Math\'ematiques, UMR 8553 du CNRS, Ecole Normale Sup\'erieure, 45, rue d'Ulm 75005 Paris Cedex, France}
C. Zuily}
\thanks{
T.A. was supported by the French Agence Nationale de la Recherche, projects ANR-08-JCJC-0132-01 and ANR-08-JCJC-0124-01.
}
\begin{document}

%\date{\empty}

\begin{abstract}
We are interested in the system of gravity water waves equations 
without surface tension. Our purpose is to study the optimal regularity 
thresholds  for the initial conditions. 
In terms of Sobolev embeddings, the initial surfaces we consider turn 
out to be only of~$C^{3/2+\epsilon}$-class for some $\epsilon>0$ and consequently have unbounded curvature, 
while the initial velocities are only Lipschitz. 
We reduce the system using a paradifferential approach. 
\end{abstract}

\maketitle

\section{Introduction}

We are interested in this work in the study of the Cauchy problem 
for the water waves system  in arbitrary dimension, without surface tension. 

An important question in the theory is the possible 
emergence of singularities (see~\cite{CCFGGS, CCFGLF,CoSh,Wu12, CMSW}) and as emphasized by Craig and 
Wayne~\cite{CW}, it is important to decide whether some physical or 
geometric quantities control the equation. In terms of the velocity field, 
a  natural criterion (in view of Cauchy-Lipschitz theorem) is given by the 
Lipschitz regularity threshold. Indeed, this is necessary for the ``fluid particles'' motion 
(i.e.\ the integral curves of the velocity field) to be well-defined. 

In terms of the free boundary,  there is no such natural criterium. In fact, the systematic use 
of the Lagrangian formulation in most previous works~\cite{AmMa,WuInvent,WuJAMS}, and the intensive 
use of Riemannian geometry tools (parallel transport, vector fields,...) by 
Shatah-Zeng~\cite{SZ,SZ2,SZ3}, 
Christodoulou--Lindblad~\cite{ChLi} or Lindblad~\cite{LindbladAnnals}
seem to at least require {\em bounded curvature assumptions} (see also~\cite{CS} where a logarithmic divergence is allowed).
In this direction, the beautiful work by Christodoulou--Lindblad~\cite{ChLi}, gives {\em a priori\/} bounds as long as 
the second fundamental form of the free surface is bounded, 
and the first-order derivatives of the velocity are bounded. This  could lead to the natural conjecture 
that the regularity threshold for the water waves system is indeed 
given by Christodoulou--Lindblad's result and that the domain 
has to be assumed to be essentially~$C^2$. 
Our main contribution in this work is that this is {\em not} the case and that the 
relevant threshold is actually only the Lipschitz regularity of the velocity field. 
Indeed (see Theorem~\ref{theo:Cauchy}), our local existence result involves assumptions which,
  in view of Sobolev embeddings, 
 require only (in terms of H\"older regularity) 
the initial free domain to be~$C^{3/2+\epsilon}$ for some $\epsilon>0$.

As an illustration of the relevance 
of the  analysis of low regularity solutions in a domain with a rough boundary, 
let us mention that in a forthcoming paper, we shall give an application of our analysis to the local Cauchy theory 
of three-dimensional gravity water waves in a canal. This question 
goes back to the work by 
Boussinesq at the beginning of the $20^{\text{th}}$ century (see~\cite{Boussinesq}).

Our analysis require 
the introduction of new techniques and new tools. In~\cite{ABZ1,ABZ2} 
we started a para-differential study of the water waves system in the presence 
of surface tension and were able to prove that the equations 
can be reduced to a simple form
\begin{equation}\label{formepara}
\partial_t u+T_V\cdot \nabla u +iT_\gamma u=f,
\end{equation}
where~$T_V$ is a para-product and~$T_\gamma$ is a para-differential operator of 
order~$3/2$. Here the main step in the proof is to perform the same task without surface tension, 
with~$T_\gamma$ of order~$1/2$. It has to be noticed however that performing our reduction is considerably more 
difficult here than in our previous papers~(\cite{ABZ1,ABZ2}). 
Indeed, in the case with non vanishing surface tension, the natural regularity threshold 
forces the velocity field to be Lipschitz while the domain is actually much smoother ($C^{5/2}$). 
In the present work, 
the velocity field is also Lipschitz, but the domain is merely $C^{3/2}$. 
To overcome these difficulties, we had  to give a micro-local  description (and contraction estimates) 
of the Dirichlet-Neumann operator which is non trivial in the whole range of  $C^s$ domains, $s>1$ 
(see the work by Dahlberg-Kenig~\cite{DK} and Craig-Schanz-Sulem~\cite{CSS} for results 
on the Dirichlet-Neumann operator in Lipschitz domains). We think that this analysis is of independent interest. 

Finally, let us mention that, as we proceed by energy estimates, 
our results are proved in $L^2$-based Sobolev spaces and 
our initial data $(\eta, V)$ which describe 
respectively the initial domain as the graph of the function $\eta$ and 
the trace of the initial velocity on the free surface, 
are assumed to be in $H^{s+\mez}( \xR^d)\times H^s( \xR^d), s> 1+ \frac d 2$. 
The gravity water waves system enjoys a scaling invariance 
for which the critical threshold is $s_c= \mez +\frac d 2$ (in other terms 
our well-posedness result is  $1/2$ above the scaling critical index).

\subsection{Assumptions on the domain}\label{assu:domain}
Hereafter,~$d\ge 1$,~$t$ 
denotes the time variable and~$x\in \xR^d$ and~$y\in \xR$ denote the horizontal and vertical 
spatial variables.
We work in a time-dependent fluid domain~$\Omega$ 
located underneath 
a free surface~$\Sigma$ and moving  
in a fixed container denoted by~$\mathcal{O}$. This fluid domain
$$
\Omega=\{\,(t,x,y)\in [0,T]\times{\mathbf{R}}^d\times\xR \, : \, (x,y) \in \Omega(t)\,\},
$$
is such that, for each time~$t$, one has
$$
\Omega(t)=\left\{ (x,y)\in \mathcal{O} \, :\, y < \eta(t,x)\right\},
$$
where $\eta$ is an unknown function and~$\mathcal{O}$ is a given open domain 
which contains a fixed strip around the free surface 
$$
\Sigma=\{ (t,x,y)\in [0,T]\times{\mathbf{R}}^d\times \xR\,:\, y=\eta(t,x)\}.
$$
This implies that 
there exists~$h>0$ such that, for all~$t\in [0,T]$,
\begin{equation}\label{hypt}
\Omega_{h}(t)\defn \left\{ (x,y)\in {\mathbf{R}}^d\times \xR \, :\, \eta(t,x)-h < y < \eta(t,x)\right\} \subset \Omega(t).
\end{equation}
We also assume that the domain~$\mathcal{O}$ (and hence the domain~$\Omega(t)$) 
is connected.
\begin{rema}
\begin{enumerate}[(i)]
\item  
Two classical examples are given by~$\mathcal{O}=\xR^{d}\times \xR$ (infinite depth case) or 
$\mathcal{O}=\xR^d\times [-1,+\infty)$ (flat bottom). 
Notice that, in the following, no regularity assumption is made on the 
bottom~$\Gamma\defn \partial\mathcal{O}$. 
\item Notice that~$\Gamma$ does not depend on time. However, 
our method applies in the case where the bottom is time dependent 
(with the additional assumption in this case that the bottom is Lipschitz). 
\end{enumerate}
\end{rema}
\subsection{The equations}
Below we use the following notations
$$
\partialx=(\partial_{x_i})_{1\le i\le d},\quad \partialyx =(\partialx,\partial_y), 
\quad \Delta = \sum\nolimits_{{1\le i\le d}} \partial_{x_i}^2,\quad 
\Deltayx = \Delta+\partial_y^2.
$$
We consider an incompressible inviscid liquid, having unit density. 
The equations by which the motion is to be determined are well known. 
Firstly, the Eulerian velocity field~$v\colon \Omega \rightarrow \xR^{d+1}$ 
solves the incompressible Euler equation
\begin{equation}\label{cond1}
\partial_{t} v +v\cdot \partialyx v + \partialyx P = - g e_y ,\quad \cnxy v =0 \quad\text{in }\Omega,
\end{equation}
where~$-ge_y$ is the acceleration of gravity ($g>0$) and 
where the pressure term~$P$ can be recovered from 
the velocity by solving an elliptic equation. 
The problem is then given by three boundary conditions. They are
\begin{equation}\label{systbound}
\left\{\begin{aligned}
&v\cdot n=0 &&\text{on }\Gamma, \\
&\partial_{t} \eta = \sqrt{1+|\partialx \eta|^2}\, v \cdot \nu \quad 
&&\text{on }\Sigma, \\
& P=0
&&\text{on }\Sigma,
\end{aligned}\right.
\end{equation}
where~$n$ and~$\nu$ are the exterior 
unit normals to the bottom~$\Gamma$ and the free surface~$\Sigma(t)$. 
The first condition in~\eqref{systbound} expresses the fact 
that the particles in contact with the rigid bottom remain 
in contact with it. Notice that to fully make sense, 
this condition requires some smoothness on~$\Gamma$, 
but in general, it has a weak variational meaning (see Section~\ref{sec.3}). 
The second condition in~\eqref{systbound} states that the 
free surface moves with the fluid and the last condition 
is a balance of forces across the free surface. Notice that 
the pressure at the upper surface of the fluid may be indeed 
supposed to be zero, provided we 
afterwards add the atmospheric pressure to the pressure so determined. 
The fluid motion is supposed to be irrotational. The velocity field 
is therefore given by~$v=\nabla_{x,y} \phi$ 
for some potential~$\phi\colon \Omega\rightarrow \xR$ satisfying 
$$
\Delta_{x,y}\phi=0\quad\text{in }\Omega, \qquad \partial_n\phi=0\quad\text{on }\Gamma.
$$
Using the Bernoulli integral of the dynamical equations to express the pressure, 
the condition~$P=0$ on the free surface implies that
\begin{equation}\label{syst2}
\left\{
\begin{aligned}
&\partial_{t} \eta = \partial_y \phi-\nabla\eta\cdot\nabla\phi
&&\text{on }\Sigma, \\
& \partial_t \phi+\mez \la \nabla_{x,y}\phi\ra^2+g y  =0
&&\text{on }\Sigma,\\
&\partial_n \phi =0 &&\text{on }\Gamma,
\end{aligned}
\right.
\end{equation}
where recall that~$\nabla=\nabla_x$. 
Many results have been obtained on the Cauchy theory for System~\eqref{syst2}, 
starting from the pioneering works 
of Nalimov~\cite{Nalimov}, Shinbrot~\cite{Shinbrot}, Yoshihara~\cite{Yosihara}, 
Craig~\cite{Craig1985}. In the framework of Sobolev spaces and 
without smallness assumptions on the data, the well-posedness 
of the Cauchy problem was first proved by Wu for the case 
without surface tension (see~\cite{WuInvent,WuJAMS}) 
and by Beyer-G\"unther in~\cite{BG} in the case with surface tension. 
Several extensions of their results have been obtained 
by different methods (see~\cite{CMST,GMS,GMS2,GT,LannesKelvin,MR,Wu09,Wu10,ZhangZhang} 
for recent results and the surveys~\cite{BL,CW,LannesLivre} for more references). 
Here we shall use the Eulerian formulation. Following Zakharov~\cite{Zakharov1968} and 
Craig--Sulem~\cite{CrSu}, 
we reduce the analysis to a system on the free surface 
$\Sigma(t)=\{y= \eta(t,x)\}$. 
If~$\psi$ is defined by 
$$
\psi(t,x)=\phi(t,x,\eta(t,x)),
$$
then~$\phi$ is the unique variational solution of 
$$
\Delta_{x,y} \phi = 0 \text{ in } \Omega, 
\quad \phi\arrowvert_{y=\eta}  = \psi , \quad \partial_n \phi=0 \text{ on }\Gamma.
$$
Define the Dirichlet-Neumann operator by 
\begin{align*}
(G(\eta) \psi)  (t,x)&=\sqrt{1+|\partialx\eta|^2}\,
\partial _n \phi\arrowvert_{y=\eta(t,x)}\\
&=(\partial_y \phi)(t,x,\eta(t,x))-\partialx \eta (t,x)\cdot (\partialx \phi)(t,x,\eta(t,x)).
\end{align*}
For the case with a rough bottom, we recall the precise 
construction later on (see~\S\ref{S:defiDN}).
Now ~$(\eta,\psi)$ solves (see~\cite{CrSu} or~\cite[chapter 1]{LannesLivre} for instance)
\begin{equation}\label{system}
\left\{
\begin{aligned}
&\partial_{t}\eta-G(\eta)\psi=0,\\[0.5ex]
&\partial_{t}\psi+g \eta
+ \frac{1}{2}\la\partialx \psi\ra^2  -\frac{1}{2}
\frac{\bigl(\partialx  \eta\cdot\partialx \psi +G(\eta) \psi \bigr)^2}{1+|\partialx  \eta|^2}
= 0.
\end{aligned}
\right.
\end{equation}

\subsection{The Taylor condition}

Introduce the so-called Taylor coefficient
\begin{equation}\label{defi:ma}
\ma(t,x)=-(\partial_y P)(t,x,\eta(t,x)).
\end{equation}
The stability of the waves is dictated by 
the Taylor sign condition, which is the assumption that there exists a positive 
constant~$c$ such that
\begin{equation}\label{taylor}
\ma(t,x)\ge c >0.
\end{equation}
This assumption is now classical and we refer 
to~\cite{BL,ChLi,CoCoGa,LannesJAMS,WuInvent,WuJAMS} 
for various comments. 
Here we only recall some basic facts. 
First of all, as proved by Wu~(\cite{WuInvent,WuJAMS}), 
this assumption is automatically satisfied in the infinite depth case 
(that is when~$\Gamma=\emptyset$) or 
for flat bottoms (when~$\Gamma=\{ y=-k\}$). Notice 
that the proof remains valid for any~$C^{1,\alpha}$-domain, $0<\alpha<1$ (by using 
the fact that the Hopf Lemma is true for such domains, see~\cite{Safonov} and the references therein). 
There are two other cases where this assumption 
is known to be satisfied. For instance under a smallness assumption. Indeed, if 
$\partial_t \phi=O(\eps^2)$ and~$\nabla_{x,y}\phi=O(\eps)$ then directly from the definition 
of the pressure we have~$P+gy = O(\eps^2)$. 
Secondly, it was proved by Lannes~(\cite{LannesJAMS}) 
that the Taylor's assumption is satisfied 
under a smallness assumption on the curvature of the bottom 
(provided that the bottom is at least~$C^2$). 
However, for general bottom we will assume that~\eqref{taylor} is satisfied at time~$t=0$. 
\subsection{Main result}
We work below with the vertical and horizontal traces of the velocity 
on the free boundary, namely
\begin{gather*}
B\defn (\partial_y \phi)\arrowvert_{y=\eta},\quad 
V \defn (\nabla_x \phi)\arrowvert_{y=\eta}.
\end{gather*}
These can be defined only in terms of~$\eta$ and~$\psi$ by means of the formulas 
\begin{equation}\label{defi:BV}
B= \frac{\partialx \eta \cdot\partialx \psi+ G(\eta)\psi}{1+|\partialx  \eta|^2},
\qquad
V= \partialx \psi -B \partialx\eta.
\end{equation}
Also, recall that the Taylor coefficient~$\ma$ defined in~\eqref{defi:ma} 
can be defined in terms of~$\eta,V,\B,\psi$ only (see Section~\ref{sec.3.2} below).
\begin{theo}
\label{theo:Cauchy}
Let~$d\ge 1$,~$s>1+d/2$ and consider~$(\eta_{0},\psi_{0})$ such that
\begin{enumerate}
\item \label{assu1}
$\eta_0\in H^{s+\mez}({\mathbf{R}}^d),\quad \psi_0\in H^{s+\mez}({\mathbf{R}}^d),
\quad V_0\in H^{s}({\mathbf{R}}^d),\quad B_0\in H^{s}({\mathbf{R}}^d)$,
\item there exists~$h>0$ such that condition \eqref{hypt} holds initially for~$t=0$,
\item there exists a positive constant~$c$ such that, for all~$x\in {\mathbf{R}}^d$, 
$\ma_0(x)\ge c$.
\end{enumerate}
Then there exists~$T>0$ such that 
the Cauchy problem for \eqref{system} 
with initial data $(\eta_{0},\psi_{0})$ has a unique solution 
$
(\eta,\psi)\in C^0\big([0,T];H^{s+\mez}({\mathbf{R}}^d)\times H^{s+\mez}({\mathbf{R}}^d)\big),
$
such that 
\begin{enumerate}
\item we have~$(V,B)\in C^0\big([0,T];H^{s}({\mathbf{R}}^d)\times H^{s}({\mathbf{R}}^d)\big)$,
\item the condition \eqref{hypt} holds for 
$0\le t\le T$, with~$h$ replaced by~$h/2$,
\item for all~$0\le t\le T$ and for all~$x\in {\mathbf{R}}^d$, 
$\ma(t,x)\ge c/2$.
\end{enumerate}
\end{theo}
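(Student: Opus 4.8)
The plan is to reduce the system \eqref{system} to a single scalar paradifferential equation of the form \eqref{formepara} with $T_\gamma$ of order $1/2$, to close an $L^2$-based energy estimate for it, and then to build the solution by an approximation scheme while checking that the geometric conditions persist.

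\emph{Step 1: paralinearization and contraction estimates for the Dirichlet--Neumann operator.} The analytic heart of the proof is a description of $G(\eta)\psi$ valid when $\eta$ is merely of class $C^{3/2+\epsilon}$ and the bottom $\Gamma$ carries no regularity. I would first set up $\phi$ as the variational solution of the elliptic problem in the rough domain $\Omega(t)$, flatten the free boundary by sending the strip $\Omega_h(t)$ onto $\{(x,z):z\in(-1,0)\}$, and analyze the transformed equation, whose coefficients inherit only the $C^{1/2+\epsilon}$ regularity of $\nabla\eta$. The target identity is
\[
G(\eta)\psi = T_\lambda\bigl(\psi - T_B\eta\bigr) - T_V\cdot\nabla\eta + R(\eta)\psi,
\]
where $\lambda(x,\xi)=\sqrt{(1+|\nabla\eta|^2)|\xi|^2-(\nabla\eta\cdot\xi)^2}$ is the principal symbol, $B$ and $V$ are given by \eqref{defi:BV}, and $R(\eta)\psi$ is a half-smoothing remainder bounded in $H^s$ by $\mathcal{F}$ of the data. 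All paradifferential products and compositions must be tracked with symbols of this limited regularity, and I also need Lipschitz-in-$\eta$ contraction bounds for $G(\eta)\psi$ and for $\lambda$; this microlocal study of $G(\eta)$ across the full range $C^s$, $s>1$, with a rough bottom, is the step I expect to be the main obstacle.

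\emph{Step 2: paralinearization of the system and the good unknown.} Inserting the above identity into \eqref{system} and introducing the good unknown $U:=\psi-T_B\eta$, which absorbs the worst contributions of $B$, the first equation becomes $\partial_t\eta+T_V\cdot\nabla\eta-T_\lambda U=f_1$. Paralinearizing the quadratic nonlinearities in the second equation and using the expression of the Taylor coefficient $a$ in terms of $(\eta,V,B,\psi)$ (Section~\ref{sec.3.2}), one gets $\partial_t U+T_V\cdot\nabla U+T_a\eta=f_2$, with remainders $f_1,f_2$ estimated in $H^s$ by $\mathcal{F}$ of the data. This is a $2\times2$ system in $(\eta,U)$ whose principal part is $\partial_t+T_V\cdot\nabla$ together with the coupling $(\eta,U)\mapsto(-T_\lambda U,\,T_a\eta)$.

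\emph{Step 3: symmetrization and energy estimate.} Choosing scalar symbols $p\sim(a/\lambda)^{1/2}$ and $q\sim1$ (up to lower order) and setting $u:=T_p\eta+iT_qU$, the system collapses to $\partial_t u+T_V\cdot\nabla u+iT_\gamma u=f$ with $\gamma=\sqrt{a\lambda}$ real, elliptic of order $1/2$, and self-adjoint to principal order, and $f$ controlled in $H^s$. An energy estimate on $\langle D_x\rangle^s u$ then closes: the transport term contributes only $\lesssim\|\nabla V\|_{L^\infty}\|u\|_{H^s}^2$ — which is precisely why the Lipschitz regularity of the velocity, and nothing more, is needed — while $iT_\gamma$ is skew-adjoint modulo an operator of order $0$. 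Since $\|u\|_{H^s}$ is equivalent to $\|\eta\|_{H^{s+\mez}}+\|U\|_{H^s}$, and $\|\psi\|_{H^{s+\mez}}+\|V\|_{H^s}+\|B\|_{H^s}$ is then controlled by lower-order terms, one obtains $\tfrac{d}{dt}\|u\|_{H^s}^2\le\mathcal{F}(\|u\|_{H^s},h^{-1},c^{-1})$ and hence an a priori bound on an interval $[0,T]$ with $T$ depending only on the initial data. For existence I would regularize the data and/or add a vanishing viscosity term, solve the regularized problems (classically well posed), and pass to the limit using these uniform bounds, the convergence of $G(\eta_n)\psi_n$ being furnished by the contraction estimates of Step 1; the conditions \eqref{hypt} with $h/2$ and $a\ge c/2$ then hold for small $t$ by continuity, since $\eta\in C^0([0,T];H^{s+\mez})$ controls $\|\eta(t)-\eta_0\|_{W^{1,\infty}}$ (as $s+\mez>\tdm+d/2$) and $a$ depends continuously on $(\eta,V,B,\psi)$. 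Uniqueness follows from a contraction estimate for the difference of two solutions in a norm with half a derivative less, again using the Lipschitz dependence of $G(\eta)$ on $\eta$, and the strong continuity $(\eta,\psi)\in C^0([0,T];H^{s+\mez}\times H^{s+\mez})$ is recovered by a Bona--Smith-type argument comparing the solution with those issued from regularized data.
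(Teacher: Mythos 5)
Your overall architecture --- reduce the system to $\partial_t u+T_V\cdot\nabla u+iT_\gamma u=f$ with $T_\gamma$ of order $\mez$, close an $H^s$-type energy estimate, regularize for existence, contract in a lower norm for uniqueness --- is the one announced in the introduction, but your route through it rests on a key lemma that this paper neither proves nor uses, and in fact is designed to avoid. In Step~1 you take as ``target identity'' the good-unknown paralinearization $G(\eta)\psi=T_\lambda(\psi-T_B\eta)-T_V\cdot\nabla\eta+R(\eta)\psi$ with $R(\eta)\psi$ bounded in $H^s$ by the data. At the regularity of Theorem~\ref{theo:Cauchy} ($\eta\in H^{s+\mez}$ only, i.e.\ a $C^{3/2+\epsilon}$ domain), the regularity of the remainder in any paradifferential description of $G(\eta)\psi$ is limited by the regularity of the domain and not by that of $\psi$; this is exactly the phenomenon the introduction points to as the reason for \emph{not} working with $\psi$. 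Correspondingly, the only paralinearization established here is Proposition~\ref{coro:paraDN1}: $G(\eta)f=T_\lambda f+R(\eta)f$ for $f\in H^\sigma$ with the restriction $\sigma\le s$ and a gain of only $\eps\le\mez$, with no good-unknown cancellation built in; it cannot be applied to $f=\psi\in H^{s+\mez}$ to produce your claimed bound. The paper's proof instead derives evolution equations for $(B,V,\zeta)$ directly from the Euler equation (Proposition~\ref{prop:newS}), works with the vector-valued good unknown $U=V+T_\zeta B$ (and $U_s=\lDx{s}V+T_\zeta\lDx{s}B$, $\zeta_s=\lDx{s}\zeta$), and only ever applies the Dirichlet--Neumann paralinearization to $V,B\in H^s$, i.e.\ inside the admissible range $\sigma\le s$. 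So the central step of your plan is asserted rather than proved: an estimate of that type may well hold, but establishing it for $C^{3/2}$ domains is a substantial piece of analysis in its own right and nothing in this paper supplies it.

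There are also inconsistencies downstream. With $U=\psi-T_B\eta$ (which lives at the $H^{s+\mez}$ level) and $p\sim(\ma/\lambda)^{1/2}$ of order $-\mez$, the claimed equivalence $\lA u\rA_{H^s}\sim\lA \eta\rA_{H^{s+\mez}}+\lA U\rA_{H^s}$ does not balance; in the paper the symmetrized unknowns are $(U_s,T_q\zeta_s)$, estimated in $L^2$. More importantly, the sentence asserting that $\lA\psi\rA_{H^{s+\mez}}+\lA V\rA_{H^s}+\lA B\rA_{H^s}$ is ``controlled by lower-order terms'' hides genuinely delicate steps: recovering $\eta$ in $H^{s+\mez}$ uses the ellipticity of the Taylor coefficient and a bootstrap (Lemma~\ref{ecBVeta}), and recovering $V$ and $B$ separately in $H^s$ requires the structural identity $G(\eta)B=-\cnx V+\gamma$ of Proposition~\ref{cancellation} together with an elliptic inversion (Lemma~\ref{ecBVs}). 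Finally, with a general bottom $\Gamma$ carrying no regularity, the regularized problems are not ``classically well posed'' by citation of earlier work: the paper has to prove Proposition~\ref{HFS} (existence for smooth data with loss of derivatives, via parabolic regularization) before the a priori and contraction estimates can be combined; your parenthetical vanishing-viscosity suggestion points in the right direction but is not carried out, and the uniqueness step also needs the sharpened contraction estimate for the Dirichlet--Neumann operator of Theorem~\ref{L:p60} (involving only $\lA\eta_1-\eta_2\rA_{H^{s-\mez}}$), not merely the Lipschitz bound of Theorem~\ref{th.25}.
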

\begin{rema}
The main novelty is that, in view of Sobolev embeddings, the 
initial surfaces we consider turn out to be only of~$C^{3/2+\epsilon}$-class for some 
$\epsilon>0$ and consequently 
have unbounded curvature.
\end{rema}
\begin{rema}
Assumption \ref{assu1} in the above theorem is automatically satisfied if 
$$
\eta_0\in H^{s+\mez}({\mathbf{R}}^d),
\quad \psi_0\in H^{\mez}({\mathbf{R}}^d),
\quad V_0\in H^{s}({\mathbf{R}}^d),
\quad B_0\in H^{\mez}({\mathbf{R}}^d).
$$

The only point where the estimates depend on~$\psi$ (and not only on~$\eta,V,\B$) 
come from the fact that we consider a general domain without assumption on the bottom. 
Otherwise, 
we shall prove {\em a priori\/} estimates 
for the fluid velocity and not for the fluid potential (notice that 
the fluid potential is defined up to a constant). 
More precisely, 
the $H^{s}$-norm of $\psi$ is used only in the proof of an estimate 
(cf. (4.10)) which holds trivially, say, in infinite depth (since the quantity $\gamma$ 
which is estimated in (4.10) is $0$ in infinite depth). 
We also refer the reader to Theorem 4.35 of Lannes \cite{LannesLivre} for a well-posedness result 
requiring only that $\psi$ belongs to an homogeneous Sobolev space.
\end{rema}

 \subsection{The pressure}\label{sec.3.2}
The purpose of this paragraph is to 
clarify, for low regularity solutions of the water waves system in rough domains, 
the definition of the pressure which is required if one wants to come back 
from solutions to the Zakharov system to solutions to the free boundary Euler equation. 
This definition will also provide the basic {\em a priori} 
estimates which will be later the starting point when 
establishing higher order elliptic regularity estimates required when 
studying the Taylor coefficient~$a= - \partial _y P \mid_{\Sigma}$. 
On a physics point of view, the pressure is the Lagrange multiplier 
which is required by the incompressibility of the fluid 
(preservation of the null divergence condition). 
As a consequence, taking the divergence in~\eqref{cond1}, 
it is natural to define the pressure as a solution of 
\begin{equation}\label{pression}
 \Delta_{x,y} P = - \cn_{x,y}( v\cdot\nabla_{x,y} v), \qquad P \mid_{y=\eta} =0.
 \end{equation}
Notice however that the solution of such a problem may not be 
unique as can be seen in the simple 
case when~$\Omega= (-\infty,0) \times \xR^d$. 
Indeed, if~$P$ is a solution, then~$P +cy$ is another. 
Notice also that if~$P$ satisfies~\eqref{pression}, then 
$$
\Delta_{x,y} \Bigl(P+gy+\frac 1 2  |v |^2\Bigr) =0.
$$
\begin{defi}\label{def.4.1} Let~$(\eta, \psi) \in (W^{1, \infty}\cap H^{1/2}(\xR^d)) \times H^{1/2} ( \xR^d)$. 
Assume that the variational solution (as defined in~$\S\ref{S:defiDN}$) of the equation 
\begin{equation}
\label{varia}
\Delta_{x,y} \phi=0, \quad \phi \mid_{y=\eta} = \psi,
\end{equation}
satisfies 
$$
|\nabla_{x,y} \phi|^2 (x, \eta(x) ) \in H^{1/2} (\xR^d).
$$     
Let~$R$ be the variational solution of 
$$
\Delta_{x,y} R =0 \text{ in } \Omega, \qquad R \mid_{y=\eta} 
=   g \eta+ \frac 1 2  |\nabla_{x,y} \phi|^2 \mid_{y=\eta}.
$$ 
We define the pressure~$P$ in the domain~$\Omega$ by  
$$
P(x,y)\defn R(x,y) -gy - \frac 1 2 |\nabla_{x,y} \phi(x,y)|^2.
$$
\end{defi}
\begin{rema} The main advantage of defining the pressure 
as the solution of a variational problem is that it will satisfy 
automatically an {\em a priori} estimate (the estimate given by the variational theory).
\end{rema}
It remains to link the 
solutions to the Zakharov system to solutions of the free boundary Euler system~\eqref{cond1} 
with boundary conditions~\eqref{systbound}. To do so, 
we proved in~\cite{ABZ-Bertinoro} that if~$(\eta, \psi)$ is a solution of the Zakharov system, 
if we consider the variational solution to~\eqref{varia}, 
then the velocity field~$v= \partialyx \phi$ satisfies~\eqref{cond1}, 
which is of course equivalent to
\begin{equation}\label{egalite}
P = -\partial_t \phi -gy -\mez \vert \nabla_{x,y}\phi \vert^2.
\end{equation}
\begin{theo}[from~\cite{ABZ-Bertinoro}]
Assume that~$(\eta, \psi) \in C([0,T]; H^{s+ \mez}(\xR^d) \times  H^{s+ \mez}(\xR^d) )$, with  
$s> 1/2+d/2$, is a solution of the Zakharov/Craig-Sulem system~\eqref{system}. 
Then the assumptions required to define the pressure are satisfied, 
and~\eqref{egalite} is satisfied, and  the distribution~$\partial_t \phi$ 
is well defined for fixed~$t$ and belongs to the space~$H^{1,0}( \Omega(t))$ 
(see Definition~$\ref{defi:H10}$).  
\end{theo}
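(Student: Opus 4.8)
The plan is to establish the three claims in turn: first that the hypotheses of Definition~\ref{def.4.1} are met, then that $\partial_t\phi$ defines an element of $H^{1,0}(\Omega(t))$, and finally the Bernoulli identity~\eqref{egalite}. The main work will all be in the middle step.

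\emph{Step 1: the pressure is well defined.} Since $s>\mez+d/2$, Sobolev embedding gives $\eta\in H^{s+\mez}(\xR^d)\hookrightarrow W^{1,\infty}\cap H^{1/2}(\xR^d)$ and $\psi\in H^{s+\mez}\hookrightarrow H^{1/2}$, so the variational solution $\phi$ of~\eqref{varia} exists. The mapping properties of the Dirichlet--Neumann operator on such domains (see~\S\ref{S:defiDN}) yield $G(\eta)\psi\in C([0,T];H^{s-\mez}(\xR^d))$, whence $B,V\in C([0,T];H^{s-\mez}(\xR^d))$ by~\eqref{defi:BV}. Because $s-\mez>d/2$, $H^{s-\mez}$ is an algebra continuously embedded in $H^{1/2}$, so $|\nabla_{x,y}\phi|^2\eval=B^2+|V|^2\in C([0,T];H^{1/2}(\xR^d))$. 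All hypotheses of Definition~\ref{def.4.1} therefore hold, and $P$ together with the harmonic function $R$ (with $R\eval=g\eta+\mez|\nabla_{x,y}\phi|^2\eval$) is well defined.

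\emph{Step 2: $\partial_t\phi\in H^{1,0}(\Omega(t))$.} For fixed $t$ and $\chi\in C^\infty_c(\Omega(t))$ one has $\supp\chi\subset\Omega(t')$ for $t'$ near $t$, so $t'\mapsto\langle\phi(t',\cdot),\chi\rangle$ is defined near $t$; this defines $\partial_t\phi$ as a distribution on $\Omega(t)$, and differentiating $\Delta_{x,y}\phi=0$ (made rigorous via the change of variables below) shows it is harmonic there. To control it up to $\Sigma$, we straighten the moving domain by the time-dependent, $x$-regularizing diffeomorphism $(x,z)\mapsto(x,\varrho(t,x,z))$ of~\S\ref{S:defiDN}, with $\varrho(t,x,0)=\eta(t,x)$. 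Then $\varphi\defn\phi\circ\varrho$ solves a uniformly elliptic divergence-form equation $\cn_{x,z}(A[\eta]\,\nabla_{x,z}\varphi)=0$ with $\varphi\arrowvert_{z=0}=\psi$, and the variational theory gives $\nabla_{x,z}\varphi\in L^2$. Since $(\eta,\psi)$ solves~\eqref{system}, $\partial_t\eta=G(\eta)\psi$ and, from the second equation of~\eqref{system} together with~\eqref{defi:BV}, $\partial_t\psi\in C([0,T];H^{s-\mez}(\xR^d))\subset C([0,T];H^{1/2})$; in particular $(\eta,\psi)\in C^1([0,T];H^{s-\mez})$, so the difference quotients in $t$ converge. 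Their limit $\partial_t\varphi$ is the variational solution of
\begin{equation*}
\cn_{x,z}\big(A[\eta]\,\nabla_{x,z}\partial_t\varphi\big)=-\,\cn_{x,z}\big((\partial_t A[\eta])\,\nabla_{x,z}\varphi\big),\qquad\partial_t\varphi\arrowvert_{z=0}=\partial_t\psi,
\end{equation*}
a problem with $H^{1/2}$ boundary datum and right-hand side controlled, in the weighted spaces of~\S\ref{S:defiDN}, by $\partial_t\eta=G(\eta)\psi$; Lax--Milgram gives $\nabla_{x,z}\partial_t\varphi\in L^2$. Unwinding the change of variables, $\partial_t\phi=(\partial_t\varphi)\circ\varrho^{-1}-(\partial_y\phi)\,(\partial_t\varrho)\circ\varrho^{-1}\in H^{1,0}(\Omega(t))$, and its trace from inside is, by the chain rule applied to $\psi=\phi\eval$, $\partial_t\phi\eval=\partial_t\psi-B\,\partial_t\eta=\partial_t\psi-B\,G(\eta)\psi$.

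\emph{Step 3: proof of~\eqref{egalite}.} Inserting the second equation of~\eqref{system} into the trace formula for $\partial_t\phi$ and re-expressing everything through $B$ and $V$ via~\eqref{defi:BV}, a direct computation gives $-\partial_t\phi\eval=g\eta+\mez|\nabla_{x,y}\phi|^2\eval$. Thus $-\partial_t\phi$ and $R$ are both harmonic, both in $H^{1,0}(\Omega(t))$, and have the same trace on $\Sigma$; by uniqueness of the variational solution, $-\partial_t\phi=R$, so
\begin{equation*}
P=R-gy-\mez|\nabla_{x,y}\phi|^2=-\,\partial_t\phi-gy-\mez|\nabla_{x,y}\phi|^2,
\end{equation*}
which is~\eqref{egalite}. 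Reading the Bernoulli relation backwards (using $\curl\nabla_{x,y}\phi=0$, $\Delta_{x,y}\phi=0$, $\partial_n\phi=0$ on $\Gamma$, and the first equation of~\eqref{system}) then shows that $v=\nabla_{x,y}\phi$ solves the free boundary Euler system~\eqref{cond1}--\eqref{systbound}.

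\emph{Main obstacle.} The crux is Step~2. Because the free surface is merely $C^{3/2+\epsilon}$ and the velocity merely Lipschitz, the coefficients $A[\eta]$ of the flattened equation have very low regularity, and $\partial_t A[\eta]$ involves $\nabla_x(G(\eta)\psi)$, which lies only in a Sobolev space of index $s-\tdm$; keeping the source $\cn_{x,z}((\partial_t A[\eta])\nabla_{x,z}\varphi)$ in a space dual to the energy space, and more generally running the whole differentiation-in-$t$ argument within the $H^{1,0}$-framework (the only one available on domains this rough), is where the genuine difficulty lies, and it rests on the refined analysis of the Dirichlet--Neumann operator developed in~\S\ref{S:defiDN}.
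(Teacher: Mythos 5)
This theorem is not proved in the paper at all: it is imported verbatim from~\cite{ABZ-Bertinoro}, so there is no internal proof to compare against, and your proposal has to be judged on its own. Its architecture is the natural one (and, in outline, that of the cited reference): your Step 1 is correct and complete — Theorem~\ref{coro:estiDN} with $\sigma=s+\mez$ gives $G(\eta)\psi\in H^{s-\mez}$, hence $B,V\in H^{s-\mez}$, an algebra contained in $H^{\mez}$, so the hypotheses of Definition~\ref{def.4.1} hold — and your Step 3 is the right identification: the trace computation $(\partial_t\phi)\eval=\partial_t\psi-B\,\partial_t\eta=-g\eta-\mez\bigl(B^2+|V|^2\bigr)$ together with uniqueness of the variational problem forces $-\partial_t\phi=R$, which is exactly~\eqref{egalite}. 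But both of these are conditional on Step 2.

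Step 2 is where the actual content of the theorem lies, and as written it is a program rather than a proof. (i) To run Lax--Milgram for the time-differentiated problem you must show that $\cn_{x,z}\bigl((\partial_t A[\eta])\nabla_{x,z}\varphi\bigr)$ is a bounded functional on the energy space. Since $\partial_t\eta=G(\eta)\psi$ lies only in $H^{s-\mez}$, the coefficient derivative $\partial_t A[\eta]$ involves $\nabla_x G(\eta)\psi\in H^{s-\tdm}$ and is not bounded, so the needed bilinear bound requires the improved elliptic regularity $\nabla_{x,z}\varphi\in X^{s-\mez}$ near $z=0$ (Proposition~\ref{p1}) combined with product estimates; you explicitly flag this as ``where the genuine difficulty lies'' and do not carry it out, so the central estimate is missing. (ii) The flattening does not freeze the domain: $\widetilde{\Omega}_2$ in~\eqref{omega1} still depends on $t$ through $\eta$, so differentiating ``the'' variational formulation in $t$ on a fixed domain is not available globally; worse, in that region $\nabla_{x,z}\varphi$ is merely $L^2$ (no regularity of $\Gamma$ is assumed) while your source there contains $\nabla_x\partial_t\eta\in H^{s-\tdm}$, which is not an $L^\infty$ multiplier for $s$ close to $\mez+\frac d2$, so the right-hand side is not in the dual space and the argument as written fails below the strip. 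One must instead split at a fixed intermediate surface — below it $\Omega(t)$ is literally time-independent and $\phi$ is smooth there by Lemma~\ref{regell} — and use the time-dependent diffeomorphism only in the top strip. Finally, a point of precision that matters for Step 3: with Definition~\ref{defi:H10}, elements of $H^{1,0}(\Omega(t))$ have vanishing trace on $\Sigma$, whereas the trace you compute is $-g\eta-\mez\bigl(B^2+|V|^2\bigr)\neq 0$; what your construction can deliver (and what the uniqueness argument actually uses) is that $\partial_t\phi$ is a lifting of this trace plus an element of $H^{1,0}$, i.e.\ $\nabla_{x,y}\partial_t\phi\in L^2(\Omega(t))$ — you should state the conclusion in that form rather than as literal membership, since your own write-up asserts both membership in $H^{1,0}$ and a nonzero trace, which are incompatible.
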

\subsection{Plan of the paper}
At first glance, Theorem~\ref{theo:Cauchy} looks very similar to our previous result in presence 
of surface tension~\cite[Theorem 1.1]{ABZ1}. Indeed, the regularity threshold 
exhibited by the velocity field (namely~$V, B \in H^{s}({\mathbf{R}}^d), s>1+d/2$) is the 
same in both results and (as explained above) appears to be the natural one. 
However, an important difference between both cases is that the algebraic 
nature of~\eqref{system} (and its counter-part in presence of surface tension) 
requires that the free domain is~$3/2$ smoother than the velocity field in presence of surface tension 
and only~$1/2$ smoother without surface tension. 
This algebraic rigidity of the system 
implies that in order to lower the regularity threshold to the natural one (Lipschitz velocities), we are 
forced to work with~$C^{3/2}~$ domains (compared to the much smoother~$C^{5/2}$ 
regularity in~\cite{ABZ1}). This in turn poses new challenging questions in the 
study of the Dirichlet--Neumann operator. Indeed, at this level of regularity the 
regularity of the remainder term in the paradifferential description of the 
Dirichlet-Neumann operator~$G(\eta) \psi$ is not given by the regularity 
of the function~$\psi$ itself, but rather by the regularity of the domain. 
This is this phenomenon which forces us to work with the new unknowns 
$V,B$ rather than with~$\psi$. 

In Section~\ref{sec:2}, we wrote a review of paradifferential 
calculus and proved various technical results useful in the article. 
In Section~\ref{sec.3} we  study the Dirichlet-Neumann operator. 
In Section~\ref{S:6}, we symmetrize the system 
and prove {\em a priori} estimates. 
In Section~\ref{S:contraction} we prove the contraction estimates 
required to show uniqueness and stability of solutions. In particular we prove 
a contraction estimate for the difference of two Dirichlet-Neumann operators, 
involving only (in terms of Sobolev embedding) the $C^{\mez }$-norm of the difference of the functions defining 
the domains (see Theorem~\ref{L:p60}), while in 
Section~\ref{S:regul} we prove the existence of solutions by a 
regularization process. 

\smallbreak
\textbf{Acknowledgements.} We would like to thank the referees for their comments which led to a better version of this paper.

\section{Paradifferential calculus}\label{sec:2}
Let us review notations and results about Bony's paradifferential calculus. 
We refer to \cite{Bony,Hormander,MePise,Meyer} for the general theory. 
Here we follow the presentation by M\'etivier in \cite{MePise}.
\subsection{Paradifferential operators}\label{s2}
For~$k\in\xN$, we denote by $W^{k,\infty}({\mathbf{R}}^d)$ the usual Sobolev spaces.
For $\rho= k + \sigma$, $k\in \xN, \sigma \in (0,1)$ denote 
by~$W^{\rho,\infty}({\mathbf{R}}^d)$ 
the space of functions whose derivatives up to order~$k$ are bounded and uniformly H\"older continuous with 
exponent~$\sigma$. 
\begin{defi}
Given~$\rho\in [0, 1]$ and~$m\in\xR$,~$\Gamma_{\rho}^{m}({\mathbf{R}}^d)$ denotes the space of
locally bounded functions~$a(x,\xi)$
on~${\mathbf{R}}^d\times({\mathbf{R}}^d\setminus 0)$,
which are~$C^\infty$ with respect to~$\xi$ for~$\xi\neq 0$ and
such that, for all~$\alpha\in\xN^d$ and all~$\xi\neq 0$, the function
$x\mapsto \partial_\xi^\alpha a(x,\xi)$ belongs to~$W^{\rho,\infty}({\mathbf{R}}^d)$ and there exists a constant
$C_\alpha$ such that,
\begin{equation*}%\label{para:10}
\forall\la \xi\ra\ge \mez,\quad 
\lA \partial_\xi^\alpha a(\cdot,\xi)\rA_{W^{\rho,\infty}(\xR^d)}\le C_\alpha
(1+\la\xi\ra)^{m-\la\alpha\ra}.
\end{equation*}
\end{defi}
Given a symbol~$a$, we define
the paradifferential operator~$T_a$ by
\begin{equation}\label{eq.para}
\widehat{T_a u}(\xi)=(2\pi)^{-d}\int \chi(\xi-\eta,\eta)\widehat{a}(\xi-\eta,\eta)\psi(\eta)\widehat{u}(\eta)
\, d\eta,
\end{equation}
where
$\widehat{a}(\theta,\xi)=\int e^{-ix\cdot\theta}a(x,\xi)\, dx$
is the Fourier transform of~$a$ with respect to the first variable; 
$\chi$ and~$\psi$ are two fixed~$C^\infty$ functions such that:
\begin{equation}\label{cond.psi}
\psi(\eta)=0\quad \text{for } \la\eta\ra\le \frac{1}{5},\qquad
\psi(\eta)=1\quad \text{for }\la\eta\ra\geq \frac{1}{4},
\end{equation}
and~$\chi(\theta,\eta)$ 
satisfies, for~$0<\eps_1<\eps_2$ small enough,
$$
\chi(\theta,\eta)=1 \quad \text{if}\quad \la\theta\ra\le \eps_1\la \eta\ra,\qquad
\chi(\theta,\eta)=0 \quad \text{if}\quad \la\theta\ra\geq \eps_2\la\eta\ra,
$$
and such that
$$
\forall (\theta,\eta)\,:\qquad \la \partial_\theta^\alpha \partial_\eta^\beta \chi(\theta,\eta)\ra\le 
C_{\alpha,\beta}(1+\la \eta\ra)^{-\la \alpha\ra-\la \beta\ra}.
$$

Since we shall need to work with paraproducts, we chose a cut-off function~$\chi$ such that 
when~$a=a(x)$,~$T_a$ is given by the usual expression in terms of the Littlewood-Paley operators. Namely, 
the function $\chi$ can be constructed as follows. Let~$\kappa\in C^\infty_0({\mathbf{R}}^d)$ be such that
$$
\kappa(\theta)=1\quad \text{for }\la \theta\ra\le 1.1,\qquad 
\kappa(\theta)=0\quad \text{for }\la\theta\ra\ge 1.9.
$$
Then we define
$
\chi(\theta,\eta)=\sum_{k=0}^{+\infty} \kappa_{k-3}(\theta) \varphi_k(\eta)
$,
where
\begin{equation*}
\kappa_k(\theta)=\kappa(2^{-k}\theta)\quad\text{for }k\in \xZ,
\qquad \varphi_0=\kappa_0,\quad\text{ and } 
\quad \varphi_k=\kappa_k-\kappa_{k-1} \quad\text{for }k\ge 1.
\end{equation*}
%\end{rema} 

Given a temperate distribution $u$ and an integer $k$ in $\xN$ we also introduce $S_k u$ 
and $\Delta_k u$ by 
$S_k u=\kappa_k(D_x)u$ and $\Delta_k u=S_k u-S_{k-1}u$ for $k\ge 1$ and $\Delta_0u=S_0u$. 

\subsection{Symbolic calculus}\label{sec.2.2}
We shall use quantitative results from \cite{MePise} about operator norms estimates in symbolic calculus. 
Introduce the following semi-norms.
\begin{defi}\label{defiGmrho}
For~$m\in\xR$,~$\rho\in [0,1]$ and~$a\in \Gamma^m_{\rho}({\mathbf{R}}^d)$, we set
\begin{equation}\label{defi:norms}
M_{\rho}^{m}(a)= 
\sup_{\la\alpha\ra\le 2(d+2) +\rho ~}\sup_{\la\xi\ra \ge 1/2~}
\lA (1+\la\xi\ra)^{\la\alpha\ra-m}\partial_\xi^\alpha a(\cdot,\xi)\rA_{W^{\rho,\infty}({\mathbf{R}}^d)}.
\end{equation}
\end{defi}
\begin{defi}[Zygmund spaces] Consider a Littlewood-Paley decomposition: 
$u=\sum_{q=0}^\infty \Delta_q u$ (with the notations introduced above). 
If~$s$ is any real number, we define the Zygmund class~$C^{s}_*({\mathbf{R}}^d)$ as the 
space of tempered distributions~$u$ such that
$$
\lA u\rA_{C^{s}_*}\defn \sup_q 2^{qs}\lA \Delta_q u\rA_{L^\infty}<+\infty.
$$
\end{defi}
\begin{rema}
Recall that $C^{s}_*({\mathbf{R}}^d)$ is the 
H\"older 
space~$W^{s,\infty}({\mathbf{R}}^d)$ if~$s\in (0,+\infty)\setminus \xN$. 
\end{rema}

\begin{defi}\label{defi:order}
Let~$m\in\xR$.
An operator~$T$ is said to be of  order~$\leo m$ if, for all~$\mu\in\xR$,
it is bounded from~$H^{\mu}$ to~$H^{\mu-m}$. 
\end{defi}

The main features of symbolic calculus for paradifferential operators are given by the following theorem.
\begin{theo}\label{theo:sc0}
Let~$m\in\xR$ and~$\rho\in [0,1]$. 

$(i)$ If~$a \in \Gamma^m_0({\mathbf{R}}^d)$, then~$T_a$ is of order~$\leo m$. 
Moreover, for all~$\mu\in\xR$ there exists a constant~$K$ such that
\begin{equation}\label{esti:quant1}\lA T_a \rA_{H^{\mu}\rightarrow H^{\mu-m}}\le K M_{0}^{m}(a).
\end{equation}
$(ii)$ If~$a\in \Gamma^{m}_{\rho}({\mathbf{R}}^d), b\in \Gamma^{m'}_{\rho}({\mathbf{R}}^d)$ then 
$T_a T_b -T_{a b}$ is of order~$\leo m+m'-\rho$. 
Moreover, for all~$\mu\in\xR$ there exists a constant~$K$ such that
\begin{equation}\label{esti:quant2}
\lA T_a T_b  - T_{a b}   \rA_{H^{\mu}\rightarrow H^{\mu-m-m'+\rho}}%&
\le 
K M_{\rho}^{m}(a)M_{0}^{m'}(b)+K M_{0}^{m}(a)M_{\rho}^{m'}(b).
\end{equation}
$(iii)$ Let~$a\in \Gamma^{m}_{\rho}({\mathbf{R}}^d)$. Denote by 
$(T_a)^*$ the adjoint operator of~$T_a$ and by~$\overline{a}$ the complex conjugate of~$a$. Then 
$(T_a)^* -T_{\overline{a}}$ is of order~$\leo m-\rho$. 
Moreover, for all~$\mu$ there exists a constant~$K$ such that
\begin{equation}\label{esti:quant3}
\lA (T_a)^*   - T_{\overline{a}}   \rA_{H^{\mu}\rightarrow H^{\mu-m+\rho}}\le 
K M_{\rho}^{m}(a).
\end{equation}
\end{theo}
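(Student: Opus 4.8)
The three statements in Theorem~\ref{theo:sc0} are the standard consequences of the symbolic calculus for paradifferential operators, and the quantitative bounds $M^m_\rho(a)$ are tailored so that the proofs in M\'etivier's lectures \cite{MePise} apply verbatim. I would therefore organise the proof in three parts, one for each item, the common thread being a dyadic (Littlewood--Paley) analysis of the kernels combined with the spectral localisation built into the definition \eqref{eq.para} via the cutoffs $\chi$ and $\psi$.

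For part $(i)$, the plan is to write $T_a u = \sum_q T_a \Delta_q u$ and observe that, by the support properties of $\chi(\xi-\eta,\eta)$, the function $T_a(\Delta_q u)$ has Fourier transform supported in a ball of radius comparable to $2^q$, while the piece $S_{q-N}a(\cdot,\xi)$ of the symbol entering the $q$-th block is, for each fixed $\xi$ with $\L\xi\ra\sim 2^q$, bounded in $L^\infty_x$ by $M^m_0(a)(1+\L\xi\ra)^m$ uniformly (and likewise after applying $\partial_\xi^\alpha$ for $\la\alpha\ra$ up to $2(d+2)$, which is what controls the kernel decay in $x$). One then estimates $\lA T_a\Delta_q u\rA_{L^2}\les M^m_0(a)\, 2^{qm}\lA\Delta_q u\rA_{L^2}$ by a Schur-type bound on the kernel, and reassembles via the almost-orthogonality of the frequency blocks to get the $H^\mu\to H^{\mu-m}$ estimate \eqref{esti:quant1}. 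The role of the seminorm $M^m_0$ is precisely to package the finitely many $L^\infty$ bounds on $\partial_\xi^\alpha a$ needed for this Schur estimate.

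For parts $(ii)$ and $(iii)$, the plan is to reduce to part $(i)$ applied to a remainder symbol. For the composition, one forms the symbol of $T_aT_b$ by a stationary-phase / Taylor expansion in the frequency variable dual to the ``slow'' spatial variable, so that $T_aT_b = T_{ab} + R$ where $R$ has a symbol whose seminorm of order $m+m'-\rho$ is controlled by the mixed products $M^m_\rho(a)M^{m'}_0(b)+M^m_0(a)M^{m'}_\rho(b)$ — here $\rho$ copies of the H\"older modulus of one factor are consumed to gain $\rho$ orders, which is the reason for the asymmetric form of the right-hand side of \eqref{esti:quant2}. Applying $(i)$ to $R$ gives the claim. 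For the adjoint, one computes the kernel of $(T_a)^*$, recognises its principal part as the kernel of $T_{\overline a}$, and estimates the difference by the same mechanism (one H\"older modulus of $a$ buys $\rho$ orders), again invoking $(i)$; this yields \eqref{esti:quant3}.

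\textbf{Main obstacle.} The genuinely delicate point is not the qualitative calculus — that is classical — but keeping the \emph{quantitative} dependence linear in the stated seminorms while only assuming $a\in\Gamma^m_0$ (not $\Gamma^m_\rho$) in $(i)$, and bilinear in $(ii)$. Concretely, one must check that the number of $\xi$-derivatives appearing in the Schur kernel estimate never exceeds $2(d+2)+\rho$ (the cutoff in Definition~\ref{defi:norms}), and that the remainder symbols produced in $(ii)$--$(iii)$ really do lie in $\Gamma^{\bullet}_0$ with seminorm bounded by the advertised products, with no loss of a power of $\L\xi\ra$. This bookkeeping — tracking exactly how many spatial derivatives and how much H\"older regularity each term costs — is the heart of the argument; everything else is the routine dyadic machinery. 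Since this is exactly Theorem~5.1 (and its corollaries) in \cite{MePise}, I would ultimately cite that reference for the detailed verification, after having indicated the structure above.
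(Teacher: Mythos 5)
Your proposal is sound and ends up exactly where the paper does: the article gives no proof of Theorem~\ref{theo:sc0} at all, but simply invokes the quantitative symbolic calculus of M\'etivier \cite{MePise}, which is the same reference you cite for the detailed verification. Your sketch of the dyadic/Schur argument for $(i)$ and the reduction of $(ii)$--$(iii)$ to a remainder symbol handled by $(i)$ is the standard route taken in that reference, so there is nothing to add.
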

We shall need in this article to consider paradifferential operators with negative regularity. 
As a consequence, we need to extend our previous definition.
\begin{defi}
For~$m\in \xR$ and~$\rho\in (-\infty, 0)$,~$\Gamma^m_\rho(\xR^d)$ denotes the space of
distributions~$a(x,\xi)$
on~${\mathbf{R}}^d\times(\xR^d\setminus 0)$,
which are~$C^\infty$ with respect to~$\xi$ and 
such that, for all~$\alpha\in\xN^d$ and all~$\xi\neq 0$, the function
$x\mapsto \partial_\xi^\alpha a(x,\xi)$ belongs to $C^\rho_*({\mathbf{R}}^d)$ and there exists a constant
$C_\alpha$ such that,
\begin{equation}
\forall\la \xi\ra\ge \mez,\quad \lA \partial_\xi^\alpha a(\cdot,\xi)\rA_{C^\rho_*}\le C_\alpha
(1+\la\xi\ra)^{m-\la\alpha\ra}.
\end{equation}
For~$a\in \Gamma^m_\rho$, we define 
\begin{equation}
M_{\rho}^{m}(a)= 
\sup_{\la\alpha\ra\le 2(d +2)+\vert \rho \vert   ~}\sup_{\la\xi\ra \ge 1/2~}
\lA (1+\la\xi\ra)^{\la\alpha\ra-m}\partial_\xi^\alpha a(\cdot,\xi)\rA_{C^{\rho}_*({\mathbf{R}}^d)}.
\end{equation}
\end{defi}
\subsection{Paraproducts and product rules}\label{sec.2.3}
If~$a=a(x)$ is a function of~$x$ only, the paradifferential operator~$T_a$ is called a paraproduct. 
A key feature of paraproducts is that one can replace 
nonlinear expressions by paradifferential expressions up to smoothing operators. 
Also, one can define paraproducts~$T_a$ for rough functions~$a$ which do not belong to~$L^\infty(\xR^d)$ but merely to 
$C_*^{-m}({\mathbf{R}}^d)$ with~$m>0$. 
\begin{defi}
Given two functions~$a,b$ defined on~${\mathbf{R}}^d$ we define the remainder 
$$
R(a,u)=au-T_a u-T_u a.
$$
\end{defi}
We record here various estimates about paraproducts (see chapter 2 in~\cite{BCD} or~\cite{Chemin}).
\begin{theo}
\begin{enumerate}[i)]
\item  Let~$\alpha,\beta\in \xR$. If~$\alpha+\beta>0$ then
\begin{align}
&\lA R(a,u) \rA _{H^{\alpha + \beta-\frac{d}{2}}({\mathbf{R}}^d)}
\leq K \lA a \rA _{H^{\alpha}({\mathbf{R}}^d)}\lA u\rA _{H^{\beta}({\mathbf{R}}^d)},\label{Bony} \\ 
&\lA R(a,u) \rA _{H^{\alpha + \beta}({\mathbf{R}}^d)} \leq K \lA a \rA _{C^{\alpha}_*({\mathbf{R}}^d)}\lA u\rA _{H^{\beta}({\mathbf{R}}^d)}.\label{Bony3}
\end{align}
\item   Let~$m>0$ and~$s\in \xR$. Then
\begin{equation}
\lA T_a u\rA_{H^{s-m}}\le K \lA a\rA_{C^{-m}_*}\lA u\rA_{H^{s}}.\label{niS}%\\
\end{equation}
\item Let~$s_0,s_1,s_2$ be such that 
$s_0\le s_2$ and~$s_0 < s_1 +s_2 -\frac{d}{2}$, 
then
\begin{equation}\label{boundpara}
\lA T_a u\rA_{H^{s_0}}\le K \lA a\rA_{H^{s_1}}\lA u\rA_{H^{s_2}}.
\end{equation}
\end{enumerate}
\end{theo}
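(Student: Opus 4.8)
The plan is to reduce each of the four inequalities to Littlewood--Paley bookkeeping, using only the operators $S_k,\Delta_k$ introduced above together with three elementary inputs: the frequency-localization inequalities $\|\Delta_k f\|_{L^2}\lesssim 2^{-ks}c_k\|f\|_{H^s}$ and $\|\Delta_k f\|_{L^\infty}\lesssim 2^{-ks}\|f\|_{C^s_*}$, where $(c_k)_k$ denotes throughout an $\ell^2$-sequence of norm $O(1)$ that may change from line to line; the Bernstein inequalities $\|\Delta_k f\|_{L^\infty}\lesssim 2^{kd/2}\|\Delta_k f\|_{L^2}$ and $\|\Delta_k g\|_{L^2}\lesssim 2^{kd/2}\|g\|_{L^1}$; and the two reconstruction lemmas, namely that $f=\sum_k f_k$ satisfies $\|f\|_{H^\sigma}^2\lesssim\sum_k 2^{2k\sigma}\|f_k\|_{L^2}^2$ for \emph{every} $\sigma\in\xR$ when each $\widehat{f_k}$ is supported in an annulus $\{|\xi|\sim 2^k\}$, and for $\sigma>0$ only when $\widehat{f_k}$ is supported merely in a ball $\{|\xi|\lesssim 2^k\}$. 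With $\chi$ chosen as in \eqref{eq.para}--\eqref{cond.psi}, Bony's decomposition $au=T_a u+T_u a+R(a,u)$ is realized with $T_a u=\sum_k(S_{k-3}a)\varphi_k(D)u$ and $R(a,u)=\sum_k\Delta_k a\,\widetilde\Delta_k u$, $\widetilde\Delta_k:=\Delta_{k-1}+\Delta_k+\Delta_{k+1}$; the crucial structural fact is that $(S_{k-3}a)\varphi_k(D)u$ has spectrum in an annulus of size $2^k$ (spectral gap), whereas $\Delta_k a\,\widetilde\Delta_k u$ has spectrum only in a ball of radius $\lesssim 2^k$.

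For the remainder estimates I would set $R_k=\Delta_k a\,\widetilde\Delta_k u$. For \eqref{Bony3}, H\"older gives $\|R_k\|_{L^2}\le\|\Delta_k a\|_{L^\infty}\|\widetilde\Delta_k u\|_{L^2}\lesssim 2^{-k(\alpha+\beta)}c_k\|a\|_{C^\alpha_*}\|u\|_{H^\beta}$, and since the $R_k$ are ball-supported and $\alpha+\beta>0$, the ball reconstruction lemma gives \eqref{Bony3} directly. For \eqref{Bony} I would bound instead in $L^1$: $\|R_k\|_{L^1}\le\|\Delta_k a\|_{L^2}\|\widetilde\Delta_k u\|_{L^2}\lesssim 2^{-k(\alpha+\beta)}c_k d_k\|a\|_{H^\alpha}\|u\|_{H^\beta}$ with $(c_k d_k)_k\in\ell^1$ by Cauchy--Schwarz; Bernstein then yields $\|\Delta_j R\|_{L^2}\lesssim 2^{jd/2}\sum_{k\gtrsim j}\|R_k\|_{L^1}$, so that the sequence $j\mapsto 2^{j(\alpha+\beta-d/2)}\|\Delta_j R\|_{L^2}$ is dominated by $2^{j(\alpha+\beta)}\sum_{k\gtrsim j}\|R_k\|_{L^1}$, which is the convolution of $(c_k d_k)_k\in\ell^1$ with $(2^{-i(\alpha+\beta)}\mathbf 1_{\{i\ge -C\}})_i$; the latter is summable precisely because $\alpha+\beta>0$, so the convolution lies in $\ell^2$ and \eqref{Bony} follows since $H^{\alpha+\beta-d/2}$ carries the weight $2^{j(\alpha+\beta-d/2)}$ and $2^{j(\alpha+\beta-d/2)}2^{jd/2}=2^{j(\alpha+\beta)}$.

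For the paraproduct estimates, the spectral gap makes each block $(S_{k-3}a)\varphi_k(D)u$ annulus-supported, so by the annulus reconstruction lemma it suffices to control $\big(2^{k\sigma}\|(S_{k-3}a)\varphi_k(D)u\|_{L^2}\big)_k$ in $\ell^2$ with $\sigma=s-m$ for \eqref{niS} and $\sigma=s_0$ for \eqref{boundpara}, and $\|(S_{k-3}a)\varphi_k(D)u\|_{L^2}\le\|S_{k-3}a\|_{L^\infty}\|\varphi_k(D)u\|_{L^2}$. For \eqref{niS}: $\|S_{k-3}a\|_{L^\infty}\le\sum_{q<k}\|\Delta_q a\|_{L^\infty}\lesssim\|a\|_{C^{-m}_*}\sum_{q<k}2^{qm}\lesssim 2^{km}\|a\|_{C^{-m}_*}$ since $m>0$, and the factors $2^{km}$ and $2^{-ks}$ cancel against $2^{k(s-m)}$, leaving an $\ell^2$ sequence. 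For \eqref{boundpara} I would distinguish two cases: if $s_1>d/2$ then $\|S_{k-3}a\|_{L^\infty}\le\|a\|_{L^\infty}\lesssim\|a\|_{H^{s_1}}$ uniformly in $k$, and one concludes using $s_0\le s_2$; if $s_1\le d/2$ then Bernstein gives $\|S_{k-3}a\|_{L^\infty}\lesssim 2^{k(d/2-s_1)}\|a\|_{H^{s_1}}$ (up to a harmless factor $\sqrt k$ when $s_1=d/2$), hence $2^{ks_0}\|(S_{k-3}a)\varphi_k(D)u\|_{L^2}\lesssim 2^{k(s_0-s_1-s_2+d/2)}\sqrt k\,c_k\|a\|_{H^{s_1}}\|u\|_{H^{s_2}}$, which is in $\ell^2$ precisely because $s_0<s_1+s_2-d/2$ (and note that this strict inequality already forces $s_0<s_2$ when $s_1\le d/2$, so there is no tension with the hypothesis $s_0\le s_2$).

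I do not expect any single deep obstacle: the content is entirely the combinatorics of the frequency supports and the verification that the hypotheses ($\alpha+\beta>0$ in (i), $m>0$ in (ii), $s_0\le s_2$ and $s_0<s_1+s_2-d/2$ in (iii)) are exactly the threshold conditions making the relevant geometric or convolution series converge. The one spot requiring genuine care is the endpoint $s_1=d/2$ in \eqref{boundpara}, where $\sum_{q<k}\|\Delta_q a\|_{L^\infty}$ diverges logarithmically; the resulting $\sqrt k$ loss is absorbed only because $s_0<s_1+s_2-d/2$ is \emph{strict}, which produces a true geometric gain. I would also keep in mind that, for $a=a(x)$, the specific cut-offs $\psi,\chi$ of \eqref{eq.para}--\eqref{cond.psi} make $T_a$ coincide with the Littlewood--Paley paraproduct only up to a low-frequency correction, which is a smoothing operator and hence harmless for all four estimates.
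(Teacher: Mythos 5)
Your proof is correct: the paper does not prove this theorem but simply quotes it from the standard references (chapter 2 of Bahouri--Chemin--Danchin or Chemin), and your Littlewood--Paley argument — ball-spectrum reconstruction for the remainder with $\alpha+\beta>0$, the $L^1$/Bernstein convolution trick for \eqref{Bony}, and the geometric summation of $\lA S_{k-3}a\rA_{L^\infty}$ for \eqref{niS} and \eqref{boundpara} — is precisely the proof given there. Your handling of the delicate points (avoiding the naive ball lemma when $\alpha+\beta-\frac d2\le 0$, and absorbing the logarithmic loss at $s_1=\frac d2$ via the strict inequality $s_0<s_1+s_2-\frac d2$) is also the standard and correct one.
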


By combining the two previous points with the embedding~$H^\mu({\mathbf{R}}^d)\subset C^{\mu-d/2}_*({\mathbf{R}}^d)$ (for any~$\mu\in\xR$) 
we immediately obtain the following results. 
\begin{prop}\label{lemPa}
Let~$r,\mu\in \xR$ be such that~$r+\mu>0$. If~$\gamma\in\xR$ satisfies 
$$
\gamma\le r  \quad\text{and}\quad \gamma < r+\mu-\frac{d}{2},
$$
then there exists a constant~$K$ such that, for all 
$a\in H^{r}({\mathbf{R}}^d)$ and all~$u\in H^{\mu}({\mathbf{R}}^d)$, 
%we have
$$
\lA au - T_a u\rA_{H^{\gamma}}\le K \lA a\rA_{H^{r}}\lA u\rA_{H^\mu}.
$$
\end{prop}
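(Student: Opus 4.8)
The plan is to deduce Proposition~\ref{lemPa} directly from the two product estimates already recorded, namely \eqref{Bony} for the remainder $R(a,u)=au-T_au-T_ua$ and \eqref{niS} for the paraproduct $T_u a$ with a rough symbol, together with the Sobolev embedding $H^\mu(\xR^d)\subset C^{\mu-d/2}_*(\xR^d)$. Since
$$
au - T_a u = R(a,u) + T_u a,
$$
it suffices to bound each of the two terms on the right-hand side in $H^\gamma$ by $K\lA a\rA_{H^r}\lA u\rA_{H^\mu}$.

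First I would treat the remainder term. By \eqref{Bony}, with the choice $\alpha=r$ and $\beta=\mu$, which are admissible because the hypothesis $r+\mu>0$ is exactly the condition $\alpha+\beta>0$, one gets
$$
\lA R(a,u)\rA_{H^{r+\mu-d/2}}\le K\lA a\rA_{H^r}\lA u\rA_{H^\mu}.
$$
The assumption $\gamma< r+\mu-\frac d2$ ensures $H^{r+\mu-d/2}\subset H^\gamma$ (continuously, since we work on $\xR^d$ and lower the Sobolev index), so the remainder is controlled in $H^\gamma$.

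Next I would treat $T_u a$. Here the natural move is to view $u$ as the rough coefficient and $a$ as the regular function, so that $T_u a$ is a paraproduct with symbol $u$. If $\mu-\frac d2<0$, then $u\in C^{-m}_*$ with $m=\frac d2-\mu>0$ by the embedding $H^\mu\subset C^{\mu-d/2}_*$, and \eqref{niS} with $s=r$ gives
$$
\lA T_u a\rA_{H^{r-m}}\le K\lA u\rA_{C^{-m}_*}\lA a\rA_{H^r}\le K\lA u\rA_{H^\mu}\lA a\rA_{H^r},
$$
and $r-m=r+\mu-\frac d2>\gamma$, so again we land in $H^\gamma$. If instead $\mu-\frac d2\ge 0$, then $u\in L^\infty$ (more precisely $u\in C^{\mu-d/2}_*\subset L^\infty$ when $\mu-d/2\ge 0$, treating the borderline case via $C^{0}_*\supset C^\epsilon_*$, or simply using $H^\mu\subset C^{0}_*$ for $\mu>d/2$), and then $T_u$ is of order $\le 0$ by Theorem~\ref{theo:sc0}(i), so $\lA T_u a\rA_{H^r}\le K\lA u\rA_{L^\infty}\lA a\rA_{H^r}\le K\lA u\rA_{H^\mu}\lA a\rA_{H^r}$; since $\gamma\le r$ this is again enough. (One could also invoke \eqref{boundpara} to handle $T_u a$ uniformly, but splitting on the sign of $\mu-d/2$ is cleaner.)

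The only mild subtlety, and the step I would be most careful about, is the bookkeeping of which index goes where: one must put the less regular of $a,u$ into the symbol slot of the paraproduct so that \eqref{niS} applies, and one must check that the two candidate target regularities $r+\mu-\frac d2$ (from both $R(a,u)$ and $T_u a$) are indeed $\ge\gamma$, which is precisely the second hypothesis, while the condition $\gamma\le r$ is what is needed should the borderline/$L^\infty$ branch for $T_u a$ occur. Assembling the two bounds by the triangle inequality in $H^\gamma$ finishes the proof. There is no genuine obstacle here; the statement is a clean corollary of the quoted paraproduct toolbox.
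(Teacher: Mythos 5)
Your argument is essentially the paper's own (the paper states Proposition~\ref{lemPa} as an immediate consequence of the same decomposition $au-T_au=R(a,u)+T_ua$, of \eqref{Bony}--\eqref{niS}/\eqref{boundpara}, and of the embedding $H^\mu\subset C^{\mu-d/2}_*$), so the route matches and the main estimates are correct. One caveat in your branch $\mu\ge d/2$: at the borderline $\mu=d/2$ the claim $u\in L^\infty$ is false, since neither $H^{d/2}(\xR^d)$ nor $C^0_*(\xR^d)$ embeds into $L^\infty$; however in that case the hypothesis $\gamma<r+\mu-\frac d2$ forces $\gamma<r$ strictly, so \eqref{niS} with a small $m>0$ (using $H^{d/2}\subset C^0_*\subset C^{-m}_*$), or simply the uniform application of \eqref{boundpara} with $s_0=\gamma$, $s_1=\mu$, $s_2=r$ that you mention parenthetically, closes this case and is the cleaner way to treat $T_ua$ throughout.
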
 
\begin{coro}
\begin{enumerate}[i)]
\item \label{it.1} If~$u_j\in H^{s_j}({\mathbf{R}}^d)$ ($j=1,2$) with $s_1+s_2> 0$ then 
\begin{equation}\label{pr}
\lA u_1 u_2 \rA_{H^{s_0}}\le K \lA u_1\rA_{H^{s_1}}\lA u_2\rA_{H^{s_2}},
\end{equation}
if $s_0\le s_j$, $j=1,2$, and $s_0< s_1+s_2-d/2$.

\item \label{it.2} (Tame estimate in Sobolev spaces) If~$s\ge 0$ then
\begin{equation}\label{prS2}
\lA u_1 u_2 \rA_{H^{s}}\le K \bigl(\lA u_1\rA_{H^{s}}\lA u_2\rA_{L^{\infty}}+ \lA u_1\rA_{L^{\infty}} \lA u_2\rA_{H^{s}}\bigr).
\end{equation}

\item \label{it.4} Let~$\mu,m\in \xR$ be such that~$\mu,m> 0$ and~$m\not \in \xN$. Then
\begin{equation}\label{prZ2}
\lA u_1 u_2\rA_{H^\mu}\le K \bigl(\lA u_1\rA_{L^\infty}\lA u_2\rA_{H^\mu}+\lA u_2\rA_{C^{-m}_*}\lA u_1\rA_{H^{\mu+m}}\bigr).
\end{equation}

\item \label{it.6} Let~$s>d/2$ and consider~$F\in C^\infty(\xC^N)$ such that~$F(0)=0$. 
Then there exists a non-decreasing function~$\mathcal{F}\colon\xR_+\rightarrow\xR_+$ 
such that, for any~$U\in H^s({\mathbf{R}}^d)^N$,
\begin{equation}\label{esti:F(u)}
\lA F(U)\rA_{H^s}\le \mathcal{F}\bigl(\lA U\rA_{L^\infty}\bigr)\lA U\rA_{H^s}.
\end{equation}
\end{enumerate}
\end{coro}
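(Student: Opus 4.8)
\emph{Proof plan.} The plan is to derive the three bilinear estimates \eqref{pr}, \eqref{prS2}, \eqref{prZ2} by inserting Bony's decomposition $u_1u_2 = T_{u_1}u_2 + T_{u_2}u_1 + R(u_1,u_2)$ and bounding the three pieces by the estimates recorded above, and then to obtain the composition estimate \eqref{esti:F(u)} directly by a telescoping Littlewood--Paley argument (all of this being classical; cf.\ \cite{BCD,Chemin}).

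For \eqref{pr} I would write $u_1u_2 = T_{u_1}u_2 + \bigl(u_1u_2 - T_{u_1}u_2\bigr)$, bound the second term by Proposition~\ref{lemPa} applied with $a=u_1\in H^{s_1}$, $u=u_2\in H^{s_2}$, $\gamma=s_0$ (its hypotheses $r+\mu=s_1+s_2>0$, $\gamma=s_0\le r=s_1$, $\gamma=s_0<r+\mu-\frac{d}{2}$ being exactly the assumptions of item~\ref{it.1}), and bound the paraproduct $T_{u_1}u_2$ by \eqref{boundpara} with $(s_0,s_1,s_2)$, which uses the remaining hypotheses $s_0\le s_2$ and $s_0<s_1+s_2-\frac{d}{2}$. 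Adding the two bounds gives \eqref{pr}; the only thing to check here is this matching of indices.

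For \eqref{prS2} (the case $s=0$ being the Cauchy--Schwarz inequality $\lA u_1u_2\rA_{L^2}\le\lA u_1\rA_{L^\infty}\lA u_2\rA_{L^2}$) I would use that any $a=a(x)\in L^\infty$ belongs to $\Gamma^0_0(\xR^d)$ with $M^0_0(a)=\lA a\rA_{L^\infty}$, so that Theorem~\ref{theo:sc0}$(i)$ gives $\lA T_{u_1}u_2\rA_{H^s}\le K\lA u_1\rA_{L^\infty}\lA u_2\rA_{H^s}$ and, symmetrically, $\lA T_{u_2}u_1\rA_{H^s}\le K\lA u_2\rA_{L^\infty}\lA u_1\rA_{H^s}$; the remainder is controlled by \eqref{Bony3} with $\alpha=0$, $\beta=s$ together with the elementary embedding $L^\infty(\xR^d)\subset C^0_*(\xR^d)$, giving $\lA R(u_1,u_2)\rA_{H^s}\le K\lA u_1\rA_{L^\infty}\lA u_2\rA_{H^s}$. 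For \eqref{prZ2} I would keep the bound on $T_{u_1}u_2$ as above, bound $T_{u_2}u_1$ by \eqref{niS} with $s$ there equal to $\mu+m$, and bound the (symmetric) remainder by \eqref{Bony3} with $a=u_2\in C^{-m}_*$ and $u=u_1\in H^{\mu+m}$, the condition $\alpha+\beta=\mu>0$ being satisfied. In each case summing the three bounds yields the claim.

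Finally, for \eqref{esti:F(u)} (the classical Moser estimate) I would argue directly. Using $F(0)=0$, write $F(U)=\sum_{q\ge-1}\bigl(F(S_{q+1}U)-F(S_qU)\bigr)$ with the convention $S_{-1}U=0$ (recall $\Delta_0=S_0$); since $S_{q+1}U-S_qU=\Delta_{q+1}U$, the fundamental theorem of calculus gives $F(S_{q+1}U)-F(S_qU)=m_q\cdot\Delta_{q+1}U$ with $m_q=\int_0^1 DF\bigl(S_qU+\tau\Delta_{q+1}U\bigr)\,d\tau$. Bernstein's inequality yields $\lA\partial^\alpha S_qU\rA_{L^\infty}+\lA\partial^\alpha\Delta_{q+1}U\rA_{L^\infty}\le C_\alpha\,2^{q\la\alpha\ra}\lA U\rA_{L^\infty}$, hence $\lA m_q\rA_{L^\infty}\le\sup_{\la z\ra\le C\lA U\rA_{L^\infty}}\la DF(z)\ra$ and, more generally, $\lA\partial^\alpha m_q\rA_{L^\infty}\le 2^{q\la\alpha\ra}\Phi_\alpha\bigl(\lA U\rA_{L^\infty}\bigr)$ for some non-decreasing $\Phi_\alpha$ — that is, $m_q$ is smooth at scale $2^{-q}$. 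Since $\Delta_{q+1}U$ is spectrally supported in $\{\la\xi\ra\sim2^q\}$, the product $m_q\cdot\Delta_{q+1}U$ is, up to a rapidly decaying tail, spectrally supported in $\{\la\xi\ra\lesssim2^q\}$: one has $\lA\Delta_p(m_q\cdot\Delta_{q+1}U)\rA_{L^2}\le\lA m_q\rA_{L^\infty}\lA\Delta_{q+1}U\rA_{L^2}$ for $p\le q+C$, and $\lA\Delta_p(m_q\cdot\Delta_{q+1}U)\rA_{L^2}\le C_N\,2^{-N(p-q)}\Phi_N\bigl(\lA U\rA_{L^\infty}\bigr)\lA\Delta_{q+1}U\rA_{L^2}$ for $p>q+C$ and any $N$. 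Summing over $q$, and using that $\bigl(2^{ks}\mathbf{1}_{k\le C}\bigr)_{k\in\xZ}$ and $\bigl(2^{(s-N)k}\mathbf{1}_{k>C}\bigr)_{k\in\xZ}$ belong to $\ell^1(\xZ)$ when $s>0$ and $N>s$, Young's inequality in $\ell^2$ yields $\lA F(U)\rA_{H^s}\le\mathcal{F}\bigl(\lA U\rA_{L^\infty}\bigr)\lA U\rA_{H^s}$; the hypothesis $s>d/2$ is used only to ensure $\lA U\rA_{L^\infty}<\infty$ for $U\in H^s$. I expect the one genuinely non-formal point of the whole statement to be this spectral-localization estimate for the products $m_q\cdot\Delta_{q+1}U$; everything else is bookkeeping on top of Bony's decomposition and the bounds already at our disposal, and is routine.
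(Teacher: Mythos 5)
Your proposal is correct, and its overlap with the paper is only partial because the paper itself proves very little here: for items i) and ii) it simply cites H\"ormander and Chemin, for item iv) it cites Meyer, and the only argument it writes out is item iii), where your treatment (Bony decomposition, $T_{u_1}u_2$ by \eqref{esti:quant1}, $T_{u_2}u_1$ by \eqref{niS}, $R(u_1,u_2)$ by \eqref{Bony3}) coincides with the paper's line for line. Where you differ is that you make i), ii) and iv) self-contained: for i) you split $u_1u_2=T_{u_1}u_2+(u_1u_2-T_{u_1}u_2)$ and your index bookkeeping is exactly right — Proposition~\ref{lemPa} with $(\gamma,r,\mu)=(s_0,s_1,s_2)$ uses the hypotheses $s_1+s_2>0$, $s_0\le s_1$, $s_0<s_1+s_2-\frac d2$, and \eqref{boundpara} uses $s_0\le s_2$, $s_0<s_1+s_2-\frac d2$; for ii) the observation $M^0_0(u_1)=\lA u_1\rA_{L^\infty}$ plus \eqref{Bony3} with $(\alpha,\beta)=(0,s)$ and $L^\infty\subset C^0_*$ (with the trivial case $s=0$ handled by H\"older) is sound; and for iv) your telescoping argument $F(U)=\sum_q m_q\cdot\Delta_{q+1}U$ with $\lA\partial^\alpha m_q\rA_{L^\infty}\lesssim 2^{q\la\alpha\ra}\Phi_\alpha(\lA U\rA_{L^\infty})$ and the quasi-orthogonality estimate for $\Delta_p(m_q\Delta_{q+1}U)$ is precisely the classical proof of Meyer's theorem that the paper invokes by reference (it is the argument behind \cite[Th\'eor\`eme 2.5]{Meyer}, also in \cite{BCD}). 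What your route buys is independence from the cited literature at the cost of length; what the paper's route buys is brevity, since all four statements are standard. The one point you flag as delicate — the spectral quasi-localization of $m_q\Delta_{q+1}U$ for $p>q+C$ via the decay of $\Delta_j m_q$ — is indeed the crux of iv) and your sketch of it is correct, including the use of $s>0$ and $N>s$ to get summable kernels and the remark that $s>d/2$ enters only through $\lA U\rA_{L^\infty}<\infty$ (and through the convergence $S_qU\to U$ in $L^\infty$ needed to justify the telescoping).
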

\begin{proof}
The first two estimates  are well-known, 
see H\"ormander \cite{Hormander} or Chemin \cite{Chemin}. 
To prove $\ref{it.4}$) 
we write 
$u_1u_2=T_{u_1} u_2+T_{u_2} u_1 +R(u_1,u_2)$ 
and use that
\begin{alignat*}{2}
&\lA T_{u_1} u_2\rA_{H^\mu} \les \lA u_1\rA_{L^\infty}\lA u_2\rA_{H^\mu} \qquad &&(\text{see }\eqref{esti:quant1}),\\
&\lA T_{u_2} u_1\rA_{H^\mu} \les \lA u_2\rA_{C^{-m}_*}\lA u_1\rA_{H^{\mu+m}} 
&&(\text{see }\eqref{niS}),\\
&\lA R(u_1,u_2)\rA_{H^\mu} \les \lA u_2\rA_{C^{-m}_*}\lA u_1\rA_{H^{\mu+m}} \quad
&&(\text{see }\eqref{Bony3}).
\end{alignat*}
Finally, $\ref{it.6}$) is due to Meyer~\cite[Th\'eor\`eme 2.5 and remarque]{Meyer}. 
\end{proof}

Finally, let us finish this section with a generalization of~\eqref{niS} 
\begin{prop}\label{prop.niSbis}
Let~$\rho<0$,~$m\in \xR$ and~$a\in \dot{ \Gamma}^m_\rho$. Then the operator~$T_a$ is of order~$m-\rho$:
\begin{equation}\label{niSbis}
\| T_a \|_{H^s \rightarrow H^{s-(m- \rho)}}\leq C M_{\rho}^{m}(a),\qquad
\| T_a \|_{C^s_* \rightarrow C^{s-(m- \rho)}_*}\leq C M_{\rho}^{m}(a).
\end{equation}
\end{prop}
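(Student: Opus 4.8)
The plan is to prove Proposition~\ref{prop.niSbis} by decomposing the paradifferential operator $T_a$ dyadically and using the spectral localization built into the cut-off $\chi$, exactly as in the proof of the classical Theorem~\ref{theo:sc0}~$(i)$ but keeping careful track of the negative regularity index $\rho$. First I would recall that, by the construction of $\chi$ as $\sum_k \kappa_{k-3}(\theta)\varphi_k(\eta)$, one can write $T_a u = \sum_{k} T_a^{(k)} u$ where $T_a^{(k)} u$ has Fourier transform spectrally supported in an annulus $\{|\xi|\sim 2^k\}$ (for $k$ large), and $T_a^{(k)}u$ is, up to harmless constants, of the form $S_{k-3}(a(\cdot,D))\,\Delta_k u$ — that is, a low-frequency piece of the symbol $a$ (frequencies $\lesssim 2^{k}$) multiplied against the $k$-th Littlewood--Paley block of $u$, the multiplier $\partial_\xi$-frozen at frequency $2^k$. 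The point of the semi-norm $M_\rho^m(a)$ with its fixed finite number of $\xi$-derivatives $|\alpha|\le 2(d+2)+|\rho|$ is precisely to control, via Bernstein-type arguments in the $\xi$ variable on the annulus, the $L^\infty_x\to L^\infty_x$ (resp.\ the relevant operator) norm of each $a(\cdot,D)$-type factor by $C\,M_\rho^m(a)\,2^{km}$ times the $C^\rho_*$-norm of the relevant frequency-truncated symbol.

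The key estimate is then the bound on the low-frequency truncation of a $C^\rho_*$ function with $\rho<0$: if $g\in C^\rho_*$ then $\|S_j g\|_{L^\infty}\lesssim 2^{-j\rho}\|g\|_{C^\rho_*}$ (this is immediate from $S_j g = \sum_{q\le j}\Delta_q g$ and the definition of $C^\rho_*$, summing the geometric series since $\rho<0$ forces $2^{-q\rho}$ to grow). Applying this with $g = \partial_\xi^\alpha a(\cdot,\xi)|_{|\xi|\sim 2^k}$ and $j\sim k$, each block $T_a^{(k)}u$ gains a factor $2^{-k\rho}$ beyond the factor $2^{km}$ coming from the order of the symbol; so $\|T_a^{(k)}u\|_{L^2}\lesssim M_\rho^m(a)\,2^{k(m-\rho)}\|\Delta_k u\|_{L^2}$. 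Since the outputs $T_a^{(k)}u$ are almost orthogonal in $L^2$ (their spectra lie in annuli of fixed overlap), squaring and summing against $2^{2(s-(m-\rho))k}\|T_a^{(k)}u\|_{L^2}^2$ gives the $H^s\to H^{s-(m-\rho)}$ bound after recognizing $\sum_k 2^{2sk}\|\Delta_k u\|_{L^2}^2 \sim \|u\|_{H^s}^2$; the finitely many low-frequency terms $k\lesssim 1$ are handled trivially since there $\langle\xi\rangle\sim 1$ and the bound is just boundedness on $L^2$. For the $C^s_*\to C^{s-(m-\rho)}_*$ statement one argues identically but measures the output blocks in $L^\infty$ rather than $\ell^2(L^2)$, using that $T_a^{(k)}u$ is spectrally localized at $2^k$ so its $C^{s-(m-\rho)}_*$-norm is $\sup_k 2^{(s-(m-\rho))k}\|T_a^{(k)}u\|_{L^\infty}$, and $\|T_a^{(k)}u\|_{L^\infty}\lesssim M_\rho^m(a)2^{k(m-\rho)}\|\Delta_k u\|_{L^\infty}\lesssim M_\rho^m(a)2^{k(m-\rho)}2^{-sk}\|u\|_{C^s_*}$.

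The main obstacle — really the only non-bookkeeping point — is verifying cleanly that the spectral-localization structure of $\chi$ indeed lets one write $T_a = \sum_k T_a^{(k)}$ with $T_a^{(k)}$ acting as "truncated symbol at frequency $\lesssim 2^k$ times $\Delta_k u$," and that the $\xi$-derivative loss in converting the annulus multiplier $\chi(\xi-\eta,\eta)\psi(\eta)$ into an $L^\infty_x$ bound costs only the fixed number $2(d+2)+|\rho|$ of derivatives recorded in $M_\rho^m$ — this is where one invokes the quantitative version of Theorem~\ref{theo:sc0}$(i)$ from \cite{MePise} essentially verbatim, the negative $\rho$ entering only through the harmless replacement of $\|S_j g\|_{L^\infty}\lesssim\|g\|_{L^\infty}$ (false here) by $\|S_j g\|_{L^\infty}\lesssim 2^{-j\rho}\|g\|_{C^\rho_*}$. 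Once that lemma is in place the rest is the routine almost-orthogonality summation above, so I would present the argument as a short reduction to the already-quoted symbolic calculus estimate, isolating the dyadic-truncation lemma for $C^\rho_*$ functions as the one genuinely new ingredient.
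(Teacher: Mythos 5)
Your argument is correct, but it follows a genuinely different route from the paper. You run a direct Littlewood--Paley analysis on $T_a$ itself: decompose $T_au=\sum_k T_a^{(k)}u$ with each block of the form ``$x$-smoothed symbol at frequencies $\lesssim 2^k$ acting on $\Delta_k u$'', exploit the spectral localization of the output in annuli, and isolate as the one new ingredient the truncation bound $\lA S_j g\rA_{L^\infty}\lesssim 2^{-j\rho}\lA g\rA_{C^\rho_*}$ for $\rho<0$, which produces the loss $2^{-k\rho}$ per block and hence the order $m-\rho$; almost-orthogonal summation (resp.\ taking $\sup_k$ of weighted $L^\infty$ norms) then gives both the Sobolev and the Zygmund estimates by the same computation. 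The paper instead separates variables: it first treats $a(x,\xi)=b(x)p(\xi)$ by writing $T_a=T_b\circ p(D)$ and quoting the paraproduct estimate \eqref{niS} as a black box, and then handles a general symbol by expanding $\xi\mapsto a(x,\xi)/\la\xi\ra^m$ in spherical harmonics, $a=\sum_\nu a_\nu(x)h_\nu(\xi)$, using integration by parts against $\Delta_\omega^d$ to get $\lA a_\nu\rA_{C^\rho_*}\lesssim \nu^{-2}M^m_\rho(a)$, the Weyl law and Sobolev embedding on $\xS^{d-1}$ to bound $\lA\widetilde h_\nu\rA_{L^\infty}\lesssim\nu^{1/2}$, and summing $\sum_\nu\nu^{-3/2}<\infty$. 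What each buys: the paper's reduction reuses the already-quoted estimate \eqref{niS} so the only new work is the spherical-harmonics bookkeeping (a device it recycles in Step 3 of the proof of Lemma~\ref{lemm:Dt}), at the cost of requiring the fixed number $2(d+2)+|\rho|$ of $\xi$-derivatives in the semi-norm; your route avoids spherical harmonics and treats both scales uniformly, but its load-bearing step is the blockwise operator bound for genuinely $x$-dependent symbols of order $m$ on an annulus, which you cannot literally quote from Theorem~\ref{theo:sc0}~$(i)$ (that statement assumes $a\in\Gamma^m_0$, i.e.\ $L^\infty$ in $x$) --- you must either redo the kernel/integration-by-parts estimate for each smoothed block, or cite the sharp quantitative version in M\'etivier with the $k$-dependent $L^\infty_x$ bound $2^{-k\rho}M^m_\rho(a)$ inserted by hand, exactly as you indicate. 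With that step made explicit, your proof is complete.
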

\begin{proof}
Let us prove the first estimate. The proof of the second is similar. 
Notice that if~$m=0$ and~$a(x, \xi) = a(x)$, then ~\eqref{niSbis} 
is simply ~\eqref{niS}. Furthermore, if~$a(x, \xi) = b(x) p(\xi)$, then
$T_a = T_b \circ p(D)$. 
  As a consequence, using the first step and the fact that   $p(\xi) = \vert \xi \vert^m p\big (\frac{\xi}{\vert \xi \vert}\big) $ we obtain
$$\| T_a \|_{H^s \rightarrow H^{s-(m- \rho)}}\leq C \|b\|_{C^\rho_*} \| p \mid_{\mathbf{S}^{d-1}}\|_{L^\infty}.$$
 In the general case, we can expand, for fixed~$x$,~$a(x,\xi)$ 
 in terms of spherical harmonics. Let~$(\widetilde{h}_\nu)_{\nu\in \xN^*}$ 
 be an orthonormal basis 
of~$L^2(\xS^{d-1})$ consisting of eigenfunctions of the (self-adjoint) Laplace--Beltrami operator, 
$\Delta_\omega=\Delta_{\xS^{d-1}}$ on~$L^2(\xS^{d-1})$, {\em i.e.} 
$
\Delta_\omega \widetilde{h}_\nu 
=\lambda^2_{\nu}\widetilde{h} _\nu.
$
By the Weyl formula, we know that~$\lambda_\nu\sim c \nu^{\frac 1 d}$. Setting 
$$
h_\nu(\xi)= \la \xi\ra^m \widetilde{h}_\nu (\omega),\quad 
\omega=\frac{\xi}{\la\xi\ra},~\xi\neq 0,
$$
we can write
$$
a(x,\xi)=\sum_{\nu\in\xN^*} a_\nu(x) h_\nu(\xi)\quad\text{where}\quad 
a_\nu(x)=\int_{\xS^{d-1}} a(x,\omega) \overline{\widetilde{h}_\nu(\omega)}\, d\omega.
$$
Then $T_a = \sum_{\nu \in \xN^*} T_{a_\nu h_\nu}.$  Now since we have
$$
\lambda_\nu^{2k} a_\nu (x)=\int_{\xS^{d-1}} \Delta_w^k a (x,\omega) \overline{\widetilde{h}_\nu(\omega)}\, d\omega,
$$
taking $k=d$ we obtain  for   all~$\nu \ge 1$, 
\begin{equation*}
\begin{aligned}
 \lA a_\nu \rA_{C^\rho_*} &\le \lambda_\nu^{-2d} 
 \int_{\xS^{d-1}} \vert \Delta_w^d a (x,\omega)\vert \vert \overline{\widetilde{h}_\nu(\omega)} \vert\, d\omega 
 \le  C_1 \lambda_\nu^{-2d} {M}^m_\rho(a) \Vert {\widetilde{h}_\nu} \Vert_{L^2( {\xS^{d-1}})}  ,\\
  &  \le  C_2 \nu ^{-2} {M}^m_\rho(a).
 \end{aligned}
 \end{equation*}
Moreover by the Sobolev embedding we have, with $\eps>0$ small
\begin{equation}
\big\Vert \widetilde{h}_\nu\big\Vert_{L^\infty(S^{d-1})}\le C_3 \lambda_\nu^{\frac{ d-1} 2 + \eps} \leq C_4 \nu^{\frac 1 2 - \frac{ 1} {2d} + \frac{\eps}{d}} \leq C_4 \nu^{\frac 1 2}.
\end{equation}
Therefore by the second step we can conclude that
$$
\| T_a \| _{H^s \rightarrow H^{s-(m- \rho)}}\leq C_5 \sum_{\nu\in \xN^*} \nu^{ - \frac 3 {2} } {M}^m_\rho(a).
$$
This completes the proof.
\end{proof}

We shall also need the following technical result.

\begin{prop}\label{comut}
Set~$ \langle D_x \rangle=(I-\Delta)^{1/2}$. 

\begin{enumerate}[i)]
\item \label{itt.1} Let~$s>\mez + \frac{d}{2}$ and~$\sigma \in \xR$ be such that~$\sigma \leq s$. 
Then there exists~$K>0$ such that for all~$V \in W^{1, \infty}({\mathbf{R}}^d) \cap H^s({\mathbf{R}}^d)$ 
and ~$u \in H^{\sigma- \mez}({\mathbf{R}}^d)$  one has
$$
\Vert [\lDx{\sigma}, V] u \Vert_{L^2({\mathbf{R}}^d)} 
\leq K \big \{ \Vert V \Vert_{W^{1, \infty}({\mathbf{R}}^d)} 
+ \Vert V \Vert_{H^s({\mathbf{R}}^d) }\big \} \Vert u \Vert_{H^{\sigma- \mez}({\mathbf{R}}^d)}.
$$
\item \label{itt.2} Let~$s>1+\frac{d}{2}$ and~$\sigma \in \xR$ be such that~$\sigma \leq s$. 
Then there exists~$K>0$ such that for all~$V \in   H^s({\mathbf{R}}^d)$  and ~$u \in H^{\sigma- 1}({\mathbf{R}}^d)$  one has
$$
\Vert [\lDx{\sigma}, V] u \Vert_{L^2({\mathbf{R}}^d)} \leq K \Vert V \Vert_{H^s({\mathbf{R}}^d) }  \Vert u \Vert_{H^{\sigma- 1}({\mathbf{R}}^d)}.
$$
\end{enumerate}
\end{prop}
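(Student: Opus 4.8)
The plan is to use a Littlewood--Paley decomposition together with Bony's paraproduct decomposition to reduce the commutator estimate to a few model pieces. Write $V = \sum_k \Delta_k V$ and $u = \sum_j \Delta_j u$, and decompose $Vu = T_V u + T_u V + R(V,u)$. Since $[\lDx{\sigma},V]u = \lDx{\sigma}(Vu) - V\lDx{\sigma}u$, one has
$$
[\lDx{\sigma},V]u = \bigl(\lDx{\sigma}T_V - T_V\lDx{\sigma}\bigr)u + \lDx{\sigma}\bigl(T_u V + R(V,u)\bigr) - \bigl(T_u V + R(V,u)\bigr)\text{-type terms},
$$
more precisely $[\lDx{\sigma},V]u = [\lDx{\sigma},T_V]u + \lDx{\sigma}(T_u V) - T_{\lDx{\sigma}u}V + \lDx{\sigma}R(V,u) - R(V,\lDx{\sigma}u) + (\text{lower order})$. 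The first commutator $[\lDx{\sigma},T_V]$ is handled by symbolic calculus: $\lDx{\sigma}$ has symbol $\L{\xi}^\sigma \in \Gamma^\sigma_\infty$, $T_V$ has symbol $V \in \Gamma^0_1$ (or $\Gamma^0_\rho$ with the relevant $\rho$), and by Theorem~\ref{theo:sc0}(ii) the operator $\lDx{\sigma}T_V - T_{\L{\xi}^\sigma V}$ and $T_V\lDx{\sigma} - T_{V\L{\xi}^\sigma}$ are of order $\sigma - 1$ with norms controlled by $M^0_1(V)M^\sigma_0(\L{\xi}^\sigma) = C\Vert V\Vert_{W^{1,\infty}}$ (case~i)), so that $[\lDx{\sigma},T_V]$ maps $H^{\sigma-1}$ to $L^2$ with the stated constant. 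For case~ii), with $s>1+d/2$ one instead uses the regularity $V\in H^s \subset W^{1,\infty}$ and keeps the $H^s$-norm; here one wants the commutator bounded from $H^{\sigma-1}$ (not merely $H^{\sigma-1/2}$) to $L^2$, which is why the symbolic expansion must be pushed to order $\sigma-1$, costing one extra derivative on $V$, i.e. the full $W^{1,\infty}$ which is absorbed by $\Vert V\Vert_{H^s}$.

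Next I would treat the remaining ``low--high'' and ``high--high'' contributions. For $\lDx{\sigma}(T_uV) - T_{\lDx{\sigma}u}V$: since in $T_uV$ the frequencies of $V$ dominate, $\lDx{\sigma}$ essentially acts on $V$ and one gains from the spectral localization; using \eqref{niS}/\eqref{boundpara} together with $u\in C^{\sigma-1-d/2}_*$ (from $H^{\sigma-1/2}\subset C^{\sigma-1}_*$ in case~i, resp.\ $H^{\sigma-1}\subset C^{\sigma-1-d/2}_*$ in case~ii) one bounds these in $L^2$ by $\Vert V\Vert_{H^s}\Vert u\Vert_{H^{\sigma-1/2}}$, resp.\ $\Vert V\Vert_{H^s}\Vert u\Vert_{H^{\sigma-1}}$, provided $s$ is large enough to absorb the $d/2$ loss — which is exactly the hypothesis $s>1/2+d/2$ (resp.\ $s>1+d/2$). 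The remainder term $R(V,u)$ and its companion are handled by \eqref{Bony}/\eqref{Bony3}: $\lDx{\sigma}R(V,u)\in L^2$ requires $R(V,u)\in H^\sigma$, and \eqref{Bony} gives $\Vert R(V,u)\Vert_{H^{s+(\sigma-1/2)-d/2}}\le K\Vert V\Vert_{H^s}\Vert u\Vert_{H^{\sigma-1/2}}$ with $s+(\sigma-1/2)-d/2 \ge \sigma$ precisely when $s\ge 1/2+d/2$ (case~i); similarly $R(V,u)\in H^{s+\sigma-1-d/2}$ with exponent $\ge\sigma$ when $s\ge 1+d/2$ (case~ii). One must also keep track of the analogous pieces coming from $V\lDx{\sigma}u$, which are symmetric and estimated the same way.

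Finally I would assemble the pieces and verify the bookkeeping of the thresholds: in case~i) one needs $s>1/2+d/2$ for the paraproduct/remainder terms but $V\in W^{1,\infty}$ enters through the principal commutator $[\lDx{\sigma},T_V]$, which is why \emph{both} norms $\Vert V\Vert_{W^{1,\infty}}$ and $\Vert V\Vert_{H^s}$ appear; in case~ii) the stronger hypothesis $s>1+d/2$ gives $H^s\hookrightarrow W^{1,\infty}$ directly, so only $\Vert V\Vert_{H^s}$ is needed, at the cost of assuming $u\in H^{\sigma-1}$ rather than $H^{\sigma-1/2}$. The condition $\sigma\le s$ is used to ensure all the Sobolev exponents produced on the $V$-side stay $\le s$ so that $\Vert V\Vert_{H^s}$ genuinely dominates them. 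The main obstacle, and the point requiring real care, is the principal commutator $[\lDx{\sigma},T_V]$ in case~ii): one must extract a full derivative gain (order $\sigma-1$, not $\sigma-1/2$) from symbolic calculus, which forces control of $\partial_x V$ in $L^\infty$ — available only because $s>1+d/2$ — and it is this mechanism that distinguishes the two cases. Everything else is a routine, if lengthy, Littlewood--Paley summation.
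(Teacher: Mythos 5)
Your decomposition — splitting off the paradifferential part $T_V$, bounding the principal commutator $[\lDx{\sigma},T_V]$ by symbolic calculus (Theorem~\ref{theo:sc0}, \eqref{esti:quant2}) with constant $\Vert V\Vert_{W^{1,\infty}}$ (absorbed by $\Vert V\Vert_{H^s}$ in case ii), and estimating the two remaining pieces $\lDx{\sigma}\bigl((V-T_V)u\bigr)$ and $(V-T_V)\lDx{\sigma}u$ separately by paraproduct and remainder estimates — is essentially the paper's own proof, which just packages your uses of \eqref{niS}, \eqref{boundpara}, \eqref{Bony} as Proposition~\ref{lemPa} with $(\gamma,r,\mu)=(\sigma,s,\sigma-\mez)$ and $(0,s,-\mez)$ in case i, resp.\ $(\sigma,s,\sigma-1)$ and $(0,s,-1)$ in case ii. The only blemish is a mis-stated intermediate embedding (one has $H^{\sigma-\mez}\subset C_*^{\sigma-\mez-d/2}$, not $C_*^{\sigma-1}$), which is harmless since the correct exponent yields exactly the thresholds $s>\mez+\frac d2$ and $s>1+\frac d2$ that your final bookkeeping records.
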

\begin{proof}
To prove~$\ref{itt.1}$) we write
\begin{equation*}
\Vert [\lDx{\sigma}, V] u \Vert_{L^2} \leq A+B,\qquad A = \Vert [\lDx{\sigma}, T_V] u \Vert_{L^2},
\quad B = \Vert [\lDx{\sigma}, V-T_V] u \Vert_{L^2}.
\end{equation*}
By \eqref{esti:quant2} we have 
$A \leq K  \Vert V \Vert_{W^{1, \infty}}\Vert u \Vert_{H^{\sigma- 1}}$. 
On the other hand one can write
$$
B \leq 
\Vert  \lDx{\sigma}(V-T_V) u \Vert_{L^2} + \Vert (V-T_V)\lDx{\sigma} u \Vert_{L^2}
= B_1 + B_2.
$$
We use Proposition \ref{lemPa} two times. 
To estimate~$B_1$ we take~$\gamma = \sigma, r = s, \mu = \sigma - \mez.$ 
To estimate~$B_2$ we take~$\gamma =0, r = s, \mu = -\mez~$ and we obtain,
$$
B \leq K \Vert V \Vert_{H^s}  \Vert u \Vert_{H^{\sigma- \mez}}.
$$

To prove~$\ref{itt.2}$),  to estimate~$B_1$ (resp.~$B_2$) we use again 
Proposition \ref{lemPa} 
with $\gamma = \sigma, r = s, \mu = \sigma - 1~$ (resp.~$\gamma = 0, r =s, \mu = -1)$. 
\end{proof}

We shall need well-known estimates on the solutions 
of transport equations.
\begin {prop}\label{estclass}
Let~$I=[0,T]$, $s> 1 + \frac d 2$ and consider the Cauchy problem
\begin{equation}\label{transport}
\left\{
\begin{aligned}
&\partial_t u + V\cdot \nabla u =f, \quad t \in I,\\
&u\arrowvert_{t=0} = u_0.
\end{aligned}
\right.
\end{equation}
We have the following estimates
\begin{equation}\label{eq.i}  
\Vert u(t) \Vert_{L^\infty({\mathbf{R}}^d)} 
\leq \Vert u_0 \Vert_{L^\infty({\mathbf{R}}^d)} 
+ \int_0^t \Vert f(\sigma,\cdot) \Vert_{L^\infty({\mathbf{R}}^d)}d\sigma.
\end{equation}
There exists a non decreasing function  $\mathcal{F}:\xR^+ \to \xR^+$ such that
\begin{equation}
\Vert u(t) \Vert_{L^2({\mathbf{R}}^d)} 
\leq
\mathcal{F}\big(\Vert V \Vert_{L^1(I; W^{1, \infty}({\mathbf{R}}^d))}\big)
\big(\Vert u_0 \Vert_{L^2({\mathbf{R}}^d)} + \int_0^t \Vert f(t',\cdot)\Vert_{L^2({\mathbf{R}}^d)}\,
dt' \big).\label{eq.ii}
\end{equation}
and for any $\sigma\in [0,s]$ 
there exists a non decreasing function~$\mathcal{F}:\xR^+ \to \xR^+$ 
such that 
\begin{equation}\label{eq.iii}
\Vert u(t) \Vert_{H^{\sigma}({\mathbf{R}}^d)}  \\
\leq \mathcal{F} \bigl( \Vert V\Vert_{L^1(I; H^s({\mathbf{R}}^d))}\bigr)
\bigl(\Vert u_0 \Vert_{H^{\sigma}({\mathbf{R}}^d)} 
+ \int_0^t \Vert f(t',\cdot) \Vert_{H^{\sigma}({\mathbf{R}}^d)}\,dt' \big)  .
\end{equation}
\end{prop}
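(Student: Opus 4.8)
The plan is to establish the three estimates in increasing order of difficulty, using the standard energy method for transport equations adapted to the available function spaces. For \eqref{eq.i}, I would integrate the equation along the characteristics of the vector field $V$: since $s>1+d/2$, $V$ is at least $C^1$ in $x$, so the flow $\chi(t,\cdot)$ generated by $V$ is well-defined, and $u(t,\chi(t,x)) = u_0(x) + \int_0^t f(\sigma,\chi(\sigma,x))\,d\sigma$; taking the $L^\infty$-norm in $x$ (and using that $\chi(t,\cdot)$ is a bijection of $\xR^d$) gives the result directly. For \eqref{eq.ii}, I would multiply the equation by $u$, integrate over $\xR^d$, and use $\int (V\cdot\nabla u)\,u\,dx = -\frac12\int (\cnx V)\,u^2\,dx$ to get $\frac{d}{dt}\Vert u\Vert_{L^2}^2 \le \Vert \cnx V\Vert_{L^\infty}\Vert u\Vert_{L^2}^2 + 2\Vert f\Vert_{L^2}\Vert u\Vert_{L^2}$; Gr\"onwall's lemma then yields \eqref{eq.ii} with $\mathcal F(r) = e^{Cr}$.

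The main work is \eqref{eq.iii}. Here I would commute $\lDx{\sigma}$ with the equation, setting $u_\sigma \defn \lDx{\sigma} u$, which satisfies
$$
\partial_t u_\sigma + V\cdot\nabla u_\sigma = \lDx{\sigma} f + [\,V\cdot\nabla,\lDx{\sigma}\,] u.
$$
Applying the $L^2$ estimate \eqref{eq.ii} to $u_\sigma$ reduces matters to bounding the commutator term in $L^1_t L^2_x$. I would write $[\,V\cdot\nabla,\lDx{\sigma}\,]u = \sum_j [\,V_j,\lDx{\sigma}\,]\partial_j u$ and apply Proposition \ref{comut}: for $\sigma\le s$ with $s>1+d/2$, part \ref{itt.2}) gives $\Vert [\lDx{\sigma}, V_j]\partial_j u\Vert_{L^2} \le K\Vert V_j\Vert_{H^s}\Vert \partial_j u\Vert_{H^{\sigma-1}} \le K\Vert V\Vert_{H^s}\Vert u\Vert_{H^\sigma}$. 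Thus the commutator is controlled by $\Vert V\Vert_{H^s}\Vert u_\sigma\Vert_{L^2}$ (up to lower-order terms absorbed by interpolation), and feeding this into \eqref{eq.ii} and invoking Gr\"onwall once more produces \eqref{eq.iii} with $\mathcal F$ depending on $\Vert V\Vert_{L^1(I;H^s)}$. A minor point requiring care is the case $\sigma=0$, where the commutator vanishes and one falls back on \eqref{eq.ii} directly, and the endpoint structure of the interpolation when $0<\sigma<s$; these are routine.

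The only genuine obstacle is ensuring that the commutator estimate of Proposition \ref{comut} applies with exactly the right indices — in particular that $\Vert u\Vert_{H^{\sigma-1}}$ on the right-hand side can be upgraded to $\Vert u_\sigma\Vert_{L^2} = \Vert u\Vert_{H^\sigma}$ so that the Gr\"onwall argument closes without losing derivatives. Since $\Vert u\Vert_{H^{\sigma-1}}\le \Vert u\Vert_{H^\sigma}$ this is immediate, so in fact there is no real difficulty; the proposition is a standard consequence of the commutator bounds already proved and the classical energy method, and I would present it as such, referring to \cite{Chemin} or \cite{BCD} for the details of the characteristic-flow argument in \eqref{eq.i}.
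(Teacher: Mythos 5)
Your argument is correct and is precisely the classical proof the paper has in mind: Proposition~\ref{estclass} is stated there without proof as a collection of well-known estimates, and your route — integration along the characteristics of $V$ for \eqref{eq.i}, the energy identity $\int (V\cdot\nabla u)u\,dx=-\frac12\int(\cnx V)u^2\,dx$ plus Gr\"onwall for \eqref{eq.ii}, and commutation of $\lDx{\sigma}$ with the equation controlled by Proposition~\ref{comut}~\ref{itt.2}) for \eqref{eq.iii} — is the standard one (cf.\ \cite{BCD,Chemin}). The only detail worth making explicit is that the Gr\"onwall coefficient coming from applying \eqref{eq.ii} to $u_\sigma$ involves $\Vert V(t)\Vert_{W^{1,\infty}}$, which is absorbed into $\Vert V(t)\Vert_{H^{s}}$ by the Sobolev embedding $H^{s}\subset W^{1,\infty}$ valid since $s>1+\frac d2$, so the final bound indeed depends only on $\Vert V\Vert_{L^1(I;H^s)}$ as stated.
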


\subsection{Commutation with a vector field}\label{S:Dt} 

We prove in this paragraph 
a commutator estimate between a paradifferential operator~$T_p$ and the 
convective derivative~$\partial_t +V\cdot \nabla$. Inspired by Chemin~\cite{Chemin88b} and Alinhac~\cite{Alinhac88}, 
we prove an estimate which depends on estimates  on~$\partial_t p+V\cdot \nabla p$ and not on~$\nabla_{t,x}p$. 

When~$a$ and~$u$ are
symbols and functions depending on~$t\in I$, we still denote by~$T_a u$ the spatial
paradifferential operator (or paraproduct) such that for all~$t\in I$, 
$(T_a u)(t)=T_{a(t)}u(t)$. Given a symbol~$a=a(t;x,\xi)$ depending on time, we use the notation
$$
\mathcal{M}^m_0(a)\defn \sup_{t\in [0,T]} \sup_{\la\alpha\ra\le   2(d+2)+ \vert \rho \vert}\sup_{\la\xi\ra \ge 1/2~}
\lA (1+\la\xi\ra)^{\la\alpha\ra-m}\partial_\xi^\alpha a(t;\cdot,\xi)\rA_{L^{\infty}({\mathbf{R}}^d)}.
$$
Given a scalar symbol~$p=p(t,x,\xi)$ of order~$m$, 
it follows directly from the symbolic calculus rules for paradifferential operators 
(see~\eqref{esti:quant1} and \eqref{esti:quant2}) that,
\begin{equation*}
\lA \bigl[ T_p , \partial_t +T_V \cdot \nabla\bigr] u\rA_{H^\mu}\le K  \left\{\mathcal{M}^m_0 (\partial_t p)+
\mathcal{M}^m_0 (\nabla p)\lA V\rA_{W^{1,\infty}}\right\}  \lA u\rA_{H^{\mu+m}}.
\end{equation*}
A technical key point in our analysis is that one can replace this estimate by a tame 
estimate which does not involve the first order derivatives of~$p$, but instead 
$\partial_t p+V\cdot \nabla p$.

\begin{lemm}\label{lemm:Dt}
Let~$V\in C^0([0,T];C^{1+\eps}_*({\mathbf{R}}^d))$ for some~$\eps>0$ and 
consider a symbol~$p=p(t,x,\xi)$ which is homogeneous in~$\xi$ of order~$m$. 
Then there exists  $K>0$ (independent of~$p,V$) such that for any~$t \in [0,T]$ 
and any~$u\in C^0([0,T];H^{m}({\mathbf{R}}^d))$,
\begin{multline}\label{comm:Dt}
\lA \bigl[ T_p , \partial_t +T_V \cdot \nabla\bigr] u(t)\rA_{L^2({\mathbf{R}}^d)}\\
\le K  \left\{\mathcal{M}^m_0 (p)\lA V(t)\rA_{C^{1+\eps}_*}
+\mathcal{M}^m_0 (\partial_t p+V\cdot \nabla p)\right\}\lA u(t)\rA_{H^{m}({\mathbf{R}}^d)}.
\end{multline}
\end{lemm}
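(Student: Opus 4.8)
The plan is to decompose the commutator into pieces according to the structure $[T_p, \partial_t + T_V\cdot\nabla] = [T_p,\partial_t] + [T_p, T_V\cdot\nabla]$, and then to exploit the fundamental identity that the convective derivative $\partial_t + V\cdot\nabla$ acting on a symbol is \emph{scalar-invariant}: if $q \defn \partial_t p + V\cdot\nabla p$, then $q$ is again a symbol homogeneous of order $m$ with $\mathcal{M}^m_0(q)$ controlled by the right-hand side of \eqref{comm:Dt}. The first step is therefore to write $[T_p,\partial_t]u = T_{\partial_t p}u + (\text{terms where }\partial_t\text{ hits }u)$; more precisely, since $T_p(\partial_t u)$ and $(\partial_t T_p)u = T_{\partial_t p}u$ appear, the genuine commutator contributes exactly $-T_{\partial_t p}u$, and I want to trade $\partial_t p$ for $\partial_t p + V\cdot\nabla p$ at the cost of a term involving $V\cdot\nabla p$, which will be absorbed into the $T_V\cdot\nabla$ part.

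The core of the argument is the commutator with the paraproduct transport term. I would write
$$
\bigl[T_p, T_V\cdot\nabla\bigr]u = T_p\bigl(T_V\cdot\nabla u\bigr) - T_V\cdot\nabla\bigl(T_p u\bigr),
$$
and insert the paralinearization of $V\cdot\nabla p$. The key algebraic observation is that the symbol of $\partial_t + T_V\cdot\nabla$ commuted against $T_p$ produces, to leading order, the paradifferential operator with symbol $\{iV\cdot\xi, p\} = V\cdot\nabla_x p$ (the $\partial_\xi$ part of the Poisson bracket vanishes because $iV\cdot\xi$ is linear in $\xi$ with $V$ independent of $\xi$) — but this is precisely cancelled by the $V\cdot\nabla p$ term we carried over from $[T_p,\partial_t]$. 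Concretely, one shows
$$
\bigl[T_p, \partial_t + T_V\cdot\nabla\bigr]u = -T_{\partial_t p + V\cdot\nabla p}u + E u,
$$
where $E$ collects (a) the error in $T_p T_V - T_{pV}$-type compositions, estimated by \eqref{esti:quant2} with $\rho = \eps$ (this is where $V\in C^{1+\eps}_*$ enters, giving a gain of $\eps$ derivatives so the remainder maps $H^m\to L^2$), (b) the difference $R(V,\nabla u)$-type remainders from replacing $T_V\cdot\nabla$ by $V\cdot\nabla$ inside, and (c) the commutator $[T_p, T_V]\cdot\nabla u$ handled again by symbolic calculus. Each piece is bounded by $K\,\mathcal{M}^m_0(p)\lA V\rA_{C^{1+\eps}_*}\lA u\rA_{H^m}$ using Theorem~\ref{theo:sc0}(i)–(ii), Proposition~\ref{prop.niSbis}, and the paraproduct estimates \eqref{niS}, \eqref{Bony3}. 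The leading term $-T_{\partial_t p+V\cdot\nabla p}u = -T_q u$ is then bounded directly by \eqref{esti:quant1}: since $q\in\Gamma^m_0$ with $M^m_0(q) = \mathcal{M}^m_0(\partial_t p + V\cdot\nabla p)$, we get $\lA T_q u\rA_{L^2}\le K\,\mathcal{M}^m_0(\partial_t p + V\cdot\nabla p)\lA u\rA_{H^m}$.

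The main obstacle is bookkeeping the regularity budget so that every error term genuinely lands in $L^2$ with only $\mathcal{M}^m_0(p)$ — not $\mathcal{M}^m_0(\nabla p)$ — on the right. The danger is that naive symbolic calculus for $[T_p, T_V\cdot\nabla]$ throws up a term $T_{\nabla_x p\cdot\text{something}}$, i.e. exactly the forbidden $\nabla p$. The resolution is that this term is not an error but \emph{is} the main term $-T_{V\cdot\nabla p}u$, which combines with $-T_{\partial_t p}u$ into $-T_q u$; so the proof must be organized to produce this combination \emph{before} estimating, rather than estimating term by term. A secondary subtlety is that $V$ is only $C^{1+\eps}_*$, not Lipschitz-plus-$H^s$, so one must use the second estimate in \eqref{esti:quant2} (the one with $M^m_\rho$ on $p$ and $M^0_0$ on the paraproduct symbol, i.e. just $\lA V\rA_{C^\eps_*}$ for the $\nabla V$ factor) and the Zygmund-based composition bound of Proposition~\ref{prop.niSbis}, taking $\rho = \eps$ throughout; the gain of $\eps > 0$ derivatives is exactly what makes the remainders bounded on $L^2$. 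I would also need to note that $p$ homogeneous of order $m$ means $M^m_\rho(p)\lesssim \mathcal{M}^m_0(p)$ after the natural low-frequency truncation implicit in $T_p$, so no hidden dependence on higher $x$-regularity of $p$ sneaks in.
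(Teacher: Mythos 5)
There is a genuine gap, and it sits exactly at the point you flag as "the main obstacle". Your plan is to produce the recombined term $-T_{\partial_t p+V\cdot\nabla p}u$ by symbolic calculus applied to $[T_p,T_V\cdot\nabla]$, and then to estimate the error $E$ by Theorem~\ref{theo:sc0}(i)--(ii), Proposition~\ref{prop.niSbis} and the paraproduct bounds. But every one of those tools gains order only through H\"older regularity \emph{in $x$} of the symbols involved: the composition estimate \eqref{esti:quant2} produces the term $M^{m}_{\rho}(p)$ (with $\rho=\eps$ or $\rho=1$), and extracting the first-order (Poisson-bracket) term $V\cdot\nabla_x p$ from $T_{iV\cdot\xi}T_p$ with a controlled remainder would require $x$-regularity of $p$ strictly greater than $1$. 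The hypotheses of the lemma give you only $\mathcal{M}^m_0(p)$ and $\mathcal{M}^m_0(\partial_t p+V\cdot\nabla p)$, i.e.\ $p$ is merely bounded in $x$; your closing remark that "$M^m_\rho(p)\lesssim \mathcal{M}^m_0(p)$ because $p$ is homogeneous in $\xi$" is false, since homogeneity in $\xi$ carries no $x$-regularity whatsoever, and this is precisely the hidden regularity your bookkeeping needs but does not have. (Also, the parenthetical claim that the $\partial_\xi$-part of the Poisson bracket vanishes is incorrect: $\partial_x(iV\cdot\xi)\cdot\partial_\xi p\neq 0$ in general; that term would be harmless, but only if the second-order expansion were justified, which it is not at this regularity.) So the identity $[T_p,\partial_t+T_V\cdot\nabla]u=-T_{\partial_t p+V\cdot\nabla p}u+Eu$ with $E$ controlled by $\mathcal{M}^m_0(p)\lA V\rA_{C^{1+\eps}_*}$ is exactly the content of the lemma, and your argument assumes it rather than proves it.

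For comparison, the paper cannot invoke soft symbolic calculus either and instead proves the cancellation by hand: for paraproducts ($m=0$, $p=p(t,x)$) it performs a dyadic Littlewood--Paley analysis in which the term $T_{V\cdot\nabla p}u$ is exhibited from $\sum_j S_{j-3}(V\cdot\nabla p)\Delta_j u$ and all other blocks are rearranged so that derivatives fall on $V$ (integration by parts inside the blocks), the factor $2^{-j\eps}$ from $V\in C^{1+\eps}_*$ making the sums converge; then general symbols are reduced to this case by writing $p=\sum_\nu a_\nu(t,x)h_\nu(\xi)$ in spherical harmonics, using the crucial fact that $(\partial_t+V\cdot\nabla)a_\nu$ is controlled by $\mathcal{M}^m_0(\partial_t p+V\cdot\nabla p)$, a sharp commutator estimate (Lemma~\ref{commutateur}) for $[h_\nu(D)\psi(D),T_V]$ with explicit $H^{d+2}(\xS^{d-1})$ dependence, and Weyl asymptotics to sum the series. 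These ingredients (the dyadic cancellation argument and the spherical-harmonics reduction) are what your proposal is missing; without them, or some substitute that gains a derivative using only $\lA V\rA_{C^{1+\eps}_*}$ and the $L^\infty_x$ bounds on $p$, the estimate of $E$ does not go through.
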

\begin{proof}
Set~$I=[0,T]$ and denote by~$\mR$ the set of 
continuous operators~$R(t)$ from $H^m({\mathbf{R}}^d)$ to 
$L^2({\mathbf{R}}^d)$ with norm satisfying 
$$
\lA R(t) \rA_{\mathcal{L} (H^{m} (\xR^d), L^2({\mathbf{R}}^d))}
\leq K  \left\{\mathcal{M}^m_0 (p)\lA V(t)\rA_{C^{1+\eps}_*}
+\mathcal{M}^m_0 (\partial_t p+V\cdot \nabla p)  \right\}.
$$
We begin by noticing that it is sufficient to prove that 
\begin{equation}\label{6.13}
\bigl(\partial_t +V \cdot \nabla\bigr) T_p =T_p \bigl(\partial_t +T_V \cdot \nabla\bigr) 
+R,\quad R\in \mR.
\end{equation}
Indeed, by Theorem~$5.2.9$ in \cite{MePise}, we have (for fixed $t$)
$$
\lA ( V-T_V) \cdot \nabla T_p u\rA_{L^2}\les \lA V\rA_{W^{1,\infty}} \lA T_p u\rA_{L^2}\les  \lA V\rA_{W^{1,\infty}} \mathcal{M}_0^m(p) \lA u\rA_{H^m}
$$
  by using the operator norm estimate \eqref{esti:quant1}.
This implies that~$\bigl( V-T_V\bigr) \cdot \nabla T_p \in \mR$.

We split the proof of \eqref{6.13} into three steps. 
By decomposing~$p$ into a sum of spherical harmonics, we shall reduce the analysis 
to establishing \eqref{6.13} for the special case when~$T_p$ is a paraproduct. 
In the first step we prove \eqref{6.13} for~$m=0$ and~$p=p(t,x)$. In the second step we 
prove \eqref{6.13} for~$p=a(t,x)h(\xi)$ where~$h$ is homogeneous in~$\xi$ of order~$m$. 
Then we consider the general case.
\subsubsection*{Step 1: Paraproduct,~$m=0$,~$p=p(t,x)$.} In this case 
 $ \mathcal{M}^0_0 (p) = \| p\|_{L^\infty}.$  
We have
\begin{equation}\label{6.14}
\left\{
\begin{aligned}
&\partial_t T_p u= T_{\partial_t p} u +T_p \partial_t u,\\[0.5ex]
&V\cdot \nabla T_p u =V\cdot T_{\nabla p} u + V T_p \cdot \nabla u=:A+B.
\end{aligned}
\right.
\end{equation}

Decompose~$V=S_{j-3}(V)+S^{j-3}(V)$, with
$$
S_{j-3}(V)=\sum_{k\le j-3} \Delta_k V, \qquad S^{j-3}(V)=\sum_{k\ge j-2} \Delta_k V.
$$
With the choice of cut-off function $\chi$ made in \S\ref{s2}, given two functions 
$u$ and $a$ we have $T_a u=\sum_j S_{j-3}(a)\Delta_j u$ and hence
\begin{equation}\label{6.15}
\left\{
\begin{aligned}
&A=A_1+A_2,\\
&A_1\defn \sum_j S_{j-3}(V)S_{j-3}(\nabla p)\Delta_j u,\qquad A_2\defn \sum_j S^{j-3}(V)S_{j-3}(\nabla p)\Delta_j u.
\end{aligned}
\right.
\end{equation}
Let us consider the term~$A_2$. Since 
$$
\lA S^{j-3}(V)\rA_{L^\infty}\le \sum_{k\ge j-2} \lA \Delta_k V\rA_{L^\infty}
\les \sum_{k\ge j-2} 2^{-k(1+\eps)} \VCe \les 2^{-j(1+\eps)} \VCe
$$
and~$\lA S_{j-3}(\nabla p)\rA_{L^\infty}\les 2^j \pO$, 
we obtain
\begin{equation}\label{6.16}
\lA A_2 \rA_{L^2} \les \sum_j 2^{-j\eps} \lA V\rA_{C^{1+\eps}_*} 
\lA p\rA_{L^\infty} \lA u\rA_{L^2} \les \mathcal{M}^0_0 (p)\lA V\rA_{C^{1+\eps}_*} \lA u\rA_{L^2}.
\end{equation}

We now estimate~$A_1= A_{11}+A_{12}$, with 
\begin{equation}\label{6.17}
\left\{
\begin{aligned}
&A_{11}\defn \sum_j S_{j-3}\bigl\{ S_{j-3}(V) \cdot \nabla p\bigr\} \Delta_j u,\\
&A_{12}\defn \sum_j \bigl\{\bigl[ S^{j-3}(V),S_{j-3}\bigr] \nabla p \bigr\}\Delta_j u.
\end{aligned}
\right.
\end{equation}
Write~$S_{j-3}(V)=V-S^{j-3}(V)$, to obtain
\begin{equation*}
A_{11} = \sum_j S_{j-3}\bigl( V \cdot \nabla p\bigr) \Delta_j u 
- \sum_j S_{j-3}\bigl\{ S^{j-3}(V) \cdot \nabla p\bigr\} \Delta_j u  =T_{V\cdot \nabla p} u +I+II
\end{equation*}
where 
$$
I=-\sum_j \bigl(  \nabla \cdot S_{j-3}\bigl\{ S^{j-3}(V)  p\bigr\}\bigr) \Delta_j u,\quad 
II=\sum_j S_{j-3}\bigl\{ S^{j-3}(\nabla \cdot V) p\bigr\} \Delta_j u.
$$
Then
\begin{equation*}
\begin{aligned}
\lA I\rA_{L^2} &\les \sum_j 2^j \lA S^{j-3}(V) p\rA_{L^\infty}\lA \Delta_j u\rA_{L^2}\\
 &\les \sum_j 2^j 2^{-j(1+\eps)} \VCe \pO \lA u\rA_{L^2} 
 \les \VCe \pO \lA u\rA_{L^2}.
 \end{aligned}
\end{equation*}
Moreover, 
\begin{equation*}
\lA II\rA_{L^2} \les \sum_j \lA S^{j-3}(\nabla V)\rA_{L^\infty} \pO \lA \Delta_j u\rA_{L^2} 
 \les \VCe \pO \lA u\rA_{L^2}.
\end{equation*}
Therefore
\begin{equation}\label{6.18}
A_{11}=T_{V\cdot \nabla p} u +R u,\quad R\in \mR.
\end{equation}
 In order to estimate~$A_{12}$ we note that one can write
 $$ \bigl[ S^{j-3}(V),S_{j-3}\bigr] \nabla p =  \bigl[ S^{j-3}(V),S_{j-3} \nabla \bigr]  p + S_{j-3}\big( S^{j-3}(\nabla V)p\big).$$
 Since $S_{j-3}\partial_k = 2^j\psi_1(2^{j-3}D)$ where $\psi_1 \in C_0^\infty(\xR^d)$ we have
 $$ \Vert  \bigl[ S^{j-3}(V),S_{j-3} \nabla \bigr]  p \Vert_{L^\infty} \leq C 2^{-j\eps}\Vert   V \Vert_{C^{1+\eps}_*} \Vert p \Vert_{L^\infty}.$$
 Moreover we have
 $$ \Vert S_{j-3}\big( S^{j-3}(\nabla V)p\big)\Vert_{L^\infty} \leq C 2^{-j\eps}\Vert  V \Vert_{ C^{1+\eps}_*}  \Vert p \Vert_{L^\infty}.$$

 It follows that~$A_{12}=R u$ with~$R\in \mR$. 
Consequently, we deduce from \eqref{6.17} and \eqref{6.18} that $A_{1}=T_{V\cdot \nabla p} u +R u$ for some $R$ in $\mR$. 
It thus follows from \eqref{6.15} and \eqref{6.16} that
$A=T_{V\cdot \nabla p} u +R u$ with $R\in \mR$.

We estimate now the term~$B$ introduced in \eqref{6.14}. We split this term as follows:
\begin{align*}
B&=V \cdot (T_p \nabla u) = V\cdot  \sum_j  S_{j-3}(p)\nabla \Delta_j u \\ 
&= \sum_j S_{j-3}(V)S_{j-3}( p)\Delta_j \nabla u+\sum_j S^{j-3}(V)S_{j-3}(p)\Delta_j \nabla u
=: B_1+B_2.
\end{align*}
We have
\begin{align*}
\lA B_2\rA_{L^2} &\le \sum_j \lA S^{j-3}(V)\rA_{L^\infty}  \lA S_{j-3}( p)\rA_{L^\infty}  \lA \Delta_j \nabla u\rA_{L^2}\\
&\les \sum_{j} 2^{-j(1+\eps)} \VCe 2^j \pO \lA u\rA_{L^2}.
\end{align*}
and hence~$B_2=R u$ with~$R\in \mR$. To deal with the term~$B_1$, let us introduce
\begin{equation}\label{6.25}
C\defn T_p T_V \cdot \nabla u =\sum_{j}S_{j-3}(p)\Delta_j \sum_k S_{k-3}(V)\cdot \nabla \Delta_k u .
\end{equation}
Since the spectrum of~$S_{k-3}(V)\cdot \nabla \Delta_k u$ is contained in  
$\{ (3/8) 2^k\le \la \xi\ra\le (2+1/8)2^k\}$, the term 
$\Delta_j (S_{k-3}(V)\cdot \nabla \Delta_k u)$ vanishes unless~$\la k-j\ra\le 3$. On the other hand, 
for~$\la k-j\ra\le 3$,~$S_{k-3}(V)-S_{j-3}(V)=\pm \sum_{\ell=-3}^{3}\Delta_{\ell+j} V$, and hence we can write 
$C$ under the form
$$
C=C_1+C_2=C_1+\sum_{j}S_{j-3}(p)\Delta_j \bigl\{ S_{j-3}(V) \cdot \sum_{\la k-j\ra\le 3}  \nabla \Delta_k u\bigr\}
$$
where~$C_1$ is given by 
$$
C_1=\sum_j  S_{j-3}(p) \Delta_j \sum_{i=1}^3 \sum_{\ell=-1}^{i-2} 
\bigl\{ \Delta_{\ell+j}(V) \nabla \Delta_{i+j}(u)-\Delta_{\ell+j-i}(V)\nabla\Delta_{j-i}(u)\bigr\},
$$
so that 
$$
\lA C_1\rA_{L^2} \les \sum_j \pO 2^{-j(1+\eps)}\VCe 2^j \lA u\rA_{L^2},
$$
which implies that~$C_1=Ru$ with~$R\in \mR$. To estimate~$C_2$, as before we write 
$C_2=C_{21}+C_{22}$ where
\begin{align*}
C_{21}&\defn \sum_{j}S_{j-3}(p)\bigl[ \Delta_j ,  S_{j-3}(V)\bigr]  \cdot \sum_{\la k-j\ra\le 3}  \nabla \Delta_k u, ~ \\
C_{22}&\defn \sum_{j}S_{j-3}(p)S_{j-3}(V) \cdot   \Delta_j \sum_{\la k-j\ra\le 3}  \nabla \Delta_k u,
\end{align*}
where (using frequency localization in dyadic annuli and Plancherel formula) 
$$
\lA C_{21}\rA_{L^2}^2 
\les \sum_j \pO^2 2^{-2j}\lA V\rA_{W^{1,\infty}}^2 2^{2j} \sum_{\la k-j\ra\le 3}\lA \Delta_k u\rA_{L^2}^2
\les \pO^2 \VCe^2 \lA u\rA^2_{L^2}.
$$
On the other hand, since~$\Delta_j \sum_{\la k-j\ra\le 3}\Delta_k =\Delta_j$, we have 
$$
C_{22}= \sum_{j}S_{j-3}(V)S_{j-3}(p)\nabla \Delta_j u= B_1.
$$
We thus end up with
\begin{equation}\label{6.30}
B=T_p T_V \cdot \nabla u +R u,\quad R\in \mR.
\end{equation}

It follows from \eqref{6.14} and \eqref{6.30} that
\begin{equation}\label{6.31}
(\partial_t +V\cdot \nabla )T_p u=T_p (\partial_t+T_V \cdot \nabla) u +T_{\partial_t p+V\cdot \nabla p} u+R u,\quad R\in \mR.
\end{equation}
The symbolic calculus shows that~$T_{\partial_t p+V\cdot \nabla p}\in \mR$, 
which proves \eqref{6.13} and concludes the proof of the first step.
\subsubsection*{Step 2 : Higher order paraproducts.} 
We now assume that~$p(t,x,\xi)=a(t,x)h(\xi)$ where~$h(\xi)=\la \xi\ra^m \widetilde{h} (\frac{\xi}{\vert \xi \vert})$ with 
$\widetilde{h} \in C^\infty(\xS^{d-1})$. Then, directly from the definition~\eqref{eq.para}, we have 
$T_p=T_a \psi(D_x) h(D_x)$ where~$\psi$ satisfies \eqref{cond.psi}. We have
$$
\bigl[ T_p , \partial_t +T_V \cdot \nabla\bigr] = \bigl[ T_a , \partial_t +T_V \cdot \nabla\bigr] \psi(D_x) h(D_x)+ T_a \bigl[\psi(D_x) h(D_x) ,  T_V\bigr] \cdot \nabla.
$$
We claim that the norm from~$H^m$ to~$L^2$ of the operator in the left hand side is bounded by
\begin{equation}\label{est:comm}
C \big( \|a\|_{L^\infty} \lA V\rA_{C^{1+\eps}_*} + \|\partial_t a+V\cdot \nabla a\|_{L^\infty} \big) \| \widetilde{h} \|_{H^{d+2}(\xS^{d-1})}.
\end{equation}
To obtain this claim, notice that 
the norm of the first term in the right-hand side is estimated 
by means of the previous step by 
$$
C  (\|a\|_{L^\infty} \lA V\rA_{C^{1+\eps}_*}
+\|\partial_t a+V\cdot \nabla a\|_{L^\infty})\Vert \widetilde{h} \Vert_{L^\infty(\xS^{d-1})}.
$$
To estimate the second term in the right-hand side we use a sharp version of the symbolic calculus 
estimate \eqref{esti:quant2}, see \cite[Theorem 6.1.4]{MePise}, which implies that 
the norm of the second term is bounded by $ C \|a\|_{L^\infty}\lA \nabla V\rA_{L^\infty}
  \| \widetilde{h} \|_{H^{d+2}(\xS^{d-1})}$  . 
The sharp version of symbolic result alluded to above asserts that \eqref{esti:quant2} holds 
with the semi-norm $M_\rho^m$ (cf.\ \eqref{defi:norms}) 
replaced by 
other semi-norms where the supremum on the multi-index $\alpha$ is taken for $\la \alpha\ra$ smaller 
than $d/2$ plus an explicit number. In particular we have the following lemma which suffices to 
complete the 
proof of the claim \eqref{est:comm}.

\begin{lemm}\label{commutateur}
Set  $b(\xi) = \vert \xi \vert^m  \psi(\xi) \widetilde{h}\big(\frac{\xi}{\vert \xi \vert}\big).$  
There exists $C>0$ independent of $V$ and $ \widetilde{h}$ such that  
 $$\lA [b(D), T_V]u \rA_{L^2(\xR^d)} \leq C \Vert \nabla V \Vert_{L^\infty(\xR^d)} \Vert \widetilde{h} \Vert_{H^{d+2}(\xS^{d-1})}Ê\Vert u \Vert_{H^{m-1}(\xR^d)} $$
 for every $u \in H^{m-1}(\xR^d)$.
\end{lemm}
  \subsubsection*{Step 3 : Paradifferential operators.} 
Consider an orthonormal basis $(\widetilde{h} _\nu)_{\nu\in \xN^*}$ 
of~$L^2(\xS^{d-1})$ consisting of eigenfunctions of the (self-adjoint) 
Laplace--Beltrami operator, 
$\Delta_\omega=\Delta_{\xS^{d-1}}$ on~$L^2(\xS^{d-1})$, {\em i.e.\/}~$\Delta_\omega \widetilde{h} _\nu 
=\lambda_{\nu}^2\widetilde{h}_\nu$. By the Weyl formula, we know that~$\lambda_\nu\sim c \nu^{\frac 1 d}$. 
Setting~$h_\nu(\xi)= \la \xi\ra^m \widetilde{h} _\nu (\omega)$, 
$\omega=\xi/\la\xi\ra$,~$\xi\neq 0$, we can write
$$
p(t,x,\xi)=\sum_{\nu\in\xN^*} a_\nu(t,x) h_\nu(\xi)\quad\text{where}\quad 
a_\nu(t,x)=\int_{\xS^{d-1}} p(t,x,\omega) \overline{\widetilde{h}_\nu(\omega)}\, d\omega.
$$
Since
\begin{align*}
  \lambda_\nu^{2k} a_\nu (t,x)&=\int_{\xS^{d-1}} \Delta_w^k p (t,x,\omega) \overline{\widetilde{h}_\nu(\omega)}\, d\omega,\\
\lambda_\nu^{2k}( \partial_t  +V\cdot \nabla)a_\nu (t,x)
&=\int_{\xS^{d-1}} \Delta_w^k (\partial_t  +V\cdot \nabla) p (t,x,\omega) \overline{\widetilde{h}_\nu(\omega)}\, d\omega
 \end{align*}
taking $ k = d+2 $ we deduce
\begin{equation}\label{6.33}
\begin{aligned}
\sup_{t\in I} \lA a_\nu(t, \cdot)\rA_{L^\infty} &\le C \lambda_\nu  ^{-2(d+2)} \mathcal{M}^m_0(p)\\
\sup_{t\in I} \lA ( \partial_t +V\cdot \nabla)a_\nu(t, \cdot)\rA_{L^\infty} &\le C \lambda_\nu  ^{-2(d+2)} \mathcal{M}^m_0 (\partial_t p +V\cdot \nabla  p).
 \end{aligned}
  \end{equation}
Moreover,  there exists a positive constant~$K$ such that, for all~$\nu\ge 1$, 
\begin{equation}\label{6.34}
\big\Vert \widetilde{h}_\nu\big\Vert_{H^{d+2}(\xS^{d-1})} \leq C\lambda_{\nu}^{d+2}.
\end{equation}
Now we can write
$$
\lA \left[ \partial_t+T_V\cdot\partialx , T_p\right] u\rA_{L^2}\le \sum_{\nu\in\xN^*} 
\lA \left[ \partial_t+T_V\cdot\partialx , T_{a_\nu h_\nu}\right] u\rA_{L^2}.
$$
So using  \eqref{est:comm} for every~$\nu\ge 1$ 
and the estimates \eqref{6.33}--\eqref{6.34}, we obtain \eqref{6.13}, since the sum 
$$
\sum_{\nu} \lambda_\nu^{d+2} 
\lambda_\nu^{- 2(d+2)} \sim  \sum_{\nu} \nu^{- 1- \frac{2}{ d}}
$$
is finite.
This completes the proof of the lemma.
\end{proof}

We have also a Sobolev analogue of Lemma \ref{lemm:Dt} which can be proved similarly.
\begin{lemm}\label{lemm:Dt1}
Let~$s> 1 + d/2$ and~$V\in C^0([0,T];H^s({\mathbf{R}}^d))$. 
There exists a positive constant~$K$ such that for any 
symbol~$p=p(t,x,\xi)$ which is homogeneous in~$\xi$ 
of order~$m\in \xR$ and all~$u\in C^0([0,T];H^{s+m}({\mathbf{R}}^d))$,
\begin{multline*}
\lA \bigl[ T_p , \partial_t +T_V \cdot \nabla\bigr] u (t) \rA_{H^s({\mathbf{R}}^d)}\\
\le K  \left\{\mathcal{M}^m_0 (p)\lA V(t)\rA_{H^s}
+\mathcal{M}^m_0 (\partial_t p+V\cdot \nabla p) \right\}   \lA u(t) \rA_{H^{s+m}({\mathbf{R}}^d)}.
\end{multline*}
\end{lemm}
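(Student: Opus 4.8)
The plan is to reproduce, step by step, the three-step proof of Lemma~\ref{lemm:Dt}, replacing the $L^2$ estimates by $H^s$ estimates throughout. Since $s>1+d/2$ the Sobolev embedding $H^s(\xR^d)\hookrightarrow W^{1,\infty}(\xR^d)$ lets us bound $\|V\|_{C^{1+\eps}_*}$ and $\|V\|_{W^{1,\infty}}$ by $C\|V\|_{H^s}$; in fact no extra H\"older exponent on $V$ is needed, because the $H^s$-norm of $V$ will at once play the role of the $C^{1+\eps}_*$-control of its high-frequency part and of the $\ell^2$-summability of the dyadic pieces.

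As in \eqref{6.13}, one first reduces to establishing
\begin{equation*}
(\partial_t+V\cdot\nabla)T_p=T_p(\partial_t+T_V\cdot\nabla)+R,
\end{equation*}
where now $\mR$ denotes the class of operators $R\colon H^{s+m}(\xR^d)\to H^s(\xR^d)$ with operator norm $\le K\{\mathcal{M}^m_0(p)\|V\|_{H^s}+\mathcal{M}^m_0(\partial_t p+V\cdot\nabla p)\}$. That $(V-T_V)\cdot\nabla T_p\in\mR$ follows from Proposition~\ref{lemPa} applied with $V\in H^s$ in place of $a$, with $\nabla T_p u\in H^{s-1}$ in place of $u$, and $\gamma=s$ (legitimate because $s-1>d/2$), together with $\|T_p u\|_{H^s}\lesssim\mathcal{M}^m_0(p)\|u\|_{H^{s+m}}$ from Theorem~\ref{theo:sc0}.

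The substance is Step~1, the case $m=0$, $p=p(t,x)$, so that $\mathcal{M}^0_0(p)=\|p\|_{L^\infty}$. The algebraic decompositions of the proof of Lemma~\ref{lemm:Dt} ($A=A_1+A_2$, $A_1=A_{11}+A_{12}$, $B=B_1+B_2$, $C=C_1+C_2$, and so on) together with the telescoping identity $S_{j-3}(V)=V-S^{j-3}(V)$ — which converts $\nabla p$ into $V\cdot\nabla p$ and produces the good contribution $T_{\partial_t p+V\cdot\nabla p}u$, itself in $\mR$ by Theorem~\ref{theo:sc0} — are kept verbatim; only the bounds on the individual pieces change. For each piece one reorganises the dyadic sum so that its $k$-th summand $g_k$ is spectrally supported in a ball $\{|\xi|\lesssim 2^k\}$, and applies the classical estimate $\|\sum_k g_k\|_{H^s}\lesssim\|(2^{ks}\|g_k\|_{L^2})_k\|_{\ell^2}$, valid for $s>0$. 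In the $L^2$-bound on $g_k$ one puts an $H^s$-type norm on the factor carrying the top frequency — either the coefficients $2^{ks}\|\Delta_k V\|_{L^2}$, which are square-summable in $k$, or $\|S^{j-3}(V)\|_{L^2}\lesssim 2^{-js}\|V\|_{H^s}$, or simply $\|\nabla V\|_{L^\infty}$ produced by the internal commutator estimates for $[\Delta_j,S_{j-3}(V)]$ — and $L^\infty$-norms on the remaining factors, using $\|S_{j-3}(\nabla p)\|_{L^\infty}\lesssim 2^j\|p\|_{L^\infty}$ and Bernstein's inequality $\|\Delta_j u\|_{L^\infty}\lesssim 2^{jd/2}\|\Delta_j u\|_{L^2}$. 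The hypothesis $s>1+d/2$ is used exactly here: it makes series such as $\sum_{j\ge 0}2^{j(1+\frac{d}{2})}\|\Delta_j u\|_{L^2}$ converge with bound $\lesssim\|u\|_{H^s}$, and it gives $\sum_{j\ge 0}2^{2j(1+\frac{d}{2})}\|\Delta_j u\|_{L^2}^2\le\|u\|_{H^s}^2$.

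Steps~2 and~3 then transpose without change of form. In Step~2, with $p=a(t,x)h(\xi)$ and $h(\xi)=\langle\xi\rangle^m\widetilde h(\xi/|\xi|)$, one splits $[T_p,\partial_t+T_V\cdot\nabla]$ as in the proof of Lemma~\ref{lemm:Dt} into the Step~1 commutator $[T_a,\partial_t+T_V\cdot\nabla]$ composed with the multiplier $\psi(D)h(D)$ — which satisfies $\|\psi(D)h(D)u\|_{H^s}\le\|\widetilde h\|_{L^\infty(\xS^{d-1})}\|u\|_{H^{s+m}}$ directly from the definition — and the term $T_a[\psi(D)h(D),T_V]\cdot\nabla$, for which one needs the $H^s$-analogue of Lemma~\ref{commutateur}, namely
\begin{equation*}
\|[b(D),T_V]w\|_{H^s(\xR^d)}\lesssim\|\nabla V\|_{L^\infty(\xR^d)}\,\|\widetilde h\|_{H^{d+2}(\xS^{d-1})}\,\|w\|_{H^{s+m-1}(\xR^d)},
\end{equation*}
which follows from exactly the same frequency-localised commutator bound as in the $L^2$ case, since $[b(D),T_V]$ is spectrally supported in dyadic annuli: one multiplies the $L^2$-estimate of each block by $2^{js}$ and sums. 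This yields the $H^s$-analogue of \eqref{est:comm}. Finally, in Step~3 one expands a general homogeneous symbol $p$ in spherical harmonics, $p=\sum_{\nu\in\xN^*}a_\nu(t,x)h_\nu(\xi)$, applies the Step~2 bound to each $\nu$ together with the estimates \eqref{6.33}--\eqref{6.34} (whose proofs involve only the coefficients $a_\nu$ and not $s$), and sums the convergent series $\sum_\nu\lambda_\nu^{d+2}\lambda_\nu^{-2(d+2)}\sim\sum_\nu\nu^{-1-\frac{2}{d}}<\infty$. The main obstacle is, as in Lemma~\ref{lemm:Dt}, purely combinatorial: through the long chain of decompositions of Step~1 one must keep careful track of which factor carries the $H^s$-norm so that every dyadic series converges. (A shortcut by conjugating with $\langle D_x\rangle^s$ would not work, as it would require controlling $\partial_t p$ alone rather than only $\partial_t p+V\cdot\nabla p$.)
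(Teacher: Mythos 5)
Your proposal is correct and takes essentially the same approach as the paper: the paper's own justification of Lemma~\ref{lemm:Dt1} is simply that it ``can be proved similarly'' to Lemma~\ref{lemm:Dt}, and your adaptation (Sobolev embedding $H^s\subset C^{1+\eps}_*\cap W^{1,\infty}$ for $V$, the reduction via Proposition~\ref{lemPa} with $\gamma=r=s$, $\mu=s-1$, the ball/annulus-localised dyadic summations exploiting the margin $s>1+d/2$, the $H^s$ analogue of Lemma~\ref{commutateur} by block-wise $2^{js}$-weighting, and the unchanged spherical-harmonics Step~3) is precisely the intended way to carry this out.
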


\subsection{Parabolic evolution equation}\label{s:pe}
Consider the evolution equation 
$$
\partial_z w +\la D_x \ra w=0,
$$
where~$z\in\xR$ and~$x\in {\mathbf{R}}^d$. 
By using the Fourier transform, one easily checks that
\begin{equation}\label{w-para}
\sup_{z\in [0,1]} \lA w(z)\rA_{H^r}+
\bigl(\int_0^1 \lA w(z)\rA_{H^{r+\mez}}^2\, dz\bigr)^{\mez}
\le K \lA w(0)\rA_{H^r}.
\end{equation}
The purpose of this section is to prove similar results when the constant coefficient operator~$\la D_x\ra$ is replaced by an elliptic paradifferential operator. 

Given~$I\subset \xR$,~$z_0\in I$ and a function~$\varphi=\varphi(x,z)$ defined on~${\mathbf{R}}^d\times I$, 
we denote by~$\varphi(z_0)$ the function~$x\mapsto \varphi(x,z_0)$. 
For~$I\subset \xR$ and a normed space~$E$,~$\varphi\in C^0_z(I;E)$ 
means that~$z\mapsto \varphi(z)$ is a continuous function from~$I$ to~$E$. 
Similarly, for~$1\le p\le +\infty$, 
$\varphi\in L^p_z(I;E)$ means that 
$z\mapsto\lA \varphi(z)\rA_{E}$ belongs to the Lebesgue space~$L^p(I)$. 
%We endow the spaces~$C^0_z(I;E)$ and~$L^p_z(I;E)$ with the usual norms.

In this section, when~$a$ and~$u$ are
symbols and functions depending on~$z$, we still denote by~$T_a u$ the function defined by 
$(T_a u)(z)=T_{a(z)}u(z)$ where~$z\in I$ is seen as a parameter. We denote by~$\Gamma^m_\rho({\mathbf{R}}^d\times I)$ 
the space of symbols~$a=a(z;x,\xi)$ such that 
$z\mapsto a(z;\cdot)$ is bounded from~$I$ 
into~$\Gamma^m_\rho({\mathbf{R}}^d)$ (see Definition~\ref{defiGmrho}), 
with the semi-norm
%This space is equipped with the semi-norm
\begin{equation}\label{seminorm-bis}
\mathcal{M}^m_\rho(a)=\sup_{z\in I}
\sup_{\la\alpha\ra\le 2(d+2) +\rho  ~}\sup_{\la\xi\ra \ge 1/2~}
\lA (1+\la\xi\ra)^{\la\alpha\ra-m}\partial_\xi^\alpha a(z;\cdot,\xi)\rA_{W^{\rho,\infty}({\mathbf{R}}^d)}.
\end{equation}

Given~$\mu\in\xR$ we define the spaces
\begin{equation}\label{XY}\begin{aligned}
X^\mu(I)&=C^0_z(I;H^{\mu}({\mathbf{R}}^d))\cap L^2_z(I;H^{\mu+\mez}({\mathbf{R}}^d)),\\
Y^\mu(I)&=L^1_z(I;H^{\mu}({\mathbf{R}}^d))+L^2_z(I;H^{\mu-\mez}({\mathbf{R}}^d)).
\end{aligned}
\end{equation}

\begin{prop}\label{prop:max}
Let~$r\in \xR$,~$\rho\in (0,1)$,~$J=[z_0,z_1]\subset\xR$ and let 
$p\in \Gamma^{1}_{\rho}({\mathbf{R}}^d\times J)$ 
satisfying$$
\RE p(z;x,\xi) \geq c \la\xi\ra,
$$
for some positive constant~$c$. Then for any ~$f\in Y^r(J)$ %L^2(J;H^r({\mathbf{R}}^d))$ 
and~$w_0\in H^{r}({\mathbf{R}}^d)$, there exists~$w\in X^{r}(J)$ solution of  the parabolic evolution equation
\begin{equation}\label{eqW}
\partial_z w + T_p w =f,\quad w\arrowvert_{z=z_0}=w_0,
\end{equation}
satisfying 
\begin{equation*}
\lA w \rA_{X^{r}(J)}\le K\left\{\lA w_0\rA_{H^{r}}+ \lA f\rA_{Y^{r}(J)}\right\},
\end{equation*}
for some positive constant~$K$ depending only on~$r,\rho,c$ and~$\mathcal{M}^1_\rho(p)$.
 Furthermore, this solution is unique in ~$X^s(J)$ for any~$s\in \xR$. 
 \end{prop}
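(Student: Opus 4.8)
The plan is to prove existence by a standard parabolic-smoothing/energy argument combined with the freezing-of-coefficients idea embodied in the symbolic calculus, and to prove uniqueness by a duality/backward-equation argument.

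\textbf{Existence.} First I would reduce to data $w_0=0$ by subtracting off a suitable known profile: let $v(z)=e^{-(z-z_0)\langle D_x\rangle}w_0$, which by \eqref{w-para} lies in $X^r(J)$ and solves $\partial_z v+\langle D_x\rangle v=0$; then $w-v$ should solve \eqref{eqW} with zero initial data and right-hand side $f-(T_p-\langle D_x\rangle)v\in Y^r(J)$, using that $T_p-\langle D_x\rangle$ is of order $1$ hence maps $X^r(J)$ into $L^2_z(J;H^{r-\mez})\subset Y^r(J)$. So assume $w_0=0$. Next, construct approximate solutions: regularize $p$ (e.g.\ replace $p$ by $p_\delta$ obtained by mollifying in $x$, or add $\delta\langle\xi\rangle$ to the symbol) so that $T_{p_\delta}$ is a bounded, maximally accretive-like operator on each $H^r$, and solve the resulting linear ODE in the Banach space $H^r$ by the Hille--Yosida theorem or simply by Duhamel iteration (the regularized generator is bounded on $H^r$, uniformly in $z$, once we have fixed $\delta$). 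This gives $w_\delta\in C^1_z(J;H^r)$.

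\textbf{A priori estimates.} The heart of the matter is the uniform (in $\delta$) bound
\[
\lA w_\delta\rA_{X^r(J)}\le K\bigl\{\lA w_0\rA_{H^r}+\lA f\rA_{Y^r(J)}\bigr\}.
\]
To get it, apply $\langle D_x\rangle^r$ to \eqref{eqW}, set $W=\langle D_x\rangle^r w_\delta$, and test the equation against $W$ in $L^2$. The commutator $[\langle D_x\rangle^r,T_{p_\delta}]$ is of order $r$ (it is of order $1+r-\rho<1+r$ by \eqref{esti:quant2}), so it contributes a term controlled by $\lA W\rA_{H^{1/2}}\lA W\rA_{H^{-1/2}}$, absorbable by the good term. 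The decisive point is the Gårding-type lower bound: from $\RE p\ge c\langle\xi\rangle$ one has $\RE(T_p W,W)_{L^2}\ge c'\lA W\rA_{H^{1/2}}^2-C\lA W\rA_{L^2}^2$; this is the paradifferential sharp Gårding inequality, which follows from \eqref{esti:quant2} and \eqref{esti:quant3} by writing $T_p+T_p^*=2T_{\RE p}+$(lower order) and using $2\,\RE(T_q W,W)=((T_q+T_q^*)W,W)$, with $T_{\RE p}$ essentially $\ge c\langle D_x\rangle$ modulo lower order. Then $\tfrac12\tfrac{d}{dz}\lA W\rA_{L^2}^2+c'\lA W\rA_{H^{1/2}}^2\le C\lA W\rA_{L^2}^2+|(f,W)|$, and handling $f=f_1+f_2$ with $f_1\in L^1_zH^r$ (pair directly with $W\in L^\infty_zL^2$) and $f_2\in L^2_zH^{r-1/2}$ (pair via the $H^{1/2}$ term and Young's inequality) followed by Gronwall yields both the $\sup_z\lA W\rA_{L^2}$ and the $\int_J\lA W\rA_{H^{1/2}}^2$ bounds. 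This is the step I expect to be the main obstacle: getting the Gårding lower bound with constants depending only on $c,\rho$ and $\mathcal{M}^1_\rho(p)$, and keeping all commutator/lower-order errors absorbable, requires care with the symbolic-calculus norms and with the limited regularity $\rho\in(0,1)$.

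\textbf{Passage to the limit and uniqueness.} With the uniform bound, extract a weak-$*$ limit $w\in X^r(J)$ of $(w_\delta)$; since $T_{p_\delta}\to T_p$ in the appropriate operator topology (using $M^1_0(p_\delta-p)\to0$ or an interpolation between a crude bound and convergence in a weaker norm), $w$ solves \eqref{eqW}, and continuity in $z$ with values in $H^r$ follows from the equation ($\partial_z w\in Y^r\subset L^1_zH^{r-1}$, combined with $w\in L^\infty_zH^r$, gives weak continuity, upgraded to strong continuity by the energy identity). For uniqueness in $X^s(J)$ for arbitrary $s$: if $w$ solves \eqref{eqW} with $f=0$, $w_0=0$, take any $g\in C_0^\infty(\mathring J\times\xR^d)$, solve the backward adjoint equation $-\partial_z\phi+(T_p)^*\phi=g$, $\phi(z_1)=0$, which is again a parabolic equation (now $\RE(T_p)^*\ge c\langle D_x\rangle$ modulo order $\le1-\rho$) solvable backward by the same existence argument with $\phi\in X^{-s}(J)$ say; then $\int_J(w,g)\,dz=\int_J(w,-\partial_z\phi+(T_p)^*\phi)\,dz=\int_J(\partial_z w+T_p w,\phi)\,dz+[\text{boundary terms}]=0$, the boundary terms vanishing because $w(z_0)=0$ and $\phi(z_1)=0$. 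Hence $w\equiv0$. The duality pairing $X^s$ against $X^{-s}$ is legitimate since $H^s$ and $H^{-s}$ are dual and the time integrability exponents are conjugate in the required sense.
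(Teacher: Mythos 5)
Your overall scheme — approximate solutions, a uniform energy estimate resting on a G\aa rding-type lower bound, weak passage to the limit, and a separate uniqueness argument — is essentially the route the paper takes, but two steps do not work as written. First, the regularization: mollifying $p$ in $x$, or adding $\delta\langle\xi\rangle$ to the symbol, still leaves $T_{p_\delta}$ an operator of order $1$, hence unbounded on $H^r$, so the Hille--Yosida/Duhamel argument for a bounded generator does not apply to your approximate problems. You need either a genuine frequency truncation (a Galerkin-type cut-off of the unknown) or, as the paper does in \eqref{eqWe}, the addition of the elliptic term $\eps(\text{Id}-\Delta)$, solving the approximate problem by standard second-order parabolic theory; the rest of your plan is unaffected by this fix.

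Second, the quantitative heart of the estimate is glossed over. For symbols that are only $C^\rho$ in $x$ with $\rho\in(0,1)$, G\aa rding's inequality gives $\RE\langle T_p u,u\rangle_{H^r}\ge C_1\lA u\rA_{H^{r+\mez}}^2-C_2\lA u\rA_{H^{r+\frac{1-\rho}{2}}}^2$, i.e.\ the error sits at the intermediate level $H^{r+\frac{1-\rho}{2}}$, not at the $L^2$ level for $W=\lDx{r}w$; similarly, by \eqref{esti:quant2} the commutator $[\lDx{r},T_p]$ is of order $r+1-\rho>r$, so its contribution is $\lA W\rA_{H^{1/2}}\lA W\rA_{H^{1/2-\rho}}$ rather than $\lA W\rA_{H^{1/2}}\lA W\rA_{H^{-1/2}}$. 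These intermediate-norm errors can indeed be absorbed — either pointwise in $z$ via the interpolation $\lA W\rA_{H^{(1-\rho)/2}}^2\le\lA W\rA_{H^{1/2}}^{2(1-\rho)}\lA W\rA_{L^2}^{2\rho}$ plus Young and then Gronwall, or, as the paper does after \eqref{eq.2.45}, by interpolating in the $z$-variable to an $L^p_z$ bound with $p>2$, using H\"older on intervals of small length and iterating — but some such argument must be supplied: asserting the $L^2$-error form of G\aa rding outright hides exactly the point where the limited regularity $\rho<1$ bites, which is where the paper's proof does its real work. Your duality argument for uniqueness is a legitimate alternative to the paper's (omitted) direct energy argument, provided you observe via \eqref{esti:quant3} that $(T_p)^*=T_{\overline{p}}+R$ with $R$ of order $1-\rho$, so the backward adjoint problem falls under the same existence statement after treating $R\phi$ as an absorbable perturbation.
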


\begin{proof}
Let~$r\in \xR$. Denote by~$\langle \cdot ,\cdot\rangle_{H^r}$ the scalar product in~$H^r({\mathbf{R}}^d)$ and 
chose~$F_1$ and~$F_2$ such that~$f=F_1+F_2$ with
$$
\lA F_1\rA_{L^1_z(J;H^r)}+ \lA F_2\rA_{L^2(J;H^{r-\mez})}
\leq \lA f\rA_{Y^r(J)} + \delta,\qquad \delta >0.
$$
Let us consider for~$\eps>0$ the equation 
\begin{equation}\label{eqWe}
\partial_z w_\eps + \eps ( - \Delta + \text{Id}) w_\eps+ T_p w_\eps =f,\quad w_\eps \arrowvert_{z=z_0}=w_0.
\end{equation}
Then standard methods in parabolic equations show that for any~$z_1>z_0$, this equation have a unique solution in 
$$C^0([z_0, z_1]; H^r(\xR^d)) \cap L^2((z_0, z_1); H^{r+2}(\xR^d))$$
(here we only used that~$T_p$ is a Sobolev first order operator).
To  let $\eps$ go to zero we need to establish uniform estimates 
with respect to~$\eps$.

Taking the scalar product in~$H^r$, directly from~\eqref{eqWe}, we obtain
\begin{multline*}
\frac{1}{2} \frac{d}{d z} \lA w_\eps(z)\rA_{H^r}^2 + 
\eps \langle (- \Delta + \text{Id}) w_\eps(z) ,w_\eps(z)\rangle_{H^r} 
+  \RE \langle T_{p(z)} w_\eps(z) , w_\eps(z) \rangle_{H^r}
\\  \le 
\lA F_1(z)\rA_{H^r}\lA w_\eps(z)\rA_{H^r}+
\lA F_2(z)\rA_{H^{r-\mez}}\lA w_\eps(z) \rA_{H^{r+\mez}}.
\end{multline*}
It follows from G\aa rding's inequality (see~\cite[Section 6.3.2]{MePise}) 
that there exist two constants~$C_1,C_2>0$ depending only on~$\mathcal{M}^1_\rho(p)$ such that
 for any~$u \in H^r$, 
~$$
\RE\langle T_{p(z)} u(z) , u(z) \rangle_{H^r} \ge C_1 \lA u(z)\rA_{H^{r+\mez}}^2 - C_2\lA u(z)\rA_{H^{r+\frac{ 1- \rho} 2}}^2,
$$
for each fixed~$z\in J$. Therefore, we obtain
\begin{multline*}
\mez \frac{d}{d z} \lA w_\eps(z)\rA_{H^r}^2 
+  \eps \langle (- \Delta + \text{Id}) w_\eps(z) ,w_\eps(z)\rangle_{H^r}  
+C_1 \lA w_\eps(z)\rA_{H^{r+\mez}}^2 \\
\le \lA F_1(z)\rA_{H^r}\lA w_\eps(z)\rA_{H^r}+ 
 \lA F_2(z)\rA_{H^{r-\mez}}\lA w_\eps(z) \rA_{H^{r+\mez}} + C_2 \lA w_\eps(z)\rA_{H^{r+ \frac{ 1- \rho} 2}}^2.
\end{multline*}
Integrating in~$z$ we obtain that, for all~$z\in [z_0,z_1]$,
\begin{multline*}
A(z)\defn\mez\left\{\lA w_\eps(z)\rA_{H^r}^2-\lA w_\eps(z_0)\rA_{H^r}^2\right\} 
+  \eps\int_{z_0}^z\|w_\eps(z')\|_{H^{r+1}}^2 dz'\\
  +C_1\int_{z_0}^z \lA w_\eps(z')\rA_{H^{r+\mez}}^2 dz' 
\end{multline*}
is bounded by
\begin{multline*}
B\defn\lA F_1\rA_{L^1(J;H^r)}\lA w_\eps\rA_{L^\infty(J;H^r)}+ 
\lA F_2\rA_{L^2(J;H^{r-\mez})} \lA w_\eps\rA_{L^2(J;H^{r+\mez})}\\
+C_2\lA w_\eps\rA_{L^2(J;H^{r+\frac {1- \rho} 2})}^2.
\end{multline*}
%Since
%\begin{multline}\label{ineqB}
%B\le 4 \lA F_1\rA_{L^1(J;H^r)}^2+\frac{1}{16} \lA w_\eps\rA_{L^\infty(J;H^r)}^2 \\
%+\frac{4}{C_1}\lA F_2\rA_{L^2(J;H^{r-\mez})}^2+\frac{C_1}{16}\lA w_\eps\rA_{L^2(J;H^{r+\frac 1 2})}^2
%+C_2 \lA w_\eps\rA_{L^2(J;H^{r+ \frac{ 1- \rho} 2})}^2
%\end{multline}
%and since
%\begin{equation}\label{ineqA}
%\mez \lA w_\eps\rA_{L^\infty(J;H^r)}^2-\mez \lA w_\eps(z_0)\rA_{H^r}^2
%+ C_1\lA w_\eps\rA_{L^2(J;H^{r+\frac 1 2})}^2\le 
% \sup_{z\in J}A(z)\le  B,
%\end{equation}
%one can absorb the second and fourth terms in the 
%right-hand side of \eqref{ineqB} by the left-hand side of \eqref{ineqA} 
%to obtain (notice that the constants~$C, C_1$ are uniform with respect to~$\eps>0$),
By standard arguments, it follows that
\begin{multline}\label{eq.2.45}
\lA w_\eps\rA_{L^\infty(J; H^r)}^2 +C_1\lA w_\eps\rA_{L^2(J;H^{r+\frac{ 1} 2})}^2 
\leq \lA w_0\rA_{H^r}^2\\
+ C \bigl( \lA F_1\rA_{L^1(J;H^r)}^2 +\lA F_2\rA_{L^2(J;H^{r-\mez})}^2
+ \lA w_\eps\rA_{L^2(J;H^{r+\frac{ 1- \rho} 2})}^2\bigr).
\end{multline}
Finally, to eliminate the last term 
in the right hand side of ~\eqref{eq.2.45}, one notices 
that the left hand side controls 
by interpolation~$c \lA w_\eps\rA_{L^p(J;H^{r+\frac{ 1- \rho} 2})}^2$, 
for some~$p>2$, hence by H\"older in the~$z$ variable, 
there exists~$\kappa >0$ (depending only on~$p$) such that if ~$|z_0-z_1|\leq \kappa$,  we have 
$$
C \lA w_\eps\rA_{L^2(J;H^{r+\frac{ 1- \rho} 2})}^2
\leq \frac 1 2 \bigl( \lA w_\eps\rA_{L^\infty(J; H^r)}^2 +C_1\lA w_\eps\rA_{L^2(J;H^{r+\frac{ 1} 2})}^2\bigr).
$$
We consequently obtain 
$$
\lA w_\eps\rA_{L^\infty(J; H^r)}^2 +C_1\lA w_\eps\rA_{L^2(J;H^{r+\frac{ 1} 2})}^2 %\\
\leq  2\lA w_0\rA_{H^r}^2+ C \bigl( \lA F_1\rA_{L^1(J;H^r)}^2 +\lA F_2\rA_{L^2(J;H^{r-\mez})}^2\bigr).
$$
We can now iterate the estimate between~$z_0+\kappa$ and~$z_0+ 2 \kappa$, ... 
to get rid of the assumption~$|z_1 - z_0| \leq \kappa~$ 
(and of course the constants will depend on~$z_1$). 

By using the equation, we obtain now that $(w_\eps)$ is bounded in 
$X^r(J)\cap C^1(J; H^{r-2})$. 
It follows from the Banach--Alaoglu theorem that, up to a subsequence, 
$(w_\eps)$ converges in the sense of distributions to $w\in L^\infty(J,H^r) \cap L^2(J,H^{r+\mez})$, which satisfies 
the equation $\partial_z w+T_p w =f$. Then, using the equation, we obtain 
$\partial_z w \in Y^r(J)$ which implies that 
$w\in C^0([z_0,z_1];H^r(\xR^d))$ thanks to the Lemma \ref{lem.3.11} below.
Moreover, by the Ascoli theorem, up to a subsequence, $(w_\eps)$ converges in  
$C^0([z_0,z_1]; H^{r-\mu}_{loc})$ for some $\mu>0$. Since $w_\eps\arrowvert_{z=0}=w_0$ we obtain that 
$w\arrowvert_{z=0}=w_0$, which completes the existence part . 
The proof of uniqueness follows the same steps and we omit it.
\end{proof}
We recall now the following classical   lemma (see \cite{Lions} th.3.1). 
 \begin{lemm}\label{lem.3.11}
Let $I = (-1,0)$ and $s \in \xR$. Let   $u \in L_z^2(I, H^{s+ \mez}(\xR^d))$ such that 
$\partial_z u \in L_z^2(I, H^{s-\mez}(\xR^d)).$ Then $u \in C^0([-1,0], H^{s}(\xR^d))$ 
and there exists an absolute constant $C>0$ such that
$$
\sup_{z \in [-1,0]} \| u(z, \cdot) \|_{ H^{s}({\mathbf{R}}^d))} 
\leq C \bigl(\|u\|_{L^2(I,H^{s+ \mez}(\xR^d))}+  \|\partial_z u\|_{L^2(I,H^{s- \mez}(\xR^d))} \bigr).
$$
\end{lemm}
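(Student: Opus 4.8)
The final statement to prove is Lemma \ref{lem.3.11}, a classical result attributed to Lions: if $u\in L^2_z(I,H^{s+\mez})$ with $\partial_z u\in L^2_z(I,H^{s-\mez})$ on $I=(-1,0)$, then $u\in C^0([-1,0],H^s)$ with the quantitative bound on $\sup_z\|u(z)\|_{H^s}$.

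\textbf{Plan of proof.} Since the statement is translation-invariant in the $x$-variable and the spaces are defined by Fourier multipliers, the plan is to reduce everything to a pointwise-in-$\xi$ estimate via the Fourier transform in $x$. First I would apply the Fourier transform in $x$ and work with $\widehat{u}(z,\xi)$; multiplying by $\langle\xi\rangle^{s}$ we may assume $s=0$, i.e. it suffices to prove the claim for $u\in L^2_z(I,H^{\mez})$ with $\partial_z u\in L^2_z(I,H^{-\mez})$, giving $u\in C^0([-1,0],L^2)$. The key identity is the elementary one-dimensional computation: for the scalar function $g(z)=\langle\xi\rangle^{\mez}\widehat u(z,\xi)$ (with $\xi$ frozen), one has $\frac{d}{dz}|g(z)|^2 = 2\RE\big(\langle\xi\rangle^{\mez}\widehat u(z,\xi)\cdot\langle\xi\rangle^{\mez}\overline{\partial_z\widehat u(z,\xi)}\big)$, so that $\frac{d}{dz}\big(\langle\xi\rangle|\widehat u(z,\xi)|^2\big) = 2\RE\big(\langle\xi\rangle^{\mez}\widehat u\cdot\langle\xi\rangle^{-\mez}\overline{\partial_z\widehat u}\cdot\langle\xi\rangle\big)$; more cleanly, pair the $H^{\mez}$ factor with the $H^{-\mez}$ factor so that $|\frac{d}{dz}\|u(z)\|_{H^0}^2|\le 2\|u(z)\|_{H^{\mez}}\|\partial_z u(z)\|_{H^{-\mez}}$, which by Cauchy--Schwarz in $z$ is in $L^1(I)$.

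\textbf{Key steps.} (1) Establish the above differential inequality rigorously for a.e.\ $z$; since $u$ and $\partial_z u$ are only $L^2$ in $z$, I would first mollify in $z$ (or work with a sequence of smooth functions $u_n\to u$ in the relevant norms, using density of $C^\infty_c$), derive the identity for the smooth approximants where all manipulations are classical, then pass to the limit. (2) From $\frac{d}{dz}\|u(z)\|_{L^2}^2\in L^1(I)$ conclude that $z\mapsto\|u(z)\|_{L^2}^2$ equals (a.e.) an absolutely continuous function on $[-1,0]$, hence has a continuous representative; combined with the fact that $z\mapsto u(z)$ is weakly continuous into $L^2$ (which follows from $u\in L^2_z(I,L^2)$ and $\partial_z u\in L^2_z(I,H^{-\mez})\subset L^2_z(I,\mathcal{D}')$, giving continuity into $H^{-\mez}$, hence weak continuity into $L^2$ after checking boundedness), norm-continuity plus weak continuity yields strong continuity $u\in C^0([-1,0],L^2)$. (3) For the quantitative bound: integrating the differential inequality from any $z_0$ to any $z$ gives $\|u(z)\|_{L^2}^2\le\|u(z_0)\|_{L^2}^2 + 2\int_I\|u\|_{H^{\mez}}\|\partial_z u\|_{H^{-\mez}}\,dz'$; averaging the first term over $z_0\in I$ (so that $\|u(z_0)\|_{L^2}^2\le\|u\|_{L^2(I,L^2)}^2\le\|u\|_{L^2(I,H^{\mez})}^2$, using $|I|=1$) and applying Cauchy--Schwarz to the integral term, one obtains $\sup_z\|u(z)\|_{L^2}^2\le C(\|u\|_{L^2(I,H^{\mez})}^2 + \|\partial_z u\|_{L^2(I,H^{-\mez})}^2)$; taking square roots gives the stated estimate with an absolute constant $C$. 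Undoing the reduction ($u\mapsto\langle D_x\rangle^s u$) restores general $s$.

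\textbf{Main obstacle.} The only genuinely delicate point is the rigorous justification of the differential identity at the low regularity available — namely that for $u\in L^2_z(I,H^{\mez})$ with distributional derivative $\partial_z u\in L^2_z(I,H^{-\mez})$, the real-valued function $z\mapsto\|u(z)\|_{H^s}^2$ is absolutely continuous with derivative $2\RE\langle\partial_z u(z),u(z)\rangle$, where the bracket is the $H^{s-\mez}$--$H^{s+\mez}$ duality pairing. This is a standard but slightly technical lemma (it is exactly the content of the reference to \cite{Lions}); the cleanest route is mollification in $z$: set $u_\varepsilon = \rho_\varepsilon *_z u$ (extending $u$ suitably near the endpoints, or proving the interior statement first and then handling $[-1,0]$ by the a.c.\ structure), note $u_\varepsilon\to u$ in $L^2_z(I',H^{s+\mez})$ and $\partial_z u_\varepsilon\to\partial_z u$ in $L^2_z(I',H^{s-\mez})$ on any $I'\Subset I$, apply the classical identity to the smooth $u_\varepsilon$, and pass to the limit in $L^1_z(I')$. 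Everything else is routine Fourier analysis and Cauchy--Schwarz.
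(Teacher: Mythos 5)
Your proposal is correct, and it is precisely the classical argument behind the result: the paper does not prove this lemma but cites it from Lions--Magenes (Théorème 3.1), and your proof — reduction to $s=0$ by the Fourier multiplier $\langle D_x\rangle^s$, the differential identity $\frac{d}{dz}\|u(z)\|_{H^s}^2=2\RE\langle \partial_z u(z),u(z)\rangle$ justified by mollification in $z$, averaging in $z_0$ plus Cauchy--Schwarz for the quantitative bound, and strong continuity from norm continuity combined with weak continuity — is exactly that standard proof. The only delicate point, the low-regularity justification of the identity and the behaviour at the endpoints of $[-1,0]$, is correctly identified and adequately handled by your mollification/absolute-continuity argument.
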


 \section{The Dirichlet-Neumann operator}\label{sec.3}

We shall prove some  elliptic regularity results using a paradifferential approach.

\subsection{Definition and continuity}\label{S:defiDN}
We begin by recalling  from \cite{ABZ1} the definition of the Dirichlet--Neumann
operator under 
general assumptions on the bottom. 
One of the novelties with respect to our previous work is that we clarify the regularity 
assumptions: assuming only that~$\eta \in W^{1, \infty}({\mathbf{R}}^d)$  
and~$ \psi \in H^{\mez}({\mathbf{R}}^d)$, we show how to define~$G(\eta) \psi$ and 
prove that the map 
$$\psi \in H^\mez({\mathbf{R}}^d) \mapsto G(\eta) \psi \in H^{-\mez} ( {\mathbf{R}}^d)$$ is continuous. 
Our second contribution is to 
prove that the map~$\eta \mapsto G(\eta)$ is Lipschitz (in a proper topology).

The goal is to study the boundary value problem
\begin{equation}\label{v}
\Delta_{x,y}\phi=0\text{ in }\Omega,\quad
\phi\arrowvert_{\Sigma}=f,\quad \partial_n \phi\arrowvert_{\Gamma}=0.
\end{equation}
See \S\ref{assu:domain} for the definitions of $\Omega,\Sigma,\Gamma$. 
Since we make no assumption on~$\Gamma$, the definition of~$\phi$ requires some care.
We recall here the definition of~$\phi$ as given in~\cite{ABZ1}.

\begin{nota}\label{notaD}
Denote by~$\mathscr{D}$ the space of functions~$u\in C^{\infty}(\Omega)$ such that
$\nabla_{x,y}u\in L^2(\Omega)$. We then define~$\mathscr{D}_0$ 
as the subspace of functions~$u\in \mathscr{D}$ such that~$u$ is
equal to~$0$ in a neighborhood of  the top boundary~$\Sigma$.
\end{nota}

\begin{prop}[\protect{\cite[Proposition 2.2]{ABZ1}}]\label{corog}
There exists a positive weight~$g\in L^\infty_{loc} ( \Omega)$, equal to~$1$
near the top boundary of~$\Omega$ and a constant $C>0$ 
such that for all~$u\in \mathscr{D}_0$,
\begin{equation}\label{eq.Poinc}
\int_\Omega g(x,y) |u(x,y)|^2 \,dx dy \leq  C \int_{\Omega} |\partialyx u (x,y)|^2 \,dx dy.
\end{equation}
\end{prop}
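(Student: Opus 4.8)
The plan is to construct the weight~$g$ explicitly from the geometry of~$\Omega$ and to reduce the global estimate to a one-dimensional integration in the vertical variable inside the strip~$\Omega_h(t)$ furnished by assumption~\eqref{hypt}. First I would fix the strip: by~\eqref{hypt} there is~$h>0$ with $\Omega_h \defn \{(x,y) : \eta(x)-h<y<\eta(x)\}\subset\Omega$, and I would work with a slightly thinner strip, say of height~$h/2$, whose closure is compactly contained (vertically) in~$\Omega$. On this strip, for~$u\in\mathscr{D}_0$ — which vanishes near~$\Sigma$, hence identically in a neighbourhood of~$y=\eta(x)$ — one writes, for $(x,y)\in\Omega_h$,
\[
u(x,y)=-\int_y^{\eta(x)}\partial_y u(x,t)\,dt,
\]
so that by Cauchy--Schwarz $|u(x,y)|^2 \le h\int_{\eta(x)-h}^{\eta(x)}|\partial_y u(x,t)|^2\,dt$. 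Integrating this over $y\in(\eta(x)-h,\eta(x))$ and then over $x\in\xR^d$ gives
\[
\int_{\Omega_h}|u(x,y)|^2\,dx\,dy \le h^2\int_{\Omega_h}|\partial_y u(x,y)|^2\,dx\,dy \le h^2\int_{\Omega}|\partialyx u|^2\,dx\,dy.
\]
This already proves the inequality with weight~$1$ on the part of~$\Omega$ lying inside~$\Omega_h$.

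The remaining issue is the part of~$\Omega$ \emph{below} the strip, where no Poincaré inequality can hold with a weight bounded below (the domain may be infinite in the vertical direction, or have a very rough bottom), so one can only ask for a weight~$g$ that decays there. Here I would use a simple chaining / telescoping argument: cover the lower part of~$\Omega$ by a countable family of overlapping slabs $\Omega^{(k)}$, $k\ge 1$, each of vertical extent comparable to~$h$, arranged so that $\Omega^{(k)}$ and $\Omega^{(k+1)}$ overlap in a slab of definite thickness and $\Omega^{(0)}$ is (contained in) the good strip~$\Omega_h$. On each slab one has, by the same one-dimensional argument applied between two heights inside $\Omega^{(k-1)}\cap\Omega^{(k)}$, an estimate of the form $\int_{\Omega^{(k)}}|u|^2 \le C\big(\int_{\Omega^{(k-1)}}|u|^2 + \int_{\Omega^{(k)}}|\partialyx u|^2\big)$. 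Iterating, $\int_{\Omega^{(k)}}|u|^2 \le C^k\int_{\Omega}|\partialyx u|^2$, and then one defines $g$ to equal~$1$ on~$\Omega_h$ and to equal (a smooth version of) $C^{-k}$, or more safely $2^{-k}C^{-k}$, on $\Omega^{(k)}\setminus\Omega^{(k-1)}$; summing the geometric series in~$k$ yields $\int_\Omega g|u|^2 \le C'\int_\Omega|\partialyx u|^2$, and $g$ is positive, locally bounded, and equal to~$1$ near~$\Sigma$ as required. (For the connectedness of~$\Omega$ this chaining is always possible; the only structural fact used is~\eqref{hypt}.)

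The main obstacle, and the only genuinely delicate point, is the last step: making the telescoping uniform in~$x$ when the bottom~$\Gamma$ is arbitrary. Two sub-slabs must overlap in a region of vertical thickness bounded below \emph{uniformly in~$x$}, which is why one wants the slabs defined relative to the graph~$y=\eta(x)$ (e.g. $\Omega^{(k)} = \{ \eta(x) - (k+1)\tfrac h2 < y < \eta(x) - (k-1)\tfrac h2\}\cap\Omega$) rather than at fixed heights; with $\eta\in W^{1,\infty}$ these slabs have the required uniform geometry and the one-dimensional estimate on each slab is completely elementary. I would also remark that the weight~$g$ and the constant~$C$ depend only on~$h$ and on $\|\eta\|_{W^{1,\infty}}$ (in fact only on~$h$, since the vertical slicing does not see~$\nabla\eta$), which is what is needed for the uniform-in-time variational theory later; since the bottom $\Gamma$ enters nowhere, no regularity of $\Gamma$ is used, consistent with the hypotheses of~\S\ref{assu:domain}.
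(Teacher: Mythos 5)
Your first step — the one–dimensional Poincar\'e estimate in the strip of \eqref{hypt}, using that every $u\in\mathscr{D}_0$ vanishes near $\Sigma$ — is correct, and it is also how the cited proof begins (the paper does not reproduce the argument; it refers to [ABZ1, Proposition 2.2]). The gap is in your treatment of the region below the strip. Your chaining uses horizontal slabs $\Omega^{(k)}=\{\eta(x)-(k+1)\tfrac h2<y<\eta(x)-(k-1)\tfrac h2\}\cap\Omega$ and again a vertical integration to pass from $\Omega^{(k-1)}$ to $\Omega^{(k)}$. But below the strip the only hypothesis is $\Omega=\{(x,y)\in\mathcal{O}: y<\eta(x)\}$ with $\mathcal{O}$ open, connected and otherwise arbitrary: the vertical segment joining a point of $\Omega^{(k)}$ to a point of $\Omega^{(k-1)}\cap\Omega^{(k)}$ need not stay inside $\Omega$ (it can hit $\Gamma$), and a connected component of $\Omega^{(k)}$ need not meet $\Omega^{(k-1)}$ at all (pockets reached only from below or sideways). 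Moreover the slabwise inequality $\int_{\Omega^{(k)}}|u|^2\le C\bigl(\int_{\Omega^{(k-1)}}|u|^2+\int_{\Omega^{(k)}}|\nabla_{x,y}u|^2\bigr)$ with $C=C(h)$ is false: let $\mathcal{O}$ consist of the strip together with a fixed ball contained in one slab, joined to the strip by a thin tube entering the ball from the side, and take $u$ equal to $0$ on the strip, equal to $1$ on the ball, with the transition inside the tube below the slab $k-1$; then $u\in\mathscr{D}_0$, the right-hand side (even with the full Dirichlet energy $\int_\Omega|\nabla_{x,y}u|^2$ and the iterated constant $C^k$) is arbitrarily small as the tube is thinned, while the left-hand side stays of the order of the volume of the ball. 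For the same reason your concluding remark that $g$ and $C$ depend only on $h$ cannot hold; your slab argument implicitly assumes a vertical accessibility of $\Omega$ all the way down, i.e.\ precisely the kind of structural assumption on $\Gamma$ that the proposition is designed to avoid, and this is why the statement only asserts that $g$ is positive and locally bounded rather than bounded below.

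The repair — and, in substance, the argument behind [ABZ1, Proposition 2.2] — is to chain along the connectivity of $\Omega$ rather than by depth. Cover $\Omega$ minus a neighborhood of $\Sigma$ by countably many balls $B_k\subset\Omega$; since $\Omega$ is open and connected, each $B_k$ can be joined to a ball contained in the strip by a finite chain of overlapping balls inside $\Omega$, and propagating the $L^2$ control along the chain (Poincar\'e--Wirtinger on each ball, comparison of averages on the overlaps, plus your strip estimate to start the chain) gives $\|u\|_{L^2(B_k)}^2\le C_k\|\nabla_{x,y}u\|_{L^2(\Omega)}^2$ with a constant $C_k$ depending on the chain, hence on the arbitrary geometry below the strip. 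One then sets $g=1$ near the top boundary and $g=\sum_k \epsilon_k\mathbf{1}_{B_k}$ below, with $\epsilon_k>0$ chosen so that $\sum_k\epsilon_k C_k<\infty$; this yields \eqref{eq.Poinc} with a weight that is positive and in $L^\infty_{loc}(\Omega)$, but whose decay (and the constant $C$) genuinely depends on the domain, not only on $h$.
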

\begin{defi}\label{defi:H10}
Denote by~$H^{1,0}( \Omega)$ the space of functions~$u$ on~$\Omega$
such that there exists a sequence~$(u_n)\in \mathscr{D}_0$ such that,
$$ \partialyx u_n  \rightarrow \partialyx u \text{ in } L^2( \Omega, dxdy),
\qquad u_n  \rightarrow  u \text{ in } L^2( \Omega, g(x,y)dxdy).$$
We endow the space~$H^{1,0}(\Omega)$ with the norm 
$\lA u\rA = \lA \partialyx u \rA_{L^2( \Omega)}$.
\end{defi}
Let us recall that the space~$H^{1,0}(\Omega)$ is a Hilbert space (see~\cite{ABZ1}).

We are able now   to define the Dirichlet-Neumann operator.
Let~$f \in H^\mez( {\mathbf{R}}^d)$. We first define an~$H^1$ lifting of~$f$ in~$\Omega.$ 
To do so let~$\chi_0 \in C^\infty(\xR)$ be such that~$\chi_0(z) = 1$ 
if~$z\geq - \mez$ and~$\chi_0(z) = 0$ if~$z \leq -1.$ We set
$$
\psi_1(x,z) = \chi_0(z) e^{z\langle D_x \rangle}f(x), \quad x \in {\mathbf{R}}^d, z\leq 0.
$$
By the usual property of the Poisson kernel we have
$$
\lA \nabla_{x,z} \psi_1 \rA_{L^2([-1,0]\times \xR^d)}\le C \lA f\rA_{H^{\mez}(\xR^d)}.
$$
Then we set
$$
\underline{\psi} (x,y) = \psi_1\big(x, \frac{y-\eta(x)}{h}\big),\quad (x,y)\in \Omega.
$$
This is well defined since~$\Omega \subset \{(x,y): y<\eta(x)\}$. 
Moreover since the bottom~$\Gamma$ is contained in~$\{(x,y): y<\eta(x)-h\}$, 
we see that~$\underline{\psi}$ vanishes identically near~$\Gamma.$ 

Now we have obviously~$\underline{\psi}\arrowvert_\Sigma = f$ and 
since~$\nabla \eta \in   L^\infty({\mathbf{R}}^d)$, 
an easy computation shows that~$\underline{\psi} \in H^1(\Omega)$ and
\begin{equation}\label{est:psi-H1}
\Vert \underline{\psi} \Vert_{H^1(\Omega)} 
\leq K(1+ \Vert \eta \Vert_{W^{1, \infty}})\Vert f\Vert_{H^\mez({\mathbf{R}}^d)}.
\end{equation}
 
Then the map
$$
v\mapsto - \int_{\Omega}  \partialyx  \underline{\psi} \cdot \partialyx v \, dx dy
$$
is a bounded linear form on~$H^{1,0}(\Omega)$. 
It follows from the Riesz theorem that there exists a unique
${u}\in H^{1,0}(\Omega)$ such that
\begin{equation}\label{eq.variabis}
\forall v \in H^{1,0} ( \Omega), \qquad \int_\Omega \partialyx  u \cdot \partialyx v \, dxdy
= - \int_{\Omega}  \partialyx  \underline{\psi} \cdot \partialyx v \, dx dy.
\end{equation}
Then~$u$ is the variational solution to the problem
$$
- \Deltayx u = \Deltayx \underline{\psi}
\quad\text{in }\mathcal{D}'(\Omega), \qquad u \mid_{\Sigma} =0,
\qquad \partial _n u \mid_{\Gamma} = 0,
$$
the latter condition being justified as soon as the bottom~$\Gamma$ is regular enough.

\begin{lemm}
The function~$\phi = u + \underline{\psi}$ constructed 
by this procedure is  independent on the choice
of the lifting function~$\underline{\psi}$ as long as it remains 
bounded in~$H^1(\Omega)$ and vanishes near the bottom.
\end{lemm}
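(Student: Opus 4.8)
The plan is to show that if $\underline{\psi}_1$ and $\underline{\psi}_2$ are two admissible liftings of $f$ (both bounded in $H^1(\Omega)$, vanishing near $\Gamma$, and equal to $f$ on $\Sigma$), and $u_1, u_2$ are the corresponding variational solutions of \eqref{eq.variabis}, then $u_1 + \underline{\psi}_1 = u_2 + \underline{\psi}_2$. First I would observe that the difference $w \defn \underline{\psi}_1 - \underline{\psi}_2$ lies in $H^1(\Omega)$, vanishes on $\Sigma$ (since both liftings agree with $f$ there), and vanishes near $\Gamma$; hence $w \in H^{1,0}(\Omega)$. Indeed, $w$ can be approximated by functions in $\mathscr{D}_0$: one smooths $w$ by convolution away from the top boundary and uses that $w$ already vanishes identically in a neighborhood of $\Sigma$, so the approximants can be taken to vanish near $\Sigma$ as well, and the Poincaré-type inequality \eqref{eq.Poinc} guarantees that control of $\nabla_{x,y}$ of the approximants controls their $L^2(g\,dxdy)$-norms.

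Next I would subtract the two instances of the variational identity \eqref{eq.variabis}. For every $v \in H^{1,0}(\Omega)$,
\begin{equation*}
\int_\Omega \partialyx (u_1 - u_2) \cdot \partialyx v \, dxdy = - \int_\Omega \partialyx w \cdot \partialyx v \, dxdy,
\end{equation*}
that is, $\int_\Omega \partialyx\bigl( (u_1 + \underline{\psi}_1) - (u_2 + \underline{\psi}_2) \bigr) \cdot \partialyx v \, dxdy = 0$ for all $v \in H^{1,0}(\Omega)$. Now set $\Phi \defn (u_1 + \underline{\psi}_1) - (u_2 + \underline{\psi}_2) = (u_1 - u_2) + w$. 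Since $u_1 - u_2 \in H^{1,0}(\Omega)$ by construction and $w \in H^{1,0}(\Omega)$ by the previous step, we have $\Phi \in H^{1,0}(\Omega)$, so it is a legitimate test function: taking $v = \Phi$ gives $\lA \partialyx \Phi \rA_{L^2(\Omega)}^2 = 0$. By the definition of the norm on $H^{1,0}(\Omega)$ this forces $\Phi = 0$ in $H^{1,0}(\Omega)$, hence $u_1 + \underline{\psi}_1 = u_2 + \underline{\psi}_2$ as elements of $H^{1,0}(\Omega)$ (and in particular as functions, after the identification used throughout).

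The main obstacle, and the only genuinely non-routine point, is the first step: verifying that the difference $w$ of the two liftings actually belongs to $H^{1,0}(\Omega)$, i.e.\ can be approximated in the appropriate sense by elements of $\mathscr{D}_0$. This needs the hypothesis that each $\underline{\psi}_i$ is bounded in $H^1(\Omega)$ and vanishes near the bottom, so that $w$ inherits both properties, together with the fact that $w\arrowvert_\Sigma = 0$ in the trace sense — which, combined with a cutoff sliding the support away from $\Sigma$ and a standard mollification, produces the required approximating sequence in $\mathscr{D}_0$; the Poincaré inequality of Proposition~\ref{corog} then ensures convergence also in $L^2(\Omega, g\,dxdy)$. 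Once $w \in H^{1,0}(\Omega)$ is established, the uniqueness argument via Riesz (testing against $\Phi$ itself) is immediate and exactly parallels the construction of $u$.
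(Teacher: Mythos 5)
Your proof is correct and follows essentially the same route as the paper: you show that $w=\underline{\psi}_1-\underline{\psi}_2$ belongs to $H^{1,0}(\Omega)$ by a density argument and then conclude that $\phi_1-\phi_2$ is the trivial solution of the homogeneous variational problem (the paper invokes uniqueness from the Riesz theorem; testing against $\Phi$ itself, as you do, is the same thing). One wording slip: in your first paragraph $w$ does not vanish identically in a neighborhood of $\Sigma$ — it vanishes identically only near $\Gamma$ and merely has zero trace on $\Sigma$ — but your final paragraph states the correct trace property, which is what the approximation by elements of $\mathscr{D}_0$ actually requires.
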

\begin{proof}Consider two functions constructed 
by this procedure,~$\phi_k= u_k + \underline{\psi_k}$,~$k=1,2$. 
Then, by standard density arguments, since~$\underline{\psi_1}-\underline{\psi_2}$ 
vanishes at the top boundary~$\Sigma$ and in a 
neighborhood of the bottom~$\Gamma$, there exists a 
sequence of functions~$\psi_n \in C^\infty_0( \Omega)$ 
supported in a fixed 
Lipschitz domain~$\widetilde{\Omega} \subset \Omega$ tending 
to~$\underline{\psi_1} - \underline{\psi_2}$ in~$H^1_0 (\widetilde{\Omega})$ 
and hence also in~$H^{1,0} (\Omega)$. 
As a consequence,~$\underline{\psi_1} - \underline{\psi_2}\in H^{1,0} ( \Omega)$ 
and the function~$\phi= \phi_1- \phi_2$ is the unique (trivial) solution in~$H^{1,0}(\Omega)$ 
of the equation 
$\Delta_{x,y} \phi =0$ given by the Riesz Theorem.
\end{proof}

\begin{defi}\label{defi:phi}
We shall say that the function~$\phi = u + \underline{\psi}$ constructed by the above 
procedure is the variational solution of~\eqref{v}. It satisfies 
\begin{equation}\label{est:phi}
\int_\Omega \la \nabla_{x,y}\phi\ra^2\, dx dy \le K \lA f\rA_{H^{\mez}({\mathbf{R}}^d)}^2,
\end{equation}
for some constant~$K$ depending only on the Lipschitz norm of~$\eta$. 
\end{defi}
Formally the Dirichlet-Neumann operator is defined by
\begin{equation}\label{def:DN}
  G(\eta)\psi  
 = \sqrt{1+|\nabla \eta|^2}\, \partial_n\phi \big\arrowvert_{y=\eta(x)}
=\big[ \partial_y \phi-\partialx \eta\cdot \partialx \phi \big] \, \big\arrowvert _{y=\eta(x)} .
\end{equation}

\subsubsection{Straightening the free boundary}\label{s:flattening} 

In what follows we shall set 
\begin{equation}\label{lesomega}
\left\{
\begin{aligned}
\Omega_1 &= \{(x,y): x \in {\mathbf{R}}^d, \eta(x)-h<y<\eta(x)\},\\
\Omega_2 &= \{(x,y)\in \mathcal{O}: y\leq\eta(x)-h\},\\
\Omega &= \Omega_1 \cup \Omega_2,
\end{aligned}
\right.
 \end{equation}
where $\mathcal{O}$ has been defined in section \ref{assu:domain}, and 
\begin{equation}\label{omega1}
\left\{
\begin{aligned}
&\widetilde{\Omega}_1= \{(x,z): x \in {\mathbf{R}}^d, z \in I\}, \quad I = (-1,0),\\
&\widetilde{\Omega}_2 = \{(x,z)\in \xR^d \times (-\infty, -1]: (x,z+1+\eta(x)-h)\in \Omega_2\},\\
&\widetilde{\Omega}= \widetilde{\Omega}_1  \cup \widetilde{\Omega}_2.  
\end{aligned}
\right.
\end{equation}

Guided by Lannes~(\cite{LannesJAMS}), 
we consider the map $(x,z) \mapsto  \rho(x,z)$ from $\widetilde{\Omega}$ to $\xR$ defined as follows
\begin{equation}\label{diffeo}
\left\{
\begin{aligned}
&\rho(x,z)=  (1+z)e^{\delta z\langle D_x \rangle }\eta(x) -z \big\{e^{-(1+ z)\delta\langle D_x \rangle }\eta(x) -h\big\}\quad \text{if } (x,z)\in \widetilde{\Omega}_1,\\
&\rho(x,z) = z+1+\eta(x)-h\quad \text{if } (x,z)\in \widetilde{\Omega}_2 
\end{aligned}
\right.
\end{equation}
where $\delta>0$ will be chosen later on.
   \begin{lemm}\label{rho:diffeo}
Assume $\eta \in W^{1,\infty}(\xR^d)$.
\begin{enumerate}
\item There exists $C>0$ such that for every $(x,z) \in \widetilde{\Omega} $ we have
$$ \vert \nabla_{x } \rho (x,z)\vert  \leq C \Vert \eta \Vert_{ W^{1,\infty}(\xR^d)} .$$
\item There exists  $K>0$   such that,  if  $ \delta \Vert \eta \Vert_{ W^{1,\infty}(\xR^d)} \leq \frac{h}{2K}$
 we have
\begin{equation}\label{rhokappa}
  \quad   \text{min }\big(1,\frac{h}{2}\big) \leq \partial_z \rho(x,z) \leq \text{max }(1,  \frac{3h}{2}), \quad \forall (x,z) \in \widetilde{\Omega}.
\end{equation}
\item The map $(x,z) \mapsto (x, \rho(x,z))$ is a Lipschitz diffeomorphism from $\widetilde{\Omega}_1$ to $\Omega_1.$
\end{enumerate}
 \end{lemm}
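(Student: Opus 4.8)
The plan is to treat the three items essentially separately, relying throughout on the smoothing properties of the Poisson semigroup $z\mapsto e^{z\langle D_x\rangle}$ acting on $\eta$. For item (1), I would note that on $\widetilde{\Omega}_2$ one has $\nabla_x\rho=\nabla_x\eta$, so the bound is trivial there. On $\widetilde{\Omega}_1$, I differentiate the explicit formula \eqref{diffeo}: $\nabla_x\rho=(1+z)e^{\delta z\langle D_x\rangle}\nabla_x\eta-z\,e^{-(1+z)\delta\langle D_x\rangle}\nabla_x\eta$. For $z\in(-1,0)$ the prefactors $|1+z|$ and $|z|$ are bounded by $1$, and $e^{\delta z\langle D_x\rangle}$, $e^{-(1+z)\delta\langle D_x\rangle}$ are Fourier multipliers with symbols $e^{-\delta|z|\langle\xi\rangle}$, $e^{-\delta(1+z)\langle\xi\rangle}$ that are bounded (by $1$) and hence act boundedly on $L^\infty$ with operator norm controlled by a fixed constant (convolution against an $L^1$-normalized kernel, or by interpolation/Young). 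This yields $\|\nabla_x\rho(\cdot,z)\|_{L^\infty}\le C\|\eta\|_{W^{1,\infty}}$ with $C$ independent of $\delta$ and $z$, which is the claim.

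For item (2), again on $\widetilde{\Omega}_2$ we have $\partial_z\rho=1$, which lies in the asserted interval. On $\widetilde{\Omega}_1$ I compute $\partial_z\rho$ from \eqref{diffeo}:
\[
\partial_z\rho = e^{\delta z\langle D_x\rangle}\eta + (1+z)\delta\langle D_x\rangle e^{\delta z\langle D_x\rangle}\eta - \big\{e^{-(1+z)\delta\langle D_x\rangle}\eta-h\big\} + z\delta\langle D_x\rangle e^{-(1+z)\delta\langle D_x\rangle}\eta.
\]
Collecting the terms that do not carry a $\delta$ factor gives $e^{\delta z\langle D_x\rangle}\eta - e^{-(1+z)\delta\langle D_x\rangle}\eta + h$, whose difference of the two semigroup terms I write as $\big(e^{\delta z\langle D_x\rangle}-e^{-(1+z)\delta\langle D_x\rangle}\big)\eta$; since $|e^{-\delta|z|\langle\xi\rangle}-e^{-\delta(1+z)\langle\xi\rangle}|\le \delta\langle\xi\rangle e^{-c\delta\langle\xi\rangle}\cdot(\text{bounded})$, this is an $O(\delta\|\eta\|_{W^{1,\infty}})$ term in $L^\infty$ — here one uses that $\delta\langle\xi\rangle e^{-c\delta\langle\xi\rangle}$ times $\langle\xi\rangle^{-1}$ is a uniformly bounded multiplier symbol mapping $\nabla\eta\in L^\infty$ to $L^\infty$. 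The remaining $\delta$-carrying terms, $(1+z)\delta\langle D_x\rangle e^{\delta z\langle D_x\rangle}\eta+z\delta\langle D_x\rangle e^{-(1+z)\delta\langle D_x\rangle}\eta$, are likewise $O(\delta\|\eta\|_{W^{1,\infty}})$ in $L^\infty$ by the same multiplier bound (writing $\langle D_x\rangle e^{\pm\delta s\langle D_x\rangle}$ applied to $\eta$ as $\delta^{-1}$ times a bounded multiplier applied to $\eta$, or more directly $\delta\langle D_x\rangle e^{-\delta s\langle D_x\rangle}$ applied to $\nabla\eta$-type quantities). Hence $\partial_z\rho = h + r(x,z)$ with $\|r(\cdot,z)\|_{L^\infty}\le K\delta\|\eta\|_{W^{1,\infty}}$ for a fixed $K$; imposing $\delta\|\eta\|_{W^{1,\infty}}\le h/(2K)$ forces $|r|\le h/2$, so $h/2\le\partial_z\rho\le 3h/2$ on $\widetilde{\Omega}_1$. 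Combining with the bottom piece gives \eqref{rhokappa}.

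For item (3), I first observe that $\rho$ is Lipschitz in $(x,z)$ on $\widetilde{\Omega}_1$: the $x$-Lipschitz bound is item (1), and $|\partial_z\rho|\le 3h/2$ from item (2); continuity across $z=-1$ is built into the definition. Since $\partial_z\rho(x,z)\ge \min(1,h/2)>0$ uniformly, for each fixed $x$ the map $z\mapsto\rho(x,z)$ is a strictly increasing Lipschitz bijection from $[-1,0]$ onto $[\rho(x,-1),\rho(x,0)]=[\eta(x)-h,\eta(x)]$, with Lipschitz inverse (inverse-function-type argument for monotone Lipschitz functions with positive lower Lipschitz bound). Therefore $(x,z)\mapsto(x,\rho(x,z))$ is a bijection from $\widetilde{\Omega}_1$ onto $\Omega_1$, it is Lipschitz (both components are), and its inverse $(x,y)\mapsto(x,\varrho(x,y))$ — where $\varrho(x,\cdot)$ is the inverse of $\rho(x,\cdot)$ — is Lipschitz, because $\varrho$ depends Lipschitz-continuously on $y$ (from the uniform lower bound on $\partial_z\rho$) and on $x$ (differentiating the identity $\rho(x,\varrho(x,y))=y$ gives $\nabla_x\varrho=-(\nabla_x\rho)/(\partial_z\rho)$, bounded by items (1)--(2)). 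This establishes that the map is a Lipschitz diffeomorphism from $\widetilde{\Omega}_1$ to $\Omega_1$.

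The main obstacle is the quantitative control in item (2): one must extract the factor $\delta$ from the combination of semigroup terms without losing it to the $\langle\xi\rangle$ weights, i.e. one needs the uniform boundedness on $L^\infty$ of the family of Fourier multipliers $\delta\langle\xi\rangle e^{-c\delta\langle\xi\rangle}$ (and of $e^{-c\delta\langle\xi\rangle}$) with bound independent of $\delta>0$. This is where the regularizing effect of the Poisson kernel is genuinely used, and it is the step deserving the most care; the rest is bookkeeping on the explicit formula \eqref{diffeo}.
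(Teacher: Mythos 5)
Your plan follows the paper's proof almost verbatim: item (1) via the uniform $L^1$ bound for the kernel of $e^{\lambda\langle D_x\rangle}$, $\lambda<0$; item (2) via the explicit formula for $\partial_z\rho$ (your displayed expression is exactly the paper's decomposition), isolating $h$ and bounding the remainder by $K\delta\Vert\eta\Vert_{W^{1,\infty}}$; item (3) from the lower bound on $\partial_z\rho$ together with $\rho(x,0)=\eta(x)$, $\rho(x,-1)=\eta(x)-h$ (your extra detail on the Lipschitz inverse, $\nabla_x\kappa=-\nabla_x\rho/\partial_z\rho$, $\partial_y\kappa=1/\partial_z\rho$, is fine and is what the paper leaves implicit).

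The one place where your write-up is looser than the paper is the quantitative step in item (2). First, the pointwise bound $\vert e^{-\delta\vert z\vert\langle\xi\rangle}-e^{-\delta(1+z)\langle\xi\rangle}\vert\le\delta\langle\xi\rangle e^{-c\delta\langle\xi\rangle}\cdot(\text{bounded})$ cannot hold with a fixed $c>0$: the mean value theorem only gives the exponent $\delta\min(\vert z\vert,1+z)\langle\xi\rangle$, which degenerates as $z\to 0^-$ or $z\to -1^+$ (and indeed for $z$ near $0$ and $\langle\xi\rangle$ large the left side is close to $1$ while your right side tends to $0$). Second, a pointwise bound on a symbol does not by itself give an $L^\infty\to L^\infty$ operator bound, and your trade of the factor $\langle\xi\rangle$ against $\nabla\eta$ silently inserts the multiplier $\xi/\langle\xi\rangle$ (a smoothed Riesz transform), which is precisely the non-trivial point when $\eta$ is only Lipschitz. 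The paper's route avoids these formulations: it writes $e^{\delta\lambda\langle D_x\rangle}\eta-\eta=\delta\lambda\int_0^1 e^{\delta t\lambda\langle D_x\rangle}\langle D_x\rangle\eta\,dt$ and invokes, for each term (including the $\delta$-carrying ones), the single estimate $\Vert e^{\delta t\lambda\langle D_x\rangle}\langle D_x\rangle\eta\Vert_{L^\infty}\le K\Vert\eta\Vert_{W^{1,\infty}}$ uniformly in the (negative) exponent, together with the uniform kernel bound of your item (1). If you phrase your item (2) this way — difference of the two semigroups written as an integral of $\langle D_x\rangle e^{-t\langle D_x\rangle}\eta$ over an interval of length $\le\delta$, plus that one uniform smoothing estimate — your argument coincides with the paper's; as written, the specific inequality you display and the multiplier reasoning behind it do not stand up literally and should be replaced by this fundamental-theorem-of-calculus formulation.
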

\begin{proof}
 
 $(1)$  For any $\lambda<0$  the symbol $a(\xi) = e^{\lambda \langle \xi \rangle}$ satisfies 
the estimate $\vert \partial_\xi^\alpha a(\xi) \vert \leq C_\alpha \langle \xi \rangle^{-\vert \alpha \vert}$ 
where $C_\alpha$ is independent on $\lambda$. Therefore its 
Fourier transform in an $L^1(\xR^d)$ function  whose norm is uniformly bounded. 
This implies that $\Vert a(D)f \Vert_{L^\infty(\xR^d)} \leq M \Vert  f \Vert_{L^\infty(\xR^d)} $ 
with $M$ independent of $\lambda.$ This proves our claim.
 
$(2)$ We have in $\widetilde{\Omega}_1$
\begin{equation}\label{dec:rho}
\begin{aligned}
\partial_z \rho  &= h   + \big(e^{\delta z \langle D_x \rangle}\eta  - \eta \big)  +(1+z)\delta e^{\delta z \langle D_x \rangle}\langle D_x \rangle \eta -  \big(e^{-\delta (1+z) \langle D_x \rangle}\eta  - \eta\big)   \\
& + z \delta e^{-\delta (1+z) \langle D_x \rangle} \langle D_x \rangle \eta.
\end{aligned}
\end{equation}
  Since 
$ e^{\delta \lambda \langle D_x \rangle}\eta  - \eta
= \delta \lambda \int_0^1e^{\delta t \lambda \langle D_x \rangle}\langle D_x \rangle \eta \, dt$ 
we can write
\begin{multline*}\Vert e^{\delta z \langle D_x \rangle}\eta  - \eta \Vert_{L^\infty(\xR^d)} 
+ \delta \Vert e^{\delta z \langle D_x \rangle}
\langle D_x \rangle \eta\Vert_ {L^\infty(\xR^d)} + \Vert e^{-\delta (1+z) \langle D_x \rangle}\eta  - \eta \Vert_{L^\infty(\xR^d)}\\ 
+ \delta \Vert e^{-\delta (1+z)\langle D_x \rangle}
\langle D_x \rangle \eta\Vert_ {L^\infty(\xR^d)}
\leq K \delta \Vert \eta \Vert_{W^{1,\infty}(\xR^d)} \leq \frac{h}{2} 
\end{multline*}
which proves $(2)$. 

$(3)$ Since  $ \rho(x,0) = \eta(x), \rho(x,-1) = \eta(x)-h $ our claim follows from $(1)$.
\end{proof}

For later use 
we state the following result whose 
proof is straightforward (recall that the spaces $X^s(I)$ are defined in~\eqref{XY}).
\begin{lemm}\label{rho:diffeo-s}
Let $I=(-1,0)$. Assume $s>\mez + \frac{d}{2}$. There exists $C>0$ such that for every $\eta \in H^{s+\mez}(\xR^d)$ we have
 \begin{equation}\label{eq.rho1}
 \Vert\partial_z \rho -h \Vert_{X^{s- \mez}(I)}
 \leq C \sqrt{\delta}  \|\eta\|_{H^{s+ \mez}( {\mathbf{R}}^d)}, \quad
\|\nabla_x \rho \|_{X^{s- \mez}(I)}   
 \leq \frac{C}{\sqrt{\delta}}   \|\eta\|_{H^{s+ \mez}( {\mathbf{R}}^d)}.
  \end{equation}
  \end{lemm}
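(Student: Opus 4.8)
The plan is to establish the two estimates in~\eqref{eq.rho1} directly from the explicit formula~\eqref{diffeo} for~$\rho$ on~$\widetilde{\Omega}_1$, reducing everything to mapping properties of the Fourier multipliers $e^{\lambda\langle D_x\rangle}\langle D_x\rangle^k$ for $\lambda\le 0$ on the parameter~$z$. Write $\lambda=\delta z$ or $\lambda=-\delta(1+z)$, so that in both cases $|\lambda|\le\delta$ and $\lambda\le 0$ for $z\in I=(-1,0)$. The decomposition~\eqref{dec:rho} already expresses $\partial_z\rho-h$ as a sum of terms of the two types $\bigl(e^{\delta\lambda\langle D_x\rangle}\eta-\eta\bigr)$ and $\delta\,e^{\delta\lambda\langle D_x\rangle}\langle D_x\rangle\eta$ (with $|\lambda|\le 1$), and similarly $\nabla_x\rho$ is a combination of $(1+z)e^{\delta z\langle D_x\rangle}\nabla_x\eta$ and $z\,e^{-\delta(1+z)\langle D_x\rangle}\nabla_x\eta$. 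So the whole lemma comes down to two elementary smoothing estimates for the heat-type semigroup $z\mapsto e^{z\langle D_x\rangle}$, namely:
\begin{itemize}
\item $\lA e^{\lambda\langle D_x\rangle}g\rA_{X^{s-\mez}(I)}\le C\,\delta^{-\mez}\lA g\rA_{H^{s-1}}$ when one gains one derivative off $g$, giving the $\nabla_x\rho$ bound with $g=\nabla_x\eta\in H^{s-\mez}$ — wait, I should be careful about the exponents here.
\end{itemize}

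Let me restate the core estimates more carefully. Everything follows from: for $\lambda\in(0,1]$ (writing $\lambda=\delta|z'|$ for an appropriate $z'$), the multiplier $m_\lambda(\xi)=e^{-\lambda\langle\xi\rangle}$ satisfies, by Plancherel in $x$ and then the elementary bound $\sup_{t\ge 0}t\,e^{-2at}\le (2ea)^{-1}$,
\begin{equation*}
\int_I \lA e^{-\delta|z|\langle D_x\rangle}g\rA_{H^{\mu+\mez}}^2\,dz
\le \int_0^1 \lA e^{-\delta z\langle D_x\rangle}g\rA_{H^{\mu+\mez}}^2\,dz
\le \frac{C}{\delta}\lA g\rA_{H^{\mu}}^2,
\end{equation*}
together with the trivial $L^\infty_z H^\mu$ bound $\lA e^{-\delta|z|\langle D_x\rangle}g\rA_{H^\mu}\le\lA g\rA_{H^\mu}$; these two combine to $\lA e^{-\delta|z|\langle D_x\rangle}g\rA_{X^\mu(I)}\le C\delta^{-\mez}\lA g\rA_{H^\mu}$, but with a genuine half-derivative gain the $L^2_zH^{\mu+\mez}$ part is what matters. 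Applying this with $g=\nabla_x\eta$, which lies in $H^{s-\mez}$, and $\mu=s-\mez$ gives $\lA\nabla_x\rho\rA_{X^{s-\mez}(I)}\lesssim\delta^{-\mez}\lA\eta\rA_{H^{s+\mez}}$ — the factor $(1+z)$ and $z$ are harmless since $|z|\le 1$. For the first estimate in~\eqref{eq.rho1}, the two term types in~\eqref{dec:rho} are handled as follows: for $\delta\,e^{-\delta|z|\langle D_x\rangle}\langle D_x\rangle\eta$, pull out the full $\delta$ and one power of $\langle D_x\rangle$, apply the smoothing bound to the remaining $e^{-\delta|z|\langle D_x\rangle}$ acting on $\langle D_x\rangle\eta\in H^{s-\mez}$, and get a total factor $\delta\cdot\delta^{-\mez}=\delta^{\mez}$; for $e^{-\delta|z|\langle D_x\rangle}\eta-\eta$, use the identity already displayed in the proof of Lemma~\ref{rho:diffeo}, $e^{-\delta|z|\langle D_x\rangle}\eta-\eta=-\delta|z|\int_0^1 e^{-\delta t|z|\langle D_x\rangle}\langle D_x\rangle\eta\,dt$, so that pulling out $\delta$ reduces it to the previous type up to a harmless $t$-integration and the bound $|z|\le 1$; this again produces the factor $\delta^{\mez}$.

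Concretely the steps are: (1) record the elementary semigroup bound $\sup_{t\ge0}t\,e^{-2t}<\infty$ and deduce, via Plancherel, the inequality $\int_0^1\lA e^{-\delta z\langle D_x\rangle}g\rA_{H^{\mu+\mez}}^2\,dz\le C\delta^{-1}\lA g\rA_{H^\mu}^2$ and the companion $\sup_{z\le 0}\lA e^{z\langle D_x\rangle}g\rA_{H^\mu}\le\lA g\rA_{H^\mu}$; (2) apply this with $g=\nabla_x\eta\in H^{s-\mez}$ to the two terms of $\nabla_x\rho$ coming from differentiating~\eqref{diffeo} in $x$, obtaining the $C\delta^{-\mez}$ bound; (3) write out $\partial_z\rho-h$ via~\eqref{dec:rho}, split into the ``$\delta\langle D_x\rangle\eta$'' terms and the ``$e^{\mp\delta|z|\langle D_x\rangle}\eta-\eta$'' terms, apply step~(1) with $g=\langle D_x\rangle\eta\in H^{s-\mez}$ after factoring out $\delta$ (using the integral identity for the second kind of term), and collect the factor $\delta\cdot\delta^{-\mez}=C\sqrt\delta$; (4) note that the $L^\infty_zH^{s-\mez}$ part of the $X^{s-\mez}(I)$ norm is controlled the same way, using the uniform bound from step~(1) and the continuity in $z$ coming from the explicit exponential dependence (so $\rho\in C^0_z$ is immediate).

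This is genuinely "straightforward" as the statement of the lemma claims; there is no real obstacle, only bookkeeping. The only point requiring a modicum of care is tracking the powers of $\delta$: one must make sure the half-derivative smoothing is used to convert the gradient/$\langle D_x\rangle$ appearing in~\eqref{diffeo}-\eqref{dec:rho} into the gain of $\delta^{-\mez}$ for $\nabla_x\rho$ and, conversely, that the extra explicit factor $\delta$ in front of the derivative terms of $\partial_z\rho$ over-compensates to yield $\delta^{+\mez}$. A secondary small point is the continuity in $z\in\overline I=[-1,0]$ needed for the $C^0_z$ (rather than merely $L^\infty_z$) part of $X^{s-\mez}$, which follows because $z\mapsto e^{-\delta|z|\langle\xi\rangle}\widehat g(\xi)$ is continuous into $L^2(\langle\xi\rangle^{2(s-\mez)}d\xi)$ by dominated convergence; this is why the lemma is stated on $X^{s-\mez}(I)$ and the conclusion is legitimate.
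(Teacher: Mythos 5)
Your proof is correct, and it is exactly the ``straightforward'' argument the paper has in mind when it states Lemma~\ref{rho:diffeo-s} without proof: by Plancherel, $\int_0^1 e^{-2\delta z\langle\xi\rangle}\,dz\le (2\delta\langle\xi\rangle)^{-1}$ gives the half-derivative smoothing $\lA e^{-\delta|z|\langle D_x\rangle}g\rA_{L^2_zH^{\mu+\mez}}\le C\delta^{-\mez}\lA g\rA_{H^\mu}$, and combining this with the uniform bound and the decomposition \eqref{dec:rho} (resp.\ differentiating \eqref{diffeo} in $x$) yields the stated powers $\sqrt\delta$ and $1/\sqrt\delta$. The only implicit point, which is harmless in context, is that absorbing the $L^\infty_z$ contributions (of size $O(\delta)\lA\eta\rA_{H^{s+\mez}}$, resp.\ $O(1)\lA\eta\rA_{H^{s+\mez}}$) into $C\sqrt\delta$, resp.\ $C/\sqrt\delta$, uses $\delta\lesssim 1$, which is the regime fixed in Lemma~\ref{rho:diffeo}.
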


Coming back to the case where $\eta$ is a Lipschitz function, 
Lemma \ref{rho:diffeo} shows that the 
map~$(x,z) \mapsto  (x,\rho(x,z))$ is a~Lipschitz-diffeomorphism 
from~$ \widetilde{\Omega} $ to~$\Omega $. 
We denote by~$\kappa$ the inverse map of~$\rho$:
$$
(x,z)\in \widetilde{\Omega} ,\,\,\, (x,\rho(x,z)) 
= (x,y) \Leftrightarrow (x,z) = (x,\kappa(x,y)), \,\,\,(x,y) \in \Omega.
$$ 
Let $\widetilde{\phi} (x,z) = \phi (x, \rho(x,z))$.
Then we have
\begin{equation}\label{Lambda}
\left\{
\begin{aligned} 
&(\partial_y \phi)(x,\rho(x,z)) = (\Lambda_1\widetilde{\phi})(x,z), \quad (\nabla_x\phi)(x,\rho(x,z)) 
= (\Lambda_2\widetilde{\phi})(x,z),\\
&\Lambda_1 = \frac{1}{\partial_z \rho} \partial_z \quad \Lambda_2 = \nabla_x -  \frac{\nabla_x \rho}{\partial_z \rho} \partial_z.
\end{aligned}
\right.
\end{equation}

If $\phi$ is a solution of $\Delta_{x,y}\phi = 0$ in $\Omega$ then $\widetilde{\phi}$ satisfies
$$
(\Lambda_1^2 +\Lambda_2^2)\widetilde{\phi} = 0\quad \text{in }  \widetilde{\Omega}.
$$
This yields
\begin{equation}\label{dnint1a}
(a \partial_z^2  +\Delta_x   + b \cdot\nabla_x\partial_z    - c \partial_z )\widetilde{\phi}=0,
\end{equation}
where 
\begin{equation}\label{dnint1bis}
a\defn \frac{1+|\nabla_x  \rho |^2}{(\partial_z\rho)^2},\quad b\defn  
-2 \frac{\nabla_x\rho}{\partial_z\rho} ,\quad 
c\defn \frac{1}{\partial_z\rho}\bigl( a \partial_z^2 \rho +\Delta_x \rho 
+ b \cdot\nabla_x \partial_z \rho\bigr).
\end{equation}
It will be convenient to have a constant coefficient in front of~$\partial_z^2 \widetilde{\phi}$. 
Dividing~\eqref{dnint1a} by~$a$ we obtain
\begin{equation}\label{dnint1}
(\partial_z^2  +\alpha\Delta_x  + \beta \cdot\nabla_x\partial_z   - \gamma \partial_z )\widetilde{\phi} =0,
\end{equation}
where 
\begin{equation}\label{alpha}
\alpha\defn \frac{(\partial_z\rho)^2}{1+|\nabla_x   \rho |^2},\quad 
\beta\defn  -2 \frac{\partial_z \rho \nabla_x \rho}{1+|\nabla_x  \rho |^2} ,\quad 
\gamma \defn \frac{1}{\partial_z\rho}\bigl(  \partial_z^2 \rho 
+\alpha\Delta_x \rho + \beta \cdot \nabla_x \partial_z \rho\bigr).
\end{equation}

In the  coordinates $(x,z)$, according to \eqref{def:DN} we have
\begin{equation}\label{def2:DN}
G(\eta)\psi = \widetilde{U}\arrowvert_{z=0}, \quad \widetilde{U}= \Lambda_1 \widetilde{\phi} - \nabla_x \rho\cdot \Lambda_2 \widetilde{\phi}.
\end{equation}
The following remark will be useful in the sequel. We have
\begin{equation}\label{dzU}
\partial_z\widetilde{U} = - \nabla_x \cdot((\partial_z \rho) \Lambda_2 \widetilde{\phi}).
\end{equation}
Indeed we can write
\begin{equation*}
\begin{aligned}
\partial_z \widetilde{U} &= \partial_z \Lambda_1 \widetilde{\phi} - 
\nabla_x \partial_z \rho \cdot \Lambda_2 \widetilde{\phi} - \nabla_x \rho  \cdot \partial_z \Lambda_2 \widetilde{\phi} \\
&= (\partial_z \rho) \Lambda_1^2 \widetilde{\phi} -  
\nabla_x \partial_z \rho \cdot \Lambda_2 \widetilde{\phi} + (\partial_z \rho) (\Lambda_2 - \nabla_x) \Lambda_2 \widetilde{\phi}\\
&= (\partial_z \rho)( \Lambda_1^2 +  \Lambda_2^2) \widetilde{\phi} -\nabla_x \cdot\big((\partial_z \rho) \Lambda_2 \widetilde{\phi} \big).
\end{aligned}
\end{equation*}
Since $ (\Lambda_1^2 +  \Lambda_2^2) \widetilde{\phi} =0$ we obtain  \eqref{dzU}.

\subsubsection{Continuity of the Dirichlet-Neumann operator}
\begin{theo}\label{th.23}
Let $\eta \in W^{1,\infty}(\xR^d).$ Then for all $f \in H^\mez(\xR^d),$ 
$$
G(\eta)f = \sqrt{1+ \vert \nabla \eta \vert^2}\partial_n \phi\arrowvert_{y= \eta(x)} = [\partial_y - \nabla \eta \cdot \nabla \phi] \arrowvert_{y=\eta(x)}
$$
is well defined in $H^{-\mez}(\xR^d)$. 
Furthermore there exists $\mathcal{F}: \xR^+ \to \xR^+$ 
non decreasing independent of $\eta,f$ such that   
$$ \| G(\eta) f \|_{H^{-\mez} ({\mathbf{R}}^d)} \leq \mathcal{F} 
( \|\eta\|_{W^{1,\infty}(\xR^d)}) \|f \|_{H^\mez ({\mathbf{R}}^d)}.
$$
\end{theo}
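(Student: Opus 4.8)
The strategy is to transfer everything to the flattened domain $\widetilde\Omega$ via the Lipschitz diffeomorphism $(x,z)\mapsto(x,\rho(x,z))$ of Lemma~\ref{rho:diffeo}, and then to work directly with $\widetilde\phi(x,z)=\phi(x,\rho(x,z))$, which solves the elliptic equation~\eqref{dnint1}. The key structural observation is the identity~\eqref{dzU}, namely $\partial_z\widetilde U=-\nabla_x\cdot\big((\partial_z\rho)\Lambda_2\widetilde\phi\big)$, where $\widetilde U=\Lambda_1\widetilde\phi-\nabla_x\rho\cdot\Lambda_2\widetilde\phi$ and $G(\eta)f=\widetilde U\arrowvert_{z=0}$. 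Since $\nabla_{x,y}\phi\in L^2(\Omega)$ with the quantitative bound $\|\nabla_{x,y}\phi\|_{L^2(\Omega)}\le K\|f\|_{H^{1/2}}$ from Definition~\ref{defi:phi}, and since the change of variables together with part~(2) of Lemma~\ref{rho:diffeo} shows $\partial_z\rho$ is bounded above and below and $\nabla_x\rho\in L^\infty$, a direct computation gives that $\Lambda_1\widetilde\phi$ and $\Lambda_2\widetilde\phi$ belong to $L^2(\widetilde\Omega_1)$ with norm $\le\mathcal F(\|\eta\|_{W^{1,\infty}})\|f\|_{H^{1/2}}$. In particular $\widetilde U\in L^2_z\big((-1,0);L^2(\xR^d)\big)$ and $(\partial_z\rho)\Lambda_2\widetilde\phi\in L^2_z\big((-1,0);L^2(\xR^d)\big)$, hence by~\eqref{dzU} also $\partial_z\widetilde U\in L^2_z\big((-1,0);H^{-1}(\xR^d)\big)$.

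Given these two facts, I would invoke the trace lemma (Lemma~\ref{lem.3.11}, with a shift of indices: interpolating between $L^2_z(L^2)$ and $L^2_z(H^{-1})$, or more simply the elementary fact that $u\in L^2_z(I;L^2)$ with $\partial_z u\in L^2_z(I;H^{-1})$ implies $u\in C^0(\overline I;H^{-1/2})$) to conclude that $z\mapsto\widetilde U(z)$ is continuous with values in $H^{-1/2}(\xR^d)$ up to the boundary $z=0$, with
$$
\sup_{z\in[-1,0]}\|\widetilde U(z)\|_{H^{-1/2}}\le C\big(\|\widetilde U\|_{L^2_z(I;L^2)}+\|\partial_z\widetilde U\|_{L^2_z(I;H^{-1})}\big).
$$
Evaluating at $z=0$ gives $G(\eta)f\in H^{-1/2}(\xR^d)$ together with the desired estimate $\|G(\eta)f\|_{H^{-1/2}}\le\mathcal F(\|\eta\|_{W^{1,\infty}})\|f\|_{H^{1/2}}$. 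The equivalence of the two expressions for $G(\eta)f$ (the normal-derivative form and $[\partial_y\phi-\nabla\eta\cdot\nabla\phi]\arrowvert_{y=\eta}$) is formula~\eqref{def:DN}, which holds here in the sense of the above $H^{-1/2}$ traces once one checks, using~\eqref{Lambda}, that $\widetilde U$ is exactly the pullback of $\partial_y\phi-\nabla\eta\cdot\nabla\phi$.

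The main obstacle is making rigorous the claim that the weak solution $\phi$ of the variational problem actually has a well-defined trace of its normal derivative at $\Sigma$ in $H^{-1/2}$, given that a priori we only control $\nabla_{x,y}\phi$ in $L^2(\Omega)$ and the domain is merely Lipschitz. The point of the argument above is precisely that the combination $\widetilde U$, and not the individual components $\partial_y\phi$ and $\nabla_x\phi$, enjoys the extra regularity in $z$ coming from the equation via~\eqref{dzU}; one must be careful that~\eqref{dzU} is an identity valid in $\mathcal D'(\widetilde\Omega_1)$ for the variational solution, which follows by approximating $\widetilde\phi$ by smooth functions and passing to the limit, using that the bilinear form in~\eqref{eq.variabis} is continuous on $H^{1,0}(\Omega)$. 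A secondary technical point is to verify that the lifting and the cut-off near $z=-1$ do not spoil the argument—this is where one uses that $\underline\psi$, and hence $\widetilde\phi$, vanishes near the bottom, so only the behaviour on $\widetilde\Omega_1=\xR^d\times(-1,0)$ matters.
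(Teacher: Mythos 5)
Your proposal is correct and follows essentially the same route as the paper: both reduce to the identity \eqref{dzU}, estimate $\widetilde U$ in $L^2_z(I;L^2)$ and $\partial_z\widetilde U$ in $L^2_z(I;H^{-1})$ via the variational bound \eqref{est:phi} and Lemma~\ref{rho:diffeo}, and conclude by the trace Lemma~\ref{lem.3.11}. Note only that no ``shift of indices'' is needed: Lemma~\ref{lem.3.11} applies directly with $s=-\mez$.
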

\begin{proof}
With the preceding notations we have $G(\eta) f = \big(\Lambda_1 \widetilde{\phi} - \nabla_x \rho \cdot \Lambda_2\widetilde{\phi}\big) \arrowvert_{z=0}.$ Let us set $\widetilde{U} = \Lambda_1 \widetilde{\phi} - \nabla_x \rho \cdot \Lambda_2\widetilde{\phi}.$ 
Then the theorem will follow from Lemma \ref{lem.3.11} with $s = -\mez$ 
and the following inequality where $I=(-1,0)$
\begin{equation*}%\label{def:DN}
 \Vert \widetilde{U} \Vert_{L^2(I, L^2(\xR^d))} + \Vert \partial_z\widetilde{U} \Vert_{L^2(I, H^{-1}(\xR^d))} \leq \mathcal{F}(\Vert \eta \Vert_{W^{1,\infty}(\xR^d)})\Vert f \Vert_{H^\mez(\xR^d)}. 
\end{equation*}
The estimate on $\widetilde{U}$ is a consequence of \eqref{est:phi}, \eqref{Lambda} 
and the estimate $(1)$ in Lemma \ref{rho:diffeo} on $\nabla_x\rho$. 
To estimate $\partial_z \widetilde{U}$ we use \eqref{dzU} and 
the estimate on $\partial_z\rho $ in Lemma~\ref{rho:diffeo}.
\end{proof}

Notice that, as a by product of the previous proof, 
in the system of coordinates~$(x,z)$, 
the variational solution of~\eqref{v},~$\widetilde{\phi}$, 
satisfies
\begin{equation}\label{eq.Xs}
\widetilde{\phi} \in C^0_z([-1,0]; H^\mez ({\mathbf{R}}^d)) 
\cap C^1_z([-1,0]; H^{-\mez} ( {\mathbf{R}}^d)).
\end{equation}

We also state a second basic  strong continuity of the Dirichlet-Neumann operator.

\begin{theo}\label{th.25}
There exists a non decreasing function $\mathcal{F}\colon\xR_+\rightarrow \xR_+$ such that, 
for all $\eta_j\in W^{1,\infty}({\mathbf{R}}^d)$,~$j=1,2$ and all $f\in H^{\mez} ({\mathbf{R}}^d)$, 
$$
\big\Vert \bigl(G(\eta_1)- G(\eta_2)\bigr) f \big\Vert_{H^{-\mez} }
\leq \mathcal{F} \bigl(\|(\eta_1,\eta_2)\|_{W^{1,\infty}\times W^{1,\infty}}\bigr)
\|\eta_1 - \eta_2\|_{W^{1,\infty}} \|f \|_{H^\mez}.
$$
\end{theo}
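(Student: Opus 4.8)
The plan is to run the scheme of the proof of Theorem~\ref{th.23} for $\eta_1$ and for $\eta_2$ and to compare the two straightened potentials. Fix $\delta>0$ small enough that Lemma~\ref{rho:diffeo} applies to both $\eta_1$ and $\eta_2$ (so that $\delta$ depends only on $\lA(\eta_1,\eta_2)\rA_{W^{1,\infty}\times W^{1,\infty}}$), let $\rho_j=\rho(\eta_j)$ be the maps~\eqref{diffeo}, and set $\widetilde\phi_j(x,z)=\phi_j(x,\rho_j(x,z))$, where $\phi_j$ is the variational solution of~\eqref{v} with free surface $\{y=\eta_j(x)\}$. As in Section~\ref{s:flattening}, $\widetilde\phi_j\arrowvert_{z=0}=\phi_j(x,\eta_j(x))=f$ and $\widetilde\phi_j$ solves~\eqref{dnint1}, equivalently the divergence-form equation
\begin{equation*}
\nabla_{x,z}\cdot\bigl(\mathcal A_j\,\nabla_{x,z}\widetilde\phi_j\bigr)=0\quad\text{in }\mathcal D'(\widetilde\Omega),
\end{equation*}
understood in the variational sense (see~\eqref{eq.variabis}), where $\mathcal A_j$ is the symmetric matrix built from $\rho_j$ whose blocks are $(\partial_z\rho_j)I_d$, $-\nabla_x\rho_j$, $-(\nabla_x\rho_j)^{T}$ and $(1+|\nabla_x\rho_j|^2)/\partial_z\rho_j$. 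By Lemma~\ref{rho:diffeo}, $\mathcal A_j$ has $L^\infty(\widetilde\Omega)$-norm bounded by $\mathcal F(\lA\eta_j\rA_{W^{1,\infty}})$ and is uniformly elliptic, its ellipticity constant being bounded below by a positive decreasing function of $\lA\eta_j\rA_{W^{1,\infty}}$. Comparing with~\eqref{def2:DN} and~\eqref{dzU}, the quantity $\widetilde U_j=\Lambda_1\widetilde\phi_j-\nabla_x\rho_j\cdot\Lambda_2\widetilde\phi_j$ (with $\Lambda_1,\Lambda_2$ as in~\eqref{Lambda}, built from $\rho_j$), which satisfies $G(\eta_j)f=\widetilde U_j\arrowvert_{z=0}$, is precisely the last component of the vector field $\mathcal A_j\nabla_{x,z}\widetilde\phi_j$, while $\partial_z\widetilde U_j$ is minus the horizontal divergence of its first $d$ components.

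The first step is a Lipschitz bound for the coefficients. Arguing as in the proof of Lemma~\ref{rho:diffeo}, the $\eta$-dependent part of $\rho(\eta)$ is linear in $\eta$ (the rest being constant in $x$), and the $L^\infty$-estimates established there for $\nabla_x\rho$ and $\partial_z\rho$ are linear in the data; applying them to $\eta_1-\eta_2$ gives
\begin{equation*}
\lA\nabla_x(\rho_1-\rho_2)\rA_{L^\infty(\widetilde\Omega)}+\lA\partial_z(\rho_1-\rho_2)\rA_{L^\infty(\widetilde\Omega)}\le\mathcal F\bigl(\lA(\eta_1,\eta_2)\rA_{W^{1,\infty}\times W^{1,\infty}}\bigr)\lA\eta_1-\eta_2\rA_{W^{1,\infty}}.
\end{equation*}
Since the entries of $\mathcal A_j$ are smooth functions of $(\nabla_x\rho_j,\partial_z\rho_j)$ on the fixed range $\{|\nabla_x\rho|\le M,\ \partial_z\rho\ge c_0\}$ provided by Lemma~\ref{rho:diffeo}, the same bound holds for $\lA\mathcal A_1-\mathcal A_2\rA_{L^\infty(\widetilde\Omega)}$.

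The main step is an energy estimate for $w\defn\widetilde\phi_1-\widetilde\phi_2$. Since $w\arrowvert_{z=0}=0$ and $\nabla_{x,z}w\in L^2(\widetilde\Omega)$, $w$ belongs to the variational space attached to $\widetilde\Omega$ (cf.\ Definition~\ref{defi:H10} and Proposition~\ref{corog}) and is a legitimate test function for both equations; subtracting the two variational identities yields, for every admissible $v$,
\begin{equation*}
\int_{\widetilde\Omega}\mathcal A_1\,\nabla_{x,z}w\cdot\nabla_{x,z}v\,dx\,dz=\int_{\widetilde\Omega}(\mathcal A_2-\mathcal A_1)\,\nabla_{x,z}\widetilde\phi_2\cdot\nabla_{x,z}v\,dx\,dz.
\end{equation*}
Choosing $v=w$, using the uniform ellipticity of $\mathcal A_1$ and the Cauchy--Schwarz inequality, and dividing, we get $\lA\nabla_{x,z}w\rA_{L^2(\widetilde\Omega)}\le\mathcal F(\lA\eta_1\rA_{W^{1,\infty}})\lA\mathcal A_1-\mathcal A_2\rA_{L^\infty(\widetilde\Omega)}\lA\nabla_{x,z}\widetilde\phi_2\rA_{L^2(\widetilde\Omega)}$; combining with the first step and with $\lA\nabla_{x,z}\widetilde\phi_2\rA_{L^2(\widetilde\Omega)}\le\mathcal F(\lA\eta_2\rA_{W^{1,\infty}})\lA f\rA_{H^{\mez}}$, which follows from~\eqref{est:phi} and the change of variables $y=\rho_2(x,z)$, we arrive at
\begin{equation*}
\lA\nabla_{x,z}(\widetilde\phi_1-\widetilde\phi_2)\rA_{L^2(\widetilde\Omega)}\le\mathcal F\bigl(\lA(\eta_1,\eta_2)\rA_{W^{1,\infty}\times W^{1,\infty}}\bigr)\lA\eta_1-\eta_2\rA_{W^{1,\infty}}\lA f\rA_{H^{\mez}}.
\end{equation*}
(For $z\le-1$ the sets $\widetilde\Omega(\eta_1)$ and $\widetilde\Omega(\eta_2)$ differ only through a vertical translation of the fixed container $\mathcal O$; this part of the comparison is routine and we omit it.)

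To conclude, $\mathcal A_1\nabla_{x,z}\widetilde\phi_1-\mathcal A_2\nabla_{x,z}\widetilde\phi_2=\mathcal A_1\nabla_{x,z}w+(\mathcal A_1-\mathcal A_2)\nabla_{x,z}\widetilde\phi_2$ is, by the last two displayed inequalities together with $\lA\mathcal A_1\rA_{L^\infty}\le\mathcal F(\lA\eta_1\rA_{W^{1,\infty}})$, a vector field whose $L^2(\widetilde\Omega)$-norm is at most $\mathcal F(\lA(\eta_1,\eta_2)\rA_{W^{1,\infty}\times W^{1,\infty}})\lA\eta_1-\eta_2\rA_{W^{1,\infty}}\lA f\rA_{H^{\mez}}$. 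Restricting to $\widetilde\Omega_1=\xR^d\times I$ with $I=(-1,0)$, its last component gives the same bound for $\widetilde U_1-\widetilde U_2$ in $L^2(I;L^2(\xR^d))$, and minus the horizontal divergence of its first $d$ components gives the same bound for $\partial_z(\widetilde U_1-\widetilde U_2)$ in $L^2(I;H^{-1}(\xR^d))$. Lemma~\ref{lem.3.11} with $s=-\mez$, applied to $\widetilde U_1-\widetilde U_2$, then yields
\begin{equation*}
\lA\bigl(G(\eta_1)-G(\eta_2)\bigr)f\rA_{H^{-\mez}}=\lA(\widetilde U_1-\widetilde U_2)\arrowvert_{z=0}\rA_{H^{-\mez}}\le\mathcal F\bigl(\lA(\eta_1,\eta_2)\rA_{W^{1,\infty}\times W^{1,\infty}}\bigr)\lA\eta_1-\eta_2\rA_{W^{1,\infty}}\lA f\rA_{H^{\mez}},
\end{equation*}
which is the desired estimate. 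The main obstacle is the energy estimate of the third paragraph: one must write the perturbed equation in divergence form so that its right-hand side stays in $L^2(\widetilde\Omega)$ although $\widetilde\phi_2$ enjoys no regularity beyond $\nabla_{x,z}\widetilde\phi_2\in L^2$, keep careful track of which quantities are controlled in $L^2$ and which in $L^\infty$, and handle the $\eta$-dependence of the straightened domain near the (possibly rough) bottom.
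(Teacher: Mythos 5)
Your proof is correct and takes essentially the same route as the paper's: straighten the two problems via $\rho_j$, prove a Lipschitz bound on the coefficients, compare the two variational solutions by an energy estimate obtained by subtracting the variational identities and testing with the difference, bound $U_1-U_2$ in $L^2_z L^2_x$ and $\partial_z(U_1-U_2)$ in $L^2_z H^{-1}_x$, and conclude with Lemma~\ref{lem.3.11} at $s=-\mez$. Your divergence-form packaging of the flux $\mathcal{A}_j\nabla_{x,z}\widetilde{\phi}_j$ (whose last component is $\widetilde{U}_j$ and whose horizontal components give $\partial_z\widetilde{U}_j$) is simply a tidier rewriting of the identities \eqref{def2:DN}--\eqref{dzU} and of the paper's term-by-term estimates in Lemma~\ref{dem:estvar} and \eqref{formeU}--\eqref{formedzU}, and, like the paper, you leave the mismatch of the flattened domains below $z=-1$ essentially implicit.
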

\begin{proof} 
We use the notations introduced in \S\ref{s:flattening}. 
Namely, for~$j=1,2$, $x\in\xR^d, z\in I:=(-1,0)$ we introduce 
$\rho_j(x,z)$ and~$v_j(x,z)$ defined by~\eqref{diffeo},   
\begin{equation*}
\begin{aligned}
\rho_j(x,z)&=(1+z)(e^{\delta  z\langle D_x\rangle}\eta_j)(x)-z\big\{e^{-(1+ z)\delta  \langle D_x \rangle }\eta_j(x) -h\big\},\\
\rho_j(x,z)&= z+1+\eta_j -h,\quad \text{if } (x,z)\in \widetilde{\Omega}_2,
\end{aligned}
\end{equation*}
where $\delta \Vert \eta_j \Vert_{W^{1,\infty}(\xR^d)}$ is small enough for $j = 1,2.$

Notice that we have the following estimates (see Lemma \ref{rho:diffeo})
\begin{equation}\label{est-rho}
\left\{
\begin{aligned}
&(i) \quad \partial_z \rho_j \geq \min(1,\frac{h}{2}), \quad (x,z)\in \widetilde{\Omega},\\
& (ii) \quad \Vert \nabla_{x,z}\rho_j \Vert_{L^\infty(\widetilde{\Omega})}
\leq C(1+ \Vert \eta_j\Vert_{W^{1,\infty}(\xR^d)})\\
&(iii) \quad \Vert \nabla_{x,z}(\rho_1 -\rho_2)\Vert_{L^\infty(I,L^\infty(\xR^d))}
\leq C\Vert \eta_1-\eta_2\Vert_{W^{1,\infty}(\xR^d)}.
 \end{aligned}
\right.
 \end{equation}
Recall also that we have set
\begin{equation}\label{Lambda-2}
\Lambda_1^i = \frac{1}{\partial_z \rho_i}\partial_z, \quad \Lambda_2^i 
= \nabla_x - \frac{\nabla_x \rho_i}{\partial_z \rho_i}\partial_z.
\end{equation}
It follows from  \eqref{est-rho} that for $k=1,2$ we have with $W^{1,\infty} = W^{1,\infty} (\xR^d),$
\begin{equation}\label{diff-lambda}
\left\{
 \begin{aligned}
 &(i) \quad \Lambda^1_k - \Lambda^2_k = \beta_k \partial_z, 
 \quad \text{with }\supp\beta_k \subset \xR^d\times I, \\
 &(ii) \quad \Vert \beta_k \Vert_{L^\infty(I\times \xR^d)}
 \leq\mathcal{F}(\Vert (\eta_1, \eta_2)\Vert_{W^{1,\infty} \times W^{1,\infty} }) 
 \Vert \eta_1-\eta_2\Vert_{W^{1,\infty} }
 \end{aligned}
 \right.
 \end{equation}

Then we set $\widetilde{\phi}_j(x,z) = \phi_j(x,\rho_j(x,z))$ (where $\Delta_{x,y}\phi_j =0$ in $\Omega_j, \phi_j\arrowvert_{\Sigma_j} = f $) 
and we recall (see \eqref{def2:DN}) that
\begin{equation}\label{Getaj}
 G(\eta_j)f = U_j\arrowvert_{z=0}, \quad U_j= \Lambda_1^j \widetilde{\phi}_j - \nabla_x \rho_j\cdot \Lambda_2^j \widetilde{\phi}_j.
\end{equation}
\begin{lemm}\label{dem:estvar}
Set $I=(-1,0)$, $v = \widetilde{\phi}_1 - \widetilde{\phi}_2,$ and $ \Lambda^j = (\Lambda^j_1, \Lambda^j_2) $. 
There exists a non decreasing function $\mathcal{F}:\xR^+\to \xR^+$ such that 
\begin{equation}\label{est2:var}
\Vert \Lambda^j v \Vert_{L^2(I;L^2(\xR^d))}
\leq \mathcal{F}( \Vert(\eta_1, \eta_2)\Vert_{W^{1,\infty} \times W^{1,\infty} })
\Vert \eta_1-\eta_2\Vert_{W^{1,\infty} }
\Vert f\|_{H^{\mez}}.
\end{equation}
\end{lemm}
Let us show how this Lemma implies 
Theorem \ref{th.25}. According to \eqref{Getaj} we have
\begin{equation}\label{formeU}
\begin{aligned}
 U_1-U_2 &= (1) +(2) +(3)+ (4)+ (5) \quad \text{where}\\
(1) &= \Lambda_1^1 v, \quad (2) 
= (\Lambda_1^1 -\Lambda_1^2)\widetilde{\phi}_2, \quad (3) =  - \nabla_x (\rho_1 -   \rho_2) \Lambda_2^1 \widetilde{\phi}_1\\
(4)&= - (\nabla_x \rho_2) \Lambda_2^1v, \quad (5) =   - (\nabla_x \rho_2)(\Lambda_2^1- \Lambda_2^2)\widetilde{\phi}_2.
\end{aligned}
 \end{equation}
The $L^2(I, L^2(\xR^d))$ norms of $(1)$ and $(4)$ are estimated 
using \eqref{est2:var} and \eqref{est-rho}. Also, the $L^2(I, L^2(\xR^d))$ norms of $(2)$ and $(5)$ are estimated 
by the right hand side of \eqref{est2:var} using \eqref{diff-lambda} and \eqref{est:phi}. 
Eventually the $L^2(I, L^2(\xR^d))$ norm of $(3)$ is also estimated by the right hand side 
of \eqref{est2:var} using \eqref{est-rho} $(iii)$ and \eqref{est:phi}. It follows that 
\begin{equation}\label{est:Uj}
\Vert U_1-U_2 \Vert_{L^2(I,L^2 )} 
\leq \mathcal{F}( \Vert(\eta_1, \eta_2)\Vert_{W^{1,\infty} \times W^{1,\infty}})
\|\eta_1 - \eta_2\|_{W^{1, \infty}}  \| f\|_{H^{\mez}}.
\end{equation}
Now according to \eqref{dzU} we have
\begin{equation}\label{formedzU}
\partial_z(U_1-U_2)
= -\nabla_x\cdot\big(  \partial_z(\rho_1- \rho_2 )\Lambda_2^1 \widetilde{\phi}_1 
+ (\partial_z\rho_2)(\Lambda_2^1-\Lambda_2^2) \widetilde{\phi}_1 +(\partial_z\rho_2)\Lambda_2^2v\big).
\end{equation}
Therefore using the same estimates as above we see easily that
\begin{equation}\label{est:dzUj}
\Vert  \partial_z(U_1-U_2)\Vert_{L^2(I,H^{-1} )}
\leq \mathcal{F}( \Vert(\eta_1, \eta_2)\Vert_{W^{1,\infty} \times W^{1,\infty} }) \|\eta_1 - \eta_2\|_{W^{1, \infty}}  \| f\|_{H^{\mez}}.
\end{equation}
Then Theorem \ref{th.25} follows from \eqref{est:Uj}, \eqref{est:dzUj} and Lemma \ref{lem.3.11}.
\end{proof}
\begin{proof}[Proof of Lemma \ref{dem:estvar}]

 %%%%%%%%%%%%%%%%%%%%%%%%%%%%%%%%%%%
We use the variational characterization of the solutions $u_i$. 
First of all we notice that $\widetilde{\phi}_1- \widetilde{\phi}_2 = \widetilde{u}_1 - \widetilde{u}_2=:v$. 
Now setting $X=(x,z)$ and recalling that $ \Lambda^i=(\Lambda^i_1, \Lambda^i_2),$         we have
\begin{equation}\label{equi}
\int_{\widetilde{\Omega}}\Lambda ^i\widetilde{u}_i\cdot\Lambda ^i\theta \, J_i\, dX 
=-\int_{\widetilde{\Omega}}\Lambda ^i \underline{\widetilde{f}} \cdot\Lambda ^i\theta \, J_i\, dX 
\end{equation}
for all $\theta \in H^{1,0}(\widetilde{\Omega}),$ where $J_i = \vert \partial_z \rho_i \vert.$
 
Taking the difference between the 
two equations \eqref{equi}, using \eqref{est-rho} 
and setting $\theta = v = \widetilde{u}_1 -\widetilde{u}_2$  one can find a positive constant $C$ such that 
\begin{equation*}%\label{difference1}
\int_{\widetilde{\Omega}}\vert \Lambda ^1v\vert^2\, dX \leq C\sum_{k=1}^6 A_k,
\end{equation*}
where
\begin{equation*}%\label{difference2}
\left\{
\begin{aligned}
A_1&=   \int_{\widetilde{\Omega}}\vert (\Lambda^1 - \Lambda^2) \widetilde{u}_2 \vert \vert \Lambda^1 v\vert \,J_1\,  dX, \qquad   
&&A_2=  \int_{\widetilde{\Omega}}\vert (\Lambda^1 - \Lambda^2) v \vert \vert \Lambda^2 \widetilde{u}_2 \vert \,J_1 dX,  \\
A_3 &= \int_{\widetilde{\Omega}}\vert \Lambda^2\widetilde{u}_2\vert \vert  \Lambda^2v \vert \,\vert J_1-J_2\vert \, dX, \qquad 
&&A_4 = \int_{\widetilde{\Omega}}\vert (\Lambda^1 - \Lambda^2) \underline{\widetilde{f}}  \vert \vert \Lambda^1 v\vert \,J_1\, dX, \\
A_5 &=  \int_{\widetilde{\Omega}}\vert (\Lambda^1 - \Lambda^2) v \vert \vert \Lambda^2 \underline{\widetilde{f}}  \vert \,J_1\, dX,
\qquad 
&&A_6 =  \int_{\widetilde{\Omega}}\vert \Lambda^2  \widetilde{f} \vert \vert  \Lambda^2v \vert \,\vert J_1-J_2\vert\, dX.
\end{aligned}
\right.
\end{equation*}
Using \eqref{diff-lambda}, \eqref{est:phi}, \eqref{est-rho} we can write
\begin{equation}\label{est:A1}  
\begin{aligned}
\vert A_1 \vert &\leq \Vert \beta \Vert_{L^{\infty}(I\times \xR^d)}
\Vert J_1 \Vert_{L^{\infty}(I\times \xR^d)}\Vert \partial_z \widetilde{u}_2\Vert_{L^{2}(I\times \xR^d)}
\Vert \Lambda^1 v\Vert_{L^{2}(\widetilde{\Omega})}\\
& \leq \mathcal{F}( \| (\eta_1, \eta_2)\|_{W^{1, \infty}\times W^{1, \infty}})
\|\eta_1 - \eta_2\|_{W^{1, \infty}}  \| f\|_{H^{\mez}}\Vert \Lambda^1 v\Vert_{L^{2}(\widetilde{\Omega})}.
\end{aligned}
\end{equation}
Since $\Lambda_j^1 - \Lambda_j^2 = \frac{\beta_j}{\partial_z \rho_1}\Lambda_1^1$ 
the term $A_2$ can be bounded by the right hand side of \eqref{est:A1}.

Now we have $\Vert J_1 - J_2 \Vert _{L^{\infty}(I\times \xR^d)} \leq C\Vert \eta_1 -\eta_2 \Vert_{W^{1,\infty}(\xR^d)}$ and 
$$
\Vert \Lambda^2 v\Vert_{L^{2}(\widetilde{\Omega})}
\leq \mathcal{F}( \| (\eta_1, \eta_2)\|_{W^{1, \infty}\times W^{1, \infty}}) \Vert \Lambda^1v \Vert _{L^{2}(\widetilde{\Omega})}.
$$
So using \eqref{est:phi} we see that the term $A_3$ 
can be also estimated by the right hand side of \eqref{est:A1}. 
To estimate the terms $A_4$ to $A_6$ we use the same arguments and also \eqref{est:psi-H1}. 
This completes the proof.
\end{proof}

 %%%%%%%%%%%%%%%%%%%%%%%%%%%%%%%%%%%%%%%%%%%%%%%%%%%

Let us finish this definition section by recalling 
also the following result which is a consequence of~\cite[Lemma 2.9]{ABZ1}.
\begin{lemm}\label{regell}
Assume that~$-\mez \leq a<b \leq -\frac{1}{5}$  then the strip 
$S_{a,b}= \{ (x,y) \in \mathbf{R}^{d+1}\,:\,ah<y- \eta(x)< bh\}$ 
is included in~$\Omega$ and for any~$k \ge 1$, there exists~$C>0$ such that 
$$
\lA\phi\rA_{H^{k}(S_{a,b})}\leq C \|\psi\|_{H^\mez ({\mathbf{R}}^d)}.
$$
\end{lemm}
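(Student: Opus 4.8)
The plan is to deduce Lemma~\ref{regell} from interior elliptic regularity together with the two basic variational bounds already available, namely the energy estimate~\eqref{est:phi} and the straightening-of-the-domain estimates in Lemma~\ref{rho:diffeo}. First I would observe that the inclusion $S_{a,b}\subset\Omega$ is immediate from the definition~\eqref{hypt}: since $-\mez\le a<b\le-\frac15$, we have $ah\ge-h$ and $bh<0$, so every point $(x,y)$ with $ah<y-\eta(x)<bh$ satisfies $\eta(x)-h<y<\eta(x)$ and hence lies in $\Omega_h(t)\subset\Omega$. The point is therefore the regularity estimate, and the key structural fact is that on the strip $S_{a,b}$ the boundary $\Sigma$ is at a fixed positive distance (of order $h$), so $\phi$ solves the homogeneous equation $\Delta_{x,y}\phi=0$ in a slightly larger strip $S_{a',b'}$ with $-\mez\le a'<a<b<b'\le-\frac15$, with no boundary in sight.

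The main step is then classical interior elliptic regularity combined with a bootstrap in the vertical variable. Starting from~\eqref{est:phi}, which gives $\nabla_{x,y}\phi\in L^2(\Omega)$ and in particular $\phi\in H^1(S_{a',b'})$ (using a Poincar\'e inequality in the bounded-in-$y$ strip relative to a slice where the trace is controlled, or simply citing \cite[Lemma 2.9]{ABZ1} at level $k=1$), one applies interior elliptic estimates for the constant-coefficient Laplacian $\Delta_{x,y}$: if $\Delta_{x,y}\phi=0$ in $S_{a',b'}$ and $\phi\in H^k(S_{a',b'})$, then $\phi\in H^{k+1}_{loc}$, with $\lA\phi\rA_{H^{k+1}(S_{a,b})}\le C\lA\phi\rA_{H^k(S_{a',b'})}$ for any slightly smaller strip. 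Iterating this on a decreasing family of nested strips $S_{a^{(j)},b^{(j)}}$ all containing $S_{a,b}$ and all contained in the region $\{-\mez h<y-\eta(x)<-\frac15 h\}$, one gains one derivative at each step and arrives at $\lA\phi\rA_{H^k(S_{a,b})}\le C\lA\phi\rA_{H^1(S_{a',b'})}\le C\lA\psi\rA_{H^\mez}$, where the last inequality is exactly~\eqref{est:phi} (recall $f=\psi$ here). The constant $C$ depends on $k$, on $h$, on the gap between the successive strips, and, through the change of variables needed to make the strips have flat boundaries $\{z=\text{const}\}$, on $\lA\eta\rA_{W^{1,\infty}}$ via Lemma~\ref{rho:diffeo}.

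I expect the only subtlety — and hence the ``main obstacle'' — to be bookkeeping rather than anything conceptual: one must be careful that the interior estimates are applied in a region where $\phi$ genuinely solves the \emph{homogeneous} equation with no contribution from the rough bottom $\Gamma$ (which is why one needs $b\le-\frac15<0$ to stay away from $\Sigma$, and $a\ge-\mez$ together with~\eqref{hypt} to stay away from $\Gamma$), and that the nested strips are built with $x$-uniform vertical thickness so that the elliptic constants do not degenerate in $x$. Since $\eta$ is merely Lipschitz, to apply the standard flat-boundary interior estimates one passes to the coordinates $(x,z)$ of \S\ref{s:flattening}, in which $S_{a,b}$ becomes a flat slab $\{z_0<z<z_1\}$ and the transformed equation is~\eqref{dnint1} with Lipschitz (indeed, away from $\Sigma$, smooth in $z$) coefficients $\alpha,\beta,\gamma$ controlled by $\lA\eta\rA_{W^{1,\infty}}$; there the difference-quotient method in $x$ and the equation itself for $z$-derivatives give the bootstrap. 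This is precisely the content of \cite[Lemma 2.9]{ABZ1}, so in practice the proof is the one-line remark that Lemma~\ref{regell} is just the restatement of that lemma with $f=\psi$, combined with~\eqref{est:phi} to replace the $H^1$-norm of $\phi$ on the right-hand side by $\lA\psi\rA_{H^\mez}$.
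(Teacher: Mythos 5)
Your proposal is correct and matches the paper's treatment: the paper gives no argument beyond citing \cite[Lemma 2.9]{ABZ1}, which is exactly the reduction you arrive at, and your sketch (inclusion of $S_{a,b}$ in $\Omega_h\subset\Omega$ from \eqref{hypt}, then interior elliptic regularity on nested strips at distance $\gtrsim h$ from $\Sigma$ and $\Gamma$, seeded by the variational bound \eqref{est:phi}) is the standard argument behind that lemma. The only superfluous step is the passage to the flattened coordinates: since $\Delta_{x,y}\phi=0$ with constant coefficients and the strips lie at a positive distance (depending only on $h$ and $\lA\eta\rA_{W^{1,\infty}}$) from $\partial\Omega$, interior estimates for harmonic functions apply directly in the original variables, with no boundary regularity needed.
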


\subsection{Paralinearization of the Dirichlet-Neumann operator}
In the case of smooth domains, it is known that, modulo a smoothing operator, $G(\eta)$ 
is a pseudo-differential operator with principal symbol given by
\begin{equation}\label{eq.lambda}
\lambda(x,\xi)\defn\sqrt{(1+\la\partialx\eta(x)\ra^2)\la\xi\ra^2-(\partialx\eta(x)\cdot\xi)^2}.
\end{equation}
Notice that~$\lambda$ is well-defined for any~$C^1$ function~$\eta$. 
The main result of this section allow 
to compare~$G(\eta)$ to the paradifferential operator~$T_\lambda$ when~$\eta$ 
has limited regularity. Namely we want to estimate 
the operator
$$
R(\eta)=G(\eta)-T_\lambda.
$$
Such an analysis was at the heart of our previous work~\cite{AM}~\cite[Proposition 3.14]{ABZ1} 
for ``smooth domains" ($\eta \in H^{s+\mez}, s> 2+ \frac d 2$). 
Here we are able to lower the regularity thresholds down to $s>\mez+\frac d 2$. 
The following results, 
which we think are of independent interest, complement  
previous estimates about the Dirichlet-Neumann operator 
by Craig-Schanz-Sulem~\cite{CSS}, 
Beyer--G\"unther~\cite{BG}, Wu~\cite{WuInvent,WuJAMS}, Lannes~\cite{LannesJAMS}.
\begin{theo}\label{coro:estiDN}
Let~$d\ge 1$,~$s>\mez+\frac{d}{2}$ and~$\frac 1 2 \leq \sigma \leq s+ \frac 1 2$. 
Then there exists a non-decreasing function~$\mathcal{F}\colon\xR_+\rightarrow\xR_+$ 
such that, for all 
$\eta\in H^{s+\mez}({\mathbf{R}}^d)$ and all~$f\in H^{\sigma}({\mathbf{R}}^d)$, we have 
$G(\eta)f\in H^{\sigma-1}({\mathbf{R}}^d)$, together with the estimate
\begin{equation}\label{ests+2}
\lA G(\eta)f \rA_{H^{\sigma-1}({\mathbf{R}}^d)}\le 
\mathcal{F}\bigl(\| \eta \|_{H^{s+\mez}({\mathbf{R}}^d)}\bigr)\lA f\rA_{H^{\sigma}({\mathbf{R}}^d)}.
\end{equation}
\end{theo}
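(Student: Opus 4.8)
The plan is to reduce the problem to the study of the flattened equation \eqref{dnint1} on the slab $\widetilde\Omega_1 = \xR^d \times (-1,0)$ and then to run a standard paradifferential parabolic-factorization argument. First I would note that away from the free surface, namely on the strip $S_{a,b}$, elliptic regularity (Lemma~\ref{regell}) controls $\phi$ by $\|\psi\|_{H^\mez}$, and by trace theorems one controls $\widetilde\phi$ and $\partial_z\widetilde\phi$ at $z=-1/2$ in terms of $\|f\|_{H^\sigma}$ (indeed, one gains regularity in the interior). Hence it suffices to work on $z\in[-1/2,0]$ and to propagate the $H^\sigma\times H^{\sigma-1}$ bound on $(\widetilde\phi(z),\partial_z\widetilde\phi(z))$ up to $z=0$, since $G(\eta)f = \widetilde U\arrowvert_{z=0}$ with $\widetilde U = \Lambda_1\widetilde\phi - \nabla_x\rho\cdot\Lambda_2\widetilde\phi$ (see \eqref{def2:DN}), and $\widetilde U\arrowvert_{z=0} = (\partial_z\widetilde\phi - \nabla_x\eta\cdot\nabla_x\widetilde\phi)\arrowvert_{z=0}$.

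The core step is the paradifferential factorization of the second-order operator in \eqref{dnint1}. One writes
$$
\partial_z^2 + \alpha\Delta_x + \beta\cdot\nabla_x\partial_z - \gamma\partial_z
= \bigl(\partial_z - T_A\bigr)\bigl(\partial_z - T_B\bigr) + R_0,
$$
where $A,B$ are symbols of order $1$ solving, at the symbolic level, $A+B = i\beta\cdot\xi$ and $AB = -\alpha|\xi|^2$ modulo lower order, i.e. $B = \frac{1}{2}\bigl(-i\beta\cdot\xi - \sqrt{4\alpha|\xi|^2 - (\beta\cdot\xi)^2}\bigr)$ is the ``good'' (decaying) root with $\RE B \le -c\langle\xi\rangle$, and $A$ is the ``bad'' root with $\RE A \ge c\langle\xi\rangle$. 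The coefficients $\alpha,\beta$ lie in $X^{s-\mez}(I)$ with $s-\mez>d/2$ by Lemma~\ref{rho:diffeo-s}, so $A,B\in\Gamma^1_{s-\mez-d/2}$-type classes with the right ellipticity; the remainder $R_0$ gains $(s-\mez-d/2)$ derivatives by the symbolic calculus Theorem~\ref{theo:sc0}, and the terms where $\alpha,\beta$ are replaced by their non-paradifferential parts are handled by the product rules (Proposition~\ref{lemPa}, Corollary, item~\ref{it.4}). Setting $w = (\partial_z - T_B)\widetilde\phi$, the equation becomes $(\partial_z - T_A)w = R_0\widetilde\phi + (\text{lower order})$; since $-T_A$ is (minus) an elliptic operator with $\RE A \ge c\langle\xi\rangle$, this is a \emph{backward} parabolic equation, well-posed going from $z=-1/2$ towards $z=0$, and Proposition~\ref{prop:max} (applied to $-\partial_z$, equivalently time-reversed) gives $w\in X^{\sigma-1}([-1/2,0])$ with the bound $\|w\|_{X^{\sigma-1}}\le \mathcal F(\|\eta\|_{H^{s+\mez}})\|f\|_{H^\sigma}$, using the interior data at $z=-1/2$ as initial condition and the source bound. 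Then $\widetilde\phi$ solves the \emph{forward} parabolic equation $(\partial_z - T_B)\widetilde\phi = w$ with $\RE B\le -c\langle\xi\rangle$, so Proposition~\ref{prop:max} directly yields $\widetilde\phi\in X^\sigma([-1/2,0])$, and in particular $\partial_z\widetilde\phi = w + T_B\widetilde\phi \in C^0_z([-1/2,0];H^{\sigma-1})$ (note $T_B$ has order $1$). Evaluating at $z=0$ and recombining gives $G(\eta)f\in H^{\sigma-1}$ with the claimed estimate.

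The main obstacle I expect is bookkeeping the low regularity of the coefficients: because $\eta\in H^{s+\mez}$ with $s$ only slightly above $\mez + d/2$, the coefficients $\alpha,\beta$ in \eqref{alpha} are only in $X^{s-\mez}$, so $\partial_z^2\rho,\Delta_x\rho$ and hence $\gamma$ in \eqref{alpha} are genuinely of negative Sobolev regularity in $x$; the first-order term $T_\gamma\partial_z$ and the paralinearization remainders must be absorbed as sources in $Y^{\sigma-1}$, which forces a careful choice of $\delta$ (via Lemma~\ref{rho:diffeo-s}, the $\sqrt\delta$ smallness) to make the various operator norms small enough to close a fixed-point/continuity argument rather than merely an a priori estimate. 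A secondary technical point is the regularity of the roots $A,B$: $\sqrt{4\alpha|\xi|^2 - (\beta\cdot\xi)^2}$ is only as smooth as $\alpha,\beta$ permit, but since $s-\mez - d/2 \in (0,1)$ suffices for the parabolic estimate (Proposition~\ref{prop:max} needs $\rho\in(0,1)$), this is exactly in range. Finally, the full range $\mez\le\sigma\le s+\mez$ is reached by starting from the base case $\sigma=\mez$ (which is essentially Theorem~\ref{th.23} together with the interior gain) and noting that the argument above is uniform in $\sigma$ on this interval, with the endpoint $\sigma = s+\mez$ limited precisely by the regularity of the coefficients.
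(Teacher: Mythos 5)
There is a genuine gap, in two places. First, your factorization is in the wrong order and your parabolic solves go in the wrong direction. You put the growing root $A$ ($\RE A\ge c\langle\xi\rangle$) in the \emph{outer} factor, so your auxiliary unknown $w=(\partial_z-T_B)\widetilde\phi$ satisfies $\partial_z w-T_Aw=F$. That equation is dissipative only when integrated \emph{downward} from $z=0$; integrated from $z=-1/2$ toward $z=0$, as you propose, the propagator amplifies frequencies like $e^{(z+\frac12)c|\xi|}$ and Proposition~\ref{prop:max} gives nothing. But solving it downward would require the trace $w\arrowvert_{z=0}$, which is essentially $\partial_z\widetilde\phi\arrowvert_{z=0}$, i.e.\ the very quantity equivalent to $G(\eta)f$ that you are trying to estimate --- the scheme is circular. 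The noncircular arrangement is the one of Lemma~\ref{lemm:total}: the \emph{decaying} root sits outside, so the inner unknown $w=(\partial_z-T_A)v$ solves the forward-dissipative equation whose data at the bottom is made harmless by a cutoff, and then $v$ itself solves the backward-dissipative equation $\partial_z v-T_Av=w$ with the known Dirichlet trace $f$ at $z=0$. Your interior data at $z=-1/2$ (Lemma~\ref{regell}) cannot replace this, since the missing data in your scheme is that of $w$ at the top, not of $\widetilde\phi$ at the bottom.

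Second, even with the correct ordering, your one-pass claim ("the argument is uniform in $\sigma$") does not close. All the source estimates --- $\gamma\partial_z v$ (Lemma~\ref{UV}), the paraproduct remainders $(T_\alpha-\alpha)\Delta v$, $(T_\beta-\beta)\cdot\nabla\partial_z v$ (Lemma~\ref{l3.8}), and the symbolic-calculus remainder $F_3$ in Lemma~\ref{lemm:total} --- control the right-hand side only in $Y^{\sigma+\eps}$ in terms of $\lA\nabla_{x,z}v\rA_{X^{\sigma}}$, with $\eps\le\mez$ and $\eps<s-\mez-\frac d2$ (possibly tiny), while the only regularity available a priori for the variational solution is $X^{-\mez}$. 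Hence you cannot place the sources in $Y^{\sigma-1}$ for $\sigma$ up to $s+\mez$ in a single solve; this is exactly why the paper proves the increment $\mathcal{H}_\sigma\Rightarrow\mathcal{H}_{\sigma+\eps}$ (Proposition~\ref{Linduction}) and iterates it from the variational level $\mathcal{H}_{-\mez}$ (Proposition~\ref{p1}), thereby obtaining elliptic \emph{regularity} and not merely an a priori bound. Your fallback --- a fixed point at top regularity using smallness in $\delta$ --- is not available: Lemma~\ref{rho:diffeo-s} gives $\sqrt\delta$-smallness only for $\partial_z\rho-h$, while $\|\nabla_x\rho\|_{X^{s-\mez}}$ scales like $1/\sqrt\delta$, so the remainder norms are of size $\mathcal F(\|\eta\|_{H^{s+\mez}})$, not small; and even a successful fixed point would still need to be identified with the variational solution, which is precisely what the bootstrap from $\mathcal{H}_{-\mez}$ provides. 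Once these two points are repaired (decaying root outside plus the $\eps$-step induction), your reduction to the flattened problem and the final trace argument coincide with the paper's proof in \S\ref{s:estiDN}.
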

We also prove error estimates. 
\begin{prop}\label{coro:paraDN1}
Let~$d\ge 1$ and~$s>\mez+\frac{d}{2}$. For any~$\frac{1}{2}\leq \sigma \leq s$ and any
$$
0<\eps\leq \mez, \qquad \eps< s-\mez-\frac{d}{2},
$$
there exists a non-decreasing function~$\mathcal{F}\colon\xR_+\rightarrow\xR_+$ such that 
$R(\eta)f\defn G(\eta)f-T_\lambda f$ satisfies
\begin{equation*}
\lA R(\eta)f\rA_{H^{\sigma-1+\eps}({\mathbf{R}}^d)}\le \mathcal{F} \bigl(\| \eta \|_{H^{s+\mez}({\mathbf{R}}^d)}\bigr)\lA f\rA_{H^{\sigma}({\mathbf{R}}^d)}.
\end{equation*}
\end{prop}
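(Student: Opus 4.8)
The plan is to flatten the fluid domain as in~\S\ref{s:flattening}, to decouple the resulting elliptic equation~\eqref{dnint1} for $\widetilde{\phi}(x,z)\defn\phi(x,\rho(x,z))$ into two first-order evolution equations in the transverse variable~$z$ of opposite directions, and to analyse the forward (parabolic) one by means of Proposition~\ref{prop:max}. The gain of~$\eps$ derivatives in $R(\eta)f=G(\eta)f-T_\lambda f$ will come from the H\"older regularity of the coefficients $\alpha,\beta,\gamma$ of~\eqref{alpha}: setting $\eps_0\defn s-\mez-\frac d2>\eps$, Lemma~\ref{rho:diffeo-s} and the embedding $H^{s-\mez}\subset C^{\eps_0}_*$ show that $\alpha,\beta$ are controlled in $X^{s-\mez}$-type spaces, while $\gamma$, which contains second $x$-derivatives of~$\rho$, is controlled only in an $X^{s-\tdm}$-type space. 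Since near $z=-1$ the function $\widetilde{\phi}$ is smooth and controlled in every $H^k$ by Lemma~\ref{regell}, it suffices to work on a strip $J=(z_*,0)$ for some fixed $z_*\in(-\mez,0)$; there, by the elliptic estimates underlying Theorem~\ref{coro:estiDN}, one has $\widetilde{\phi}\in X^\sigma(J)$ with $\lA\widetilde{\phi}\rA_{X^\sigma(J)}\le\mathcal F(\lA\eta\rA_{H^{s+\mez}})\lA f\rA_{H^\sigma}$ and, by the same estimates, $\partial_z\widetilde{\phi}\in L^2_z(J;H^{\sigma-\mez})$ and $\partial_z^2\widetilde{\phi}\in L^2_z(J;H^{\sigma-\tdm})$.

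First I would replace the coefficients in~\eqref{dnint1} by paraproducts, writing
$$
\partial_z^2\widetilde{\phi}+T_\alpha\Delta_x\widetilde{\phi}+T_\beta\cdot\nabla_x\partial_z\widetilde{\phi}-T_\gamma\partial_z\widetilde{\phi}=f_1,
$$
where $f_1$ gathers the errors $(\alpha-T_\alpha)\Delta_x\widetilde{\phi}$, $(\beta-T_\beta)\cdot\nabla_x\partial_z\widetilde{\phi}$ and $(\gamma-T_\gamma)\partial_z\widetilde{\phi}$. Using Proposition~\ref{lemPa} and the product and remainder rules of~\S\ref{sec.2.3}, together with the coefficient bounds above, each of these lies in $L^2_z(J;H^{\sigma-\tdm+\eps})\subset Y^{\sigma-1+\eps}(J)$; the crucial point is that it is the elliptic-smoothing gain $\widetilde{\phi}\in L^2_z(J;H^{\sigma+\mez})$, combined with $\eps<\eps_0$, that makes these rough-coefficient errors land in $Y^{\sigma-1+\eps}(J)$. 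I would then factor the paralinearized operator: writing $\Lambda(x,z,\xi)\defn\bigl((1+|\nabla_x\rho|^2)|\xi|^2-(\nabla_x\rho\cdot\xi)^2\bigr)^{1/2}$, so that $\Lambda\arrowvert_{z=0}=\lambda$ (cf.~\eqref{eq.lambda}), one looks for symbols $A,B$ of order~$1$ with $\RE A\ge c\langle\xi\rangle$, $\RE B\le -c\langle\xi\rangle$, and principal symbol of~$A$ equal to $\frac{\partial_z\rho}{1+|\nabla_x\rho|^2}\bigl(i\,\nabla_x\rho\cdot\xi+\Lambda\bigr)$, such that
$$
(\partial_z-T_B)(\partial_z-T_A)=\partial_z^2+T_\alpha\Delta_x+T_\beta\cdot\nabla_x\partial_z-T_\gamma\partial_z+\mathcal{R}_1.
$$
Matching symbols via Theorem~\ref{theo:sc0} determines $A$ and $B$ (at principal order, the two roots $\tau$ of $\tau^2-(\gamma-i\beta\cdot\xi)\tau-\alpha|\xi|^2=0$), and the error $\mathcal{R}_1$ — built from $T_BT_A-T_{BA}$ and from $T_{\partial_z A}$, hence from the rough coefficients — satisfies, once applied to $\widetilde{\phi}\in L^2_z(J;H^{\sigma+\mez})$ and using $\eps<\eps_0$ (together with Proposition~\ref{prop.niSbis} for the negative-regularity parts coming from $\gamma$ and $\partial_z A$), $\mathcal{R}_1\widetilde{\phi}\in Y^{\sigma-1+\eps}(J)$.

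Setting $w\defn(\partial_z-T_A)\widetilde{\phi}$, the two displays give $\partial_z w+T_{-B}w=g$ with $g\defn f_1-\mathcal{R}_1\widetilde{\phi}\in Y^{\sigma-1+\eps}(J)$, while Lemma~\ref{regell} bounds the datum $w(z_*)$ in $H^{\sigma-1+\eps}({\mathbf{R}}^d)$. Since $\RE(-B)\ge c\langle\xi\rangle$, Proposition~\ref{prop:max} (with $r=\sigma-1+\eps$ and $p=-B$) yields $w\in X^{\sigma-1+\eps}(J)$, in particular $w(0)\in H^{\sigma-1+\eps}({\mathbf{R}}^d)$ with the required bound. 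To conclude, restrict to $z=0$: using $\widetilde{\phi}\arrowvert_{z=0}=f$ and $\nabla_x\rho\arrowvert_{z=0}=\nabla\eta$, the relation $w=(\partial_z-T_A)\widetilde{\phi}$ gives $\partial_z\widetilde{\phi}\arrowvert_{z=0}=T_{A_0}f+w(0)$, where $A_0\defn A\arrowvert_{z=0}$ has principal symbol $\frac{\partial_z\rho\arrowvert_{z=0}}{1+|\nabla\eta|^2}\bigl(i\,\nabla\eta\cdot\xi+\lambda\bigr)$. Inserting this into $G(\eta)f=\widetilde{U}\arrowvert_{z=0}=\frac{1+|\nabla\eta|^2}{\partial_z\rho\arrowvert_{z=0}}\partial_z\widetilde{\phi}\arrowvert_{z=0}-\nabla\eta\cdot\nabla_x f$ (cf.~\eqref{def2:DN}) and paralinearizing the two products via Theorem~\ref{theo:sc0}$(ii)$ and the remainder estimates of~\S\ref{sec.2.3}, one gets $G(\eta)f=T_q f-T_{i\nabla\eta\cdot\xi}f+Ef+(\text{bounded})\,w(0)$, with $q$ of principal symbol $i\,\nabla\eta\cdot\xi+\lambda$ and $E$ an operator of order $\le 1-\eps$. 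Since the principal parts of $q$ and $i\,\nabla\eta\cdot\xi$ cancel, leaving $\lambda$, we obtain $R(\eta)f\in H^{\sigma-1+\eps}({\mathbf{R}}^d)$ with $\lA R(\eta)f\rA_{H^{\sigma-1+\eps}}\le\mathcal F(\lA\eta\rA_{H^{s+\mez}})\lA f\rA_{H^\sigma}$.

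The main difficulty is the bookkeeping of all these error terms. Because $\gamma$ involves second $x$-derivatives of $\rho$ and is only of regularity $C^{\eps_0-1}_*$ (a negative index when $s<\tdm+\frac d2$), the naive factorization produces remainders that, viewed as \emph{operators}, are of order $2-\eps_0\gg 1-\eps$; it is only because they are evaluated on $\widetilde{\phi}$, which enjoys the elliptic gain $\widetilde{\phi}\in L^2_z(J;H^{\sigma+\mez})$, and because $\eps$ is taken strictly below $s-\mez-\frac d2$, that everything lands in $Y^{\sigma-1+\eps}(J)$. For the same reason exactly one factorization step must be used: iterating it to gain further derivatives would demand more regularity of $\eta$ than is available.
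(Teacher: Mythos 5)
Your proposal is correct and follows essentially the same route as the paper's own proof: flatten the domain, paralinearize the coefficients of the elliptic equation \eqref{dnint1}, factor into forward/backward paradifferential parabolic factors with the same symbols $a,A$ as in Lemma~\ref{lemm:total}, estimate $w=(\partial_z-T_A)\widetilde{\phi}$ via Proposition~\ref{prop:max} to get the trace bound on $(\partial_z\widetilde{\phi}-T_A\widetilde{\phi})\arrowvert_{z=0}$, and then paralinearize the boundary expression for $G(\eta)f$ and use the exact cancellation $\frac{1+|\nabla\eta|^2}{\partial_z\rho}A-i\nabla\eta\cdot\xi\arrowvert_{z=0}=\lambda$. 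The only cosmetic deviations are that the paper treats $\gamma\,\partial_z v$ purely as a source term (Lemma~\ref{UV}) rather than absorbing $\gamma$ into the factorization, and kills the initial datum with a cutoff in $z$ instead of invoking Lemma~\ref{regell} at an interior slice $z_*$.
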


To prove Theorem~\ref{coro:estiDN} and Proposition~\ref{coro:paraDN1}, 
we shall use the diffeomorphism $\rho$ defined by \eqref{diffeo}. 
Then recall from \eqref{dnint1} that the function 
$\widetilde{\phi}(x,z)=\phi(x,\rho(x,z))$ satisfies 
\begin{equation}\label{dnint1-b}
(\partial_z^2  +\alpha\Delta_x  + \beta \cdot\nabla_x\partial_z   - \gamma \partial_z )\widetilde{\phi} =0,\qquad 
\widetilde{\phi} \arrowvert_{z=0}=\phi\arrowvert_{y=\eta(x)}=
f,
\end{equation}
where the coefficients are given by \eqref{alpha}, and
\begin{equation*}
G(\eta) f= (\Lambda_1 \widetilde{\phi}  - \nabla \rho\cdot\Lambda_2 \widetilde{\phi} ) \arrowvert_{z=0}= \big(\frac{1 +|\partialx \rho |^2}{\partial_z \rho} \partial_z \widetilde{\phi}   - \partialx \rho\cdot\partialx \widetilde{\phi} \big)
\big\arrowvert_{z=0}.
\end{equation*}

We conclude this paragraph by stating 
elliptic estimates for the solutions of \eqref{dnint1-b}.
For later purpose, we will consider the non-homogeneous case. 
This yields no new difficulty and will be useful later to estimate the pressure (see Section~\ref{S:tameP}). 
We thus consider the problem
\begin{equation}\label{dnint1F}
\partial_z^2 v +\alpha\Delta v + \beta \cdot\partialx\partial_z v  
- \gamma \partial_z v=F_0,\quad v\arrowvert_{z=0}=f,
\end{equation}
where~$f=f(x)$ and~$F_0=F_0(x,z)$ are given functions. 
Recall that for $\mu\in\xR$, the spaces~$X^\mu(I), Y^\mu(I)$ are defined by (see~\eqref{XY}): 
\begin{equation}\begin{aligned}
X^\mu(I)&=C^0_z(I;H^{\mu}({\mathbf{R}}^d))\cap L^2_z(I;H^{\mu+\mez}({\mathbf{R}}^d)),\\
Y^\mu(I)&=L^1_z(I;H^{\mu}({\mathbf{R}}^d))+L^2_z(I;H^{\mu-\mez}({\mathbf{R}}^d)).
\end{aligned}
\end{equation}
Recall that~$H^\sigma({\mathbf{R}}^d)$ 
is an algebra for $\sigma>d/2$ and so is~$C^0(I; H^\sigma({\mathbf{R}}^d))$. Also, using the tame estimate~\eqref{prS2}, we obtain the following
\begin{lemm}\label{lem.alg} Assume that~$\sigma>\frac d 2~$. 
Then the space~$X^\sigma(I)$ is an algebra. Moreover if $F:\xC^N \to \xC$ is a $C^\infty$-bounded function such that $F(0) =0$ one can find  non decreasing functions $\mathcal{F}, \mathcal{F}_1$ from $\xR^+$ to $\xR^+$ such that
$$
\Vert F(U)\Vert_{X^\sigma(I)}
\leq \mathcal{F}(\Vert U\Vert_{L^\infty(I\times \xR^d)}) \Vert U\Vert_{X^\sigma(I)} \leq \mathcal{F}_1(\Vert U\Vert_{ X^\sigma(I)}).
$$
\end{lemm}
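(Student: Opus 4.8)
The plan is to treat the two constituents of the $X^\sigma(I)$-norm — the $C^0_z(I;H^\sigma)$ part and the $L^2_z(I;H^{\sigma+\mez})$ part — separately, in each case reducing to an estimate on the frozen-time slice $u(z)$, $z\in I$, and then reassembling. The crucial elementary observation is that $\sigma>d/2$ forces $\sigma+\mez>d/2$ as well, so both the algebra property of Sobolev spaces and the composition bound \eqref{esti:F(u)} of the preceding corollary are at our disposal at the two regularities $\sigma$ and $\sigma+\mez$ simultaneously.

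For the algebra property, given $u,v\in X^\sigma(I)$ I would argue as follows. Since $H^\sigma({\mathbf{R}}^d)$ is an algebra, $\|u(z)v(z)\|_{H^\sigma}\lesssim \|u(z)\|_{H^\sigma}\|v(z)\|_{H^\sigma}\le \|u\|_{C^0_zH^\sigma}\|v\|_{C^0_zH^\sigma}$ for every $z$, and $z\mapsto u(z)v(z)$ is continuous with values in $H^\sigma$ because multiplication is continuous from $H^\sigma\times H^\sigma$ into $H^\sigma$. For the higher-regularity part I would invoke the tame estimate \eqref{prS2} (applicable since $\sigma+\mez\ge0$) together with the embedding $H^\sigma\hookrightarrow L^\infty$, to obtain for a.e.\ $z$
\[
\|u(z)v(z)\|_{H^{\sigma+\mez}}\lesssim \|u(z)\|_{H^{\sigma+\mez}}\|v\|_{C^0_zH^\sigma}+\|u\|_{C^0_zH^\sigma}\|v(z)\|_{H^{\sigma+\mez}},
\]
and then take the $L^2_z(I)$-norm. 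Combining the two yields $\|uv\|_{X^\sigma(I)}\lesssim \|u\|_{X^\sigma(I)}\|v\|_{X^\sigma(I)}$, which is the algebra property.

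For the composition estimate I would apply \eqref{esti:F(u)} slice by slice at the two regularities: for every $z\in I$,
\[
\|F(U(z))\|_{H^\sigma}\le\mathcal{F}\bigl(\|U(z)\|_{L^\infty}\bigr)\|U(z)\|_{H^\sigma},\qquad
\|F(U(z))\|_{H^{\sigma+\mez}}\le\mathcal{F}\bigl(\|U(z)\|_{L^\infty}\bigr)\|U(z)\|_{H^{\sigma+\mez}},
\]
with one and the same non-decreasing $\mathcal{F}$. Since $\mathcal{F}$ is non-decreasing and $\|U(z)\|_{L^\infty}\le\|U\|_{L^\infty(I\times{\mathbf{R}}^d)}$, the first inequality gives $\sup_z\|F(U(z))\|_{H^\sigma}\le\mathcal{F}(\|U\|_{L^\infty(I\times{\mathbf{R}}^d)})\|U\|_{C^0_zH^\sigma}$; squaring the second and integrating over $I$ gives $\|F(U)\|_{L^2_zH^{\sigma+\mez}}\le\mathcal{F}(\|U\|_{L^\infty(I\times{\mathbf{R}}^d)})\|U\|_{L^2_zH^{\sigma+\mez}}$. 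Summing the two bounds produces the first asserted inequality $\|F(U)\|_{X^\sigma(I)}\le\mathcal{F}(\|U\|_{L^\infty(I\times{\mathbf{R}}^d)})\|U\|_{X^\sigma(I)}$, and the second one follows upon inserting $\|U\|_{L^\infty(I\times{\mathbf{R}}^d)}\le C\|U\|_{C^0_zH^\sigma}\le C\|U\|_{X^\sigma(I)}$ and setting $\mathcal{F}_1(r):=r\,\mathcal{F}(Cr)$, which is non-decreasing.

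\textbf{Main obstacle.} There is no genuine difficulty here; the only points demanding some care are of a bookkeeping nature: propagating the slice estimates \emph{uniformly} in $z$ for the $C^0_z$ component and \emph{in $L^2$} in $z$ for the $L^2_z$ component, and checking that the resulting function $z\mapsto F(U(z))$ is actually continuous into $H^\sigma$ — which is the classical fact that $V\mapsto F(V)$ is continuous on $H^\sigma({\mathbf{R}}^d)$ for $\sigma>d/2$ (cf.\ Meyer~\cite{Meyer}) — and measurable, indeed in $L^2_z$, into $H^{\sigma+\mez}$.
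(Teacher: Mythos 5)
Your proof is correct and follows essentially the route the paper indicates (the lemma is stated there without a detailed proof, as a consequence of the algebra property of $H^\sigma$ for $\sigma>d/2$, the tame estimate \eqref{prS2}, and the slice-wise composition bound \eqref{esti:F(u)}). The only cosmetic point is that the functions $\mathcal{F}$ coming from \eqref{esti:F(u)} at the regularities $\sigma$ and $\sigma+\mez$ need not coincide, but one may simply replace both by their maximum, so your argument stands as written.
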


With these notations, we want to estimate 
the~$X^\sigma$-norm of~$\nabla_{x,z}v$ in terms of the~$H^{\sigma+1}$-norm 
of the data and the~$Y^\sigma$-norm of the source term. 
An important point is that we need to consider the case of rough coefficients. 
In this section we only assume that 
$\eta\in H^{s+\mez}({\mathbf{R}}^d)$ for some~$s>1/2+d/2$.  
An interesting point is that we shall prove elliptic estimates 
as well as elliptic regularity (in other words, we do not prove only {\em a priori} estimates). 
Our only assumption is that~$v$ is given by a variational problem, 
so that% one has an estimate for the~$H^1$-norm of~$v$, and consequently
\begin{equation}\label{assu:var}
\lA \nabla_{x,z} v\rA_{X^{-\mez}([-1,0])}
<+\infty.
\end{equation}
\begin{rema}\label{rema:321}
In the case where~$v(x,z)=\widetilde{\phi}(x,z)=\phi(x,\rho(x,z))$ with~$\phi$ the variational solution of
$$
\Delta_{x,y}\phi=0, \quad \phi\arrowvert_{y=\eta}=f, \quad \partial_n\phi=0 \text{ on }\Gamma,
$$
then~\eqref{eq.Xs} together with \eqref{est:phi} shows that~$v$ satisfies the assumption~\eqref{assu:var}.
\end{rema}
\begin{prop}\label{p1}
Let~$d\ge 1$ and 
$$
s> \mez + \frac d 2,\quad - \frac 1 2 \leq \sigma \leq s - \frac 1 2.
$$ 
Consider ~$f\in H^{\sigma+1}({\mathbf{R}}^d)$, 
$F_0\in Y^\sigma([-1,0])$ and~$v$ satisfying the assumption~\eqref{assu:var} 
solution to~\eqref{dnint1F}. 
Then for any~$z_0\in (-1,0)$, 
$
\nabla_{x,z}v \in X^{\sigma}([z_0,0])$,
and
$$
\lA \nabla_{x,z} v\rA_{X^{\sigma}([z_0,0])}
\le \mathcal{F}(\| \eta \|_{H^{s+\mez}})\left\{\lA f\rA_{H^{\sigma+1}}+\lA F_0\rA_{Y^\sigma([-1,0])}
+ \lA \nabla_{x,z} v\rA_{X^{-\mez}([-1,0])} \right\},
$$
for some non-decreasing function~$\mathcal{F}\colon\xR_+\rightarrow\xR_+$ depending only on~$\sigma$. 
\end{prop}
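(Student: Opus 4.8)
The plan is to factorise the paralinearised version of the elliptic operator in~\eqref{dnint1F} as a product of two first--order parabolic operators — one running ``downward'' and one ``upward'' in~$z$ — and then to apply the parabolic estimate of Proposition~\ref{prop:max} twice. Since the statement is one of elliptic \emph{regularity} and not merely an a~priori bound, the argument is organised as a finite induction on~$\sigma$, starting from the weak information~\eqref{assu:var} and improving the Sobolev index by a fixed amount at each stage. First, by Lemma~\ref{rho:diffeo} the function~$\partial_z\rho$ is bounded from below, and by Lemma~\ref{rho:diffeo-s} one has~$\nabla_x\rho,\partial_z\rho-h\in X^{s-\mez}([-1,0])$; since~$s-\mez>d/2$, Lemma~\ref{lem.alg} shows that~$X^{s-\mez}$ is an algebra stable under composition with smooth functions vanishing at~$0$, whence~$\alpha,\beta\in X^{s-\mez}$ with~$\alpha\ge c_0>0$ and~$\gamma\in X^{s-\tdm}$, all with norms controlled by~$\mathcal{F}(\|\eta\|_{H^{s+\mez}})$. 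We set~$r_0=\min\bigl(s-\mez-\tfrac{d}{2},\mez\bigr)>0$.

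One replaces the products~$\alpha\Delta v$, $\beta\cdot\nabla_x\partial_z v$ and~$\gamma\partial_z v$ by the corresponding paradifferential operators, the errors being split according to Bony's paraproduct decomposition and estimated — using crucially the extra half-derivative carried by the~$L^2_z$ component of the spaces~$X^{\sigma}$ of~\eqref{XY} — by Proposition~\ref{lemPa} and the estimates~\eqref{Bony},~\eqref{niS}, so that they all gain~$r_0$ derivatives on the~$Y$--scale. One then looks for symbols~$A=A(z;x,\xi)$ and~$a=a(z;x,\xi)$ of order~$1$ such that
$$
\partial_z^2+T_\alpha\Delta+T_\beta\cdot\nabla_x\partial_z-T_\gamma\partial_z
=(\partial_z-T_a)(\partial_z-T_A)+R_1\partial_z+R_0 ,
$$
with~$R_0,R_1$ again gaining~$r_0$ derivatives. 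Expanding the right--hand side and matching symbols by means of the quantitative symbolic calculus of Theorem~\ref{theo:sc0} (and Proposition~\ref{prop.niSbis} for the rough symbols), the principal parts~$A_1,a_1$ must be the two roots of~$\zeta^2+i(\beta\cdot\xi)\zeta-\alpha\la\xi\ra^2=0$, i.e.
$$
A_1,a_1=\mez\Bigl(-i\beta\cdot\xi\pm\sqrt{4\alpha\la\xi\ra^2-(\beta\cdot\xi)^2}\,\Bigr),
$$
and the explicit formulas~\eqref{alpha} together with Cauchy--Schwarz give~$4\alpha\la\xi\ra^2-(\beta\cdot\xi)^2\ge c_1\la\xi\ra^2>0$. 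Choosing the~$+$ sign for~$A_1$ and the~$-$ sign for~$a_1$ one gets~$\RE A_1\ge c\la\xi\ra$ and~$\RE(-a_1)\ge c\la\xi\ra$ (the harmless low--frequency contribution being absorbed into~$R_0$), and the subprincipal symbols are then uniquely determined — using~$a_1\ne A_1$ — so as to cancel the order--one contributions of~$\partial_z A_1$, of~$T_\gamma$ and of the symbolic calculus remainders.

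Put now~$w=(\partial_z-T_A)v$. The factorisation yields
$$
\partial_z w+T_{-a}w=F_0+Rv,
$$
where~$R$ collects all the paralinearisation and factorisation remainders and, by the above, gains~$r_0$ derivatives; since~$\RE(-a)\ge c\la\xi\ra$ this is a \emph{forward} parabolic equation. Assume inductively that~$\nabla_{x,z}v\in X^{\sigma_k}([z_0^{(k)},0])$ for some~$-\mez\le\sigma_k<\sigma$ and~$z_0^{(k)}\in(-1,0)$, the base case~$\sigma_0=-\mez$,~$z_0^{(0)}=-1$ being precisely~\eqref{assu:var}. Then~$F_0+Rv\in Y^{\sigma_{k+1}}([z_0^{(k)},0])$ with~$\sigma_{k+1}:=\min(\sigma_k+r_0,\sigma)$, and~$w\in X^{\sigma_k}([z_0^{(k)},0])$ so that~$w(z_0^{(k)})\in H^{\sigma_k}$. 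Writing~$w$ as the sum of the solution with source~$F_0+Rv$ and zero data at~$z_0^{(k)}$, which lies in~$X^{\sigma_{k+1}}$ by Proposition~\ref{prop:max}, and of the solution of the homogeneous equation with data~$w(z_0^{(k)})$, which lies in~$X^{\sigma_k}$ and, by the smoothing effect of the homogeneous parabolic flow, in~$X^{\sigma_{k+1}}([z_0^{(k+1)},0])$ for any~$z_0^{(k+1)}\in(z_0^{(k)},0)$, we obtain~$w\in X^{\sigma_{k+1}}([z_0^{(k+1)},0])$ — this is the only step where the strictly interior bottom is used. Finally~$v$ is recovered by solving~$(\partial_z-T_A)v=w$ with~$v\arrowvert_{z=0}=f\in H^{\sigma+1}$; since~$\RE A\ge c\la\xi\ra$ this is a parabolic equation run in the direction of decreasing~$z$, to which (after the change of variable~$z\mapsto-z$) Proposition~\ref{prop:max} applies on~$[z_0^{(k+1)},0]$ and gives~$v\in X^{\sigma_{k+1}+1}$, i.e.~$\nabla_{x,z}v\in X^{\sigma_{k+1}}([z_0^{(k+1)},0])$, with a bound of the claimed form (the constants in Proposition~\ref{prop:max} depending only on~$\sigma_{k+1},r_0,c_0,c_1$ and the semi--norms~$\mathcal{M}^1_{r_0}(A)$,~$\mathcal{M}^1_{r_0}(a)$, hence on~$\mathcal{F}(\|\eta\|_{H^{s+\mez}})$). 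As~$r_0>0$ is fixed, after finitely many steps~$\sigma_k=\sigma$; choosing the auxiliary endpoints~$z_0^{(k)}$ so as to increase to the prescribed~$z_0$ finishes the proof.

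The main obstacle is the factorisation step when the coefficients are only~$C^{r_0}_*$ with~$r_0>0$ possibly small: one must keep precise track of the symbol classes~$\Gamma^m_{r_0}$ and use the quantitative bounds of Theorem~\ref{theo:sc0} and Proposition~\ref{prop.niSbis} to be certain that the remainders~$R_0,R_1$ and the paralinearisation errors genuinely gain~$r_0$ derivatives on the scale~\eqref{XY}. This is what forces the induction increment to be exactly~$r_0$ and makes the half--derivative smoothing built into~$X^\sigma$ (as opposed to the~$Y^\sigma$--scale of the source term) indispensable.
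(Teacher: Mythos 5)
Your proposal is correct and follows essentially the same route as the paper: paralinearization of the coefficients, factorization of the resulting operator into $(\partial_z-T_a)(\partial_z-T_A)$ with the same explicit root symbols, and an induction on $\sigma$ with a fixed increment on shrinking interior intervals, applying Proposition~\ref{prop:max} first to the forward equation for $w=(\partial_z-T_A)v$ and then, after the change $z\mapsto -z$, to the backward equation recovering $v$ from the data $f$ at $z=0$. Only three cosmetic adjustments are needed: the increment must be taken strictly smaller than $s-\mez-\frac d2$ (the estimates of Proposition~\ref{lemPa} require strict inequalities, so $r_0=\min\bigl(s-\mez-\frac d2,\mez\bigr)$ fails at the endpoint); the ``interior smoothing of the homogeneous parabolic flow'' you invoke without proof is exactly what the paper's cutoff $\chi$ supplies, since the commutator term $\chi'(z)(\partial_z-T_A)v$ lies in $L^2_zH^{\sigma+\mez}\subset Y^{\sigma+1}$ and the truncated unknown has zero data at the new left endpoint; and no subprincipal correction of the symbols is available---or needed---at $C^{r_0}$ regularity in $x$, because the principal factorization already produces remainders of order $2-r_0$, which is all your induction uses.
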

To prove Proposition~\ref{p1} we shall proceed by induction on the regularity~$\sigma$. 
\begin{defi}
Given~$\sigma$ such that 
$-1/2\le \sigma\le s-1/2$, we say that the property~$\Hs$ is satisfied 
if for any interval~$I\Subset (-1,0]$,
\begin{equation}\label{av}
\lA \nabla_{x,z} v\rA_{X^\sigma(I)}\le
\mathcal{F}(\| \eta \|_{H^{s+\mez}})\left\{\lA f\rA_{H^{\sigma+1}}+\lA F_0\rA_{Y^\sigma([-1,0])} + \lA \nabla_{x,z} v\rA_{X^{-\mez}([-1, 0])} \right\}  ,
\end{equation}
for some non-decreasing function~$\mathcal{F}\colon\xR_+\rightarrow\xR_+$ 
depending only on~$I$ and $\sigma$.
\end{defi}

With this definition, note that Assumption~\eqref{assu:var} 
means that property~$\mathcal{H}_{-1/2}$ is satisfied. 
Consequently, Proposition~\ref{p1} is 
an immediate consequence of the following proposition 
which will be proved in Sections~\ref{S:NE} (see \S\ref{S:PPLind}).
\begin{prop}\label{Linduction}Let~$s>\frac 1 2 + \frac d 2$. For any~$\eps$ such that
\begin{equation}\label{cond:eps}
0<\eps\leq \mez, \qquad \eps< s-\mez-\frac{d}{2},
\end{equation}
if~$\Hs$ is satisfied for some ~$-1/2\le \sigma\le s-1/2-\eps$, then~$\mathcal{H}_{\sigma+\eps}$ is satisfied.
\end{prop}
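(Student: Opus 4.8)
The plan is to paralinearize the elliptic equation~\eqref{dnint1F}, to factor the resulting paradifferential operator as a composition of two parabolic evolutions in~$z$, and to solve each of them with Proposition~\ref{prop:max}: one propagating from the interior up to~$z=0$ (fed by the interior information contained in~$\Hs$), the other propagating from~$z=0$ into the interior (fed by the boundary datum~$f$). Throughout, $I\Subset(-1,0]$ denotes the interval on which $\Hseps$ is to be proved, and $J=[z_0,0]$ with $z_0$ chosen close to~$-1$; the final estimate will be uniform in~$I$.

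\emph{Step 1 (reduction to a paradifferential equation).} By Lemma~\ref{rho:diffeo-s} and Lemma~\ref{lem.alg}, the coefficients in~\eqref{alpha} satisfy, up to smooth functions of~$z$, $\alpha-1,\ \beta\in X^{s-\mez}(J)$ and $\gamma\in X^{s-\tdm}(J)$; note that $\gamma$ is only of negative Hölder regularity in~$x$, which is why it will be treated as a source term. Fix $\rho\in(0,1)$ with $\eps\le\rho<s-\mez-\tfrac d2$. Since $\Hs$ gives $\nabla_{x,z}v\in X^\sigma(J)$, the paralinearization identities together with the product rules~\eqref{Bony},~\eqref{niS},~\eqref{boundpara},~\eqref{prZ2} --- used crucially with the $L^2$-integrability in~$z$, since only $X^\sigma$-bounds on the first derivatives of~$v$ are available --- show that
\begin{equation*}
\partial_z^2 v+T_\alpha\Delta v+T_\beta\cdot\partialx\partial_z v=F_1,\qquad v\arrowvert_{z=0}=f,
\end{equation*}
where $F_1\defn F_0+\gamma\,\partial_z v+\bigl(\alpha\Delta v-T_\alpha\Delta v\bigr)+\bigl(\beta\cdot\partialx\partial_z v-T_\beta\cdot\partialx\partial_z v\bigr)$ obeys
$$
\lA F_1\rA_{Y^{\sigma+\eps}(J)}\le\mathcal F\bigl(\lA\eta\rA_{H^{s+\mez}}\bigr)\bigl\{\lA F_0\rA_{Y^{\sigma+\eps}([-1,0])}+\lA\nabla_{x,z}v\rA_{X^\sigma(J)}\bigr\}.
$$
Here the hypothesis $\eps<s-\mez-\tfrac d2$ is what makes the Bony remainders $R(\alpha,\Delta v)$, $T_{\Delta v}\alpha$, $R(\beta,\partialx\partial_z v)$, \dots\ gain $\eps$ derivatives, while $\eps\le\mez$ is what lets $\gamma\,\partial_z v\in L^2_z(J;H^{\sigma+\eps-\mez})$ be absorbed into~$Y^{\sigma+\eps}(J)$.

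\emph{Step 2 (parabolic factorization).} I would construct symbols $a,A\in\Gamma^1_\rho(\xR^d\times J)$ of order~$1$, with $a+A=-i\beta\cdot\xi$ (an exact symbol identity) and $A$ solving $-A^2-i(\beta\cdot\xi)A-\partial_z A+\alpha|\xi|^2=0$ modulo a symbol of order~$\le0$, the principal part being $A_1=\tfrac12\bigl(-i\beta\cdot\xi+\sqrt{4\alpha|\xi|^2-(\beta\cdot\xi)^2}\bigr)$. The positivity of~$\alpha$ and of $(1+|\partialx\rho|^2)|\xi|^2-(\partialx\rho\cdot\xi)^2$ makes $A$ elliptic with $\RE A\ge c\langle\xi\rangle$, and correspondingly $\RE(-a)\ge c\langle\xi\rangle$ (after a harmless modification for bounded~$\xi$). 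With $w\defn\partial_z v-T_A v$, the symbolic calculus~\eqref{esti:quant2} then gives
$$
\partial_z w-T_a w=F_1-R_0 v,\qquad R_0\text{ of order }2-\rho,
$$
and since $v\in L^2_z(J;H^{\sigma+\tdm})$ and $\rho\ge\eps$ we get $\lA R_0 v\rA_{L^2_z(J;H^{\sigma-\mez+\rho})}\le\mathcal F(\lA\eta\rA_{H^{s+\mez}})\lA\nabla_{x,z}v\rA_{X^\sigma(J)}$; hence the right-hand side of the $w$-equation belongs to $Y^{\sigma+\eps}(J)$.

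\emph{Step 3 (the two solves and conclusion).} The $w$-equation is forward parabolic since $\RE(-a)\ge c\langle\xi\rangle$, and its Cauchy datum $w(z_0)=\partial_z v(z_0)-T_A v(z_0)\in H^\sigma$ is controlled by~$\Hs$. Proposition~\ref{prop:max} first yields $w\in X^\sigma(J)$ --- which, by the uniqueness clause of Proposition~\ref{prop:max}, is indeed $\partial_z v-T_A v$ --- and then, after cutting off in~$z$ away from~$z_0$ (the cut-off error lies in $L^2_z(J;H^\sigma)\subset Y^{\sigma+\mez}(J)\subset Y^{\sigma+\eps}(J)$, using $\eps\le\mez$ once more), a second application gives $w\in X^{\sigma+\eps}(I')$ on every $I'\Subset(z_0,0]$. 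Next, $\partial_z v-T_A v=w$ with $\RE A\ge c\langle\xi\rangle$ becomes, after the change of variable $\tau=-z$, the forward parabolic equation $\partial_\tau v+T_A v=-w$ with the \emph{given} Cauchy datum $v\arrowvert_{\tau=0}=f\in H^{\sigma+\eps+1}$ and source $-w\in X^{\sigma+\eps}(I')\subset L^2_z(I';H^{\sigma+\eps+\mez})\subset Y^{\sigma+\eps+1}(I')$; Proposition~\ref{prop:max} then gives $v\in X^{\sigma+\eps+1}(I')$, whence $\partialx v\in X^{\sigma+\eps}(I')$ and $\partial_z v=T_A v+w\in X^{\sigma+\eps}(I')$. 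Since $\Hs$ may be used to bound $\lA\nabla_{x,z}v\rA_{X^\sigma(J)}$ by the three canonical quantities, and since $z_0$ may be taken arbitrarily close to~$-1$, this establishes the bound~\eqref{av} at level $\sigma+\eps$ on every $I\Subset(-1,0]$, that is~$\Hseps$. The delicate part is Steps~1--2: carrying out the paralinearization and the parabolic factorization at this low level of regularity (with $\alpha,\beta$ only~$C^{s-\mez-d/2}_*$ in~$x$ and $\gamma$ of negative Hölder regularity) while keeping the gain of the symbolic calculus ($\rho$ derivatives) together with the parabolic $\mez$-smoothing compatible with the target gain~$\eps$ --- this matching is precisely the arithmetic encoded in the two conditions~\eqref{cond:eps}.
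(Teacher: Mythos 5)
Your proposal is correct and follows essentially the same route as the paper's own proof: the same paralinearization with $\gamma\,\partial_z v$ treated as a source and $\alpha,\beta$ replaced by paraproducts (Lemmas \ref{UV} and \ref{l3.8}), the same factorization $(\partial_z-T_a)(\partial_z-T_A)$ with the quadratic-root symbols and the remainder coming from $T_{\partial_z A}$ (of order $2-\eps$, cf.\ Proposition \ref{prop.niSbis} and Lemma \ref{lemm:total}), and the same two applications of Proposition \ref{prop:max} --- forward with a cut-off killing the Cauchy datum, backward from $z=0$ with the datum $f$. The only slip is the parenthetical claim that $\gamma\,\partial_z v\in L^2_z(J;H^{\sigma+\eps-\mez})$: in the borderline regime (e.g.\ $d=1$, $\sigma$ near $-\mez$, $s$ near $1$) the product rules fail there because $s-1+\sigma$ may be negative, and one must instead pair the two $L^2_z$ factors $\gamma\in L^2_z H^{s-1}$ and $\partial_z v\in L^2_z H^{\sigma+\mez}$ to place $\gamma\,\partial_z v$ in the $L^1_z(J;H^{\sigma+\eps})$ component of $Y^{\sigma+\eps}(J)$, exactly as in Lemma \ref{UV}; this does not affect your conclusion.
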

\subsection{Nonlinear estimates}\label{S:NE}
Let us fix~$\eps$ satisfying \eqref{cond:eps},~$\sigma$ such that
$$
-\mez\le \sigma\le s-\mez-\eps
$$
and assume that~$\Hs$ is satisfied. 
We begin by estimating the coefficients~$\alpha,\beta,\gamma$ in~\eqref{alpha} in terms of~$\| \eta \|_{H^{s+\mez}}$.
\begin{lemm}\label{lem.3.25} 
Let $J= [-1,0]$ and $s> \mez+ \frac d 2$. We have
\begin{equation}
\lA \alpha - h^2\rA_{X^{s-\mez}(J)}+
\lA \beta\rA_{X^{s-\mez}(J)}+\lA \gamma\rA_{X^{s-\tdm}(J)}\le \mathcal{F}(\| \eta \|_{H^{s+\mez}}).
\label{regv}
\end{equation}
 
\end{lemm}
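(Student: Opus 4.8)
The plan is to reduce \eqref{regv} entirely to three ingredients already available: Lemma~\ref{rho:diffeo-s}, which gives $\partial_z\rho-h\in X^{s-\mez}(J)$ and $\nabla_x\rho\in X^{s-\mez}(J)$ with norms $\le\mathcal{F}(\lA\eta\rA_{H^{s+\mez}})$; the bound \eqref{rhokappa} of Lemma~\ref{rho:diffeo}, which forces $\partial_z\rho$ to take values in a fixed compact interval $[m,M]\subset(0,+\infty)$; and Lemma~\ref{lem.alg}, which (since $s-\mez>d/2$) asserts that $X^{s-\mez}(J)$ is an algebra and is stable under composition $U\mapsto F(U)$ with $F\in C^\infty$, $F(0)=0$, with tame bounds. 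I will also use the two elementary facts that $\nabla_x$ maps $X^{s-\mez}(J)$ into $X^{s-\tdm}(J)$ (each $x$-derivative lowers the $X$-index by one, uniformly in $z$) and that the Sobolev product rule \eqref{pr} gives $X^{s-\mez}(J)\cdot X^{s-\tdm}(J)\subset X^{s-\tdm}(J)$, again because $s-\mez>d/2$.

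First I would linearize the two ``denominators'' occurring in \eqref{alpha}. Since $\partial_z\rho\in[m,M]$ with $m>0$, one may choose $F_1\in C^\infty(\xR)$, bounded together with all its derivatives, such that $F_1(t)=\frac{1}{h+t}-\frac{1}{h}$ on $[m-h,M-h]$; then $\frac{1}{\partial_z\rho}=\frac{1}{h}+F_1(\partial_z\rho-h)$ with $F_1(0)=0$, so $F_1(\partial_z\rho-h)\in X^{s-\mez}(J)$ with norm $\le\mathcal{F}(\lA\eta\rA_{H^{s+\mez}})$. Similarly $\frac{1}{1+|\nabla_x\rho|^2}=1+F_2(\nabla_x\rho)$ with $F_2(v)=\frac{1}{1+|v|^2}-1$, which is globally smooth and bounded with $F_2(0)=0$, so $F_2(\nabla_x\rho)\in X^{s-\mez}(J)$ with the same bound. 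Hence each factor in \eqref{alpha} is a constant plus an element of $X^{s-\mez}(J)$. For $\alpha$, write $(\partial_z\rho)^2=h^2+\bigl(2h+(\partial_z\rho-h)\bigr)(\partial_z\rho-h)$ and $\alpha=(\partial_z\rho)^2\bigl(1+F_2(\nabla_x\rho)\bigr)$; the algebra property then gives $\alpha-h^2\in X^{s-\mez}(J)$. For $\beta=-2(\partial_z\rho)(\nabla_x\rho)\bigl(1+F_2(\nabla_x\rho)\bigr)$ the factor $\nabla_x\rho$ already lies in $X^{s-\mez}(J)$ with no constant part, so $\beta\in X^{s-\mez}(J)$. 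In both cases the norm is $\le\mathcal{F}(\lA\eta\rA_{H^{s+\mez}})$.

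The remaining term $\gamma=\frac{1}{\partial_z\rho}\bigl(\partial_z^2\rho+\alpha\Delta_x\rho+\beta\cdot\nabla_x\partial_z\rho\bigr)$ is the only one requiring a further argument, and within it only the quantity $\partial_z^2\rho$, which is not supplied by Lemma~\ref{rho:diffeo-s}. The other two second-order quantities are immediate: $\Delta_x\rho=\nabla_x\cdot(\nabla_x\rho)$ and $\nabla_x\partial_z\rho=\nabla_x(\partial_z\rho-h)$ belong to $X^{s-\tdm}(J)$. For $\partial_z^2\rho$ I would differentiate the explicit formula \eqref{diffeo} (equivalently \eqref{dec:rho}) once more in $z$ on $\widetilde{\Omega}_1$: this yields a finite sum of terms $\delta^k e^{\mu\langle D_x\rangle}\langle D_x\rangle^k\eta$ with $k\le 2$ and $\mu\le 0$, and since $e^{\mu\langle D_x\rangle}$ ($\mu\le 0$) is bounded uniformly on every $H^t$ and gains a half derivative in the $L^2_z$ sense (as in the proof of Lemma~\ref{rho:diffeo} and in \eqref{w-para}), each such term is bounded in $X^{s-\tdm}(J)$ by $\mathcal{F}(\lA\eta\rA_{H^{s+\mez}})$ — the dependence on the fixed parameter $\delta$, hence on $\lA\eta\rA_{W^{1,\infty}}\les\lA\eta\rA_{H^{s+\mez}}$, being absorbed into $\mathcal{F}$. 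Then $\partial_z^2\rho+\alpha\Delta_x\rho+\beta\cdot\nabla_x\partial_z\rho\in X^{s-\tdm}(J)$ by the product rule (using that $\alpha,\beta$ are a constant plus an element of $X^{s-\mez}(J)$, from the previous step), and multiplying by $\frac{1}{\partial_z\rho}\in\frac{1}{h}+X^{s-\mez}(J)$ keeps us in $X^{s-\tdm}(J)$ with norm $\le\mathcal{F}(\lA\eta\rA_{H^{s+\mez}})$, which is \eqref{regv}.

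The main obstacle is precisely the treatment of $\partial_z^2\rho$: it is the one place where one cannot argue abstractly from the conclusion of Lemma~\ref{rho:diffeo-s}, but must exploit the explicit smoothing structure of the diffeomorphism $\rho$ in the $z$-variable. A secondary, entirely routine, point is the need to truncate $t\mapsto\frac{1}{h+t}$ away from its pole when linearizing $1/\partial_z\rho$, which is harmless thanks to the two-sided bound \eqref{rhokappa}.
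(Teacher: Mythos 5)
Your proof is correct and follows essentially the same route as the paper: reduce everything to the bounds of Lemma~\ref{rho:diffeo-s} and the algebra/composition Lemma~\ref{lem.alg}, writing each denominator as a constant plus a smooth bounded function (vanishing at $0$) of $\partial_z\rho-h$ or $\nabla_x\rho$ (the paper's identity $\frac{1}{1+|\nabla\rho|^2}=1-\frac{|\nabla\rho|^2}{1+|\nabla\rho|^2}$ is exactly your $F_2$). The only difference is that you spell out what the paper dismisses as ``along the same lines'' for $\gamma$, namely the cut-off needed for $1/\partial_z\rho$ via \eqref{rhokappa} and the estimate $\partial_z^2\rho\in X^{s-\tdm}(J)$ obtained from the explicit smoothing of the Poisson-type kernels in \eqref{diffeo}, which is indeed the intended argument.
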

\begin{proof}
 
According to~\eqref{eq.rho1}, we can write
$$(\partial_z \rho)^2 = h^2  +G \quad  \text{with }  \Vert G \Vert_{X^{s-\mez}(I)} \leq \mathcal{F}(\Vert \eta \Vert_{H^{s+\mez}(\xR^d)}).$$
Noticing that $\frac 1 {1+ |\nabla\rho|^2} = 1 - \frac {|\nabla\rho|^2} {1+ |\nabla\rho|^2}$ we obtain
$$
\alpha -  h^2  = - h^2 \frac {|\nabla\rho|^2} {1+ |\nabla\rho|^2} +G -G \frac {|\nabla\rho|^2} {1+ |\nabla\rho|^2}
$$
and we use Lemma \ref{lem.alg} with $\sigma = s-\mez$ 
together with \eqref{eq.rho1}. 
The estimates for $\beta$ and $\gamma$ are proved along the same lines.
\end{proof}

\begin{lemm}\label{UV}
There exists a constant~$K$ such that for all~$I\subset [-1,0]$,
\begin{equation}\label{gpzv}
\lA F_1\rA_{Y^{\sigma+\eps}(I)}
\le K 
\lA \gamma \rA_{X^{s-\tdm}(I)}\lA \partial_z v\rA_{X^\sigma(I)},
\end{equation}
where $F_1= \gamma \partial_z v$.
\end{lemm}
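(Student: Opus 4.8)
The plan is to estimate $F_1=\gamma\,\partial_z v$ by Bony's decomposition
\[
\gamma\,\partial_z v = T_\gamma \partial_z v + T_{\partial_z v}\gamma + R(\gamma,\partial_z v),
\]
and to bound each of the three pieces in $Y^{\sigma+\eps}(I)=L^1_z(I;H^{\sigma+\eps})+L^2_z(I;H^{\sigma+\eps-\mez})$ (see \eqref{XY}) by working at fixed $z$ and then integrating over $z\in I$. The only inputs I would use are: $\gamma\in C^0_z(I;H^{s-\tdm}(\xR^d))\cap L^2_z(I;H^{s-1}(\xR^d))$, which is exactly the content of \eqref{regv} in Lemma~\ref{lem.3.25} once one unravels the definition of $X^{s-\tdm}$; the splitting $\partial_z v\in C^0_z(I;H^{\sigma}(\xR^d))\cap L^2_z(I;H^{\sigma+\mez}(\xR^d))$ coming from the definition of $X^\sigma(I)$; and the Sobolev embedding $H^\mu(\xR^d)\hookrightarrow C^{\mu-\frac d2}_*(\xR^d)$. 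Since $|I|\le 1$, Cauchy--Schwarz in $z$ turns a product of two $L^2_z$ factors into an $L^1_z$ function, which is what keeps the term-by-term bounds uniform over $I\subset[-1,0]$.

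For the remainder I would spend the $L^2_z$-component of each factor: by \eqref{Bony} at fixed $z$, with $\gamma(z)\in H^{s-1}$ and $\partial_z v(z)\in H^{\sigma+\mez}$ --- legitimate because $(s-1)+(\sigma+\mez)=s-\mez+\sigma>0$ for every admissible $\sigma$ --- one gets $\lA R(\gamma(z),\partial_z v(z))\rA_{H^{\sigma+s-\mez-\frac d2}}\lesssim\lA\gamma(z)\rA_{H^{s-1}}\lA\partial_z v(z)\rA_{H^{\sigma+\mez}}$, hence $R(\gamma,\partial_z v)\in L^1_z(I;H^{\sigma+s-\mez-\frac d2})\subset L^1_z(I;H^{\sigma+\eps})\subset Y^{\sigma+\eps}(I)$ because $\eps<s-\mez-\frac d2$ by \eqref{cond:eps}. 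For $T_\gamma\partial_z v$ I would regard $\gamma$ as a (possibly negative-order) multiplier acting on $\partial_z v$ in its $C^0_zH^\sigma$ component: when $\eps=\mez$ (which forces $s>1+\frac d2$, so $\gamma(z)\in L^\infty$) the bound \eqref{esti:quant1} gives $T_\gamma\partial_z v\in L^2_z(I;H^{\sigma})$; when $\eps<\mez$, pick $m>0$ with $\bigl(\tfrac d2+1-s\bigr)_+\le m<\mez-\eps$ --- possible exactly because $\eps<s-\mez-\frac d2$ --- so that $\gamma(z)\in C^{-m}_*$ and \eqref{niS} gives $T_\gamma\partial_z v\in L^2_z(I;H^{\sigma-m})$; in either case the outcome lies in $L^2_z(I;H^{\sigma+\eps-\mez})\subset Y^{\sigma+\eps}(I)$. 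For $T_{\partial_z v}\gamma$ I would regard $\partial_z v$ as the multiplier acting on $\gamma$ in its $L^2_zH^{s-1}$ component: if $\sigma>\frac d2$ then $\partial_z v(z)\in L^\infty$ and \eqref{esti:quant1} gives $T_{\partial_z v}\gamma\in L^2_z(I;H^{s-1})\subset L^2_z(I;H^{\sigma+\eps-\mez})$ since $\sigma+\eps\le s-\mez$; if $\sigma\le\frac d2$ then $\partial_z v(z)\in C^{\sigma-\frac d2}_*\subset C^{-m'}_*$ with $m'=\max\bigl(\tfrac d2-\sigma,\delta'\bigr)$ and $\delta'>0$ small, and \eqref{niS} gives $T_{\partial_z v}\gamma\in L^2_z(I;H^{s-1-m'})$, which again sits in $L^2_z(I;H^{\sigma+\eps-\mez})$ for $\delta'$ small enough, once more because $\eps<s-\mez-\frac d2$. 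Summing the three contributions gives \eqref{gpzv}, with a constant depending only on $s,\sigma,\eps,d$.

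The individual inequalities are routine uses of the paraproduct rules already recorded (\eqref{Bony}, \eqref{niS}, \eqref{esti:quant1}); the real work is the bookkeeping. For each of the three pieces one must choose which of the two components ($C^0_z$ or $L^2_z$) of the norms of $\gamma$ and of $\partial_z v$ to spend, so that simultaneously (i) the $z$-integrabilities of the two factors combine into $L^1_z$ or $L^2_z$ as required by the definition of $Y^{\sigma+\eps}(I)$, (ii) the Sobolev and Zygmund exponents meet the hypotheses of the paraproduct estimates, and (iii) the resulting spatial regularity is at least $\sigma+\eps$ on an $L^1_z$ piece, or at least $\sigma+\eps-\mez$ on an $L^2_z$ piece. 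I expect the main --- essentially the only --- obstacle to be checking that the three standing constraints $0<\eps\le\mez$, $\eps<s-\mez-\frac d2$ and $-\mez\le\sigma\le s-\mez-\eps$ are precisely what makes all of these index inequalities close, together with the minor nuisance of the borderline cases where a Sobolev exponent falls exactly on $\frac d2$ (handled by replacing an $L^\infty$ bound with a negative-Zygmund bound at the cost of an arbitrarily small loss, which is harmless since $\eps$ and $\sigma$ are fixed throughout the section).
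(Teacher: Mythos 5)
Your proposal is correct, but it is organized differently from the paper's proof, so a comparison is worth making. The paper does not paralinearize the product at all: it applies the Sobolev product rule \eqref{pr} directly to $\gamma\,\partial_z v$ at fixed $z$, with a single dichotomy on $\sigma$. For $-\mez\le\sigma\le s-1-\eps$ it pairs the two $L^2_z$ components ($\gamma(z)\in H^{s-1}$, $\partial_z v(z)\in H^{\sigma+\mez}$) and lands in $L^1_z(I;H^{\sigma+\eps})$ (see \eqref{gpzv1}); for $s-1-\eps\le\sigma\le s-\mez-\eps$ it pairs $\gamma\in L^2_z H^{s-1}$ with $\partial_z v\in L^\infty_z H^{\sigma}$ and lands in $L^2_z(I;H^{\sigma-\mez+\eps})$ (see \eqref{gpzv2}); in each regime one checks that the standing constraints on $(s,\sigma,\eps)$ meet the hypotheses of \eqref{pr}, exactly the bookkeeping you anticipate. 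You instead expand $\gamma\,\partial_z v$ by Bony's decomposition and estimate the three pieces separately via \eqref{Bony}, \eqref{niS} and \eqref{esti:quant1}, which trades the paper's single split on $\sigma$ (forced by the constraint $s_0\le\min(s_1,s_2)$ in \eqref{pr}) for your case distinctions on $\eps=\mez$ versus $\eps<\mez$ and on $\sigma$ versus $d/2$; your index checks (in particular that $\eps<s-\mez-\frac d2$ closes every inequality, and the endpoint embedding $H^{\mu}\subset C^{\mu-d/2}_*$ at borderline exponents) are all sound, and the distribution of $z$-integrabilities ($L^2_z\times L^2_z\to L^1_z$ for the remainder, $L^2_z\times L^\infty_z\to L^2_z$ for the two paraproducts) matches the two components of $Y^{\sigma+\eps}(I)$ correctly, uniformly in $I\subset[-1,0]$. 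The paper's route is shorter because \eqref{pr} already packages the paraproduct analysis; yours is more granular but requires no new input and gives the same constant dependence (on $s,\sigma,\eps,d$ only), so both are acceptable.
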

\begin{proof}
We shall prove that, on the one hand, if~$-1/2\le \sigma\le s-1-\eps$ then
\begin{equation}\label{gpzv1}
\lA \gamma \partial_z v\rA_{L^1(I;H^{\s+\eps})} \les 
\lA \gamma \rA_{L^2(I;H^{s-1})}\lA \partial_z v\rA_{L^2(I;H^{\s+\mez})},
\end{equation}
and on the other hand, 
if~$s-1-\eps\le \sigma\le s-\mez-\eps$ then 
\begin{equation}\label{gpzv2}
\lA \gamma \partial_z v\rA_{L^2(I;H^{\s-\mez+\eps})} \les 
\lA \gamma \rA_{L^2(I;H^{s-1})}\lA \partial_z v\rA_{L^\infty(I;H^{\s})}.
\end{equation}
Since~$s>\eps+\mez+d/2$, if~$-1/2\le \sigma\le s-1-\eps$ then
$$
s-1+\sigma+\mez >0,~ \sigma+\eps\le \sigma+\mez, 
~  \sigma+\eps\le s-1,~
 \sigma+\eps < s-1+\sigma+\mez-\frac{d}{2},
$$
and hence the product rule in Sobolev spaces~\eqref{pr} implies that
$$
\lA \gamma(z) \partial_z v(z)\rA_{H^{\s+\eps}} 
\les \lA \gamma(z) \rA_{H^{s-1}}\lA \partial_z v(z)\rA_{H^{\s+\mez}}.
$$
Integrating in~$z$ and using the Cauchy-Schwarz inequality, we obtain~\eqref{gpzv1}.
On the other hand, if~$-\eps\le \sigma\le s-\mez-\eps$ then one easily checks that
$$
s-1+\sigma>0,\quad \sigma-\mez+\eps\le \sigma, 
\quad  \sigma-\mez+\eps\le s-1,\quad 
 \sigma-\mez+\eps < s-1+\sigma-\frac{d}{2},
$$
and hence the product rule~\eqref{pr} implies that
$$
\lA \gamma (z)\partial_z v(z)\rA_{H^{\s-\mez+\eps}} 
\les \lA \gamma (z)\rA_{H^{s-1}}\lA \partial_z v(z)\rA_{H^{\s}}.
$$
Taking the~$L^2$-norm in~$z$, we obtain~\eqref{gpzv2}.
\end{proof}

Our next step is to replace the multiplication by~$\alpha$ (resp.\ $\beta$) by the paramultiplication 
by~$T_\alpha$ (resp.\ $T_\beta$).
\begin{lemm}\label{l3.8}
There exists a constant~$K$ such that for all~$I\subset [-1,0]$,~$v$
satisfies the paradifferential equation
\begin{equation}\label{2m.11}
\partial_z^2 v  +T_{\alpha}\Deltax v +T_{\beta}\cdot \partialx \partial_z v =F_0+F_1+F_2,
\end{equation}
for some remainder 
\begin{equation}\label{F2e}
F_2= (T_\alpha-\alpha)\Delta v+(T_\beta-\beta)\cdot\partialx\partial_z v
\end{equation} satisfying 
\begin{equation}\label{F2}
\lA F_2\rA_{Y^{\sigma+\eps}(I)}
\le K
\left\{1+\lA \alpha - h^2\rA_{X^{s-\mez}([-1,0])}+
\lA \beta\rA_{X^{s-\mez}([-1,0])}\right\}\lA \nabla_{x,z} v\rA_{X^\sigma(I)}.
\end{equation}
\end{lemm}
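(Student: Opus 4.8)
The identity \eqref{2m.11} is purely algebraic: starting from \eqref{dnint1F}, one moves $\gamma\partial_z v$ to the right-hand side — this is exactly the term $F_1=\gamma\partial_z v$ of Lemma~\ref{UV} — and adds and subtracts $T_\alpha\Deltax v$ and $T_\beta\cdot\partialx\partial_z v$, which produces the remainder $F_2$ of \eqref{F2e}. Hence all the content of the lemma is the bound \eqref{F2}. The starting observation is that, since $\nabla_{x,z}v\in X^\sigma(I)$, both $\Deltax v=\partialx\cdot\partialx v$ and $\partialx\partial_z v=\partialx(\partial_z v)$ lie in $X^{\sigma-1}(I)$ with norm $\les \lA\nabla_{x,z}v\rA_{X^\sigma(I)}$; in particular they belong to $L^2_z(I;H^{\sigma-\mez})$ and, on an interval of length $\le 1$, to $L^1_z(I;H^{\sigma-1})$. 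By Lemma~\ref{lem.3.25} the coefficients satisfy $\alpha-h^2,\beta\in X^{s-\mez}([-1,0])$, hence in particular they lie in $L^2_z([-1,0];H^{s})$.

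The one delicate point is that $\alpha$ is not square-integrable in $x$, so the paraproduct remainder estimate cannot be applied to $T_\alpha-\alpha$ directly; I would split $T_\alpha-\alpha=\bigl(T_{\alpha-h^2}-(\alpha-h^2)\bigr)+(T_{h^2}-h^2)$. Since $h^2$ is a constant, $T_{h^2}-h^2$ is a Fourier multiplier whose symbol vanishes for large frequencies, hence smoothing; writing $\Deltax=\partialx\cdot\partialx$ one gets $\lA(T_{h^2}-h^2)\Deltax v(z)\rA_{H^{\sigma+\eps}}\les\lA\partialx v(z)\rA_{H^\sigma}$, so this contribution is controlled in $L^1_z(I;H^{\sigma+\eps})\subset Y^{\sigma+\eps}(I)$ by $\lA\nabla_{x,z}v\rA_{X^\sigma(I)}$, which accounts for the ``$1$'' in \eqref{F2}.

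For the remaining pieces $\bigl(T_{\alpha-h^2}-(\alpha-h^2)\bigr)\Deltax v$ and $(T_\beta-\beta)\cdot\partialx\partial_z v$ I would apply Proposition~\ref{lemPa} pointwise in $z$, with parameters $r=s$, $\mu=\sigma-\mez$, $\gamma=\sigma+\eps$ and inputs $a(z)\in H^{s}$ (for $a=\alpha-h^2$ or $\beta$) and $w(z)\in H^{\sigma-\mez}$ (for $w=\Deltax v$ or $\partialx\partial_z v$). The hypotheses hold: $r+\mu=s+\sigma-\mez\ge s-1>0$ because $s>\mez+\frac d2\ge 1$; $\gamma=\sigma+\eps\le s-\mez<s=r$; and $\gamma<r+\mu-\frac d2$ reduces to $\eps<s-\mez-\frac d2$, which is part of \eqref{cond:eps}. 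This yields, for a.e.\ $z\in I$, $\lA(aw-T_aw)(z)\rA_{H^{\sigma+\eps}}\le K\lA a(z)\rA_{H^{s}}\lA w(z)\rA_{H^{\sigma-\mez}}$; integrating in $z$ and using Cauchy--Schwarz gives $\lA aw-T_aw\rA_{L^1_z(I;H^{\sigma+\eps})}\le K\lA a\rA_{L^2_z(I;H^s)}\lA w\rA_{L^2_z(I;H^{\sigma-\mez})}\le K\lA a\rA_{X^{s-\mez}([-1,0])}\lA\nabla_{x,z}v\rA_{X^\sigma(I)}$ by the remarks above. Summing the three contributions and using $L^1_z(I;H^{\sigma+\eps})\subset Y^{\sigma+\eps}(I)$ gives \eqref{F2}.

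There is no real obstacle here beyond bookkeeping; the only things to be careful about are peeling off the constant $h^2$ (necessary since constants are not in $L^2(\xR^d)$) and checking that $s>1$ — which follows from $s>\mez+\frac d2$ when $d\ge1$ — so that $r+\mu>0$ in Proposition~\ref{lemPa} even at the endpoint $\sigma=-\mez$. Alternatively one could mimic the case split in the proof of Lemma~\ref{UV}, using the $L^2_z(I;H^{\sigma-\mez+\eps})$ component of $Y^{\sigma+\eps}(I)$ when $\sigma$ is large, but this extra room is unnecessary here because $\alpha-h^2$ and $\beta$ are half a derivative more regular than $\gamma$.
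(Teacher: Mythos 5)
Your proof is correct and follows essentially the same route as the paper: the algebraic rearrangement of \eqref{dnint1F} plus an application of Proposition~\ref{lemPa} with $\gamma=\sigma+\eps$, $r=s$, $\mu=\sigma-\mez$, verified under \eqref{cond:eps}, then Cauchy--Schwarz in $z$ and the embedding $L^1_z(I;H^{\sigma+\eps})\subset Y^{\sigma+\eps}(I)$. Your explicit peeling off of the constant $h^2$ (the smoothing multiplier $T_{h^2}-h^2$) is exactly what the paper's ``$1+$'' factor in its estimate encodes, only spelled out more carefully.
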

\begin{proof}
According to Proposition~\ref{lemPa}, we have 
$$
\lA au - T_a u\rA_{H^{\gamma}}\les \lA a\rA_{H^{r}}\lA u\rA_{H^\mu},
$$
 provided that~$r,\mu,\gamma\in \xR$ satisfy
\begin{equation}\label{criteria1}
r+\mu>0,\quad \gamma\le r  \quad\text{and } \gamma < r+\mu-\frac{d}{2}.
\end{equation}

Since~$s>\eps+1/2+d/2$, if~$-1/2\le \sigma\le s-\mez-\eps$ then
$$
s+\sigma-\mez >0,\quad \sigma+\eps\le s, \quad
 \sigma+\eps < s+\sigma-\mez-\frac{d}{2},
$$
and hence \eqref{criteria1} applies with
$$
\gamma =\sigma+\eps,\quad r =s ,\quad \mu=\sigma-\mez.
$$
This implies that if~$-1/2\le \sigma\le s-\mez-\eps$ then
\begin{equation}\label{F2-1}
\begin{aligned}
&\lA (T_\alpha-\alpha)\Delta  v\rA_{L^1(I;H^{\s+\eps})} \les 
\Bigl(1+\lA \alpha - \frac{h^2}{16}\rA_{L^2(I;H^{s})}\Bigr)\lA \Delta v\rA_{L^2(I;H^{\s-\mez})},\\
&\lA (T_\beta-\beta)\nabla\partial_z v  \rA_{L^1(I;H^{\s+\eps})} \les 
\lA \beta \rA_{L^2(I;H^{s})}\lA \nabla\partial_z v\rA_{L^2(I;H^{\s-\mez})},
\end{aligned}
\end{equation}
which yields
$$
\lA F_2\rA_{Y^{\sigma+\eps}(I)}\le \lA F_2\rA_{L^1(I;H^{\s+\eps})} \les 
\Bigl\{1+\lA \alpha - h^2 \rA_{X^{s-\mez}(I)}+\lA \beta \rA_{X^{s-\mez}(I)}\Bigr\}\lA \nabla_{x,z} v\rA_{X^{\sigma}(I)}.
$$
This concludes the proof.
\end{proof}

Our next task is to perform a decoupling into a forward and a backward parabolic evolution equations. 
Recall that by assumption~$\eta\in H^{s+\mez}(\xR^d)$ with~$s>\eps+1/2+d/2$. 
In particular,~$\eta\in C_*^{1+\eps}(\xR^d)$.

\begin{lemm}\label{lemm:total}
There exist two symbols 
$a,A$ in~$\Gamma^{1}_{\eps}({\mathbf{R}}^d\times [-1,0])$ and a remainder~$F_3$ 
such that,
\begin{equation}\label{2m2.b}
( \partial_z - T_a) (\partial_z - T_A)v =F_0+F_1+F_2+F_3,
\end{equation}
with
\begin{equation}\label{boundsaA}
\mathcal{M}^1_{\eps}(a)+ \mathcal{M}^1_\eps(A)\le 
\mathcal{F}\bigl( \| \eta \|_{H^{s+\mez}}\bigr),
\end{equation}
and
\begin{equation*}
\lA F_3\rA_{L^2(I;H^{\sigma-\mez+\eps})}\le \mathcal{F}\bigl(\| \eta \|_{H^{s+\mez}}\bigr)\lA 
\nabla_{x,z} v\rA_{L^2(I;H^{\sigma+\mez})},
\end{equation*}
for some non-decreasing function~$\mathcal{F}\colon\xR_+\rightarrow\xR_+$.
\end{lemm}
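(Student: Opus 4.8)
The plan is to factor, at the level of symbols, the second order operator $\partial_z^2+T_\alpha\Deltax+T_\beta\cdot\partialx\partial_z$ occurring in \eqref{2m.11} as a product of two first order operators. Motivated by the identity
$$
(\partial_z-T_a)(\partial_z-T_A)v=\partial_z^2v-(T_a+T_A)\partial_z v+T_aT_Av-T_{\partial_z A}v,
$$
I would take $a$ and $A$ to be the two roots of the characteristic equation $X^2+i(\beta\cdot\xi)X-\alpha|\xi|^2=0$, that is,
$$
A(z;x,\xi)=\tfrac12\bigl(-i\beta\cdot\xi+\ell\bigr),\qquad a(z;x,\xi)=\tfrac12\bigl(-i\beta\cdot\xi-\ell\bigr),\qquad \ell\defn\bigl(4\alpha|\xi|^2-(\beta\cdot\xi)^2\bigr)^{1/2},
$$
so that $a+A=-i\beta\cdot\xi$ and $aA=-\alpha|\xi|^2$ hold identically. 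The first task is to show $a,A\in\Gamma^1_\eps(\xR^d\times[-1,0])$ with \eqref{boundsaA}. The key point is the ellipticity of $\ell$: the Cauchy--Schwarz inequality gives $(1+|\partialx\rho|^2)|\xi|^2-(\partialx\rho\cdot\xi)^2\ge|\xi|^2$, which together with $\partial_z\rho\ge\min(1,h/2)$ and $|\partialx\rho|\les\|\eta\|_{W^{1,\infty}}$ from Lemma~\ref{rho:diffeo} yields $4\alpha|\xi|^2-(\beta\cdot\xi)^2\ge c|\xi|^2$ for some $c>0$ depending only on $\|\eta\|_{W^{1,\infty}}$. Since $\ell$ is homogeneous of degree $1$ in $\xi$, smooth for $\xi\ne0$, and a smooth function of $(\alpha,\beta)$ on the elliptic region, and since Lemma~\ref{lem.3.25} (with the embedding $H^{s-\mez}\hookrightarrow C^\eps_*$, valid since $s-\mez-\frac d2>\eps$ by \eqref{cond:eps}) gives $\alpha-h^2,\beta\in C^0_z([-1,0];C^\eps_*(\xR^d))$, the composition estimates of Lemma~\ref{lem.alg} and \eqref{esti:F(u)} applied to $\partial_\xi^\gamma\ell$ give $\mathcal{M}^1_\eps(\ell)\le\mathcal{F}(\|\eta\|_{H^{s+\mez}})$, hence the same for $a$ and $A$.

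Next I would expand $(\partial_z-T_a)(\partial_z-T_A)v$ by the identity above and compare with \eqref{2m.11}. One has $T_a+T_A=T_{-i\beta\cdot\xi}$ and, by Theorem~\ref{theo:sc0}, $T_aT_A=T_{aA}+R_0=T_{-\alpha|\xi|^2}+R_0$ with $R_0$ of order $2-\eps$ (from \eqref{esti:quant2}); moreover, writing $\Deltax=T_{-|\xi|^2}+K$ and $\partial_{x_j}=T_{i\xi_j}+K_j$ with $K,K_j$ smoothing multipliers (spectrally supported in $\{|\xi|\le1/4\}$), \eqref{esti:quant2} also gives $T_\alpha\Deltax=T_{-\alpha|\xi|^2}+R_1$ and $T_\beta\cdot\partialx=T_{i\beta\cdot\xi}+R_2$ modulo smoothing, with $R_1$ of order $2-\eps$ and $R_2$ of order $1-\eps$. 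Substituting into \eqref{2m.11} and using the comparison, one obtains \eqref{2m2.b} with $F_3$ equal to a sum of the terms $R_0v$, $R_1v$, $R_2\partial_z v$, $T_{\partial_z A}v$, and of smoothing operators applied to $v$ and $\partial_z v$.

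It remains to estimate $F_3$ in $L^2(I;H^{\sigma-\mez+\eps})$. For $R_0v$ and $R_1v$ the order-$(2-\eps)$ mapping bound from \eqref{esti:quant2} gives, for each $z$, $\|R_jv(z)\|_{H^{\sigma-\mez+\eps}}\les\mathcal{F}(\|\eta\|_{H^{s+\mez}})\|v(z)\|_{H^{\sigma+\tdm}}$; since the paraproducts in $R_j$ suppress low frequencies, only the high-frequency part of $v$ enters, where $\|v\|_{H^{\sigma+\tdm}}\les\|\partialx v\|_{H^{\sigma+\mez}}$, and integrating in $z$ gives $\les\mathcal{F}\|\partialx v\|_{L^2(I;H^{\sigma+\mez})}$. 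The term $R_2\partial_z v$ is handled the same way with the order-$(1-\eps)$ bound, and the smoothing terms reduce to $\|\partial_z v\|_{L^2(I;L^2)}\le\|\partial_z v\|_{L^2(I;H^{\sigma+\mez})}$ (using $\sigma\ge-\mez$). The genuinely delicate contribution is $T_{\partial_z A}v$: one must show $\partial_z A\in\Gamma^1_{\eps-1}(\xR^d\times[-1,0])$ with $\mathcal{M}^1_{\eps-1}(\partial_z A)\le\mathcal{F}(\|\eta\|_{H^{s+\mez}})$, after which Proposition~\ref{prop.niSbis} makes $T_{\partial_z A}$ an operator of order $2-\eps$ and one concludes as for $R_0$. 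By the chain rule this reduces to controlling $\partial_z\alpha$ and $\partial_z\beta$ in $C^0_z([-1,0];H^{s-\tdm}(\xR^d))$, which embeds in $C^0_z([-1,0];C^{\eps-1}_*(\xR^d))$ precisely by \eqref{cond:eps}; the required Sobolev control follows from the explicit smoothing structure of $\rho$ in \eqref{diffeo} together with Lemma~\ref{lem.3.25}: writing $\partial_z^2\rho=\gamma\,\partial_z\rho-\alpha\Deltax\rho-\beta\cdot\partialx\partial_z\rho$ (see \eqref{alpha}), each factor lies in the expected Sobolev space and the product rule \eqref{pr} keeps the product in $C^0_z H^{s-\tdm}$, which is one place where $s>\mez+\frac d2$ is used. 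I expect this last step --- that $\partial_z A$ loses only $2-\eps$ derivatives, which hinges both on the extra smoothing built into the change of variables \eqref{diffeo} and on the strict inequality $\eps<s-\mez-\frac d2$ --- to be the main obstacle; the rest is routine bookkeeping with the symbolic calculus of Theorem~\ref{theo:sc0} and the product and paraproduct rules of Section~\ref{sec.2.3}.
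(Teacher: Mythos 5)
Your proposal is correct and follows essentially the same route as the paper: the same factorization via the roots $a,A$ of $X^2+i(\beta\cdot\xi)X-\alpha|\xi|^2=0$, the same ellipticity bound $4\alpha|\xi|^2-(\beta\cdot\xi)^2\ge c|\xi|^2$, the symbol bounds from Lemma~\ref{lem.3.25} and the embedding $H^{s-\mez}\subset C^\eps_*$, the treatment of $\partial_z A\in\Gamma^1_{\eps-1}$ via Proposition~\ref{prop.niSbis}, and the final low-frequency remark allowing $\lA v\rA_{H^{\sigma+\tdm}}$ to be replaced by $\lA \partialx v\rA_{H^{\sigma+\mez}}$. The only differences are cosmetic (your explicit smoothing corrections relating $T_\alpha\Deltax$, $T_\beta\cdot\partialx$ to $T_{-\alpha|\xi|^2}$, $T_{i\beta\cdot\xi}$, and your identity for $\partial_z^2\rho$, where a direct estimate from \eqref{diffeo} suffices), so there is nothing to fix.
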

\begin{proof}
We seek~$a,A$ satisfying
\begin{align*}
a(z;x,\xi)A(z;x,\xi)=-\alpha(x,z) \la \xi\ra^2,\quad a(z;x,\xi)+A(z;x,\xi)=-i\beta(x,z)\cdot\xi.
\end{align*}
We thus set
\begin{equation}\label{aA}
a = \frac{1}{2}\bigl( -i  \beta\cdot \xi
-   \sqrt{ 4\alpha \la \xi \ra^2 -  (\beta \cdot \xi)^2}\bigr), \qquad A  = \frac{1}{2}\bigl( -i  \beta\cdot \xi
+   \sqrt{ 4\alpha \la \xi \ra^2 -  (\beta \cdot \xi)^2}\bigr).
\end{equation}
Directly from the definition of~$\alpha$,~$\beta$~\eqref{alpha}, note that
$$
\exists c>0; \quad \sqrt{ 4\alpha \la \xi \ra^2 -  (\beta \cdot \xi)^2}  \geq  c\la\xi\ra.
$$
According to~\eqref{regv} the symbols~$a,A$ belong to~$\Gamma^{1}_{\eps}({\mathbf{R}}^d\times [-1,0])$ 
and they satisfy the bound~\eqref{boundsaA}. 
Therefore, we have
\begin{equation}\label{F3}
(\partial_z - T_a) ( \partial_z - T_A)v= \partial_z^2v - T_{\beta}\nabla \partial_zv + T_{\alpha} \Delta v + F_3, \qquad F_3=R_0v + R_1v, 
\end{equation}
where
$$
R_0(z)\defn  T_{a(z)} T_{A(z)}   - T_\alpha\Delta, \quad
R_1(z)\defn - T_{\partial_z A(z)} .
$$
According to Theorem~\ref{theo:sc0}, applied with~$\rho=\eps$, 
$R_0(z)$ is of order $2- \eps$, uniformly in $z\in [-1,0]$. On the other hand, since
$$
\partial_z \rho \in L^\infty((-1,0); H^{s-\mez}), \qquad \partial_z^2 \rho \in L^\infty((-1,0); H^{s-\tdm}),
$$
according to \eqref{pr} we have
$$
\partial_z \alpha, \partial_z \beta \in L^\infty((-1,0); H^{s- \tdm}) \subset L^\infty((-1,0); C_*^{\eps -1}).
$$
Therefore $\partial_z A \in \Gamma^1_{\eps-1}({\mathbf{R}}^d\times [-1,0])$. 
As a consequence, using  Proposition~\ref{prop.niSbis}, we get that  
$R_1(z)$ is also of order $2- \eps$. We end up with
$$
\sup_{z\in [-1,0]}\lA R_0(z)\rA_{H^{\mu+2-\eps}\rightarrow H^{\mu}}
+\lA R_1(z)\rA_{H^{\mu+2-\eps}\rightarrow H^{\mu}}
\le \mathcal{F}\bigl( \| \eta \|_{H^{s+\mez}}\bigr).
$$

Now we notice that, given any symbol~$p$ and any function~$u$, by definition of paradifferential operators 
we have~$T_p u=T_p (1-\Psi(D_x))u$ 
for any Fourier multiplier~$(I-\Psi(D_x))$ such that~$\Psi(\xi)=0$ for~$\la \xi\ra\ge 1/2$. 
This means that we can replace~$\lA  v(z)\rA_{H^{\s+\tdm}}$ by~$\lA \partialx v(z)\rA_{H^{\s+\mez}}$. 
We thus obtain the desired result from Lemma~\ref{l3.8}.
\end{proof}

\subsubsection{Proof of Proposition~\ref{Linduction}}\label{S:PPLind}

We shall apply Proposition~\ref{prop:max}  twice. 
At first we apply it to the {\em forward} parabolic evolution equation 
$\partial_z u - T_a u=F$ (by definition~$\RE (-a) \ge c\la \xi\ra$). 
This requires an initial data on~$z=-1$ that might be chosen to be~$0$ by using a cut-off function, up to shrinking the interval~$I$. 
Next we apply it to the {\em backward\/} parabolic evolution equation 
$\partial_z u -T_A u =F$ (by definition~$\RE A \ge c\la \xi\ra$). 
This requires an initial data on~$z=0$ (which is given by our assumption on~$f$) and this requires 
also an estimate for the remainder term~$F$ which is given by means of the first step.

Suppose that~$\mathcal{H}_\sigma$ is satisfied and let~$I_0=[\zeta_0,0]$ with $\zeta_0\in (-1,0)$. Then 
\begin{equation*}
\lA \nabla_{x,z} v\rA_{X^\sigma(I_0)}\le
\mathcal{F}(\| \eta \|_{H^{s+\mez}})\big\{\lA f\rA_{H^{\sigma+1}}+\lA F_0\rA_{Y^\sigma([-1,0])}
+\lA \nabla_{x,z} v\rA_{X^{-\mez}([-1,0])} \big\}.
\end{equation*}
We shall prove that, for any~$0>\zeta_1> \zeta_0$, 
\begin{equation}\label{Indseps}
\begin{aligned}
\lA \nabla_{x,z} v\rA_{X^{\sigma+\eps}([\zeta_1,0])}\le
\mathcal{F}(\| \eta \|_{H^{s+\mez}})\big\{&\lA f\rA_{H^{\sigma+1+\eps}}+\lA F_0\rA_{Y^{\sigma+\eps}([-1,0])}\\
&\quad + \lA \nabla_{x,z} v\rA_{X^{-\mez}([-1,0])} \big\}.
\end{aligned}
\end{equation}

Introduce a cutoff function~$\chi$ such that 
$\chi\mid_{\zeta< \zeta_0} =0$, $\chi\mid_{\zeta> \zeta_1}=1$ . 
Set~$ w \defn \chi (z) (\partial_z - T_{A})v$. 
It follows from~\eqref{2m2.b} for~$v$ that $\partial_z w  -T_{a} w = F'$,
where
$$
F'=\chi(z)(F_0+F_1+F_2+F_3)+\chi'(z)(\partial_z - T_{A})v.
$$
We have already estimated~$F_1,F_2,F_3$ and~$F_0$ is given. 
We now turn to an estimate for~$(\partial_z-T_A)v$. 
According to~\eqref{esti:quant1} and~\eqref{boundsaA}, we have
\begin{equation*}
\lA T_{A}v\rA_{L^2(I_0;H^{\sigma+\mez})}\le \mathcal{F}(\| \eta \|_{H^{s+\mez}}) 
\lA \partialx v\rA_{L^2(I_0;H^{\sigma+\mez})} \le \mathcal{F}(\| \eta \|_{H^{s+\mez}})\lA \nabla_{x,z}v\rA_{X^{\sigma}(I_0)},
\end{equation*}
which implies that (since $L^2(I_0;H^{\sigma+\mez})\subset Y^{\sigma+1}(I_0)$) 
$$
\lA T_{A}v\rA_{Y^{\sigma+1}(I_0)}
\le \mathcal{F}(\| \eta \|_{H^{s+\mez}}) 
\lA \partialx v\rA_{L^\infty(I_0;H^{\sigma})}\le \mathcal{F}(\| \eta \|_{H^{s+\mez}})
\lA \nabla_{x,z}v\rA_{X^{\sigma}(I_0)}.
$$
Consequently
\begin{equation}
\lA\chi'(z)(\partial_z - T_{A})v\rA_{Y^{\sigma+1}(I_0)}\le \mathcal{F}(\| \eta \|_{H^{s+\mez}})
\lA \nabla_{x,z}v\rA_{X^{\sigma}(I_0)}\label{W-full-2}.
\end{equation}
Using the previous estimates for $F_1,F_2,F_3$, it follows 
from \eqref{W-full-2} that
\begin{equation}
\lA F'\rA_{Y^{\sigma+\eps}(I_0)}\le 
\mathcal{F}(\| \eta \|_{H^{s+\mez}})\lA \nabla_{x,z}v\rA_{X^{\sigma}(I_0)}
+\lA F_0\rA_{Y^{\sigma+\eps}(I_0)}.\label{F-full}
\end{equation}
Since~$w(x,z_0)=0$ 
and since~$a\in \Gamma^1_{\eps}$ 
satisfies~$\RE (-a(x,\xi))\ge c \la \xi\ra$, 
by using Proposition~\ref{prop:max} applied with~$J=I_0$, 
$\rho=\eps$ and~$r=\sigma+\eps$, we have 
\begin{equation*}
\lA w\rA_{X^{\sigma+\eps}(I_0)}\le \mathcal{F}(\| \eta \|_{H^{s+\mez}})
\lA F'\rA_{Y^{\sigma+\eps}(I_0)},
\end{equation*} 
and hence, using 
\eqref{F-full},
\begin{equation}\label{w-1}
\lA w\rA_{X^{\sigma+\eps}(I_0)}
\le \mathcal{F}(\| \eta \|_{H^{s+\mez}})\big\{\lA \nabla_{x,z}v\rA_{X^{\sigma}(I_0)}+\lA F_0\rA_{Y^{\sigma+\eps}(I_0)}\big\}.
\end{equation}

Now notice that on~$I_1\defn [\zeta_1,0]$ we have~$\chi=1$ so that
\begin{equation*}
\partial_z v  - T_{A} v = w \quad \text{for }z\in I_1.
\end{equation*}
Therefore the function~$\widetilde{v}$ defined by~$\widetilde v(x,z)=v(x,-z)$ satisfies
\begin{equation*}
\partial_z \widetilde v  +T_{\widetilde{A}} \widetilde v =- \widetilde w\quad \text{for }z\in \widetilde{I}_1=[0,-\zeta_1],
\end{equation*}
with obvious notations for~$\widetilde w$ and~$\widetilde A$. 
By using Proposition~\ref{prop:max} with~$J=\widetilde{I}_1$, noticing that 
$\widetilde v\arrowvert_{z=0}=v\arrowvert_{z=0}=f$, 
we obtain that
$$
\lA \widetilde v\rA_{X^{\sigma+1+\eps}(\widetilde{I}_1)}
\le \mathcal{F}(\| \eta \|_{H^{s+\mez}})\bigl(\lA f\rA_{H^{\sigma+1+\eps}}
+\lA \widetilde w\rA_{Y^{\sigma+1+\eps}(\widetilde{I}_1)}
\bigr).
$$
Using the obvious estimate
$$
\lA \widetilde w\rA_{Y^{\sigma+1+\eps}(\widetilde{I}_1)}
=\lA w\rA_{Y^{\sigma+1+\eps}(I_1)}\le
\lA w\rA_{L^2_z(I_1;H^{\sigma+\mez+\eps})}\le 
 \lA w\rA_{X^{\sigma+\eps}(I_1)},
$$
it follows from~\eqref{w-1} that
$$
\lA v\rA_{X^{\sigma+1+\eps}(I_1)}\\
\le 
\mathcal{F}(\| \eta \|_{H^{s+\mez}}) \bigl(\lA f\rA_{H^{\sigma+1+\eps}}
+\lA \nabla_{x,z}v\rA_{X^{\sigma}(I_0)}
+\lA F_0\rA_{Y^{\sigma+\eps}(I_0)}\bigr).
$$
We easily estimate~$\partial_z v$ directly from~$\partial_z v  = T_{A} v + w$ 
(by using \eqref{w-1} and the fact that~$T_A$ is an operator of order~$1$). 
This completes the proof of~\eqref{Indseps}. 

This proves that if~$\mathcal{H}_\sigma$ is satisfied then~$\mathcal{H}_{\sigma+\eps}$
is satisfied and hence concludes the proof of Proposition~\ref{Linduction} 
(and hence the proof of Proposition~\ref{p1}).

\subsubsection{Proof of Theorem~\ref{coro:estiDN}}\label{s:estiDN}
Let~$v$ be the solution of~\eqref{dnint1} with data~$v\arrowvert_{z=0}=f$. 
By definition of the Dirichlet--Neumann operator we have
\begin{equation}\label{r3.13}
G(\eta) f= \frac{1 +|\partialx \rho |^2}{\partial_z \rho} \partial_z v  -   \partialx \rho  \cdot\partialx v
\big\arrowvert_{z=0}.
\end{equation}
Now, by applying Proposition~\ref{p1} with~$F_0=0$ and Remark~\ref{rema:321}, 
we find that if~$v$ solves \eqref{dnint1}, then for any~$I \Subset  (-1, 0]$,  
\begin{equation}\label{oubli}
\lA \nabla_{x,z} v\rA_{X^{\sigma-1}(I)}
\le \mathcal{F}(\| \eta \|_{H^{s+\mez}})\lA f\rA_{H^\sigma}.
\end{equation}
According to~\eqref{eq.rho1} and~\eqref{prS2},
we obtain that
$$
\lA \frac{1 +|\partialx \rho |^2}{\partial_z \rho} \partial_z v  - 
\partialx \rho\cdot\partialx v\rA_{C^0 ([z_0,0];H^{\sigma-1})}
\le \mathcal{F}(\| \eta \|_{H^{s+\mez}})\lA f\rA_{H^\sigma}.
$$ 
As a result, taking the trace on~$z=0$ immediately implies the desired result~\eqref{ests+2}.

\subsubsection{Proof of Proposition~\ref{coro:paraDN1}}\label{s:paraDN1}
Let~$1/2\le \sigma_0 \leq s$. It follows from~\eqref{w-1} applied with~$\sigma=\sigma_0-1$ and~$F_0=0$ that
\begin{equation*}
\lA \chi(z)( \partial_z v-T_A v) \rA_{X^{\sigma_0-1+\eps}(I_0)}\le 
\mathcal{F}(\| \eta \|_{H^{s+\mez}})\big\{\lA \nabla_{x,z}v\rA_{X^{\sigma_0-1}(I_0)}\big\},
\end{equation*}
for some cut-off function~$\chi$ such that~$\chi(0)=1$. 
By using Proposition~\ref{p1}, we thus obtain
\begin{equation}\label{3.56ter}
\lA \partial_z v-T_{A}v \arrowvert_{z=0} \rA_{H^{\sigma_0-1+\eps}}\le 
\mathcal{F}\bigl( \| \eta \|_{H^{s+\mez}}\bigr)\lA f\rA_{H^{\sigma_0}}.
\end{equation}
The previous estimate allows us to express the ``normal'' derivative~$\partial_z v$ in 
terms of the tangential derivatives. Which is the main step to 
paralinearize the Dirichlet-Neumann operator. 

Now, as mentioned above, by definition of~$v$,
\begin{equation*}
G(\eta) f= \frac{1 +|\partialx \rho |^2}{\partial_z \rho} \partial_z v  - \partialx \rho \cdot\partialx v
\big\arrowvert_{z=0}.
\end{equation*}
Set
$$
\zeta_1 \defn  \frac{1 +|\partialx \rho |^2}{\partial_z \rho},\quad \zeta_2 \defn \partialx \rho .
$$
According to~\eqref{eq.rho1}, 
\begin{equation}\label{3.56bis}
\lA \zeta_1-\frac{1}{h}\rA_{C^0_z([-1,0];H^{s-\mez}_x)}+
\lA \zeta_2\rA_{C^0_z([-1,0];H^{s-\mez}_x)}\le \mathcal{F}(\| \eta \|_{H^{s+\mez}}).
\end{equation}
Let 
$$
R'=\zeta_1\partial_z v
-\zeta_2 \cdot\partialx v -(T_{\zeta_1}  \partial_z v  
- T_{\zeta_2}\partialx v).
$$
Since~$\eps \le \frac 1 2$ and $\eps<s- \frac 1 2 - \frac d 2$, 
we verify that Proposition~\ref{lemPa} applies with 
$$
\gamma=\sigma_0-1+\eps,\quad r=s-\mez,\quad \mu=\sigma_0-1,
$$
which, according to~\eqref{3.56bis} and \eqref{oubli}, implies 
$$
\lA R'\rA_{C^0(I;H^{\sigma_0-1+\eps})}\le 
\mathcal{F}\bigl( \| \eta \|_{H^{s+\mez}}\bigr)\lA f\rA_{H^{\sigma_0}}.
$$
Furthermore, according to~\eqref{3.56ter} and~\eqref{3.56bis}, we obtain
\begin{equation*}%\label{IVc2}
T_{\zeta_1}  \partial_z v  
- T_{\zeta_2}\partialx v\big\arrowvert_{z=0}
-(T_{\zeta_1}   T_A  v -T_{i\zeta_2\cdot \xi}  v \big\arrowvert_{z=0} )=R'',
\end{equation*}
with 
~$$\lA R''\rA_{H^{\sigma_0-1+\eps}}\leq \mathcal{F}\bigl( \| \eta \|_{H^{s+\mez}}\bigr)\lA f\rA_{H^{\sigma_0}}.
$$ Finally,  thanks to \eqref{esti:quant2},~\eqref{3.56bis} and~\eqref{boundsaA}, we have
\begin{align*}
\lA 
T_{\zeta_1(z)}   T_{A(z)} -T_{\zeta_1(z) A(z)}\rA_{ H^{\sigma_0}\rightarrow H^{\sigma_0-\mez}}
&\les \lA \zeta_1(z)\rA_{W^{\eps,\infty}} \mathcal{M}^1_{\eps}(A) 
\le \mathcal{F}(\| \eta \|_{H^{s+\mez}}),
\end{align*}
and hence
$$
G(\eta)f=T_{\zeta_1 A}  v -T_{i\zeta_2\cdot \xi}  v \big\arrowvert_{z=0} +R(\eta)f
$$
where 
$$
\lA R(\eta)f \rA_{H^{\sigma_0-1+\eps}}
\le \mathcal{F}(\| \eta \|_{H^{s+\mez}})  \lA f\rA_{H^{\sigma_0}}.
$$
Let 
\begin{equation*}%\label{defidns}
\lambda=\frac{1 +|\partialx \rho |^2}{\partial_z \rho} A -i  \partialx \rho
\cdot  \xi\big\arrowvert_{z=0}= \sqrt{(1+|\partialx \eta(x)|^2)| \xi |^2
-\bigl(\partialx \eta (x)\cdot \xi\bigr)^2}.
\end{equation*}
Then 
$$
G(\eta)f=T_{\lambda} f +R(\eta)f,
$$
which concludes the  proof of Proposition~\ref{coro:paraDN1}.

\section{A priori estimates in Sobolev spaces}\label{S:6}
In this section, we shall prove {\em a priori} estimates on smooth solutions on a fixed time interval $[0,T]$. 
Recall that the system reads
\begin{equation}\label{systema}
\left\{
\begin{aligned}
&\partial_{t}\eta-G(\eta)\psi=0,\\[0.5ex]
&\partial_{t}\psi+g \eta
+ \frac{1}{2}\la\partialx \psi\ra^2  -\frac{1}{2}
\frac{\bigl(\partialx  \eta\cdot\partialx \psi +G(\eta) \psi \bigr)^2}{1+|\partialx  \eta|^2}
= 0.
\end{aligned}
\right.
\end{equation}
As already mentioned, we work with the unknowns~$B=B(\eta,\psi)$ and~$V=V(\eta,\psi)$ defined by 
\begin{equation}\label{first:4}
B\defn \frac{\partialx \eta\cdot\partialx\psi+ G(\eta)\psi}{1+|\partialx  \eta|^2},
\qquad 
V\defn 
\partialx \psi -B\partialx\eta.
\end{equation}
It follows from Theorem~\ref{coro:estiDN} that, 
for all~$s>1+d/2$ and all~$(\eta,\psi)\in H^{s+\mez}$,~$\B$ and~$V$ are well defined and belong to 
$H^{s-\mez}$. 
Moreover, we shall prove that if they belong initially to~$H^{s}$ then this regularity is propagated by the equation. 
We shall prove estimates 
in terms of
\begin{equation}\label{defi:MsZr}
\begin{aligned}
M_s(T)&\defn \sup_{\tau\in [0,T]}
\lA (\psi(\tau),B(\tau),V(\tau),\eta(\tau))\rA_{H^{s+\mez}\times H^s\times H^s\times H^{s+\mez}},\\
M_{s,0}&\defn \lA (\psi(0),B(0),V(0),\eta(0))\rA_{H^{s+\mez}\times H^s\times H^s\times H^{s+\mez}}.
\end{aligned}
\end{equation}

The main result of this section is the following proposition.
\begin{prop}\label{prop:apriori}
Let~$d\ge 1$ and consider $s>1+\frac{d}{2}$. 
Consider a fluid domain such that, there exists~$h>0$ such that for all~$t\in [0,T]$,
\begin{equation}\label{hypt2}
\left\{ (x,y)\in {\mathbf{R}}^d\times \xR \, :\, \eta(t,x)-h < y < \eta(t,x)\right\} \subset \Omega(t).
\end{equation}
Assume that for any~$t \in [0,T]$, 
$$
\ma (t,x)\ge c_0,
$$
for some given positive constant~$c_0$. 
Then, there exists a non-decreasing 
function~$\mathcal{F}\colon \xR^+\rightarrow\xR^+$ such that, for all~$T\in (0,1]$ 
and all smooth solution 
$(\eta,\psi)$ of \eqref{systema} defined on the time interval~$[0,T]$, there holds
\begin{equation}\label{apriori}
M_s(T)\le \mathcal{F}\bigl(\mathcal{F}(M_{s,0})+T\mathcal{F}\bigl(M_s(T)%+Z_r(T)
\bigr)\bigr).
\end{equation}
\end{prop}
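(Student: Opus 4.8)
The plan is to derive the energy estimate \eqref{apriori} by reducing the Zakharov/Craig--Sulem system \eqref{systema} to a paradifferential evolution equation of the model form \eqref{formepara} for a suitable symmetrized unknown, and then running a standard energy argument on that equation. The first step is to paralinearize the equations. Using Proposition~\ref{coro:paraDN1}, write $G(\eta)\psi = T_\lambda \psi + R(\eta)\psi$ with $\lambda$ the symbol in \eqref{eq.lambda}, and — more usefully at this regularity — express $G(\eta)\psi$ in terms of $B$ and $V$ via \eqref{first:4}, since the remainder in the paralinearization is controlled by the regularity of $\eta$ rather than of $\psi$ (this is exactly why the unknowns $B,V$ are forced upon us, as emphasised in the introduction). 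Then paralinearize the nonlinear terms in the second equation of \eqref{systema} using the product and composition rules of Section~\ref{sec:2} (Proposition~\ref{lemPa}, Corollary after it, and Theorem~\ref{theo:sc0}), keeping careful track that all remainders are bounded by $\mathcal F(M_s(T))$ times lower-order norms and hence, after integration in time on $[0,T]$, contribute $T\,\mathcal F(M_s(T))$. The outcome should be a system for $(\eta, V)$ (or equivalently $(\eta,\psi, B, V)$) of the form $\partial_t \eta - T_V\cdot\nabla\eta - T_\lambda (\,\cdot\,) = f_1$, $\partial_t V + T_V\cdot\nabla V + T_a \nabla\eta = f_2$, where $a$ is (a symbol related to) the Taylor coefficient, $\ma = -\partial_y P|_\Sigma$, whose $H^{s-1/2}$-type bounds and lower bound $a\ge c_0$ come from the elliptic analysis of the pressure sketched in \S\ref{sec.3.2} and the variational Definition~\ref{def.4.1}.

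The second step is symmetrization. One introduces a good unknown of Alinhac type and a paradifferential change of variables to replace the coupled $(\eta,V)$ system by a single equation $\partial_t u + T_V\cdot\nabla u + i T_\gamma u = f$ with $T_\gamma$ of order $1/2$, positive and (essentially) self-adjoint modulo acceptable errors, where $\gamma \sim \sqrt{a\lambda}$ so that $\gamma\in\Gamma^{1/2}_{\eps}$; the Taylor sign condition $a\ge c_0>0$ is precisely what makes $\gamma$ real and elliptic of order $1/2$. The conjugation is performed by composing with $T_{\sqrt a}$-type operators and a suitable $\langle D\rangle$-power; all commutators and symbol-calculus errors are of lower order and tame, hence absorbed into $f$ with the $\mathcal F(M_s(T))$ bound. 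The key structural inputs here are the symbolic calculus estimates \eqref{esti:quant1}--\eqref{esti:quant3}, Proposition~\ref{prop.niSbis} for negative-regularity symbols, and the elliptic regularity Proposition~\ref{p1} / Theorem~\ref{coro:estiDN} to control $G(\eta)\psi$ and the coefficients of the symmetrized system in the right spaces.

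The third step is the energy estimate itself. Apply $\lDx{s}$ (or work directly with the $H^s$ scalar product) to the symmetrized equation and compute $\frac{d}{dt}\|u(t)\|_{H^s}^2$. The transport term $T_V\cdot\nabla u$ contributes, after integration by parts and using that $V\in H^s\hookrightarrow C^1_*$, a term bounded by $\|\nabla\cdot V\|_{L^\infty}\|u\|_{H^s}^2\le \mathcal F(M_s(T))\|u\|_{H^s}^2$; here one needs the commutator Lemma~\ref{lemm:Dt} (or Proposition~\ref{comut}) to commute $T_V\cdot\nabla$ past $\lDx{s}$ with a tame bound that does not cost a derivative on $V$ beyond $C^{1+\eps}_*$. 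The term $iT_\gamma u$ contributes a purely imaginary main part by (near) self-adjointness, so $\RE\langle i T_\gamma u, u\rangle_{H^s}$ is again $\le \mathcal F(M_s(T))\|u\|_{H^s}^2$ using \eqref{esti:quant3}. Thus $\frac{d}{dt}\|u\|_{H^s}^2 \le \mathcal F(M_s(T))\big(\|u\|_{H^s}^2 + \|u\|_{H^s}\|f\|_{H^s}\big)$ with $\|f\|_{H^s}\le \mathcal F(M_s(T))$. Integrating in time and using Gronwall gives $\|u(t)\|_{H^s} \le \mathcal F(M_{s,0}) + T\,\mathcal F(M_s(T))$ on $[0,T]$, $T\le 1$. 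Finally one must translate the bound on $u$ back into a bound on $M_s(T)$: recover $\eta$ in $H^{s+1/2}$, then $\psi$ in $H^{s+1/2}$ and $B,V$ in $H^s$ from $u$ and the elliptic estimates, again modulo tame errors — one also checks here that the strip condition \eqref{hypt} and the bound $\ma\ge c/2$ persist on $[0,T]$ by continuity, since $\partial_t\eta$ and $\partial_t a$ are controlled. This closes \eqref{apriori}.

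\textbf{Main obstacle.} The genuinely hard part is \emph{not} the final Gronwall step but the symbolic bookkeeping at this low regularity: because $\eta$ is only $C^{3/2+\eps}$, the symbol $\lambda$ (hence $\gamma$) lies only in $\Gamma^{m}_{\eps}$ with $\eps$ small, so every paraproduct substitution, every composition $T_aT_b - T_{ab}$, and every adjoint correction gains only $\eps$ of smoothing; one must verify that this still suffices, i.e. that all error terms land in $H^{s-\text{something}}$ with enough room, and crucially that the remainder in the paralinearization of $G(\eta)\psi$ is governed by $\|\eta\|_{H^{s+1/2}}$ and not by a norm of $\psi$ that we do not control — this forces the whole scheme to be run with $(V,B)$ rather than $\psi$ as principal unknowns, and makes the symmetrization and the recovery step considerably more delicate than in the surface-tension case. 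Controlling the Taylor coefficient $a$ (both its Sobolev norm and its positivity) via the variational pressure of \S\ref{sec.3.2} at this regularity is the other technically demanding ingredient.
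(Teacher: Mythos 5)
Your overall strategy does coincide with the paper's: transport equations for $B,V,\zeta=\nabla\eta$ with the Taylor coefficient $a$ as coefficient, paralinearization built on the good unknown $U=V+T_\zeta B$, symmetrization by $\gamma=\sqrt{a\lambda}$ and $q=\sqrt{a/\lambda}$ (the paper's $2\times 2$ system for $U_s=\langle D_x\rangle^s V+T_\zeta\langle D_x\rangle^s B$ and $\theta_s=T_q\langle D_x\rangle^s\zeta$ is exactly your single complex equation $\partial_t u+T_V\cdot\nabla u+iT_\gamma u=f$ with $u=U_s+i\theta_s$), a tame $L^2$ energy estimate relying on Lemma~\ref{lemm:Dt} and on the bounds \eqref{esti:a1}--\eqref{esti:a2} for $a$ and $(\partial_t+V\cdot\nabla)a$, and finally a return to the original unknowns.

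There is, however, a genuine gap in the closing step, which you dispose of with ``recover $\eta$ in $H^{s+1/2}$, then $\psi$ in $H^{s+1/2}$ and $B,V$ in $H^s$ from $u$ and the elliptic estimates.'' The energy estimate only controls the \emph{combinations} $U_s$ and $\theta_s$ in $L^2$, while $M_s(T)$ requires $\lA B\rA_{H^s}$ and $\lA V\rA_{H^s}$ separately. From $U=V+T_\zeta B\in H^s$ alone one cannot disentangle $V$ and $B$: since $\zeta$ is merely $C^{1/2}$, $T_\zeta$ is a bounded operator of order $0$ with no smoothing, so the combination carries no more information than its two pieces. Nor can you take the route your ordering suggests (first $\psi$, then $B,V$): by Theorem~\ref{coro:estiDN}, $\eta,\psi\in H^{s+\mez}$ only give $G(\eta)\psi\in H^{s-\mez}$, hence $B,V\in H^{s-\mez}$ — precisely the half-derivative loss the whole scheme is designed to circumvent. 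The missing idea is the structural identity of Proposition~\ref{cancellation}, $G(\eta)B=-\cnx V+\gamma$ with $\gamma$ controlled in $H^{s-\mez}$ (a consequence of incompressibility/harmonicity), which combined with the paralinearization $G(\eta)=T_\lambda+R(\eta)$ turns $\cnx U$ into an elliptic paradifferential equation $T_{-\lambda+i\zeta\cdot\xi}B=\cnx U+(\text{lower order})$; inverting the elliptic symbol $-\lambda+i\zeta\cdot\xi$ of order $1$ yields $B\in H^s$, then $V=U-T_\zeta B$, and only afterwards $\psi$ via $\nabla\psi=V+B\nabla\eta$ (Lemmas~\ref{ecBVs} and~\ref{L:estpsi}). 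Likewise, recovering $\eta\in H^{s+\mez}$ from $\theta_s$ requires inverting $T_q$ — this is where the lower bound $a\ge c_0$ is used quantitatively — together with separate lower-norm transport estimates for $(\eta,B,V)$ and for $a$ in $C^\eps_*$ to absorb the error of $I-T_{1/q}T_q$ (Lemmas~\ref{ecBV} and~\ref{ecBVeta}). Without these ingredients the a priori estimate does not close, so this recovery step needs to be carried out explicitly rather than asserted.
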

\begin{rema}
The assumption \eqref{hypt2} holds provided that it holds initially at time $0$ and
$\| \eta - \eta\mid_{t=0}\|_{H^{s+ \mez}}\leq \epsilon$, 
for some small enough positive constant~$\epsilon$.
\end{rema}

\subsection{A new formulation}\label{formulation}
Since we consider low regularity solutions, 
various 
cancellations have to be used. 
We found that these cancellations are most easily seen by working with the incompressible Euler equation directly, and hence 
we do not use the Zakharov formulation. This means that 
we begin with a new formulation of the water waves system which involves the following unknowns
\begin{equation}\label{defi:unknowns}
\zeta = \partialx \eta, \quad 
\B=\partial_y\phi \eval ,  \quad 
V=\nabla_x \phi\eval , \quad \ma=-\partial_y P \eval ,
\end{equation}
where recall that~$\phi$ is the velocity potential and the pressure~$P=P(t,x,y)$ is given by
\begin{equation}\label{defi:P}
-P=\partial_t \phi+\mez \la \nabla_{x,y}\phi\ra^2+gy.
\end{equation}

\begin{prop}\label{prop:newS}Let~$s>\mez +\frac{d}{2}$. We have 
\begin{align}
(\partial_{t}+V\cdot\partialx)\B&=\ma-g,\label{eq:B}\\
(\partial_t+V\cdot\partialx)V+\ma\zeta&=0,\label{eq:V}\\
(\partial_{t}+V\cdot\partialx)\zeta&=G(\eta)V+ \zeta G(\eta)\B +\gamma,\label{eq:zeta}
\end{align}
where the remainder term~$\gamma=\gamma(\eta,\psi,V)$ satisfies the following estimate : 
\begin{equation}\label{p42.1}
\lA \gamma\rA_{H^{s-\mez}}\le \mathcal{F}(\lA (\eta,\psi,V)\rA_{H^{s+\mez}\times H^s\times H^s}).
\end{equation}
\end{prop}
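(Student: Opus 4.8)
The plan is to derive the three evolution equations \eqref{eq:B}, \eqref{eq:V}, \eqref{eq:zeta} directly from the incompressible Euler equations written in Eulerian variables, evaluated on the free surface $y=\eta(t,x)$, and then to control the commutators arising from the chain rule when one evaluates a function of $(t,x,y)$ at $y=\eta(t,x)$. The starting point is the irrotational Euler system: $v=\nabla_{x,y}\phi$, $\Delta_{x,y}\phi=0$ in $\Omega$, together with $\partial_t v+v\cdot\nabla_{x,y}v+\nabla_{x,y}P=-ge_y$. Setting $w(t,x):=f(t,x,\eta(t,x))$ for a function $f$ on $\Omega$, one has the basic identity $(\partial_t+V\cdot\nabla_x)w=\bigl[(\partial_t+v\cdot\nabla_{x,y})f\bigr]\eval$, because $\partial_t\eta=\partial_y\phi\eval-\nabla\eta\cdot\nabla\phi\eval$ (the kinematic condition) makes the vertical transport term combine correctly with the horizontal one. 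Applying this with $f=\partial_y\phi$ and $f=\nabla_x\phi$ and using Euler plus $P\eval=0$, $\partial_yP\eval=-a$ gives \eqref{eq:B} and \eqref{eq:V} with \emph{no} remainder — these two are exact algebraic consequences.

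For \eqref{eq:zeta} the plan is to differentiate the kinematic equation $\partial_t\eta=G(\eta)\psi$ in $x$, write $\zeta=\nabla_x\eta$, and express $\nabla_x\bigl(G(\eta)\psi\bigr)$ in terms of $G(\eta)V$, $\zeta\,G(\eta)B$ and lower-order terms. Here I would use the standard "shape derivative" type identities for the Dirichlet–Neumann operator: recall $G(\eta)\psi=B-\zeta\cdot V\cdot(\text{something})$... more precisely, from $\phi(t,x,\eta)=\psi$ one gets $\nabla_x\psi=V+B\zeta$ and $G(\eta)\psi=B(1+|\zeta|^2)-\zeta\cdot\nabla_x\psi$, hence $\partial_t\eta=B-\zeta\cdot V$ — wait, one must be careful; the clean route is to go back to $\partial_t\eta=(\partial_y\phi-\nabla\eta\cdot\nabla\phi)\eval=B-\zeta\cdot(V+B\zeta)/$... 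I would instead use the divergence-form identity \eqref{dzU}, $\partial_z\widetilde U=-\nabla_x\cdot((\partial_z\rho)\Lambda_2\widetilde\phi)$, together with the flattening of Section~\ref{s:flattening}, to relate $\nabla_x(G(\eta)\psi)$ to $G(\eta)$ applied to the traces of $\partial_y\phi$ and $\nabla_x\phi$, which are exactly $B$ and $V$. This is the computational heart of the proposition. The terms that are not of the form $G(\eta)V$ or $\zeta G(\eta)B$ get collected into $\gamma$.

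The main obstacle is then the estimate \eqref{p42.1}: one must show $\gamma\in H^{s-\mez}$ with norm controlled by $\mathcal F(\|(\eta,\psi,V)\|_{H^{s+\mez}\times H^s\times H^s})$. The point is that $\gamma$ is built from products of quantities like $\zeta\in H^{s-\mez}$, $B\in H^{s-\mez}$, $V\in H^{s-\mez}$ (a priori only this, by Theorem~\ref{coro:estiDN}), derivatives of the diffeomorphism coefficients $\alpha,\beta,\gamma$ estimated in Lemma~\ref{lem.3.25}, and elliptic-regularity bounds for $\widetilde\phi$ from Proposition~\ref{p1}. I would apply Proposition~\ref{p1} with $\sigma=s-\mez$ (allowed since $-\mez\le s-\mez\le s-\mez$) to get $\nabla_{x,z}\widetilde\phi\in X^{s-\mez}([z_0,0])$, then use the product rules \eqref{pr}, \eqref{prS2} and the algebra property of $X^\sigma$ for $\sigma>d/2$ (Lemma~\ref{lem.alg}) to bound each term. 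The delicate points are: (i) keeping track of which factor carries the top $H^{s+\mez}$-regularity of $\eta$ and which only $H^{s-\mez}$, so that all products land in $H^{s-\mez}$ with $s-\mez>d/2$; and (ii) verifying that every term in $\gamma$ genuinely costs at most one derivative on $\psi$/$V$ rather than on $\eta$, which is exactly the structural gain that motivates working with $V,B$ instead of $\psi$, as emphasized in the introduction. The trace on $z=0$ is harmless since $X^{s-\mez}([z_0,0])\hookrightarrow C^0([z_0,0];H^{s-\mez})$.

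\begin{rema}
In carrying out the identification of $\gamma$, one should note that the "good" unknowns $B,V$ were chosen precisely so that the worst contributions to $\nabla_x(G(\eta)\psi)$ reassemble into $G(\eta)V+\zeta\,G(\eta)B$; the remaining commutator between $\nabla_x$ and $G(\eta)$, and the terms coming from differentiating the coefficients of the flattened Laplacian, are all of order $\le s-\mez$ because they involve at most one derivative falling on $\psi$. This is the only place where the sharp elliptic estimate of Proposition~\ref{p1} (rather than a cruder bound) is needed.
\end{rema}
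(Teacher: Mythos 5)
Your derivation of \eqref{eq:B} and \eqref{eq:V} is correct and is exactly the paper's argument: the kinematic relation $\partial_t\eta+V\cdot\nabla\eta=B$ turns $(\partial_t+V\cdot\partialx)$ applied to a trace into the trace of $(\partial_t+\nabla_{x,y}\phi\cdot\nabla_{x,y})$, and these two equations come out exactly, with no remainder. The gap is in \eqref{eq:zeta}, and it is a gap of substance, not of detail: you misidentify what $\gamma$ is. In the paper one differentiates $\partial_t\eta=B-V\cdot\nabla\eta$ in $x_i$ and applies the chain rule to $B=\partial_y\phi\eval$, $V=\nabla\phi\eval$; this yields the \emph{exact} identity $(\partial_{t}+V\cdot\partialx)\partial_i \eta =  \bigl[(\partial_y -  \partialx \eta \cdot \partialx) \partial_i \phi\bigr]\eval +\partial_i \eta \bigl[ (\partial_y -\partialx \eta \cdot \partialx)\partial_y\phi \bigr]\eval$, with no commutator of $\nabla_x$ with $G(\eta)$ and no terms from differentiating the coefficients of the flattened Laplacian. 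The only reason the right-hand side is not literally $G(\eta)V_i+\partial_i\eta\, G(\eta)B$ is that $\partial_i\phi$ and $\partial_y\phi$, although harmonic with traces $V_i$ and $B$ on $\Sigma$, do not satisfy the homogeneous Neumann condition on the bottom $\Gamma$, whereas $G(\eta)$ is defined through the variational solutions $\theta_i$ which do. Hence $\gamma=R+\zeta R_0$ with $R_i=(\partial_y-\nabla\eta\cdot\nabla)(\partial_i\phi-\theta_i)\eval$: it is purely a bottom effect, consistent with the remark that $\gamma=0$ when $\Gamma=\emptyset$. Your commutator-plus-coefficient description would not vanish in infinite depth, so the product-rule bounds you sketch are aimed at the wrong object.

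Consequently the key analytic step needed for \eqref{p42.1} is missing from your plan. Your proposal to apply Proposition~\ref{p1} with $\sigma=s-\mez$ to $\widetilde{\phi}$ requires the Dirichlet data $\psi$ in $H^{\sigma+1}=H^{s+\mez}$, whereas \eqref{p42.1} only allows $\lA\psi\rA_{H^{s}}$ (and the paper in fact uses only $H^{\mez}$-type norms of $\psi$ and $V$). The correct estimate of $R_i$ exploits that $\partial_i\phi-\theta_i$ vanishes on $\Sigma$: one localizes it near the surface with a cut-off $\chi_0\bigl(\frac{y-\eta_1(x)}{h}\bigr)$, so that $U_i=\chi_0\bigl(\frac{y-\eta_1(x)}{h}\bigr)(\partial_i\phi-\theta_i)$ solves $\Delta_{x,y}U_i=F_i$ with $F_i$ supported in an interior strip where interior elliptic regularity (Lemma~\ref{regell}) makes it smooth with norms controlled by $\lA(V,B)\rA_{H^{\mez}\times H^{\mez}}$; after flattening, Proposition~\ref{p1} with \emph{zero} boundary data then gives $\lA R_i\rA_{H^{s-\mez}}\le \mathcal{F}(\lA\eta\rA_{H^{s+\mez}})\bigl(\lA\psi\rA_{H^{\mez}}+\lA V_i\rA_{H^{\mez}}\bigr)$, and similarly for $R_0$. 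Without deriving a concrete formula for $\gamma$ and without this localization-with-zero-data argument, your estimates have nothing definite to act on and cannot reach \eqref{p42.1}.
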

\begin{rema}
In the case~$\Gamma = \emptyset$, one can see that at least formally~$\gamma =0$. 
\end{rema}
\begin{proof}
Directly from the equality $\partial_t\eta=G(\eta)\psi$ and the definition of $B$ and $V$ (see \eqref{first:4}), we have
\begin{equation}\label{eq:etaB}
 \partial_t \eta +V\cdot \nabla\eta = B.
\end{equation}
Then, for any function~$f=f(t,x,y)$, by using the chain rule, we check that, with $\nabla=\nabla_x$,
\begin{align*}
(\partial_t + V\cdot \nabla) ( f \arrowvert_{y=\eta(t,x)})
&=(\partial_t + V\cdot \nabla) f(t,x,\eta(t,x))\\
&=\bigl[\partial_t f +\nabla \phi\cdot\nabla f 
+\partial_y f (\partial_t \eta + V\cdot \nabla \eta)\bigr] 
\big\arrowvert_{y=\eta(t,x)}\\
&=\bigl[ (\partial_t +\nabla_{x,y} \phi \cdot \nabla_{x,y}) f \bigr] \big\arrowvert_{y=\eta(t,x)},
\end{align*}
where we used \eqref{eq:etaB} as well as the identity $B=\partial_y \phi \big\arrowvert_{y=\eta(t,x)}$. 
Applying~$\partial_y~$ to \eqref{defi:P}, this identity yields~\eqref{eq:B}. 
On the other hand, applying $\partial_{x_k}$ to \eqref{defi:P}, the previous identity gives
$$
(\partial_t+V\cdot\nabla)V+ (\nabla P)\arrowvert_{y=\eta}=0.
$$
Since~$P\arrowvert_{y=\eta}=0$, we have
$$
0=\nabla( P\arrowvert_{y=\eta} )
= (\nabla P)\arrowvert_{y=\eta}+(\partial_y P)\arrowvert_{y=\eta}\nabla \eta,
$$
which yields \eqref{eq:V}.

To derive equation \eqref{eq:zeta} on~$\zeta\defn \nabla \eta$ we 
start from 
$$
\partial_t \eta = \B-V\cdot \partialx \eta.
$$
Differentiating with respect to~$x_i$ (for~$i=1,\ldots,d$) we find that~$\partial_i \eta=\partial_{x_i}\eta$ satisfies
\begin{equation}\label{identity:zeta}
(\partial_{t}+V\cdot\partialx)\partial_i \eta=\partial_i \B -\sum_{j=1}^d \partial_i V_j \partial_j \eta.
\end{equation}
Directly from the definitions of~$B$ and~$V$ 
($B=\partial_y\phi\arrowvert_{y=\eta}$,~$V=\nabla\phi\arrowvert_{y=\eta}$), and using the chain rule, 
we compute that
\begin{align*}
\partial_i B - \sum_{j=1}^d \partial_i V_j \partial_j\eta 
 &= \big[\partial_i \partial_y \phi +\partial_i \eta \partial_y^2\phi \big] \big\arrowvert_{y=\eta} 
 -\sum_{j=1}^d 
 \partial_j \eta \big[\partial_i \partial_j \phi 
 +\partial_i \eta \partial_j\partial_y\phi \big] \big\arrowvert_{y=\eta} \\
&= \big[\partial_y \partial_i  \phi 
- \sum_{j=1}^d \partial_j \eta \partial_i \partial_j \phi\big] \big\arrowvert_{y=\eta} 
+\partial_i \eta \big[ \partial_y^2 \phi -\sum_{j=1}^d 
\partial_j \eta \partial_j\partial_y\phi \big] \big\arrowvert_{y=\eta} .
\end{align*}
Therefore
\begin{equation}
(\partial_{t}+V\cdot\partialx)\partial_i \eta =  \big[\partial_y \partial_i  \phi 
-  \partialx \eta \cdot \partialx \partial_i \phi\big] \big\arrowvert_{y=\eta} 
+\partial_i \eta \big[ \partial_y (\partial_y \phi) -\partialx \eta \cdot \partialx\partial_y\phi \big] 
\big\arrowvert_{y=\eta} .
\end{equation}
 
 Let $\theta_i$ be the variational solution of the problem 
$$
\Delta_{x,y}\theta_i =0 \text{ in }\Omega, \quad \theta_i \arrowvert_{y=\eta} = V_i, 
\quad \partial_n \theta_i =0 \text{ on }\Gamma.
$$
 Then
$$
G(\eta)V_i = 
\sqrt {1+ \vert \nabla \eta \vert^2} \frac{\partial \theta_i}{\partial n}\arrowvert_{y=\eta} 
= (\partial_y\theta_i - \nabla \eta \cdot \nabla\theta_i)\arrowvert_{y=\eta}.
$$
Then we write
\begin{equation}\label{Ri}
 (\partial_y - \nabla \eta \cdot \nabla)\partial_i \phi \arrowvert_{y=\eta}
 = G(\eta)  V_i +R_i, \quad \text{where} 
 \quad   R_i = (\partial_y - \nabla \eta \cdot \nabla)(\partial_i \phi - \theta_i)\arrowvert_{y=\eta}.
\end{equation}
Notice that 
$$\Delta_{x,y}\partial_i \phi =0 \text{ in }\Omega, \quad \partial_i \phi\arrowvert_{y=\eta} = V_i.$$
However, in general  $ \partial_n \partial_i \phi \neq 0 \text{ on }\Gamma.$ Our goal is to show that $\partial_i \phi - \theta_i$ has a better regularity.
 Due to the presence of the bottom we have to localize the problem near~$\Sigma.$

Let~$\chi_0 \in C^\infty(\xR)$,~$\eta_1\in H^\infty({\mathbf{R}}^d)$ 
be such that~$\chi_0(z) = 1$ if~$z\geq 0 , \chi_0(z) =0$ if~$z \leq -\frac{1}{4}$ and
$$
\eta(x) - \frac{h}{4} \leq \eta_1(x) \leq \eta(x) - \frac{h}{5}. 
$$
Set 
$$
U_i(x,y) = \chi_0\big(\frac{y- \eta_1(x)}{h}\big)(\partial_i \phi - \theta_i)(x,y).
$$
We see easily that 
$R_i = (\partial_y - \nabla \eta \cdot \nabla)U_i\arrowvert_{y=\eta}$. 
Moreover~$U_i$ satisfies the equation
\begin{equation}
 \Delta_{x,y} U_i   = \big[ \Delta_{x,y},  \chi_0\big(\frac{y- \eta_1(x)}{h}\big) \big](\partial_i \phi - \theta_i):= F_i 
\end{equation}
and, with a slight change of notation, we have
\begin{equation}
\supp F_i \subset  S_{\mez,\frac{1}{5}}:= \big \{(x,y): x\in {\mathbf{R}}^d, \eta(x) - \frac{h}{2} \leq y \leq \eta(x) - \frac{h}{5} \big \}.
\end{equation}
 Moreover by ellipticity (see  Lemma~\ref{regell}) we have for all~$\alpha \in \xN^{d+1}$, 
\begin{equation}\label{regFi}
   \|D_{x,y}^\alpha  F_i \|_{ L^\infty(S_{\mez,\frac{1}{5}} )
   \cap L^2(S_{\mez,\frac{1}{5}} )} \leq C_\alpha  \| (V , B)\|_{H^{\mez }\times H^{\mez}}.
\end{equation}
Now we set~$  y = \rho(x,z) =  (1+z)e^{\delta z\langle D_x \rangle }\eta(x) -z \big\{e^{\delta(1+ z)\langle D_x \rangle }\eta(x) -h \big\} ~$ 
and~$\widetilde{g_i}(x,z) = g_i(x, \rho(x,z)).$
Since we have taken~$\delta \Vert \eta \Vert_{H^{s+\mez}}$ small,  
it is easy to see that on the image of~$  S_{\mez,\frac{1}{5}}$ one has~$ -h\leq z \leq -\frac{h}{10}.$ 
Now, according to section~\ref{s:flattening},~$\widetilde{U_i}$ is a solution of the problem 
$$
(\partial_z^2 + \alpha \Delta
+ \beta \cdot \nabla \partial_z - \gamma \partial_z) \widetilde{U_i} 
= \frac{(\partial_z \rho)^2}{1+ \vert \nabla \rho \vert^2} \widetilde{F_i}.
$$
Due to the exponential smoothing and to \eqref{regFi}, on the support of~$\widetilde{F_i}$ 
the right hand side of the above equation belongs in fact to~$C^0_z( (-h,0); H^\infty({\mathbf{R}}^d)).$ 
In particular we can apply Proposition~\ref{p1} with~$f=0.$ 
It follows that
$$
\|\nabla_{x,z} \widetilde{U_i}\|_{C^0([z_0,0]; H^{s-\mez}({\mathbf{R}}^d))}
\le \mathcal{F}(\| \eta \|_{H^{s+\mez}})\bigl(\| \widetilde F_i\|_{Y^\sigma((-1,0))}
+ \| \nabla_{x,z} \widetilde U_i\|_{X^{-\mez}((-1,0))} \bigr)~.
$$
Notice that according to the constructions of variational solutions 
the norm of~$ \widetilde U_i$ in $X^{-\mez}((-1,0))$ 
is bounded by 
$$
\mathcal{F}(\Vert \eta \Vert_{H^{s + \mez}}) \bigl(\|\psi\|_{H^\mez}+ \|V_i\|_{H^{\mez}}\bigr).
$$
Since
$$
R_i 
=\Big[ \big( \frac{1+ \vert \nabla \eta \vert^2}{1+\delta\langle D_x \rangle \eta} \partial_z 
- \nabla \eta \cdot \nabla  \big)\widetilde{U_i}\Big] \Big \arrowvert_{z=0},
$$
we deduce that 
\begin{equation*}
\Vert R_i \Vert_{H^{s-\mez}} \leq \mathcal{F}(\Vert \eta \Vert_{H^{s + \mez}})
\big( \Vert \psi \Vert_{H^\mez} + \Vert V_i \Vert_{H^\mez} \big) \\
\leq \mathcal{F}\big(\Vert \eta \Vert_{H^{s + \mez}}, \Vert \psi \Vert_{H^{s}}, \Vert V \Vert_{H^{s }} \big),
\end{equation*}
since~$s>\mez + \frac{d}{2}.$ 
We use exactly the same argument to show that
\begin{equation}
(\partial_y - \nabla \eta \cdot \nabla)\partial_y \phi \arrowvert_{y=\eta}= G(\eta)B +R_0,  
\end{equation}
where~$R_0$ satisfies the same estimate as~$R_i$. 
This completes the proof. 
\end{proof}
Following the same lines, we have the following relation between~$V$ and~$B$.
\begin{prop}\label{cancellation}
Let~$s> \mez + \frac d 2$. Then we have~$G(\eta){B}=-\cnx V +\gamma$ where 
$$
\|\gamma \|_{H^{s- \mez}} \leq \mathcal{F}( \|( \eta,  V, B)\|_{H^{s+ \mez} \times H^{\mez}\times H^\mez}).
$$
\end{prop}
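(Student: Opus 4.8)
The plan is to follow the same strategy as in the proof of Proposition~\ref{prop:newS}, reducing the claimed identity to the elliptic regularity estimates already established there. Recall that $\B=\partial_y\phi\eval$ and $V=\nabla_x\phi\eval$, where $\phi$ is the variational solution of $\Delta_{x,y}\phi=0$ in $\Omega$, $\phi\eval=\psi$, $\partial_n\phi=0$ on $\Gamma$. First I would compute $\cnx V$ by the chain rule: since $V_j=(\partial_{x_j}\phi)(x,\eta(x))$, one has
\[
\cnx V=\sum_{j=1}^d\partial_{x_j}\bigl[(\partial_{x_j}\phi)(x,\eta(x))\bigr]
=\bigl[\Delta_x\phi+\nabla\eta\cdot\nabla\partial_y\phi\bigr]\eval .
\]
Using the equation $\Delta_{x,y}\phi=0$, i.e.\ $\Delta_x\phi=-\partial_y^2\phi$, this gives
\[
-\cnx V=\bigl[\partial_y^2\phi-\nabla\eta\cdot\nabla\partial_y\phi\bigr]\eval
=\bigl[(\partial_y-\nabla\eta\cdot\nabla)\partial_y\phi\bigr]\eval .
\]

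The second step is to recognize the right-hand side as $G(\eta)\B$ modulo a controlled remainder. This is exactly the point established at the end of the proof of Proposition~\ref{prop:newS}: introducing the variational solution $\theta_0$ of $\Delta_{x,y}\theta_0=0$ in $\Omega$, $\theta_0\eval=\B$, $\partial_n\theta_0=0$ on $\Gamma$, one has $G(\eta)\B=(\partial_y\theta_0-\nabla\eta\cdot\nabla\theta_0)\eval$, and hence
\[
\bigl[(\partial_y-\nabla\eta\cdot\nabla)\partial_y\phi\bigr]\eval=G(\eta)\B+R_0,\qquad
R_0=\bigl[(\partial_y-\nabla\eta\cdot\nabla)(\partial_y\phi-\theta_0)\bigr]\eval,
\]
with $\lA R_0\rA_{H^{s-\mez}}\le\mathcal{F}\bigl(\lA(\eta,V,\B)\rA_{H^{s+\mez}\times H^\mez\times H^\mez}\bigr)$. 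Indeed, $\partial_y\phi-\theta_0$ solves a homogeneous Dirichlet problem on the free surface and its only defect is the non-vanishing of its conormal derivative on $\Gamma$; after localizing near $\Sigma$ with a cut-off supported at positive distance from the bottom, straightening the domain by the diffeomorphism~\eqref{diffeo}, and applying Proposition~\ref{p1} with a source term that is smooth and supported away from $\Sigma$ (coming from the commutator with the cut-off, estimated via Lemma~\ref{regell}), one gets the stated bound for $R_0$ exactly as for the terms $R_i$ there.

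Combining the two steps yields $G(\eta)\B=-\cnx V+\gamma$ with $\gamma\defn-R_0$, which satisfies the required estimate. The only substantive input is the remainder estimate of Step~2, and that is precisely the elliptic argument already carried out for the $R_i$ (and explicitly noted for $R_0$) in the proof of Proposition~\ref{prop:newS}; everything else is the elementary algebraic identity above. I therefore do not expect any genuinely new obstacle here: the real work has already been done in Proposition~\ref{p1} and Proposition~\ref{prop:newS}, and the point of the statement is merely to record the clean cancellation $G(\eta)\B+\cnx V=\gamma$ that follows from harmonicity of $\phi$.
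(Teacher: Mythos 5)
Your proposal is correct and follows essentially the same route as the paper's own proof: you derive $-\cnx V=\bigl[(\partial_y-\nabla\eta\cdot\nabla)\partial_y\phi\bigr]\arrowvert_{y=\eta}$ from $\Delta_{x,y}\phi=0$, and then compare $\partial_y\phi$ with the variational solution having trace $B$, estimating the harmonic difference (which vanishes on $\Sigma$) in $H^{s-\mez}$ exactly as for the remainders $R_i$, $R_0$ in Proposition~\ref{prop:newS}, via the localization near the surface and Proposition~\ref{p1}. No gap.
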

\begin{proof}
Recall that, by definition, $B=\partial_y\phi\arrowvert_{y=\eta}$ and $V=\partialx\phi\arrowvert_{y=\eta}$. 
Let $\theta$ be the variational solution to the problem
$$
\Deltayx \theta=0,\quad \theta\arrowvert_{y=\eta}=B,\quad \partial_n \theta\arrowvert_{\Gamma}=0.
$$
Then $G(\eta)B=( \partial_y \theta-\nabla\eta\cdot\nabla\theta)\big\arrowvert_{y=\eta}$. 
Now let $\widetilde{\theta}=\partial_y\phi$. We claim that 
$$
( \partial_y\widetilde{\theta}-\nabla\eta\cdot\nabla\widetilde{\theta})\big\arrowvert_{y=\eta}=-\cnx V.
$$
Indeed, on the one hand we have
$$
( \partial_y\widetilde{\theta}-\nabla\eta\cdot\nabla\widetilde{\theta})=
\partial_y^2\phi-\nabla\eta\cdot \nabla\partial_y \phi,
$$
and on the other hand
$$
\cnx V= \sum_{1\le i\le d}\partial_{x_i}V=\Bigl(\sum_{1\le i\le d} \partial_i^2\phi+\nabla\eta\cdot \partial_y\phi\Bigr)\Big\arrowvert_{y=\eta}.
$$
Then our claim follows from the fact that $\Deltayx\phi=0$. Now we have
$$
\Deltayx(\theta-\widetilde{\theta})=0,\quad (\theta-\widetilde{\theta})\arrowvert_{y=\eta}=0,
$$
so, as in the proof of Proposition~\ref{prop:newS}, we deduce from Proposition~\ref{p1} that
$$
\lA (\partial_y-\nabla\eta\cdot\nabla)(\theta-\widetilde{\theta})\rA_{H^{s-\mez}}\le 
\mathcal{F}( \|( \eta,  V, B)\|_{H^{s+ \mez} \times H^{\mez}\times H^\mez}),
$$
which is the desired result.
\end{proof}

\subsection{Estimates for the Taylor coefficient}\label{S:tameP}
In this paragraph, we prove several estimates for the Taylor 
coefficient.
\begin{prop}\label{prop:ma}
Let~$d\ge 1$ and $s>1+\frac{d}{2}$. 
There exists a non-decreasing 
function~$\mathcal{F}\colon\xR^{+}\rightarrow \xR^+$ such that, for all~$t\in [0,T]$,
\begin{equation}
\lA \ma(t)-g\rA_{H^{s-\mez}}\le 
\mathcal{F}\bigl( \lA  (\eta, \psi,V,B)(t)\rA_{H^{s+\mez}\times H^{s+\mez}\times 
H^s\times H^s}\bigr).\label{esti:a1}
\end{equation}
For $0<\eps <s-1-d/2$, there exists a non-decreasing 
function $\mathcal{F}$ such that, 
\begin{equation}
\lA (\partial_t \ma +V\cdot \partialx \ma)(t)\rA_{C^{\eps}}
\le \mathcal{F}\bigl( \lA  (\eta, \psi,V,B)(t)\rA_{H^{s+\mez}\times H^{s+\mez}\times 
H^s\times H^s}\bigr) 
.\label{esti:a2}
\end{equation}
\end{prop}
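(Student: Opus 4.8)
The plan is to express the Taylor coefficient $\ma$ in terms of the pressure $P$, which itself is given by the variational problem of Definition \ref{def.4.1}, and then to run the elliptic regularity machinery of Section \ref{sec.3} (in particular Proposition \ref{p1}) on $P$ after straightening the domain via the diffeomorphism $\rho$ of \eqref{diffeo}. First I would recall that $\ma = -\partial_y P\eval$ and that $P$ satisfies $\Delta_{x,y}P = -\cnxy(v\cdot\partialyx v)$ in $\Omega$ with $P\eval = 0$, equivalently (using $R = P + gy + \frac12|v|^2$) that $R$ is the harmonic function with boundary value $g\eta + \frac12|\partialyx\phi|^2\eval$. So I would set $\widetilde R(x,z) = R(x,\rho(x,z))$, observe that $\widetilde R$ solves the equation \eqref{dnint1F} with $F_0 = 0$ and Dirichlet data $f = g\eta + \frac12|\partialyx\phi|^2\eval$, and then check that this boundary data lies in $H^{s}(\xR^d)$: indeed $\frac12|\partialyx\phi|^2\eval$ can be written in terms of $V$ and $B$ as $\frac12(|V|^2 + B^2)\in H^{s}$ by the tame product estimate \eqref{prS2} (here $V,B\in H^s$ with $s>d/2$), while $g\eta\in H^{s+\mez}\subset H^s$. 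The variational estimate (Definition \ref{defi:phi} applied to $R$, or the a priori estimate coming from the variational theory) gives $\|\nabla_{x,z}\widetilde R\|_{X^{-\mez}([-1,0])}<\infty$, so the hypothesis \eqref{assu:var} of Proposition \ref{p1} is met.

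Next I would apply Proposition \ref{p1} with $\sigma = s-\mez$ (which is admissible since $-\mez\le s-\mez\le s-\mez$), obtaining $\|\nabla_{x,z}\widetilde R\|_{X^{s-\mez}([z_0,0])}\le \mathcal F(\|\eta\|_{H^{s+\mez}})\{\|f\|_{H^{s}} + \|\nabla_{x,z}\widetilde R\|_{X^{-\mez}}\}$, and then control the right-hand side by $\mathcal F(\|(\eta,\psi,V,B)\|_{H^{s+\mez}\times H^{s+\mez}\times H^s\times H^s})$, using that the variational norm of $R$ is bounded in terms of $\|f\|_{H^\mez}$ and the Lipschitz norm of $\eta$, together with the formula for $f$. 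Taking the trace at $z=0$ and using $\partial_y\phi(x,\rho(x,z)) = (\Lambda_1\widetilde R)(x,z) = \frac{1}{\partial_z\rho}\partial_z\widetilde R$ (together with $\partial_z\rho\eval = \partial_z\rho|_{z=0}$, which by \eqref{rhokappa} and \eqref{eq.rho1} is bounded below and lies in $h + H^{s-\mez}$), plus $\ma = -\partial_y P\eval = -\partial_y R\eval + g + \partial_y(\frac12|v|^2)\eval$, I would obtain $\ma - g \in H^{s-\mez}$ with the bound \eqref{esti:a1}; here the last term $\partial_y(\frac12|v|^2)\eval$ is handled by the chain rule and the already-established regularity of $\nabla_{x,z}\widetilde\phi$ from Theorem \ref{coro:estiDN} / Proposition \ref{p1} together with tame products.

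For the second estimate \eqref{esti:a2}, the idea is to commute the convective derivative $\partial_t + V\cdot\partialx$ through the elliptic problem defining $P$. Applying $(\partial_t + v\cdot\partialyx)$ to the interior equation $\Delta_{x,y}(P + gy + \frac12|v|^2) = 0$ and using the Euler equations \eqref{cond1} to rewrite $(\partial_t + v\cdot\partialyx)v = -\partialyx P - ge_y$, one finds that $(\partial_t + v\cdot\partialyx)R$ satisfies an elliptic equation whose source term is a commutator $[\Delta_{x,y}, v\cdot\partialyx]$ applied to known quantities, hence is controlled in low regularity; its boundary value at $\Sigma$ is computed from $(\partial_t + V\cdot\partialx)(g\eta + \frac12(|V|^2+B^2))$ using equations \eqref{eq:B}, \eqref{eq:V}, \eqref{eq:etaB}, which lies in $H^{s-\mez}$. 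One then runs Proposition \ref{p1} (or rather its non-homogeneous version, which is why the excerpt set it up with $F_0$) in the straightened variables on $\widetilde R$, being careful that the change of variables $\rho$ is time-dependent so the transported operators $\Lambda_1,\Lambda_2$ pick up extra terms; these are lower order and controlled by $\|\partial_t\rho\|$, itself controlled by $\partial_t\eta = G(\eta)\psi\in H^{s-\mez}$ via Theorem \ref{coro:estiDN}. Taking the trace and using the Sobolev embedding $H^{s-1-d/2}\subset C^{\eps}_*$ for $\eps < s-1-d/2$ then yields \eqref{esti:a2}. The main obstacle is the bookkeeping in this last step: tracking the time-dependence of the diffeomorphism, verifying that every commutator term produced by $(\partial_t + V\cdot\partialx)$ acting on the coefficients $\alpha,\beta,\gamma$ and on $\Lambda_1,\Lambda_2$ falls within the range of regularities for which Proposition \ref{p1} (non-homogeneous form) applies, and confirming that the resulting source term genuinely lies in $Y^{s-\mez}$ rather than something worse — this is where the algebraic cancellations from the Euler formulation (as opposed to the Zakharov formulation) are essential.
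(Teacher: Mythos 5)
There is a genuine gap in your treatment of \eqref{esti:a1}. You decompose $P=R-gy-\mez|\nabla_{x,y}\phi|^2$ with $R$ the harmonic function of Definition~\ref{def.4.1}, whose Dirichlet datum is $g\eta+\mez\bigl(|V|^2+B^2\bigr)$. That datum lies only in $H^{s}$ (the quadratic part cannot do better than $H^s$ when $V,B\in H^s$), so Proposition~\ref{p1} can only be applied to $\widetilde R$ with $\sigma=s-1$, not $\sigma=s-\mez$: the hypothesis of Proposition~\ref{p1} is $f\in H^{\sigma+1}$, and your own estimate, written with $\lA f\rA_{H^{s}}$ on the right while claiming $X^{s-\mez}$ regularity on the left, conflates the two. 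With $\sigma=s-1$ you only obtain $\partial_y R\eval\in H^{s-1}$, and the other piece $\partial_y\bigl(\mez|\nabla_{x,y}\phi|^2\bigr)\eval$ is likewise only in $H^{s-1}$ (it involves traces of second derivatives of $\phi$, which Proposition~\ref{p1} controls in $H^{s-1}$). Estimating the two pieces separately therefore yields $\ma-g\in H^{s-1}$, half a derivative short of \eqref{esti:a1}; the missing $\mez$ derivative cannot be recovered from this splitting, since the sum of the two pieces is exactly the quantity you are trying to estimate.

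The paper avoids this by \emph{not} splitting off the quadratic term onto the boundary: it works with $\wp(x,z)=P(x,\rho(x,z))+g\rho(x,z)$, whose boundary value is the smooth datum $g\eta\in H^{s+\mez}$ (because $P\eval=0$), and whose interior equation carries the quadratic term as a source, $F_0=-\alpha|\Lambda^2\varphi|^2$. The key point (Lemma~\ref{es:F0p}) is that, since $[\Lambda_1,\Lambda_2]=0$, each $\Lambda_j\varphi$ solves the same elliptic equation with trace $V$ or $B$ in $H^s$, so Proposition~\ref{p1} gives $\Lambda_j\Lambda_k\varphi\in L^2_z H^{s-\mez}$, and the product of two such factors lands in $L^1_z H^{s-\mez}\subset Y^{s-\mez}$ — this gain of integrability in $z$ is precisely what the $Y^\sigma$ spaces were designed for, and it is the reason Proposition~\ref{p1} was stated in non-homogeneous form. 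Then Proposition~\ref{p1} with $\sigma=s-\mez$ applies legitimately ($f=g\eta\in H^{s+\mez}$, $F_0\in Y^{s-\mez}$), giving $\nabla_{x,z}\wp\in X^{s-\mez}$ and hence \eqref{esti:a1}; the function $R$ of Definition~\ref{def.4.1} is used only to verify the low-regularity hypothesis \eqref{assu:var}, via \eqref{eq4.20}. Your sketch for \eqref{esti:a2} is too schematic to assess (the commutations with the time-dependent diffeomorphism are exactly the part you defer), but the first estimate is where your argument, as written, does not close.
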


Recall that 
$\ma=-\partial_y P\arrowvert_{y=\eta}$ 
where 
$$
P=P(t,x,y)=-\bigl(\partial_t \phi+\mez \la \nabla_{x}\phi\ra^2+\mez (\partial_y\phi)^2+gy \bigr).
$$
The basic idea is that one should be able to easily estimate~$P$ since 
it satisfies an elliptic equation. Indeed, 
since~$\Delta_{x,y}\phi=0$, we have
$$
\Delta_{x,y}P= -\la \nabla_{x,y}^2\phi\ra^2.
$$
Moreover, by assumption we have~$P=0$ on the free surface~$\{y=\eta(t,x)\}$. 
Yet, this requires some preparation because, as we shall see, the regularity of~$P$ is {\em not\/} 
given by the right-hand side in the elliptic equation above. Instead the regularity of~$P$ 
is limited by the regularity of the domain (i.e.\ the regularity of the function~$\eta$).

Hereafter, since the time variable is fixed, we shall skip it. 
We use the change of variables~$(x,z)\mapsto (x,\rho(x,z))$ introduced in~$\S$\ref{s:flattening}. 
Introduce~$\varphi$ and~$\wp$ given by
$$
\varphi(x,z)=\phi(x,\rho(x,z)),\quad \wp(x,z)=P(x,\rho(x,z))+g\rho(x,z),
$$
and notice that 
$$
\ma-g = - \frac 1 { \partial_z \rho} \partial_z  \wp\mid_{z=0}.
$$
The first elementary step is to compute the equation satisfied
by the new unknown~$v$ in~$\{z<0\}$ as well as the boundary conditions
on~$\{z=0\}$. Set (see \eqref{Lambda})
$$
\Lambda=(\Lambda_1,\Lambda_2),\quad 
\Lambda_1=\frac{1}{\partial_z\rho}\partial_z, \quad
\Lambda_2 = \nabla -\frac{\partialx \rho}{\partial_z \rho}\partial_z.
$$
We find that
\begin{align*}
& (\Lambda_1^2 + \Lambda_2^2)\va=0\text{ in }\quad - 1 <z<0,\\
&  (\Lambda_1^2 + \Lambda_2^2)\wp=-\la \Lambda^2 \va\ra^2 \text{ in }\quad - 1 < z<0,\\
&  (\Lambda_1^2 + \Lambda_2^2)\rho=0  \text{ in }\quad z<0,\\ 
\intertext{together with the boundary conditions}
&\wp=g\eta ,\quad \Lambda_1\wp=g -\ma \quad\text{on }\, z=0,\\
&\Lambda_2\varphi =V ,\quad \Lambda_1 \varphi=B,\quad\text{on }\, z=0.
\end{align*}
According to~\eqref{est:phi} and Remark~\ref{rema:321}, we have the {\em a priori estimate}
$$
\| \nabla_{x,z} \varphi \|_{X^{-\mez} (-\frac h 2, 0) }\leq \mathcal{F} ( \|\eta\|_{H^{s+ \mez}} )\|\psi\|_{H^{\mez}},
$$
while according to Proposition~\ref{p1}
\begin{equation}\label{eq4.20}
\|\nabla_{x,z}\wp\|_{X^{-\mez} (-1, 0) }\leq 
\mathcal{F}  \Bigl( \| \mathcal{R}\|_{X^{\mez} (-1, 0) } +  \| | \nabla \varphi|^2\|_{ X^{\mez} (-1, 0) }\Bigr)
\leq \mathcal{F} \bigl( \|(\eta, \psi)\|_{H^{s+ \mez}}\bigr),
\end{equation}
where $\mathcal{R}(x,z)= R(x, \rho(x,z))$ and $R$ is defined in ~Definition~\ref{def.4.1}. 

Expanding~$\Lambda_1^2 + \Lambda_2^2$, we thus find that~$\wp$ solves 
\begin{equation}
\begin{aligned}
&\partial_z^2 \wp +\alpha\Delta \wp + \beta \cdot\partialx\partial_z \wp  - \gamma \partial_z \wp=F_0(x,z)&& \text{for }z<0,\\
&\wp=g\eta && \text{on }z=0,
\end{aligned}
\end{equation}
where~$\alpha,\beta,\gamma$ are as above (see~\eqref{dnint1bis}) and where
\begin{equation}\label{defi:F0Lambda}
F_0=-\alpha \la \Lambda^2 \va\ra^2. % \quad\text{with }\Lambda=(\Lambda_1,\Lambda_2).
\end{equation}
Our first task is to estimate the source term~$F_0$. 
\begin{lemm}\label{es:F0p}
Let~$d\ge 1$ and $s>1+d/2$. 
Then there exists~$z_0 <0$ such that 
\begin{equation*}
\lA F_0\rA_{L^1([z_0, 0];H^{s-\mez})}
\le \mathcal{F}\bigl(\lA (\eta,\psi,V,B)\rA_{H^{s+\mez}\times H^{s+\mez}\times H^s\times H^s}\bigr).
\end{equation*}
\end{lemm}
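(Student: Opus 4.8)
The strategy is to combine the elliptic estimate of Proposition~\ref{p1} with the observation that the components of $\nabla_{x,y}\phi$ are themselves harmonic functions whose traces on $\Sigma$ are the unknowns $B$ and $V$, which are \emph{assumed} to lie in $H^s$ --- half a derivative smoother than the trace regularity $H^{s-\mez}$ that $\eta\in H^{s+\mez}$ alone would produce. This half-derivative gain is exactly what is needed to put the quadratic source $F_0=-\alpha\,|\Lambda^2\varphi|^2$ of \eqref{defi:F0Lambda} in $L^1_z H^{s-\mez}$. I would first record that, in the straightened variables of \S\ref{s:flattening}, one has $|\Lambda^2\varphi|^2=\sum_{i,j}|\Lambda_i\widetilde{w_j}|^2$, where $w_0=\partial_y\phi$, $w_k=\partial_{x_k}\phi$ $(1\le k\le d)$ and $\widetilde{w_j}(x,z)=w_j(x,\rho(x,z))$: indeed the entries of the Hessian $\nabla_{x,y}^2\phi$ evaluated along $y=\rho(x,z)$ are precisely the first-order $\Lambda$-derivatives of the functions $\widetilde{w_j}$, which are the components of $\Lambda\varphi$.

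Each $w_j$ is harmonic in $\Omega$; its trace on $\Sigma$ is $B$ (if $j=0$) or $V_j$ (if $1\le j\le d$), hence belongs to $H^s(\xR^d)$. By the change of variables, each $\widetilde{w_j}$ therefore solves the homogeneous version of \eqref{dnint1F} (vanishing right-hand side) with $\widetilde{w_j}\arrowvert_{z=0}\in H^s$. I would then apply Proposition~\ref{p1} twice. Applied to $\varphi$ itself (with right-hand side $0$, data $\psi\in H^{s+\mez}$, and assumption \eqref{assu:var} granted by Remark~\ref{rema:321}), it gives $\nabla_{x,z}\varphi\in X^{s-\mez}([z_0',0])$ for every $z_0'\in(-1,0)$; since $\Lambda_1=\frac1{\partial_z\rho}\partial_z$ and $\Lambda_2=\partialx-\frac{\partialx\rho}{\partial_z\rho}\partial_z$ have coefficients which, by \eqref{eq.rho1} and Lemma~\ref{lem.alg}, differ from constants by elements of $X^{s-\mez}$, it follows that $\widetilde{w_j}\in X^{s-\mez}([z_0',0])$ and (using the equation \eqref{dnint1} to control $\partial_z^2\varphi$) that $\nabla_{x,z}\widetilde{w_j}\in X^{s-\tdm}([z_0',0])\subset X^{-\mez}([z_0',0])$, since $s>\tdm$. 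This verifies the qualitative hypothesis \eqref{assu:var} for $\widetilde{w_j}$ on $[z_0',0]$, so the elliptic regularity argument behind Proposition~\ref{p1} applies on the base interval $[z_0',0]$ with target exponent $\sigma=s-1$ (admissible since $-\mez\le s-1\le s-\mez$ and $\widetilde{w_j}\arrowvert_{z=0}\in H^s=H^{(s-1)+1}$), and yields, for some $z_0\in(z_0',0)$,
$$
\lA \nabla_{x,z}\widetilde{w_j}\rA_{X^{s-1}([z_0,0])}\le \mathcal{F}\bigl(\lA\eta\rA_{H^{s+\mez}}\bigr)\bigl(\lA(V,B)\rA_{H^s\times H^s}+\lA\psi\rA_{H^{s+\mez}}\bigr).
$$

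To conclude I would reconstruct $F_0=-\alpha\sum_{i,j}|\Lambda_i\widetilde{w_j}|^2$. Each $\Lambda_i\widetilde{w_j}$ is the product of a coefficient lying in $X^{s-\mez}\subset X^{s-1}$ up to an additive constant with a component of $\nabla_{x,z}\widetilde{w_j}\in X^{s-1}$, hence lies in $X^{s-1}([z_0,0])$; here one uses that $X^{s-1}$ is an algebra (Lemma~\ref{lem.alg}), which is precisely where the hypothesis $s>1+\frac{d}{2}$ enters. Multiplying two such factors, and then by $\alpha$, which by Lemma~\ref{lem.3.25} equals $h^2$ plus an element of $X^{s-\mez}$, the algebra property gives $F_0\in X^{s-1}([z_0,0])$ with $\lA F_0\rA_{X^{s-1}([z_0,0])}\le\mathcal{F}\bigl(\lA(\eta,\psi,V,B)\rA_{H^{s+\mez}\times H^{s+\mez}\times H^s\times H^s}\bigr)$. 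Since $X^{s-1}([z_0,0])\subset L^2_z([z_0,0];H^{s-\mez})$ and $[z_0,0]$ is bounded, Cauchy--Schwarz gives $\lA F_0\rA_{L^1_z([z_0,0];H^{s-\mez})}\le C\lA F_0\rA_{X^{s-1}([z_0,0])}$, which is the claimed estimate.

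The main difficulty is the second use of Proposition~\ref{p1}: the functions $\widetilde{w_j}$ are not the variational solutions of their own Dirichlet problems --- in general $\partial_n w_j\neq 0$ on $\Gamma$, the very point that required the auxiliary functions $\theta_i$ in the proof of Proposition~\ref{prop:newS} --- so the qualitative assumption \eqref{assu:var} is not automatic and must be extracted from the softer estimate already available for $\varphi$, which is why one has to work on a sub-interval $[z_0',0]$ and then bootstrap the regularity of $\widetilde{w_j}$ back up to the exponent $s-1$.
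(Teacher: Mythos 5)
Your proof is correct and takes essentially the same route as the paper: there, one notes that $[\Lambda_1,\Lambda_2]=0$, so $\Lambda_1\varphi$ and $\Lambda_2\varphi$ (your $\widetilde{w_j}$) solve $(\Lambda_1^2+\Lambda_2^2)u=0$ with traces $B,V\in H^s$, Proposition~\ref{p1} is applied with $\sigma=s-1$, and $F_0$ is then bounded using the coefficient estimates and the algebra property of $H^{s-\mez}$ together with Cauchy--Schwarz in $z$. Your explicit bootstrap verifying \eqref{assu:var} for the differentiated solutions, and your use of the algebra $X^{s-1}$ instead of the product $L^2_zH^{s-\mez}\cdot L^2_zH^{s-\mez}\subset L^1_zH^{s-\mez}$, are only minor variants of steps the paper compresses by citing Theorem~\ref{th.23}.
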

\begin{proof}
Since~$[\Lambda_1, \Lambda_2]=0$ we have 
$$
(\Lambda_1^2 + \Lambda_2^2)\Lambda_2\va=0,
\quad (\Lambda_1^2 + \Lambda_2^2)\Lambda_1\va=0.
$$
Since~$\Lambda_2 \varphi\arrowvert_{z=0}=V$ 
and~$\Lambda_1 \varphi\arrowvert_{z=0}=B$, it follows from 
Proposition~\ref{p1} (and Theorem~\ref{th.23} 
which guarantees that~$\nabla_{x,z}\varphi\in X^{-\mez} (z_0, 0)$) 
that
\begin{equation*}
\lA \nabla_{x,z} \Lambda_j \varphi\rA_{X^{s-1}([z_0,0])}
\le \mathcal{F}\bigl(\lA (\eta,\psi,V,B)\rA_{H^{s+\mez}\times H^{s+\mez}\times H^s\times H^s}\bigr).
\end{equation*}
By using the   estimate~\eqref{eq.rho1}
\begin{equation*}
\lA \nabla_{x}\rho \rA_{C^0([z_0, 0];H^{s-\mez})}
+\lA \partial_z \rho-  h  \rA_{C^0([z_0,0]; H^{s-\mez})}
\les \|\eta \|_{H^{s+\mez}},
\end{equation*}
and the product rule in Sobolev spaces, we obtain
\begin{equation}\label{D2var}
\lA \Lambda_j \Lambda_k\varphi\rA_{L^2([z_0,0];H^{s-\mez})}
\le \mathcal{F}\bigl(\lA (\eta,\psi,V,B)\rA_{H^{s+\mez}\times H^{s+\mez}\times H^s\times H^s}\bigr).
\end{equation}
Since~$H^{s-\mez}$ is an algebra, according to Lemma~\ref{lem.3.25}, we obtain 
\begin{multline}\label{D2varbis}
\lA F_0\rA_{L^1([z_0,0];H^{s-\mez})}\les 
\left(1+\lA \alpha- h^2 \rA_{C^0([z_0,0];H^{s-\mez})}\right) 
\lA\Lambda_j \Lambda_k\varphi\rA_{L^2([z_0,0];H^{s-\mez})} ^2 \\
\le \mathcal{F}\bigl(\lA (\eta,\psi,V,B)\rA_{H^{s+\mez}\times H^{s+\mez}\times H^s\times H^s}\bigr).
\end{multline}
This completes the proof.
\end{proof}
It follows from Lemma~\ref{es:F0p} 
and Proposition~\ref{p1} applied with~$\sigma=s-1/2$ that there exists~$z_0$ such that
\begin{equation}\label{ep1}
\lA \nabla_{x,z} \wp\rA_{X^{s-\mez}([z_0,0])}
\le \mathcal{F}\bigl(\lA (\eta, \psi)\rA_{H^{s+\mez}},\lA F_0\rA_{L^1([z_0,0];H^{s-\mez})}
\bigr),
\end{equation}
where we used the estimate~\eqref{eq4.20}. 
According to~\eqref{D2varbis}, this implies that
\begin{equation}\label{inwpCSob}
\lA \nabla_{x,z} \wp\rA_{X^{s-\mez}([z_0,0])}
\le \mathcal{F}\bigl(\lA (\eta,\psi,V,B)\rA_{H^{s+\mez}\times H^{s+\mez}\times H^s\times H^s}\bigr),
\end{equation}
which in turn implies that
~$\lA \ma-g\rA_{H^{s-\mez}}$ is bounded by a constant 
depending only on 
$\lA(\eta, \psi)\rA_{H^{s+\mez}}$ and~$\lA (V,B)\rA_{H^s}$. 
\subsection{Paralinearization of the system}
Introduce 
\begin{equation}\label{eq.U}
U=V+T_\zeta B.
\end{equation}
To clarify notations, let us mention that the~$i$th component ($i=1,\ldots,d$) 
of this vector valued unknown satisfies 
$U_i=V_i+T_{\partial_i\eta}B$. The new unknown~$U$ is related to 
what is called the good unknown of Alinhac in~\cite{AM,ABZ1,Alipara,Ali}. 

To estimate~$(U,\zeta)$ in Sobolev spaces, we want to estimate 
$(\lDx{s}U,\lDx{s-\mez}\zeta)$ in~$L^\infty([0,T];L^2\times L^2)$ 
where~$\lDx{}\defn (I-\Delta)^{1/2}$. 
However, for technical reasons, 
instead of working with~$(\lDx{s}U,\lDx{s-\mez}\zeta)$, 
it is more convenient to work with
\begin{equation}\label{defiUszetas}
\begin{aligned}
&U_s \defn \lDx{s} V+T_\zeta \lDx{s}\B,\\[0.5ex]
&\zeta_s\defn \lDx{s}\zeta.
\end{aligned}
\end{equation}

\begin{prop}\label{prop:csystem2}
Under the assumptions of Proposition~\ref{prop:apriori}, 
there exists a non decreasing function~$\mathcal{F}$ such that
\begin{align}
&(\partial_t+T_V\cdot\partialx)U_s+T_\ma\zeta_s=f_1,\label{premiere}\\
&(\partial_{t}+T_V\cdot\partialx)\zeta_s=T_\lambda U_s +f_2,\label{seconde}
\end{align}
where recall that~$\lambda$ is the symbol
$$
\lambda(t;x,\xi)\defn\sqrt{(1+\la\partialx\eta(t,x)\ra^2)\la\xi\ra^2-(\partialx\eta(t,x)\cdot\xi)^2},
$$
and where, for each time~$t\in [0,T]$, 
\begin{equation}\label{p49:estf}
\lA (f_1(t),f_2(t))\rA_{L^2\times H^{-\mez}}
\le \mathcal{F}\bigl( \lA \eta(t)\rA_{H^{s+\mez}},\lA (V,B)(t)\rA_{H^s}\bigr).
\end{equation}
\end{prop}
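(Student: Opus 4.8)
The plan is to apply $\lDx{s}$ to the evolution equations \eqref{eq:V} and \eqref{eq:zeta} of Proposition~\ref{prop:newS}, to replace each convective derivative $\partial_t+V\cdot\partialx$ by its paradifferential version $\partial_t+T_V\cdot\partialx$, and to paralinearize every right-hand side, keeping only the top-order contributions and checking that all commutators and remainders fall into $L^2$ for \eqref{premiere} and into $H^{-\mez}$ for \eqref{seconde}. The structural point is that the correction $T_\zeta\lDx{s}B$ in the definition \eqref{defiUszetas} of $U_s$ is designed so as to cancel the worst error term created when $\lDx{s}$ hits \eqref{eq:V}; this is Alinhac's good-unknown mechanism, adapted to the present low-regularity setting. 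Throughout we shall use: $\eta\in H^{s+\mez}$, so that $\zeta=\partialx\eta$ has H\"older regularity strictly larger than $\mez$ (because $s>1+\frac d2$); the paralinearization $G(\eta)=T_\lambda+R(\eta)$ of Proposition~\ref{coro:paraDN1} with the borderline value $\eps=\mez$, admissible precisely because $s>1+\frac d2$, so that $R(\eta)$ gains half a derivative; the continuity estimate \eqref{ests+2}; the identities \eqref{eq:B} and \eqref{identity:zeta}; and the bound \eqref{esti:a1}, which gives $\ma-g\in H^{s-\mez}$.

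\textbf{The equation for $\zeta_s$.} Applying $\lDx{s}$ to \eqref{eq:zeta} and writing $V\cdot\partialx\zeta_s=T_V\cdot\partialx\zeta_s+(V-T_V)\cdot\partialx\zeta_s$, one is led to
\begin{equation*}
(\partial_t+T_V\cdot\partialx)\zeta_s=\lDx{s}\bigl(G(\eta)V+\zeta\, G(\eta)B+\gamma\bigr)-\bigl[\lDx{s},V\cdot\partialx\bigr]\zeta-(V-T_V)\cdot\partialx\zeta_s .
\end{equation*}
The last two terms are estimated in $H^{-\mez}$ by Proposition~\ref{comut} and a paraproduct decomposition together with \eqref{Bony}, \eqref{niS}. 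On the right-hand side, $\lDx{s}R(\eta)V\in H^{-\mez}$ by Proposition~\ref{coro:paraDN1} with $\eps=\mez$, while $\lDx{s}T_\lambda V=T_\lambda\lDx{s}V+[\lDx{s},T_\lambda]V$, and \eqref{esti:quant2} shows that $[\lDx{s},T_\lambda]$ is of order $<s+\mez$ (using that $\lambda$ has H\"older regularity $>\mez$), hence maps $H^s$ into $H^{-\mez}$. Paralinearizing $\zeta\, G(\eta)B=T_\zeta G(\eta)B+T_{G(\eta)B}\zeta+R(\zeta,G(\eta)B)$, using \eqref{ests+2} to place $G(\eta)B\in H^{s-1}$ (so that the last two terms contribute only to $H^{-\mez}$ after $\lDx{s}$), replacing $G(\eta)B$ by $T_\lambda B$ modulo $H^{-\mez}$ in the first one, moving $\lDx{s}$ through and using $T_\zeta T_\lambda=T_\lambda T_\zeta$ modulo an operator of order $<\mez$ (again by \eqref{esti:quant2}), one arrives at $T_\lambda\lDx{s}V+T_\lambda T_\zeta\lDx{s}B=T_\lambda U_s$ modulo $H^{-\mez}$. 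Since $\lDx{s}\gamma\in H^{-\mez}$, this yields \eqref{seconde} with $f_2$ satisfying \eqref{p49:estf}.

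\textbf{The equation for $U_s$.} One writes $(\partial_t+T_V\cdot\partialx)U_s=(\partial_t+T_V\cdot\partialx)\lDx{s}V+(\partial_t+T_V\cdot\partialx)\bigl(T_\zeta\lDx{s}B\bigr)$. For the first summand, applying $\lDx{s}$ to \eqref{eq:V} and commuting as above gives $(\partial_t+T_V\cdot\partialx)\lDx{s}V=-\lDx{s}(\ma\zeta)+r_1$ with $r_1\in L^2$; paralinearizing $\ma\zeta=T_\ma\zeta+T_\zeta\ma+R(\ma,\zeta)$ and using $\ma-g\in H^{s-\mez}$, one finds $-\lDx{s}(\ma\zeta)=-T_\ma\zeta_s-\lDx{s}(T_\zeta\ma)+r_1'$ with $r_1'\in L^2$, the only term failing to be in $L^2$ being $-\lDx{s}(T_\zeta\ma)\in H^{-\mez}$. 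For the second summand, $(\partial_t+T_V\cdot\partialx)(T_\zeta\lDx{s}B)=\bigl[\partial_t+T_V\cdot\partialx,T_\zeta\bigr]\lDx{s}B+T_\zeta(\partial_t+T_V\cdot\partialx)\lDx{s}B$; by Lemma~\ref{lemm:Dt} the commutator is bounded in $L^2$ by $\mathcal{F}(\cdots)\lA B\rA_{H^s}$, the key point being that \eqref{identity:zeta} shows $(\partial_t+V\cdot\partialx)\zeta\in H^{s-1}\subset L^\infty$, so that no derivative of $\zeta$ is lost; and by \eqref{eq:B} one has $T_\zeta(\partial_t+T_V\cdot\partialx)\lDx{s}B=T_\zeta\lDx{s}\ma+r_2$ with $r_2\in L^2$, so that the term failing to be in $L^2$ here is $+T_\zeta\lDx{s}\ma$. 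Adding the two contributions, these two bad terms combine into $-\lDx{s}(T_\zeta\ma)+T_\zeta\lDx{s}\ma=-[\lDx{s},T_\zeta]\ma$, which lies in $L^2$: by \eqref{esti:quant2}, $[\lDx{s},T_\zeta]$ is of order strictly less than $s-\mez$ since $\zeta$ has H\"older regularity $>\mez$, while $\ma-g\in H^{s-\mez}$. Hence $(\partial_t+T_V\cdot\partialx)U_s+T_\ma\zeta_s=f_1\in L^2$ with the bound \eqref{p49:estf}.

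\textbf{The main obstacle.} The crux is the cancellation just exhibited: each of $\lDx{s}(T_\zeta\ma)$ and $T_\zeta\lDx{s}\ma$ is one derivative too rough --- it lies in $H^{-\mez}$ and not in $L^2$ --- and only their difference $[\lDx{s},T_\zeta]\ma$ is in $L^2$; moreover this last fact holds precisely because $\zeta=\partialx\eta$ has H\"older regularity strictly above $\mez$, i.e.\ because $\eta$ is only of class $C^{\tdm+\eps}_*$, so this is exactly where the low regularity of the surface is felt. The remaining work is bookkeeping, but delicate: one must check that all remainders land in $L^2\times H^{-\mez}$ rather than merely in $L^2\times H^{-1}$, which forces the use of Proposition~\ref{coro:paraDN1} at the borderline exponent $\eps=\mez$ (legitimate precisely because $s>1+\frac d2$) and of the tame commutator estimate of Lemma~\ref{lemm:Dt} --- which, unlike the crude symbolic-calculus commutator, never differentiates $\zeta$ by itself and only calls on $(\partial_t+V\cdot\partialx)\zeta$, controlled via \eqref{identity:zeta}.
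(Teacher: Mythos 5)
Your proposal is correct and follows essentially the same route as the paper: Bony paralinearization of $\ma\zeta$ and of $G(\eta)V+\zeta G(\eta)B$ (via Proposition~\ref{coro:paraDN1} at the gain $\eps=\mez$), the equation \eqref{eq:B} to trade $\ma-g$ for $(\partial_t+V\cdot\partialx)B$, Lemma~\ref{lemm:Dt} with \eqref{identity:zeta} to commute $T_\zeta$ with the convective derivative, and the symbolic-calculus commutators with $\lDx{s}$ controlled through $\zeta,\ma\in C^{1/2+}_*$ and \eqref{esti:a1}. The only difference is organizational: you apply $\lDx{s}$ first and display the good-unknown cancellation as $-\lDx{s}(T_\zeta\ma)+T_\zeta\lDx{s}\ma=-[\lDx{s},T_\zeta]\ma\in L^2$, whereas the paper substitutes the $B$-equation into $T_\zeta\ma$ before commuting with $\lDx{s}$ (its Steps 1--2), which amounts to the same estimate applied to $(\partial_t+T_V\cdot\partialx)B=\ma-g$ modulo $H^s$.
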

\begin{proof}
The proof is based on 
the paralinearization of the Dirichlet-Neumann operator 
(see~Proposition~\ref{coro:paraDN1}), 
the Bony's paralinearization formula for a product, 
some simple computations and the commutator estimate proved in Section~\ref{S:Dt}.

\noindent {\sc{Step }1:} Paralinearization of the equation 
$$
(\partial_t+V\cdot\partialx)V+\ma\zeta=0.
$$

We begin by proving 
\begin{lemm}We have  
\begin{equation}\label{parastep1a}
\begin{aligned}
&(\partial_t+T_V\cdot\partialx)V+T_{\ma}\zeta+T_\zeta (\partial_t+T_V\cdot\partialx)B=h_1 
\quad\text{with}\\
&\lA h_1\rA_{H^s}
\le \mathcal{F}\bigl( \| \eta \|_{H^{s+\mez}},\lA (V,B)\rA_{H^s}\bigr).
\end{aligned}
\end{equation}
\end{lemm}
\begin{proof}
 Using \eqref{Bony3} and \eqref{esti:quant1} we have 
$V\cdot \nabla V=T_V\cdot \nabla V +A_1$ where~$A_1=\sum_j T_{\partial_j V} V_j +R(\partial_j V, V_j)$ satisfies
\begin{align*}
\lA A_1\rA_{H^s} \les \lA \nabla V\rA_{L^\infty} \lA V\rA_{H^s}.
\end{align*}
Similarly,~$(\ma-g)\zeta=T_{\ma-g} \zeta+T_{\zeta}(\ma-g) +R(\zeta,\ma-g)$ where 
\begin{equation}\label{Rzetaa}
\lA R(\zeta,\ma-g)\rA_{H^{s}}\les \lA\zeta\rA_{H^{s-1/2}}\lA \ma-g\rA_{H^{s-1/2}},
\end{equation}
and where~$\lA \ma-g\rA_{H^{s-1/2}}$ is estimated by means of \eqref{esti:a1}. 

Since~$T_\zeta g=0$, by 
replacing~$a$ by~$g+(\partial_t\B+V\cdot\nabla \B)$ we obtain 
\begin{align*}
T_\zeta \ma &=T_\zeta (\partial_t\B+V\cdot\nabla \B)\\
&=T_\zeta (\partial_t\B+T_V\cdot\nabla \B) +T_\zeta(V-T_V)\cdot \nabla \B.
\end{align*}
As in the analysis of~$A_1$ above, we have
$$
\lA (V-T_V)\cdot \nabla\B \rA_{H^s} \les  \lA \nabla \B\rA_{L^\infty} \lA V\rA_{H^s}.
$$
Now we use~$\lA T_\zeta\rA_{H^s\rightarrow H^s}\les \lA \zeta\rA_{L^\infty}\les \| \eta \|_{H^{s+1/2}}$ (since~$s+1/2>1+d/2$) to obtain 
$$
\lA T_\zeta (V-T_V)\cdot \nabla\B \rA_{H^s} \les  \| \eta \|_{H^{s+\mez}} \lA \nabla \B\rA_{L^\infty} \lA V\rA_{H^s}.
$$
By Sobolev injection, this proves~\eqref{parastep1a}. 
\end{proof}

\noindent {\sc{Step }2.} We now commute~\eqref{parastep1a} with~$\lDx{s}=(I-\Delta)^{s/2}$. 
The paradifferential rule~\eqref{esti:quant2} implies that 
\begin{align*}
&\lA \left[ T_\ma ,\lDx{s}\right]  \rA_{H^{s-1/2}\rightarrow L^2}\les \lA \ma \rA_{W^{1/2,\infty}}\les 1+\lA \ma-g \rA_{H^{s-1/2}},\\
&\lA \left[ T_\zeta , \lDx{s}\right]  \rA_{H^{s-1/2}\rightarrow L^2}
\les \lA \zeta \rA_{W^{1/2,\infty}}\les \lA \zeta \rA_{H^{s-1/2}},\\
&\lA \left[ T_V\cdot \nabla , \lDx{s}\right]  \rA_{H^{s}\rightarrow L^2}
\les \lA V \rA_{W^{1,\infty}}\les \lA V \rA_{H^{s}}.
\end{align*}
Consequently, it easily follows from~\eqref{esti:a2} and~\eqref{parastep1a} that
\begin{equation*}
(\partial_t+T_V\cdot\partialx) \lDx{s} V
+T_{\ma} \lDx{s} \zeta+T_\zeta (\partial_t+T_V\cdot\partialx) \lDx{s} B=h_2
\end{equation*}
for some remainder~$h_2$ satisfying
$\lA h_2\rA_{L^2}
\le \mathcal{F}\bigl( \| \eta \|_{H^{s+\mez}},\lA (V,B)\rA_{H^s}\bigr)$. 

On the other hand, Lemma~\ref{lemm:Dt} implies that 
$$
\lA  [T_\zeta,\partial_t+T_V\cdot \nabla] \lDx{s} \B (t) \rA_{L^2}
\le
\mathcal{F}\bigl( \lA \eta(t)\rA_{H^{s+\mez}},\lA (V,B)(t)\rA_{H^s}\bigr).
$$
Here we have used the fact that the~$L^\infty$ norm of~$\partial_t \zeta+V\cdot\nabla \zeta$ is, since~$s>\frac 1 2 + \frac d 2$,  estimated 
by means of the identity~\eqref{identity:zeta}: 
\begin{align*}
\lA \partial_t \zeta+V\cdot\nabla \zeta \rA_{L^\infty}
&\les \lA \nabla \B\rA_{L^\infty}+\lA \zeta\rA_{L^\infty}\lA \nabla V \rA_{L^\infty}\\
&\les \lA \nabla \B\rA_{L^\infty}+\| \eta \|_{H^{s+\mez}}\lA \nabla V \rA_{L^\infty}.
\end{align*}

By combining the previous results we obtain
$$
(\partial_t +T_V\cdot\nabla )\bigl( \lDx{s} V+T_\zeta  \lDx{s}\B\bigr)+T_\ma  \lDx{s}\zeta 
=f_1
$$
where~$f_1$ satisfies the desired estimate~\eqref{p49:estf}. 

\noindent {\sc{Step }3.} Paralinearization of the equation 
$$
(\partial_{t}+V\cdot\partialx)\zeta=G(\eta)V+ \zeta G(\eta)\B+ \gamma.
$$
Writing~$(V-T_V)\cdot \nabla\zeta=T_{\nabla\zeta}\cdot V +\sum_{j=1}^d R(\partial_j\zeta,V_j)$ and using \eqref{Bony3} and 
\eqref{niS}, we obtain
\begin{equation}
\lA (V-T_V)\cdot \nabla\zeta \rA_{H^{s-\mez}} \les  \lA \nabla \zeta\rA_{C^{-\mez}_*} \lA V\rA_{H^s}\les 
\| \eta \|_{C^{\tdm}_*}\lA V \rA_{H^s}.
\end{equation}The key step is to paralinearize~$G(\eta)V+ \zeta G(\eta)\B$. 
This is where we use the analysis performed in the previous Section. 
By definition of~$R(\eta)=G(\eta)-T_{\lambda}$ we have
\begin{equation*}
G(\eta)V+ \zeta G(\eta)\B = T_\lambda U  +F_2(\eta,V,B),
\end{equation*}
where
\begin{equation}\label{sy:F1}
F_2=[T_\zeta,T_\lambda]B+R(\eta)V+\zeta R(\eta)B+(\zeta-T_\zeta)T_\lambda B.
\end{equation}
The commutator~$ [T_\zeta,T_\lambda]B$ is estimated by means of \eqref{esti:quant2} which implies that
$$
\lA [T_\zeta,T_\lambda]B\rA_{H^{s-\mez}}\les \left\{M_0^0(\zeta)M_{1/2}^1(\lambda)
+M_{1/2}^0(\zeta)M_{0}^1(\lambda)\right\} \lA B\rA_{H^s}.
$$
Since 
$M_{1/2}^0(\zeta)+M_{1/2}^1(\lambda)\le \K\bigl( \| \eta \|_{H^{s+\mez}}\bigr)$ 
we conclude that 
$$
\lA [T_\zeta,T_\lambda]B\rA_{H^{s-\mez}}
\le \K\bigl( \| \eta \|_{H^{s+\mez}},\lA B\rA_{H^s}\bigr).
$$
Moving to the estimate of the second and third terms 
in the right-hand side of~\eqref{sy:F1}, we use Proposition~\ref{coro:paraDN1} to obtain 
that the~$H^{s-\mez}$-norm of~$R(\eta)V$ and~$R(\eta)B$ satisfy
$$
\lA R(\eta(t))V (t)\rA_{H^{s-\mez}}+\lA R(\eta(t))\B(t)\rA_{H^{s-\mez}}\\
\le 
\mathcal{F}\bigl( \lA \eta(t)\rA_{H^{s+\mez}},\lA (V,\B)(t)\rA_{H^s}\bigr).
$$
Since 
$H^{s-\mez}$ is an algebra, the term~$\zeta R(\eta)\B$ satisfies the same estimate as~$R(\eta)\B$ does. It remains only to estimate 
$(\zeta-T_\zeta)T_\lambda \B$. To do so we write
$$
(\zeta-T_\zeta)T_\lambda \B= T_{T_\lambda \B} \zeta+R(\zeta,T_\lambda \B).
$$
Thus \eqref{Bony3} (applied with~$\alpha=0$ and~$\beta=s-1/2$) implies that
$$
\lA (\zeta-T_\zeta)T_\lambda \B\rA_{H^{s-\mez}}\les \lA T_\lambda \B\rA_{C^0_*}\lA \zeta\rA_{H^{s-\mez}}.
$$
Using \eqref{esti:quant1} this yields
$$
\lA (\zeta-T_\zeta)T_\lambda B\rA_{H^{s-\mez}}\les M_0^1(\lambda) \lA B\rA_{C^1_*}\lA \zeta\rA_{H^{s-\mez}}.
$$
We thus end up with 
$\lA F_2\rA_{H^{s-\mez}}\le \mathcal{F}\bigl( \| \eta \|_{H^{s+\mez}},\lA (V,B)\rA_{H^s}\bigr)$.

By combining the previous results, we obtain
\begin{equation}\label{parastep3}
(\partial_{t}+T_V\cdot\partialx)\zeta=T_\lambda U +h_3,
\end{equation}
where 
$\lA h_3\rA_{H^{s-\mez}}\le \mathcal{F}\bigl( \| \eta \|_{H^{s+\mez}},\lA (V,B)\rA_{H^s}\bigr)$. 
As in the second step, by commuting the equation~\eqref{parastep3} 
with~$\lDx{s}$ we obtain the desired result~\eqref{seconde}, which 
concludes the proof. 
\end{proof}

\subsection{Symmetrization of the equations}
We shall use Proposition~\ref{prop:csystem2}. 
To prove an~$L^2$ estimate for System~\eqref{premiere}--\eqref{seconde}, we begin by performing a 
symmetrization of the non-diagonal part. Here we use in an essential way the fact that 
the Taylor coefficient~$\ma$ is a positive function. 
Again, let us mention that this assumption 
is automatically satisfied 
for infinitely deep fluid domain: this result was first proved by Wu~(see~\cite{WuInvent,WuJAMS}) and one can check 
that the proof remains valid for any~$C^{1,\alpha}$-domain, with $0<\alpha<1$.

\begin{prop}\label{psym}
Introduce the symbols
$$
\gamma=\sqrt{\ma \lambda},\quad q= \sqrt{\frac{\ma}{\lambda}},
$$
and set~$\theta_s=T_{q} \zeta_s$. Then 

\begin{align}
&\partial_{t} U_s+T_{V}\cdot\partialx  U_s  + T_\gamma \theta_s = F_1,\label{systreduit1}
\\
&\partial_{t}\theta_s+T_{V}\cdot\partialx \theta_s - T_\gamma U_s =F_2,
\label{systreduit2}
\end{align}
for some source terms~$F_1,F_2$ satisfying 
$$
\lA (F_{1}(t),F_{2}(t))\rA_{L^{2}\times L^{2}}\\
\le \mathcal{F}\bigl( \lA \eta(t)\rA_{H^{s+\mez}},\lA (V,B)(t)\rA_{H^s}\bigr).
$$
\end{prop}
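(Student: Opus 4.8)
The plan is to diagonalize the $2\times 2$ first-order system \eqref{premiere}--\eqref{seconde} by introducing the scalar symbols $\gamma=\sqrt{\ma\lambda}$ and $q=\sqrt{\ma/\lambda}$ as in the statement, setting $\theta_s=T_q\zeta_s$, and checking that the paradifferential operators rearrange into the antisymmetric form $(\partial_t+T_V\cdot\nabla)\pm T_\gamma$ up to $L^2$-controllable errors. The symbols $\lambda$ and $\ma$ are, respectively, of order $1$ and $0$ with the regularity coming from Lemma~\ref{lem.3.25} and \eqref{esti:a1}: $\lambda\in\Gamma^1_{1/2}$ with $M^1_{1/2}(\lambda)\le\mathcal{F}(\|\eta\|_{H^{s+\mez}})$, while $\ma-g\in H^{s-\mez}\subset C^{1/2}_*$ so $\ma\in\Gamma^0_{1/2}$ with a comparable bound; the ellipticity $\ma\ge c_0>0$ and $\lambda\ge c\la\xi\ra$ make $\gamma$ and $q$ well-defined symbols of orders $1/2$ and $-1/2$, again with semi-norms bounded by $\mathcal{F}(\|\eta\|_{H^{s+\mez}},\|(V,B)\|_{H^s})$, using that $\sqrt{\cdot}$ is smooth away from $0$ together with \eqref{esti:F(u)}.

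First I would derive the equation for $\theta_s$. Applying $T_q$ to \eqref{seconde} gives $T_q(\partial_t+T_V\cdot\nabla)\zeta_s=T_qT_\lambda U_s+T_qf_2$. I commute $T_q$ past $\partial_t+T_V\cdot\nabla$: the time derivative produces $T_{\partial_t q}\zeta_s$, and since $q=\sqrt{\ma/\lambda}$ with $\partial_t\ma+V\cdot\nabla\ma\in C^\eps$ by \eqref{esti:a2} and $\partial_t\lambda+V\cdot\nabla\lambda$ of order $1$ with $L^\infty$ coefficients controlled by $\partial_t\nabla\eta+V\cdot\nabla\nabla\eta$ (itself controlled via \eqref{identity:zeta} and \eqref{eq:zeta}), Lemma~\ref{lemm:Dt} applied to the symbol $q$ of order $-1/2$ shows $[T_q,\partial_t+T_V\cdot\nabla]\zeta_s$ is $O(\|\zeta_s\|_{H^{-1/2}})$ in $L^2$, i.e.\ bounded by $\mathcal{F}(\|\eta\|_{H^{s+\mez}},\|(V,B)\|_{H^s})$ since $\zeta_s=\lDx{s}\nabla\eta\in H^{-1/2}$. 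Then I use the symbolic calculus \eqref{esti:quant2} with $\rho=1/2$: $T_qT_\lambda=T_{q\lambda}+$ (order $1/2$), and $q\lambda=\sqrt{\ma\lambda}=\gamma$, so $T_qT_\lambda U_s=T_\gamma U_s+$ ($L^2$ remainder, using $U_s\in H^{-1/2}$ which follows from $U_s=\lDx{s}V+T_\zeta\lDx{s}B$ and \eqref{it.1}). This yields \eqref{systreduit2} with the convective derivative, and passing from $T_V\cdot\nabla$ back is not needed since the statement keeps $T_V$.

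Next I would produce the equation for $U_s$. The only change from \eqref{premiere} is to replace $T_\ma\zeta_s$ by $T_\gamma\theta_s=T_\gamma T_q\zeta_s$. By \eqref{esti:quant2} with $\rho=1/2$, $T_\gamma T_q=T_{\gamma q}+$ (order $0$), and $\gamma q=\sqrt{\ma\lambda}\sqrt{\ma/\lambda}=\ma$, so $T_\gamma\theta_s=T_\ma\zeta_s+$ ($L^2$ remainder bounded by $M^{1/2}_{1/2}(\gamma)M^0_0(q)\|\zeta_s\|_{L^2}$-type quantities, hence by $\mathcal{F}$). Substituting into \eqref{premiere} gives $\partial_tU_s+T_V\cdot\nabla U_s+T_\gamma\theta_s=f_1+(T_\ma\zeta_s-T_\gamma\theta_s)=:F_1$ with the required bound, using \eqref{p49:estf} for $f_1$.

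The main obstacle I anticipate is the rigorous bookkeeping of the commutator $[T_q,\partial_t+T_V\cdot\nabla]\zeta_s$: one must be careful that the good commutator estimate (Lemma~\ref{lemm:Dt}), which trades $\nabla_{t,x}q$ for $\partial_tq+V\cdot\nabla q$, can be applied, and this requires that $\partial_tq+V\cdot\nabla q$ has bounded $\mathcal{M}^{-1/2}_0$ semi-norm, which in turn forces one to control $\partial_t\ma+V\cdot\nabla\ma$ and $\partial_t\lambda+V\cdot\nabla\lambda$ in the right spaces — precisely what \eqref{esti:a2} and the transport structure of $\zeta$ provide. A secondary subtlety is that $\zeta_s$ only lies in $H^{-1/2}$ (not $L^2$), so every remainder estimate must be arranged to act on $H^{-1/2}$ inputs with no loss; this is why the symbols $q$ (order $-1/2$) and $\gamma$ (order $1/2$) are chosen exactly so that the products $q\lambda$ and $\gamma q$ land at the correct orders and the leftover operators are of nonpositive order. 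Once these two points are handled, the rest is a direct assembly of the estimates already established in Propositions~\ref{prop:csystem2}, \ref{prop:ma} and Lemmas~\ref{lem.3.25}, \ref{lemm:Dt}.
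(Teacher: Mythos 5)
Your proposal follows essentially the paper's own proof: the same symmetrizer $\gamma=\sqrt{\ma\lambda}$, $q=\sqrt{\ma/\lambda}$, the replacement of $T_\ma\zeta_s$ by $T_\gamma T_q\zeta_s$ in \eqref{premiere}, the application of $T_q$ to \eqref{seconde}, Lemma~\ref{lemm:Dt} for the commutator $\bigl[T_q,\partial_t+T_V\cdot\nabla\bigr]\zeta_s$ (with $\mathcal{M}^{-\mez}_0(\partial_t q+V\cdot\nabla q)$ controlled via \eqref{esti:a2} and the transport equation for $\zeta$), and the symbolic calculus \eqref{esti:quant2} exploiting $\gamma q=\ma$ and $q\lambda=\gamma$. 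Two pieces of order bookkeeping should be corrected, though neither changes the strategy. First, with $\rho=\mez$ the remainder $T_\gamma T_q-T_\ma$ is of order $-\mez$ (not $0$); this matters because $\zeta_s=\lDx{s}\nabla\eta$ lies only in $H^{-\mez}$, and the bound you wrote, involving $\lA\zeta_s\rA_{L^2}\sim\lA\eta\rA_{H^{s+1}}$, is \emph{not} controlled by $\mathcal{F}(\lA\eta\rA_{H^{s+\mez}},\cdot)$; the estimate that closes is $\lA T_\gamma T_q-T_\ma\rA_{H^{-\mez}\rightarrow L^2}\les M^{\mez}_{\mez}(\gamma)M^{-\mez}_{0}(q)+M^{\mez}_{0}(\gamma)M^{-\mez}_{\mez}(q)$ applied to $\lA\zeta_s\rA_{H^{-\mez}}\les\lA\eta\rA_{H^{s+\mez}}$ --- which is in fact the principle you state in words in your last paragraph, so this is a slip rather than a missing idea. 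Second, $T_qT_\lambda-T_\gamma$ is of order $0$ (not $\mez$), and it must act on $U_s$ in $L^2$ rather than merely $H^{-\mez}$; this is harmless because $U_s=\lDx{s}V+T_\zeta\lDx{s}B$ does belong to $L^2$, with norm controlled by $\lA V\rA_{H^s}$, $\lA B\rA_{H^s}$ and $\lA\zeta\rA_{L^\infty}$, exactly as used in the paper.
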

\begin{proof}
Directly from~\eqref{premiere}--\eqref{seconde}, we obtain~\eqref{systreduit1}--\eqref{systreduit2} with
\begin{align*}
F_1&\defn f_1+\bigl( T_{\gamma}T_q -T_{\ma}\bigr)\zeta_s,\\
F_2&\defn T_q f_2
+\bigl(T_q T_\lambda-T_{\gamma}\bigr)U_s -\left[ T_q , \partial_t +T_V\cdot \nabla\right]
\zeta_s.
\end{align*}
The commutator between~$T_q$ and~$\partial_t +T_V \cdot \nabla$ is estimated by means of 
Lemma~\ref{lemm:Dt}:
\begin{multline}\label{comm:Dtbis}
\lA \bigl[ T_q , \partial_t +T_V \cdot \nabla\bigr] \zeta_s\rA_{L^2({\mathbf{R}}^d)}\\
\le K  \left\{\mathcal{M}^{-\mez}_0 (q)\lA V\rA_{C^{1+\eps}_*}
+\mathcal{M}^{-\mez}_0 (\partial_t q+V\cdot \nabla q) 
\right\}
\times 
 \lA \zeta_s\rA_{H^{-\mez}({\mathbf{R}}^d)}.
\end{multline}$T_q f_2$ is estimated by means of \eqref{esti:quant1}. 
The key point is to estimate~$( T_{\gamma}T_q -T_{\ma})\zeta_s$ and 
$(T_q T_\lambda-T_{\gamma})U_s$. 
Since~$ \gamma q =\ma$, the operator~$T_\gamma T_q -T_{\ma}$ if of order~$-1/2$ 
since~$\gamma$ is a symbol of order~$1/2$,~$q$ is of order~$-1/2$, 
and since these symbols are~$C^{1/2}$ in~$x$. Similarly, since~$q \lambda =\gamma$, the operator 
$T_q T_\lambda-T_{\gamma}$ is of order~$0$. 
More precisely, 
by using the estimate \eqref{esti:quant2} for symbolic calculus, we obtain
\begin{align*}
&\lA T_\gamma T_q -T_\ma \rA_{H^{-\mez}\rightarrow L^{2}}\les 
M^{1/2}_{1/2}(\gamma)M^{-1/2}_{0}(q)+M^{1/2}_{0}(\gamma)M^{-1/2}_{1/2}(q),\\
&\lA T_q T_\lambda -T_\gamma \rA_{L^{2}\rightarrow L^{2}}\les 
M^{-1/2}_{1/2}(q)M^{1}_{0}(\lambda)+M^{-1/2}_{0}(q)M^{1}_{1/2}(\lambda).
\end{align*}
The above semi-norms are easily estimated by means of the~$C^{1/2}$ norms of~$\zeta=\nabla \eta$ 
and~$\ma$ (given by the Sobolev injection and Proposition~\ref{prop:ma}).
\end{proof}
 
We are now in position to prove an~$L^2$ estimate for~$(U_s,\theta_s)$. 
\begin{lemm}\label{estimationdeUs}
There exists a non-decreasing function~$\mathcal{F}$ such that
\begin{equation}\label{esti:4.46}
\lA U_s\rA_{L^\infty([0,T];L^{2})}+
\lA \theta_s\rA_{L^\infty([0,T];L^{2})}
\le \mathcal{F}(M_{s,0})+T\mathcal{F}(M_s(T)).
\end{equation}
\end{lemm}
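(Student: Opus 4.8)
The plan is to perform a standard energy estimate on the symmetrized system \eqref{systreduit1}--\eqref{systreduit2}, exploiting the antisymmetric coupling through $T_\gamma$. First I would introduce the energy functional
\[
\mathcal{E}(t)=\lA U_s(t)\rA_{L^2}^2+\lA \theta_s(t)\rA_{L^2}^2,
\]
differentiate in time, and substitute the two equations. The time derivative produces three groups of terms: the transport terms $\langle T_V\cdot\partialx U_s,U_s\rangle_{L^2}$ and $\langle T_V\cdot\partialx \theta_s,\theta_s\rangle_{L^2}$, the coupling terms $\langle T_\gamma\theta_s,U_s\rangle_{L^2}-\langle T_\gamma U_s,\theta_s\rangle_{L^2}$, and the source terms $\langle F_1,U_s\rangle_{L^2}+\langle F_2,\theta_s\rangle_{L^2}$. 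For the transport terms I would use that $(T_V\cdot\partialx)+(T_V\cdot\partialx)^*$ is of order $0$ with operator norm controlled by $\lA V\rA_{W^{1,\infty}}\les \lA V\rA_{H^s}$ (by \eqref{esti:quant3} applied componentwise to the first-order symbol $iV\cdot\xi$, whose real part has the regularity of $\nabla V$), so these contribute $\les \lA V\rA_{H^s}\mathcal{E}(t)$. For the coupling terms, since $\gamma=\sqrt{\ma\lambda}$ is a real symbol of order $1/2$ with regularity $C^{1/2}_*$ in $x$, the operator $T_\gamma-(T_\gamma)^*$ is of order $1/2-1/2=0$ by \eqref{esti:quant3}, with norm bounded by $M_{1/2}^{1/2}(\gamma)\le \mathcal{F}(\lA\eta\rA_{H^{s+\mez}},\lA(V,B)\rA_{H^s})$ using Proposition~\ref{prop:ma} and the Sobolev embedding for $\zeta=\nabla\eta$. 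Hence the two coupling terms nearly cancel, leaving an error $\les M_{1/2}^{1/2}(\gamma)\mathcal{E}(t)$. The source terms are bounded by $\mathcal{F}(\lA\eta\rA_{H^{s+\mez}},\lA(V,B)\rA_{H^s})(\mathcal{E}(t)^{1/2}+\mathcal{E}(t))$ using the estimate on $(F_1,F_2)$ from Proposition~\ref{psym}.

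Collecting these bounds I obtain a differential inequality of the form
\[
\frac{d}{dt}\mathcal{E}(t)\le \mathcal{F}\bigl(\lA\eta(t)\rA_{H^{s+\mez}},\lA(V,B)(t)\rA_{H^s}\bigr)\bigl(1+\mathcal{E}(t)\bigr)\le \mathcal{F}(M_s(T))\bigl(1+\mathcal{E}(t)\bigr),
\]
since all the norms appearing are dominated by $M_s(T)$ on $[0,T]$. Integrating by Gronwall's lemma over $[0,t]\subset[0,T]$ gives
\[
\mathcal{E}(t)\le e^{T\mathcal{F}(M_s(T))}\bigl(\mathcal{E}(0)+T\mathcal{F}(M_s(T))\bigr).
\]
It remains to relate $\mathcal{E}(0)$ to $M_{s,0}$: from the definitions $U_s=\lDx{s}V+T_\zeta\lDx{s}B$ and $\theta_s=T_q\zeta_s=T_q\lDx{s}\zeta$, and the boundedness of $T_\zeta$ on $L^2$ (norm $\les\lA\zeta\rA_{L^\infty}\les\lA\eta\rA_{H^{s+\mez}}$) and of $T_q$ from $H^{-\mez}$ to $L^2$ (since $q$ is of order $-\mez$, norm $\le M_0^{-\mez}(q)\le\mathcal{F}(M_{s,0})$), we get $\mathcal{E}(0)\le\mathcal{F}(M_{s,0})$. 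Since $T\in(0,1]$, the exponential factor $e^{T\mathcal{F}(M_s(T))}$ is itself of the form $\mathcal{F}(M_s(T))$, and absorbing constants yields $\lA U_s\rA_{L^\infty([0,T];L^2)}+\lA\theta_s\rA_{L^\infty([0,T];L^2)}\le\mathcal{F}(M_{s,0})+T\mathcal{F}(M_s(T))$, which is \eqref{esti:4.46}.

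The main obstacle I anticipate is the rigorous justification of the energy identity itself at this low regularity: one must check that $t\mapsto\mathcal{E}(t)$ is genuinely differentiable (or at least absolutely continuous) and that the pairings $\langle T_\gamma\theta_s,U_s\rangle_{L^2}$ make sense — here $\theta_s\in L^2$ but $T_\gamma\theta_s\in H^{-\mez}$ only, so the cancellation $\langle T_\gamma\theta_s,U_s\rangle-\langle T_\gamma U_s,\theta_s\rangle$ must be read as $\langle (T_\gamma-(T_\gamma)^*)\theta_s,U_s\rangle$ with the self-adjoint-excess operator of order $0$, rather than pairing unbounded operators directly. Since the proposition is stated for smooth solutions, this is only a matter of care and not a genuine difficulty, but the bookkeeping of which semi-norms $M^m_\rho$ control which operator norms — and that all of them are ultimately $\le\mathcal{F}(M_s(T))$ via Proposition~\ref{prop:ma} and Sobolev embeddings — is the part requiring the most attention.
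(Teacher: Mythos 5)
Your proposal is correct and follows essentially the same route as the paper: multiply \eqref{systreduit1} by $U_s$ and \eqref{systreduit2} by $\theta_s$, use Theorem~\ref{theo:sc0}~$(iii)$ to bound $(T_V\cdot\partialx)^*+T_V\cdot\partialx$ by $\lA V\rA_{W^{1,\infty}}$ and $T_\gamma-(T_\gamma)^*$ by $M^{1/2}_{1/2}(\gamma)$, invoke the bound on $(F_1,F_2)$ from Proposition~\ref{psym}, and conclude by Gronwall. Your extra care about the initial data ($\mathcal{E}(0)\le\mathcal{F}(M_{s,0})$) and about absorbing the exponential factor using $T\le 1$ and $M_{s,0}\le M_s(T)$ is exactly the routine bookkeeping the paper leaves implicit in ``we then easily obtain \eqref{esti:4.46}.''
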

\begin{rema}
The fact that this 
implies corresponding estimates for the Sobolev norms 
of~$\eta,\psi,V,B$ is explained below in~\S\ref{S:eVB}. 
\end{rema}
\begin{proof}
Multiply \eqref{systreduit1}Ê by~$U_s$ 
and \eqref{systreduit2}Ê by~$\theta_s$ and integrate in space to obtain
$$ 
\frac{d}{dt}\left\{\lA U_s(t)\rA_{L^2}^2 +\lA \theta_s(t)\rA_{L^2}^2\right\}
+(I)+(II)=(III),
$$
where
\begin{align*}
(I)&\defn \langle T_{V(t)} \cdot \nabla U_s(t), U_s(t)\rangle +\langle T_{V(t)} \cdot \nabla \theta_s(t), \theta_s(t)\rangle ,\\
(II)&\defn \langle T_{\gamma(t)} \theta_s(t) , U_s(t)\rangle 
-\langle T_{\gamma(t)} U_s (t), \theta_s(t)\rangle ,\\
(III)&\defn \langle F_1(t) ,U_s(t)\rangle +\langle F_2(t), \theta_s(t)\rangle.
\end{align*}
Then the key points are that (see point~$(iii)$ in Theorem~\ref{theo:sc0})
$$
\lA (T_{V(t)}\cdot \partialx)^* + T_{V(t)}\cdot \partialx \rA_{L^2\rightarrow L^2}
\les \lA V(t)\rA_{W^{1,\infty}},
$$
and 
$$
\lA T_{\gamma(t)} -(T_{\gamma(t)})^*\rA_{L^2\rightarrow L^2}
\les M^{1/2}_{1/2}(\gamma(t)).
$$
We then easily obtain~\eqref{esti:4.46}. 
\end{proof}

\subsection{Back to estimates for the original unknowns}\label{S:eVB}

Up to now, we only estimated~$(U_s,\theta_s)$ in~$L^\infty([0,T];L^2\times L^2)$. In 
this section, we shall show how we can recover estimates for the original unknowns 
$(\eta,\psi,V,\B)$ in~$L^\infty([0,T];H^{s+\mez}\times H^{s+\mez}\times H^s\times H^{s})$. 
Recall that the functions~$U_s$ and~$\theta_s$ are obtained 
from~$(\eta,V,\B)$ through:
\begin{align*}
&U_s \defn \lDx{s} V+T_\zeta \lDx{s}\B,\\[0.5ex]
&\theta_s\defn T_{\sqrt{\ma/\lambda}}\lDx{s}\nabla \eta.
\end{align*}
The analysis is in four steps:
\begin{enumerate}[(i)]
\item We first prove some estimates for~$(B,V,\eta)$ and the Taylor coefficient~$\ma$ 
in some lower order norms.
\item Then, by using the previous estimate of~$\theta_s$, we show how to recover an 
estimate of the~$L^\infty([0,T],H^{s+\mez})$-norm of~$\eta$.
\item Once~$\eta$ is estimated in~$L^\infty([0,T],H^{s+\mez})$, by using the 
estimate for~$U_s$, we estimate~$(B,V)$ in~$L^\infty([0,T];H^s)$. 
Here we make an essential use of our first result on the paralinearization of the Dirichlet-Neumann operator 
(see Proposition~\ref{coro:paraDN1}). Namely, we use the fact 
that one can paralinearize the Dirichlet-Neumann operator for any domain 
whose boundary is in~$H^{\mu}$ for some~$\mu>1+d/2$.
\item The desired estimate for~$\psi$ follows directly from the 
previous estimates for $\eta,V,\B$, the 
identity~$\nabla\psi=V+\B\nabla \eta$ and the fact that one 
easily obtain an~$L^{\infty}([0,T];L^2)$-estimate for~$\psi$.
\end{enumerate}
 We begin with the following 
lemma.
\begin{lemm}\label{ecBV}
There exists a non-decreasing function~$\mathcal{F}$ such that,
\begin{equation}\label{ecBVBV}
\| \eta \|_{L^\infty([0,T];H^{s})}+
\lA (B,V)\rA_{L^\infty([0,T]; H^{s-\mez})}
\le \mathcal{F}(M_{s,0})+T\mathcal{F}(M_s(T)).
\end{equation}
and, for any~$0<\eps<s-1-d/2$, 
\begin{equation}\label{ecBVma}
\lA \ma\rA_{L^\infty([0,T];C_*^{\eps})}\le \mathcal{F}(M_{s,0})
+\sqrt{T}\mathcal{F}(M_s(T)).
\end{equation}
\end{lemm}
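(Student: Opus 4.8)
The plan is to obtain the low-order estimates by the same energy method that produced the $L^2$ control of $(U_s,\theta_s)$, but run one derivative lower, and then to back out $\eta$, $V$, $B$ and finally $\ma$. First I would revisit the paralinearized and symmetrized system of Proposition~\ref{prop:csystem2} and Proposition~\ref{psym} with $s$ replaced by $s-\mez$: define $U_{s-\mez}\defn \lDx{s-\mez}V+T_\zeta\lDx{s-\mez}B$ and $\theta_{s-\mez}\defn T_{\sqrt{\ma/\lambda}}\lDx{s-\mez}\nabla\eta$. Since $s-\mez>\mez+\frac d2$, all the paralinearization and commutator estimates used in the proof of Propositions~\ref{prop:csystem2}, \ref{psym} and Lemma~\ref{estimationdeUs} still apply (they only required $s>\mez+\frac d2$ for the Dirichlet--Neumann paralinearization in Proposition~\ref{coro:paraDN1}, and the commutator bounds of Lemma~\ref{lemm:Dt} and Section~\ref{S:Dt} degrade gracefully), so the same Gr\o nwall argument gives
$$
\lA U_{s-\mez}\rA_{L^\infty([0,T];L^2)}+\lA \theta_{s-\mez}\rA_{L^\infty([0,T];L^2)}\le \mathcal{F}(M_{s,0})+T\mathcal{F}(M_s(T)).
$$
Because $\sqrt{\ma/\lambda}$ is elliptic of order $-\mez$ and $C^{1/2}_*$ in $x$ (using $\ma\ge c/2>0$ from the Taylor condition and the bound on $\ma-g$ from Proposition~\ref{prop:ma}, together with $\lambda$ elliptic of order $1$), one inverts $T_{\sqrt{\ma/\lambda}}$ up to a smoothing operator to recover $\lA \nabla\eta\rA_{H^{s-1}}$, i.e.\ $\lA \eta\rA_{H^s}$, from $\theta_{s-\mez}$; then, knowing $\eta\in H^s$, the operator $T_\zeta$ is bounded on $H^{s-\mez}$ (since $\zeta\in H^{s-1}\subset L^\infty$), so $U_{s-\mez}$ controls $\lA V\rA_{H^{s-\mez}}$, and from $B=(\partialx\eta\cdot\partialx\psi+G(\eta)\psi)/(1+|\partialx\eta|^2)$ combined with Proposition~\ref{coro:paraDN1} (applied with $\sigma=s$, using $\eta\in H^s$ with $s>1+\frac d2$) one gets $\lA B\rA_{H^{s-\mez}}$. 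This yields \eqref{ecBVBV}.

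For \eqref{ecBVma}, the key is that the $C^{\eps}_*$-norm of $\ma$ costs only \emph{one time integration} of the transport-type estimate \eqref{esti:a2}. Concretely, I would combine the identity $\partial_t\ma+V\cdot\partialx\ma$ with the transport estimate \eqref{eq.i} (or rather its H\"older analogue) applied to the equation for $\ma$: write $\ma(t)=\ma(0)+\int_0^t(\partial_t\ma+V\cdot\partialx\ma)-\int_0^t V\cdot\partialx\ma$, estimate the first integral by $t\cdot\sup_{[0,T]}\lA\partial_t\ma+V\cdot\partialx\ma\rA_{C^\eps}\le T\mathcal{F}(M_s(T))$ using \eqref{esti:a2}, and handle the transport term by the $C^\eps_*$ transport estimate with the $W^{1,\infty}$-norm of $V$ (controlled by $M_s(T)$ since $s>1+\frac d2$). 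The appearance of $\sqrt T$ rather than $T$ comes from the fact that the $C^\eps_*$ transport estimate for $\partial_t\ma+V\cdot\nabla\ma=\cdots$ yields, after a Gr\o nwall-type absorption, a factor that is sublinear in $T$ on $(0,1]$; alternatively one interpolates between the $L^\infty$-in-time bound on $\lA\ma-g\rA_{H^{s-\mez}}$ from \eqref{inwpCSob}/\eqref{esti:a1} (which is $\mathcal F(M_s(T))$, not small) and the $\sqrt T$-smallness of the time increment measured in a negative-order space, giving the stated $\mathcal{F}(M_{s,0})+\sqrt T\,\mathcal{F}(M_s(T))$.

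The main obstacle I anticipate is the inversion of $T_{\sqrt{\ma/\lambda}}$ to recover $\eta$: one must produce a genuine parametrix $T_{\sqrt{\lambda/\ma}}$ and control the error $T_{\sqrt{\lambda/\ma}}T_{\sqrt{\ma/\lambda}}-\mathrm{Id}$, which by Theorem~\ref{theo:sc0}(ii) is of order $-\mez$ \emph{provided} the symbols are $C^{1/2}_*$ in $x$ with the appropriate semi-norm bounds; this in turn requires knowing $\ma\in C^{1/2}_*$ and $\lambda\in C^{1/2}_*$ uniformly, i.e.\ a bit of regularity on $\eta$ and $\ma$ that is itself part of what is being estimated. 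The remedy is the bootstrap ordering already used elsewhere in the paper: the $C^{1/2}_*$ control of $\zeta=\nabla\eta$ and of $\ma$ follows from Sobolev embedding $H^{s-\mez}\hookrightarrow C^{1/2}_*$ (valid since $s>1+\frac d2$) applied to the \emph{already available} bounds \eqref{esti:a1} on $\ma-g$ and the a priori control of $\lA\eta\rA_{H^{s+\mez}}$ by $M_s(T)$, so no circularity arises; the low-order estimate \eqref{ecBVBV} then refines these to the sharper form with the $\mathcal{F}(M_{s,0})+T\mathcal{F}(M_s(T))$ structure. The other delicate point is simply bookkeeping: checking that every paradifferential estimate invoked from Sections~\ref{sec:2}--\ref{S:6} indeed tolerates the shift $s\rightsquigarrow s-\mez$, which it does because the threshold everywhere was $s>\mez+\frac d2$ and $s-\mez>\frac d2\ge\mez+\frac d2$ fails by exactly the margin $\mez$ — so in fact one should be slightly careful and use the hypothesis $s>1+\frac d2$ to guarantee $s-\mez>\mez+\frac d2$, which is where the assumption $s>1+\frac d2$ (rather than $s>\mez+\frac d2$) of Proposition~\ref{prop:apriori} is genuinely used.
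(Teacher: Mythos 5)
Your proof of \eqref{ecBVma} is essentially the paper's argument (bound $\lDx{\eps}\ma$ along the flow by combining \eqref{esti:a2} with a commutator estimate $\lA [\lDx{\eps},V]\ma\rA_{L^\infty}\les \lA V\rA_{H^s}\lA \ma\rA_{L^\infty}$ and integrating in time); the speculation about where $\sqrt T$ comes from is unnecessary, since this argument yields a factor $T$ and $T\le \sqrt T$ for $T\le 1$. The problem is with \eqref{ecBVBV}. Your route --- rerun the symmetrized energy estimate at level $s-\mez$ and then invert $T_{\sqrt{\ma/\lambda}}$ (and use $T_\zeta$ and Dirichlet--Neumann bounds) to recover $\eta, V, B$ --- cannot produce the stated structure $\mathcal{F}(M_{s,0})+T\mathcal{F}(M_s(T))$. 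Every recovery step carries operator norms controlled only through quantities like $\lA \ma(t)\rA_{C^{\eps}_*}$, $\lA \eta(t)\rA_{C^{1+\eps}_*}$, $\lA \eta(t)\rA_{H^{s+\mez}}$ at time $t$, i.e.\ by $\mathcal{F}(M_s(T))$ with no factor of $T$, and the remainder left by the parametrix is a lower Sobolev norm of $\eta(t)$, again only bounded by $M_s(T)$. So at best you get $\mathcal{F}(M_s(T))\bigl(\mathcal{F}(M_{s,0})+T\mathcal{F}(M_s(T))\bigr)$; your remark that \eqref{ecBVBV} ``then refines'' these constants is circular, since \eqref{ecBVBV} is exactly what is being proved. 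This is not cosmetic: the whole point of Lemma \ref{ecBV} is that the only term without a factor of $T$ depends on the data alone, because Lemmas \ref{ecBVeta}--\ref{ecBVs} plug these bounds into the constants of their own parametrix arguments and the final inequality \eqref{apriori} is closed by a continuity-in-$T$ argument; from an inequality of the shape $M_s(T)\le \mathcal{F}(M_s(T))\,\mathcal{F}(M_{s,0})+T\,\mathcal{F}(M_s(T))$ no bound on $M_s(T)$ can be extracted, however small $T$ is.

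The paper's proof is much more elementary and avoids the inversion entirely: by \eqref{eq:etaB}, \eqref{eq:V}, \eqref{eq:B}, the unknowns satisfy the transport equations $(\partial_t+V\cdot\nabla)\eta=B$, $(\partial_t+V\cdot\nabla)V=-\ma\,\zeta$, $(\partial_t+V\cdot\nabla)B=\ma-g$, whose right-hand sides are bounded at each time in $H^{s}$, resp.\ $H^{s-\mez}$ (using \eqref{esti:a1}), by $\mathcal{F}(M_s(T))$; the factor $T$ then comes for free from $\lA f\rA_{L^1([0,T])}\le T\lA f\rA_{L^\infty([0,T])}$, and Proposition \ref{estclass} applies with a constant $\mathcal{F}\bigl(\lA V\rA_{L^1([0,T];H^s)}\bigr)\le \mathcal{F}\bigl(T\,M_s(T)\bigr)$, which degenerates to a harmless universal constant as $T\to 0$, so the required structure is preserved. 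Incidentally, your claim that all the paralinearization and commutator estimates of Section 4 tolerate replacing $s$ by $s-\mez$ is not accurate (several steps of Proposition \ref{prop:csystem2} genuinely use $s>1+d/2$, e.g.\ $\lA V\rA_{W^{1,\infty}}\les \lA V\rA_{H^s}$), but this is moot once the transport-equation argument is used.
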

\begin{proof}
The proof is based on classical Sobolev or H\"older estimates 
for the solution~$w$ of a transport equation of the form 
$\partial_t w +V\cdot \nabla w =F$ (these estimates are recalled in 
Proposition~\ref{estclass}). 

We begin by proving \eqref{ecBVBV}. It follows from \eqref{eq:B}, \eqref{eq:V} and \eqref{eq:etaB} 
that 
$$
\partial_t \eta+V\cdot\partialx \eta=F(\eta),\quad
\partial_t V+V\cdot\partialx V=F(V),
\quad 
\partial_t B+V\cdot\partialx B=F(B)
$$
with $F(\eta)=B$, $F(V)=-\ma \zeta$ and $F(B)=\ma -g$. Now 
by using the estimate~\eqref{esti:a1} for the $H^{s-\mez}$-norm 
of~$\ma-g$ together with the product rule in Sobolev spaces, we have
$$
\| F(\eta)(t)\| _{H^{s}} +\lA F(V)(t)\rA_{H^{s-\mez}}+
\lA F(B)(t)\rA_{H^{s-\mez}}\le 
\mathcal{F}\bigl( \lA \eta(t)\rA_{H^{s+\mez}},\lA (V,B)(t)\rA_{H^s}\bigr),
$$
for some non decreasing function $\mathcal{F}$. 
Using the obvious estimate 
$\lA h\rA_{L^1([0,T])}\le T\lA h\rA_{L^\infty([0,T])}$, we deduce that
$$
\| F(\eta)\| _{L^1([0,T];H^{s})} +\lA F(V)\rA_{L^1([0,T];H^{s-\mez})}+
\lA F(B)\rA_{L^1([0,T];H^{s-\mez})}
\le T\mathcal{F}(M_s(T)).
$$
The desired estimate for $\eta$ (resp.\ for $(V,B)$) 
then follows from the estimate~\eqref{eq.iii} in 
Proposition \ref{estclass} applied with~$\sigma = s $ (resp.\ $\sigma=s-\mez$). 

It remains to prove \eqref{ecBVma}. We shall estimate the 
$L^\infty_t(L^\infty_x)$-norm of $\dot{a}\defn \langle D_x\rangle^{\eps} a$. 
To do so, using the estimate \eqref{eq.i}, as above one reduces the proof 
to proving that 
the $L^1_t(L^\infty_x)$-norm of $(\partial_t+V\cdot\partialx)\dot{a}$ is bounded 
by $T\mathcal{F}(M_s(T))$. Again, it is sufficient to prove that 
the $L^\infty_t(L^\infty_x)$-norm of $(\partial_t+V\cdot\partialx)\dot{a}$ is bounded 
by $\mathcal{F}(M_s(T))$. 
To prove this estimate, write
$$
(\partial_t+V\cdot\partialx)\dot{a}=
\langle D_x\rangle^{\eps}(\partial_t+V\cdot\partialx)a
+\bigl[ V, \langle D_x\rangle^{\eps}\bigr]\cdot\partialx a.
$$ 
The first term is estimated by means of \eqref{esti:a2}. 
The desired estimate for the second term follows from the bound
$$
\Vert [\lDx{\eps}, V] a \Vert_{L^\infty(\xR^d)}
\leq K \| V\|_{H^s(\xR^d)} \|a \|_{L^\infty(\xR^d)},
$$
which follows from paraproduct estimates in Zygmund spaces 
(see \cite{BCD}). 
\end{proof}

\begin{lemm}\label{ecBVeta}
There exists a non-decreasing function~$\mathcal{F}$ such that
\begin{equation}
\| \eta \|_{L^\infty([0,T];H^{s+\mez})}
\le \mathcal{F}\bigl(\mathcal{F}(M_{s,0})+T\mathcal{F}\bigl(M_s(T)\bigr)\bigr).
\end{equation}
\end{lemm}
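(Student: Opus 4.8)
The goal is to recover the top-order Sobolev regularity $\| \eta \|_{L^\infty([0,T];H^{s+\mez})}$ from the $L^2$-bound on $\theta_s = T_{\sqrt{\ma/\lambda}}\lDx{s}\nabla\eta$ obtained in Lemma~\ref{estimationdeUs}, together with the lower-order control $\| \eta \|_{L^\infty([0,T];H^s)}$ from Lemma~\ref{ecBV}. The strategy is to invert the paraproduct $T_{\sqrt{\ma/\lambda}}$ up to a controllable error. The symbol $q = \sqrt{\ma/\lambda}$ is of order $-\mez$, it is elliptic (since $\ma \ge c_0 > 0$ by hypothesis and $\lambda \asymp \la\xi\ra$ for $\eta$ Lipschitz), and its inverse symbol $q^{-1} = \sqrt{\lambda/\ma}$ is of order $+\mez$; moreover both $q$ and $q^{-1}$ lie in $\Gamma^{\mp 1/2}_{\eps}$ with semi-norms bounded by $\mathcal{F}(\| \eta \|_{H^{s+\mez}}, \| \ma \|_{C^\eps_*})$, using the ellipticity estimate \eqref{esti:a1} for $\ma-g$ and the Sobolev embedding $H^{s-\mez}\hookrightarrow C^\eps_*$ for $\nabla\eta$ (so $\lambda \in \Gamma^1_{\eps}$), combined with Proposition~\ref{prop:ma}. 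Applying $T_{q^{-1}}$ to the definition of $\theta_s$ and using the symbolic calculus estimate \eqref{esti:quant2} (with $\rho = \eps$, since $q q^{-1} = 1$):
\begin{equation*}
\lA T_{q^{-1}}\theta_s - \lDx{s}\nabla\eta \rA_{L^2}
= \lA (T_{q^{-1}}T_q - I)\lDx{s}\nabla\eta \rA_{L^2}
\le \mathcal{F}\bigl(\| \eta \|_{H^{s+\mez}}, \| \ma \|_{C^\eps_*}\bigr)\, \lA \lDx{s}\nabla\eta\rA_{H^{-\eps}}.
\end{equation*}
Since $\lDx{s}\nabla\eta \in H^{-\eps}$ corresponds to $\eta \in H^{s+1-\eps}$, and $s+1-\eps < s+\mez$ is \emph{not} automatic, one must instead interpolate: the right-hand side is controlled by $\lA \eta\rA_{H^{s+1-\eps}}$, which by interpolation between $H^s$ (controlled by Lemma~\ref{ecBV}) and $H^{s+\mez}$ (the quantity we are estimating) is bounded by $C\lA\eta\rA_{H^s}^{2\eps}\lA\eta\rA_{H^{s+\mez}}^{1-2\eps}$, hence absorbed by a small fraction of $\lA\eta\rA_{H^{s+\mez}}$ plus a large constant times a power of $\lA\eta\rA_{H^s}$.

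Putting this together, for each fixed $t\in[0,T]$,
\begin{equation*}
\lA \lDx{s}\nabla\eta(t) \rA_{L^2}
\le \lA T_{q^{-1}(t)}\rA_{H^{-\mez}\to L^2}\lA \theta_s(t)\rA_{L^2}
+ \mathcal{F}\bigl(\| \eta(t) \|_{H^{s+\mez}}, \| \ma(t) \|_{C^\eps_*}\bigr)\lA \eta(t)\rA_{H^{s+1-\eps}},
\end{equation*}
where the first term uses \eqref{esti:quant1} and the fact that $q^{-1}$ is of order $\mez$, hence $T_{q^{-1}}$ maps $H^{-\mez}$ to $L^2$ with norm $\le K M^{1/2}_0(q^{-1}) \le \mathcal{F}(\| \eta(t)\|_{H^{s+\mez}},\|\ma(t)\|_{C^\eps_*})$. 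The term $\lA\theta_s(t)\rA_{L^2}$ is bounded by $\mathcal{F}(M_{s,0}) + T\mathcal{F}(M_s(T))$ by Lemma~\ref{estimationdeUs}, and $\|\ma(t)\|_{C^\eps_*}$ is bounded by $\mathcal{F}(M_{s,0}) + \sqrt{T}\mathcal{F}(M_s(T))$ by \eqref{ecBVma}. Adding $\lA\eta(t)\rA_{L^2}$ (controlled e.g.\ via \eqref{eq:etaB} and the transport estimate \eqref{eq.ii}, or already contained in Lemma~\ref{ecBV}) to reconstitute the full $H^{s+\mez}$ norm, we get an inequality of the schematic form $N(t) \le G\bigl(\mathcal{F}(M_{s,0}) + T\mathcal{F}(M_s(T))\bigr)\,(1 + N(t)^{1-2\eps})$ where $N(t) = \lA\eta(t)\rA_{H^{s+\mez}}$ and $G$ involves $\mathcal{F}(\|\eta(t)\|_{H^{s+\mez}},\dots)$ — but this circular dependence on $N(t)$ inside $\mathcal{F}$ must be broken.

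\textbf{The main obstacle} is precisely this apparent circularity: the constant $\mathcal{F}(\| \eta(t)\|_{H^{s+\mez}})$ multiplying the lower-order term involves the very norm we are bounding. The standard resolution, which I would carry out, is a continuity/bootstrap argument in $T$: one knows $N(0) = M_{s,0}$-controlled, $N$ is continuous in $t$, and on the set where $N(t) \le 2\mathcal{F}_0$ for a suitable constant $\mathcal{F}_0$ depending only on $M_{s,0}$ the estimate above closes to give $N(t) \le \mathcal{F}_0 + T\mathcal{F}(M_s(T)) + (\text{small})\cdot N(t)$, hence $N(t) \le \mathcal{F}(\mathcal{F}(M_{s,0}) + T\mathcal{F}(M_s(T)))$ as claimed; for $T$ small this stays below $2\mathcal{F}_0$, closing the bootstrap. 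Alternatively — and this is cleaner since we are proving an \emph{a priori} estimate where $M_s(T)$ is already assumed finite — one simply notes that every occurrence of $\| \eta(t)\|_{H^{s+\mez}}$ inside an $\mathcal{F}$ is $\le M_s(T)$, so the constants are all of the form $\mathcal{F}(M_s(T))$, and the interpolation argument in the second paragraph shows the lower-order remainder is genuinely $o(\lA\eta(t)\rA_{H^{s+\mez}})$ relative to the leading term — more carefully, it is bounded by $\mathcal{F}(M_s(T))\lA\eta(t)\rA_{H^{s+\mez}}^{1-2\eps}$, and Young's inequality $ab^{1-2\eps} \le \delta b + C_\delta a^{1/(2\eps)}$ lets us absorb it, leaving $\lA\eta(t)\rA_{H^{s+\mez}} \le \mathcal{F}(M_{s,0}) + C_\delta\mathcal{F}(M_s(T))^{1/(2\eps)}(\text{something}) + T\mathcal{F}(M_s(T))$. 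The cosmetic repackaging of the right-hand side into the form $\mathcal{F}(\mathcal{F}(M_{s,0}) + T\mathcal{F}(M_s(T)))$ is then immediate by enlarging $\mathcal{F}$. This completes the proof.
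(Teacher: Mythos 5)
Your overall strategy (invert the paraproduct $T_q$, $q=\sqrt{\ma/\lambda}$, to pass from the bound on $\theta_s$ to a bound on $\lDx{s}\nabla\eta$) is the same as the paper's, but the execution has two genuine gaps. First, the derivative count in your displays is off by $\mez$. You estimate $\lA \lDx{s}\nabla\eta\rA_{L^2}$, which is the $H^{s+1}$-norm of $\eta$ — more than the lemma claims and more than is true — and to do so you invoke the mapping property $T_{q^{-1}}\colon H^{-\mez}\to L^2$, which is false for an operator of order $+\mez$ (it maps $L^2\to H^{-\mez}$). Moreover your error term is $\lA\lDx{s}\nabla\eta\rA_{H^{-\eps}}\sim\lA\eta\rA_{H^{s+1-\eps}}$, and since the admissible range is $0<\eps<s-1-\frac d2$ (possibly $\eps\ll\mez$), one has $s+1-\eps>s+\mez$: this norm is \emph{stronger} than the one you are estimating, so the interpolation between $H^s$ and $H^{s+\mez}$ you invoke is impossible (the exponent would be $\theta=2\eps-1<0$; your formula $\lA\eta\rA_{H^s}^{2\eps}\lA\eta\rA_{H^{s+\mez}}^{1-2\eps}$ actually corresponds to $H^{s+\mez-\eps}$, not $H^{s+1-\eps}$). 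Even after correcting the bookkeeping (measuring $\zeta_s$ in $H^{-\mez}$, error in $H^{-\mez-\eps}$), a single application of the symbolic calculus gains only $\eps<\mez$ and leaves $\lA\eta\rA_{H^{s+\mez-\eps}}$, which is not controlled by Lemma~\ref{ecBV}. The paper's proof resolves exactly this point by iterating the parametrix: with $R=I-T_{1/q}T_q$, one writes $\zeta_s=(I+R+\cdots+R^N)T_{1/q}T_q\zeta_s+R^{N+1}\zeta_s$ and chooses $N$ with $(N+1)\eps>\mez$, so the remainder is measured through $\lA\zeta_s\rA_{H^{-1}}\sim\lA\eta\rA_{H^{s}}$, already bounded by Lemma~\ref{ecBV}; no interpolation or absorption is needed.

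Second, the circularity you describe is an artifact of your choice of constants, and the way you propose to break it does not work. The operator norms of $T_{1/q}$ and $I-T_{1/q}T_q$ require only the semi-norms $M^{\pm\mez}_{0}$, $M^{\pm\mez}_{\eps}$ of $q^{\pm1}$, hence only $\lA\ma\rA_{C^\eps_*}$, $\lA\eta\rA_{C^{1+\eps}_*}$ and the lower bound $\ma\ge c_0$ — and these are bounded, with the crucial structure $\mathcal{F}(M_{s,0})+T\mathcal{F}(M_s(T))$, by \eqref{ecBVBV} and \eqref{ecBVma}. Inserting $\lA\eta\rA_{H^{s+\mez}}$ into the constants and then arguing that ``all constants are $\mathcal{F}(M_s(T))$'' destroys this structure: a term such as your $C_\delta\,\mathcal{F}(M_s(T))^{1/(2\eps)}$, not accompanied by a factor $T$ and not evaluated at $M_{s,0}$, cannot be ``repackaged'' into $\mathcal{F}\bigl(\mathcal{F}(M_{s,0})+T\mathcal{F}(M_s(T))\bigr)$ (take $T$ small with $T\mathcal{F}(M_s(T))$ bounded while $M_s(T)$ is huge). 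Likewise, a continuity-in-$T$ bootstrap cannot establish the inequality, since Proposition~\ref{prop:apriori} requires it for \emph{every} $T\in(0,1]$ and every smooth solution, and an inequality of the schematic form $N\le\mathcal{F}(N)(1+N^{1-2\eps})$ with unbounded $\mathcal{F}$ carries no information to bootstrap from. The fix is the one the paper uses: keep only the low Hölder norms of $\ma$ and $\eta$ (and $\lA\zeta_s\rA_{H^{-1}}$) in the outer function, all of which Lemma~\ref{ecBV} already controls in the admissible form, and compensate for the small gain $\eps$ by the $N$-fold iteration rather than by interpolation with the top norm.
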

\begin{proof}
Chose~$\eps$ and an integer~$N$ such that 
$$
0<\eps<s-1-\frac{d}{2},\qquad (N+1)\eps >\mez.
$$
Set~$R=I-T_{1/q}T_q$ to obtain
$$
\zeta_s= T_{1/q}T_q \zeta_s +R \zeta_s,
$$
where recall that~$\zeta_s=\lDx{s} \zeta$. 
Consequently,
$$
\zeta_s =(I+R+\cdots +R^N) T_{1/q}T_q \zeta_s +R^{N+1}\zeta_s.
$$
By definition of~$q=\sqrt{\ma/\lambda}$, 
Theorem~\ref{theo:sc0} implies that, for all~$\mu\in\xR$, there exists a non-decreasing function~$\mathcal{F}$ 
depending only on~$\eps$ and~$\inf_{(t,x)\in [0,T]\times {\mathbf{R}}^d} \ma(t,x)>0$ such that, 
$$
\lA R(t)\rA_{H^\mu \rightarrow H^{\mu+\eps}}\le \mathcal{F}\bigl(\lA \ma(t) \rA_{C_*^{\eps}},
\lA \eta(t)\rA_{C_*^{1+\eps}}\bigr),
$$
and 
$$
\lA T_{1/q(t)}\rA_{H^{\mu+1/2} \rightarrow H^{\mu}}\le \mathcal{F}\bigl(\lA \eta(t)\rA_{W^{1,\infty}}\bigr).
$$
Therefore
$$
\lA \nabla \eta\rA_{H^{s-\mez}}=\lA \zeta_s\rA_{H^{-\mez}}
\le \mathcal{F}\bigl(\lA \ma\rA_{C_*^{\eps}},
\| \eta \|_{C_*^{1+\eps}}\bigr)
\left\{ \lA T_q\zeta_s\rA_{L^2}+\lA \zeta_s\rA_{H^{-1}}\right\}.
$$
Now it follows from Lemma~\ref{ecBV} and the Sobolev embedding that
$$
\lA \ma\rA_{L^\infty([0,T];C_*^{\eps})}+
\| \eta \|_{L^\infty([0,T];C_*^{1+\eps})}+\lA \zeta_s\rA_{L^\infty([0,T];H^{-1})}
\le \mathcal{F}(M_{s,0})+T\mathcal{F}(M_s(T)).
$$
On the other hand, 
it follows from~Lemma~\ref{estimationdeUs} that
$$
\lA T_q \zeta_s\rA_{L^\infty([0,T];L^{2})}
\le \mathcal{F}(M_{s,0})+T\mathcal{F}(M_s(T)).
$$
This implies the desired result.
\end{proof}

It remains only to estimate~$(V,B)$. 
\begin{lemm}\label{ecBVs}
There exists a non-decreasing function~$\mathcal{F}$ such that
\begin{equation}
\lA (V,\B)\rA_{L^\infty([0,T];H^{s})}
\le \mathcal{F}\bigl(\mathcal{F}(M_{s,0})+T\mathcal{F}\bigl(M_s(T)\bigr)\bigr).
\end{equation}
\end{lemm}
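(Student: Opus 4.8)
The plan is to invert the relation defining the good unknown, in order to upgrade the bounds already obtained in the previous lemmas — the $L^2$-bound on $U_s=\lDx s V+T_\zeta\lDx s B$ (Lemma~\ref{estimationdeUs}), the $H^{s+\mez}$-bound on $\eta$ (Lemma~\ref{ecBVeta}), and the low-order bound $(V,B)\in H^{s-\mez}$ (Lemma~\ref{ecBV}) — into the sought $H^s$-bound for $(V,B)$. Since $U_s$ carries only one relation between $\lDx s V$ and $\lDx s B$ at order $s$, the first step is to produce a second one. For this I combine the identity $G(\eta)B=-\cnx V+\gamma$ of Proposition~\ref{cancellation} (with $\|\gamma\|_{H^{s-\mez}}\le\mathcal F(\mathcal F(M_{s,0})+T\mathcal F(M_s(T)))$) with the paralinearization $G(\eta)=T_\lambda+R(\eta)$ and the error estimate of Proposition~\ref{coro:paraDN1}. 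The crucial point is that, since $s>1+\tfrac d2$ \emph{strictly}, the value $\eps=\mez$ is admissible in Proposition~\ref{coro:paraDN1}, which, applied with $\sigma=s-\mez$, gives $R(\eta)B\in H^{s-1}$. Using a parametrix $Q=T_{1/\lambda}$ of the elliptic first-order operator $T_\lambda$ (recall $\lambda\ge c\L\xi$ from \eqref{eq.lambda}), the symbolic calculus of Theorem~\ref{theo:sc0}, and the fact that $\cnx V=\sum_j\partial_j V_j$ with each $\partial_j=T_{i\xi_j}$ up to a smoothing operator, one obtains
$$
B=-Q\cnx V+Q\gamma-Q R(\eta)B-(QT_\lambda-I)B=-T_m\cdot V+r_B,\qquad m_j:=\frac{i\xi_j}{\lambda},
$$
where $T_m\cdot V:=\sum_j T_{m_j}V_j$, the symbol $m\in\Gamma^0_\rho$ (with $\rho:=\min(s-\mez-\tfrac d2,1)>\mez$) has semi-norms bounded by $\mathcal F(\|\eta\|_{H^{s+\mez}})$, and $\|r_B\|_{H^s}\le\mathcal F(\mathcal F(M_{s,0})+T\mathcal F(M_s(T)))$. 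The latter holds because every term making up $r_B$ is $Q$ (of order $-1$) applied to an $H^{s-1}$ or $H^{s-\mez}$ function, or an operator of order $-\rho$ applied to $V\in H^{s-\mez}$ — which lands in $H^{s-\mez+\rho}\subset H^s$ since $\rho>\mez$ — or a smoothing operator applied to $(V,B)$.

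Next I apply $\lDx s$ and commute it through the paraproducts: by Theorem~\ref{theo:sc0}(ii) the commutator $[\lDx s,T_{m_j}]$ is of order $s-\rho$, hence maps $V_j\in H^{s-\mez}$ into $H^{\rho-\mez}\subset L^2$. This yields the system
$$
\lDx s V+T_\zeta\lDx s B=U_s,\qquad \lDx s B+T_m\cdot\lDx s V=g_0,
$$
with $\zeta=\nabla\eta$ and both right-hand sides in $L^2$, of norm $\le\mathcal F(\mathcal F(M_{s,0})+T\mathcal F(M_s(T)))$ (for $g_0$ one uses $r_B\in H^s$ and the commutator bound). Eliminating $\lDx s B$ gives $(I-\mathcal K)\lDx s V=U_s-T_\zeta g_0$, where $\mathcal K$ is the $d\times d$ matrix of paradifferential operators of order $0$ with principal symbol $\kappa_{ij}=\frac{i\,\xi_j\,\partial_i\eta}{\lambda}$. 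This symbol is rank one, so $\det(I-\kappa)=1-\operatorname{tr}\kappa=1-\frac{i\,\nabla\eta\cdot\xi}{\lambda}$, whose modulus is $\ge1$; therefore $I-\kappa\in\Gamma^0_\rho$ is invertible, with inverse symbol bounded by $\mathcal F(\|\eta\|_{W^{1,\infty}})$. By Theorem~\ref{theo:sc0}, $E:=T_{(I-\kappa)^{-1}}$ is a left parametrix, $E(I-\mathcal K)=I+\mathcal R$ with $\mathcal R$ of order $-\rho$. Consequently $\lDx s V=E(U_s-T_\zeta g_0)-\mathcal R\lDx s V$; the first term lies in $L^2$ since $E$ has order $0$, and $\mathcal R\lDx s V$ lies in $H^{\rho-\mez}\subset L^2$ since $\lDx s V\in H^{-\mez}$ and $\rho>\mez$. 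Hence $\lDx s V\in L^2$, and then $\lDx s B=g_0-T_m\cdot\lDx s V\in L^2$. Taking the supremum over $t\in[0,T]$ and collecting the bounds gives the claim.

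I expect the main obstacle to be the first step — producing the auxiliary identity $B=-T_m\cdot V+r_B$ with remainder of the \emph{full} order $s$. This is precisely where the new paralinearization of the Dirichlet–Neumann operator for rough domains (Proposition~\ref{coro:paraDN1}), and the admissibility of the gain $\eps=\mez$ in it when $s>1+\tfrac d2$, are used in an essential way. Once this identity is available, the remaining work — checking that the first-order symbol $I-\kappa$ is elliptic (which reduces to $\RE\bigl(1-i\,\nabla\eta\cdot\xi/\lambda\bigr)=1$) and running the symbolic-calculus inversion — is routine.
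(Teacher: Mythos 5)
Your proof is correct, and it reaches the conclusion by a route that is organized differently from the paper's, although it rests on the same pillars: Proposition~\ref{cancellation}, the error estimate of Proposition~\ref{coro:paraDN1} with $\eps=\mez$ (admissible since $s>1+\frac d2$), the $L^2$ bound on $U_s$ from Lemma~\ref{estimationdeUs}, the $H^{s+\mez}$ bound on $\eta$ from Lemma~\ref{ecBVeta}, and the lower-order bound on $(V,B)$ from Lemma~\ref{ecBV}. The paper first shows that the good unknown $U=V+T_\zeta B$ itself lies in $H^s$ (via the commutator $[\lDx{s},T_\zeta]B$), then takes its divergence, replaces $\cn V$ by $-G(\eta)B+\gamma$ and paralinearizes, which produces $\cn U=T_qB+(\text{lower order})$ with the \emph{scalar} first-order elliptic symbol $q=-\lambda+i\zeta\cdot\xi$; inverting $T_q$ recovers $B\in H^s$, and then $V=U-T_\zeta B$. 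You instead eliminate $B$: you first invert $T_\lambda$ to write $B=-T_{i\xi/\lambda}\cdot V+r_B$, substitute into the definition of $U_s$, and invert the zeroth-order matrix operator $I-\mathcal K$ whose symbol is a rank-one perturbation of the identity; note that your determinant $1-i\zeta\cdot\xi/\lambda$ equals $-q/\lambda$, so the ellipticity you exploit is exactly the paper's, normalized to order $0$, with the Sherman--Morrison formula playing the role of the scalar parametrix $T_{1/q}$. Both routes close for the same reason: every error operator is of order $-\mez$ or better and acts on quantities already controlled in $H^{s-\mez}$. Your version dispenses with the preliminary step $U\in H^s$, at the price of the matrix parametrix and the commutators $[\lDx{s},T_{i\xi_j/\lambda}]$, which you handle correctly since $\rho>\mez$; the only cosmetic caveat is that when $s-\mez-\frac d2\ge 1$ you should take $\rho$ slightly below $1$ so that $\nabla\eta\in C^\rho_*$ (given by $H^{s-\mez}\subset C_*^{s-\mez-\frac d2}$) suffices for the symbol semi-norms, which changes nothing since only $\rho>\mez$ is used.
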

\begin{proof}
The proof is based on the relation between~$V$ and~$B$ given by Proposition~\ref{cancellation}.

\noindent{\sc Step} 1. Recall that $U=V+T_\zeta B$. 
We begin by proving that there exists a non-decreasing function~$\mathcal{F}$ such that
\begin{equation}\label{p50:1}
\lA U \rA_{L^\infty([0,T];H^{s})}
\le \mathcal{F}\bigl(\mathcal{F}(M_{s,0})+T\mathcal{F}\bigl(M_s(T)\bigr)\bigr).
\end{equation}
To see this, write
$$
\lDx{s} U = U_s +\bigl[ \lDx{s},T_\zeta\bigr] \B
$$
and use Theorem~\ref{theo:sc0} to obtain
$$
\lA \bigl[ \lDx{s},T_\zeta\bigr] \B\rA_{L^{2}}
\les \lA \zeta\rA_{C_*^{\mez}}\lA \B\rA_{H^{s-\mez}}.
$$
Since, by assumption,~$s>1+d/2$ we have 
$\lA \zeta\rA_{C_*^{\mez}}
\les \lA \zeta\rA_{H^{s-\mez}}\le \| \eta \|_{H^{s+\mez}}$ 
and hence
$$
\lA U\rA_{H^{s}}\les \lA U_s\rA_{L^{2}}
+ \| \eta \|_{H^{s+\mez}} \lA \B\rA_{H^{s-\mez}}.
$$
The three terms in the right-hand side of the above inequalities have been already estimated 
(see Lemma~\ref{estimationdeUs} for~$U_s$, Lemma~\ref{ecBV} for~$B$ and 
Lemma~\ref{ecBVeta} for~$\eta$). This proves~\eqref{p50:1}.

\noindent{\sc Step} 2. 
Taking the divergence in $U=V+T_{\zeta}B$, 
we get according to Proposition~\ref{cancellation}, Lemma~\ref{ecBV} and Lemma~\ref{ecBVeta}:
\begin{align*}
\cn U &= \cn V + \cn T_\zeta B=\cn V +T_{\cn \zeta} B+T_\zeta\cdot \nabla B\\
&= -G(\eta) B+T_{i \zeta \cdot \xi+ \cn \zeta}B + \gamma\\
&=-T_\lambda B +R(\eta)B+T_{i \zeta \cdot \xi+ \cn \zeta}B + \gamma\\
&=T_q B +R(\eta)B+T_{\cn \zeta}B + \gamma
\end{align*}
where, by notation,
$$
q\defn -\lambda+i\zeta\cdot\xi,
$$
and
$$
\|\gamma \|_{H^{s- \mez}} \leq \mathcal{F}( \|( \eta,  V, B)\|_{H^{s+ \mez} \times H^{\mez}\times H^\mez}).
$$
According to Proposition~\ref{coro:paraDN1} (with~$\mu= s - \mez$)  and Lemma~\ref{ecBV}, we deduce 
\begin{equation}\label{eq.blabla}
 T_{q} B = \cn U - T_{\cn \zeta} B - R(\eta) B-\gamma.
 \end{equation}
Now write
$$
B=T_{\frac{1}{q}}T_q B +\Bigl(I-T_{\frac{1}{q}}T_q\Bigr) B
$$
to obtain from~\eqref{eq.blabla}
$$
B=T_{\frac{1}{q}}  \cn U -T_{\frac{1}{q}} \gamma+R_{-\epsilon} B
$$
where
\begin{equation}\label{defi:Repsilon}
R_{-\epsilon}\defn T_{\frac{1}{q}} \Bigl( -T_{\cn \zeta}-R(\eta)\Bigr)+\Bigl(I-T_{\frac{1}{q}}T_q\Bigr).
\end{equation}
Notice now that, according to Lemma~\ref{ecBVeta}, 
we control~$\cn \zeta= \Delta \eta$ in~$H^{s- \frac 3 2 }\subset C^{-\mez}_*$ 
(since~$s> 1 + \frac d 2$) so $T_{\cn \zeta}$ is an operator of order~$\mez$. Finally,
$q=- \lambda + i \zeta \cdot \xi \in \Gamma^1_{1/2}$ with 
$M^1_{1/2}(q)\leq C( \|\eta\|_{H^{s+ \mez}})$. 
Moreover, $q^{-1}$ is of order $-1$ and we have
$$
M^1_{1/2}\bigl(q^{-1}\bigr)\leq C( \|\eta\|_{H^{s+ \mez}}).
$$
Consequently, according to~\eqref{esti:quant1} and \eqref{esti:quant2}, the operator 
$R_{-\epsilon}$ given by~\eqref{defi:Repsilon} is of order $-\mez$. 
applying $ T_{(- \lambda + i \zeta \cdot \xi )^-1}$ to~\eqref{eq.blabla}, we get 
$$ B = W+ R_{-\epsilon} B~$$
where $W\defn T_{\frac{1}{q}}  \cn U -T_{\frac{1}{q}} \gamma$ satisfies
$$
\| W \|_{H^{s} }\leq \mathcal{F}\bigl(\mathcal{F}(M_{s,0})
+T\mathcal{F}\bigl(M_s(T)\bigr)\bigr).
$$
Since $R_{-\epsilon}$ is an operator of order~$-\mez$ and 
since we have estimated the $H^{s-\mez}$-norm of 
$B$ (see Lemma~\ref{ecBV}), we conclude that 
$$
\| B\|_{H^{s} }\leq \mathcal{F}\bigl(\mathcal{F}(M_{s,0})
+T\mathcal{F}\bigl(M_s(T)\bigr)\bigr),
$$
and coming back to the relation~$U= V+ T_\zeta B$ we get that~$V$ satisfies the same estimate.
\end{proof}

\begin{lemm}\label{L:estpsi}
There exists a non-decreasing function~$\mathcal{F}$ such that
\begin{equation*}
\lA \psi\rA_{L^\infty([0,T];H^{s+\mez})}
\le \mathcal{F}\bigl(\mathcal{F}(M_{s,0})+T\mathcal{F}\bigl(M_s(T)\bigr)\bigr).
\end{equation*}
\end{lemm}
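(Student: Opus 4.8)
The plan is to obtain the bound as a direct consequence of the Sobolev estimates for $\eta$, $V$ and $B$ already established in Lemmas~\ref{ecBV}, \ref{ecBVeta} and \ref{ecBVs}, combined with the algebraic identity $\nabla\psi = V + B\nabla\eta$ coming from \eqref{defi:BV} and the second equation of \eqref{system}. The first step is to reduce the $H^{s+\mez}$-estimate of $\psi$ to an $L^2$-estimate: since $\langle\xi\rangle^{s+\mez}\les\langle\xi\rangle^{s-\mez}(1+|\xi|)$, one has, for each fixed $t$, $\lA\psi(t)\rA_{H^{s+\mez}}\les \lA\psi(t)\rA_{L^2}+\lA\nabla\psi(t)\rA_{H^{s-\mez}}$. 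The gradient term is handled at once: by \eqref{defi:BV}, $\nabla\psi = V + B\nabla\eta$, hence $\lA\nabla\psi(t)\rA_{H^{s-\mez}}\le\lA V(t)\rA_{H^s}+\lA B(t)\nabla\eta(t)\rA_{H^{s-\mez}}$, and the product rule \eqref{pr} (admissible with $s_0=s-\mez$, $s_1=s$, $s_2=s-\mez$ since $s>d/2$) gives $\lA B(t)\nabla\eta(t)\rA_{H^{s-\mez}}\les \lA B(t)\rA_{H^s}\lA\eta(t)\rA_{H^{s+\mez}}$. By Lemmas~\ref{ecBVs} and \ref{ecBVeta} the right-hand side is bounded by $\mathcal{F}(\mathcal{F}(M_{s,0})+T\mathcal{F}(M_s(T)))$.

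It then remains only to estimate $\lA\psi(t)\rA_{L^2}$, and for this I would use the evolution equation for $\psi$. Using $\nabla\eta\cdot\nabla\psi+G(\eta)\psi = B(1+|\nabla\eta|^2)$ from \eqref{defi:BV}, the second equation of \eqref{system} reads $\partial_t\psi = -g\eta - \mez|\nabla\psi|^2 + \mez B^2(1+|\nabla\eta|^2)$. Since $\nabla\psi = V+B\nabla\eta$ and $H^{s-\mez}(\xR^d)$ is an algebra (because $s-\mez>d/2$), the right-hand side belongs to $H^{s-\mez}\subset L^2$ with $\lA\partial_t\psi(t)\rA_{L^2}\le\mathcal{F}\bigl(\lA\eta(t)\rA_{H^{s+\mez}},\lA(V,B)(t)\rA_{H^s}\bigr)\le\mathcal{F}(M_s(T))$ by definition of $M_s(T)$. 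The solution being smooth, integration over $[0,t]\subset[0,T]$ with $T\le1$ gives $\lA\psi(t)\rA_{L^2}\le\lA\psi_0\rA_{L^2}+T\mathcal{F}(M_s(T))\le M_{s,0}+T\mathcal{F}(M_s(T))$. Combining this with the estimate for $\lA\nabla\psi(t)\rA_{H^{s-\mez}}$ and absorbing everything into a single non-decreasing $\mathcal{F}$ yields the claimed bound, uniformly in $t\in[0,T]$.

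Honestly, there is no real obstacle in this lemma: it is the concluding bookkeeping step of the a priori estimate, and all the analytic content has already been invested in Lemmas~\ref{ecBV}, \ref{ecBVeta} and \ref{ecBVs} (which themselves rely on the paralinearization of the Dirichlet--Neumann operator). The only points requiring a line of justification are the elementary frequency-space inequality $\lA\psi\rA_{H^{s+\mez}}\les\lA\psi\rA_{L^2}+\lA\nabla\psi\rA_{H^{s-\mez}}$ and the verification that the product and algebra estimates apply at the relevant indices, both of which are immediate from the assumption $s>1+\tfrac d2$.
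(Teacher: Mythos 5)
Your proposal is correct and follows essentially the same route as the paper: reduce the $H^{s+\mez}$ bound to an $L^2$ bound for $\psi$ via the identity $\nabla\psi=V+B\nabla\eta$ and the estimates of Lemmas~\ref{ecBV}, \ref{ecBVeta}, \ref{ecBVs}, then close the $L^2$ bound using the evolution equation for $\psi$. The only (harmless) difference is at the last step: the paper rewrites the equation in transport form, $\partial_t\psi+V\cdot\nabla\psi=-g\eta+\mez|V|^2+\mez B^2$ (identity \eqref{psiVB}), and invokes Proposition~\ref{estclass}, whereas you integrate $\partial_t\psi$ directly in time after bounding the right-hand side in $L^2$ through the algebra property of $H^{s-\mez}$, which yields the same conclusion.
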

\begin{proof}
Since~$\nabla\psi=V+\B \nabla \eta$ and since the~$L^\infty([0,T];H^{s-\mez})$-norm of 
$(\nabla\eta,V,\B)$ has been previously estimated, it remains only to estimate 
$\lA \psi\rA_{L^\infty([0,T];L^{2})}$. 

Since 
$$
B\defn \frac{\partialx \eta \cdot\partialx \psi+ G(\eta)\psi}{1+|\partialx  \eta|^2},
$$
the equation for $\psi$ can be written under the form
$$
\partial_{t}\psi+g \eta
+ \frac{1}{2}\la\partialx \psi\ra^2  -\frac{1}{2}(1+\la \nabla\eta\ra^2)B^2=0.
$$
Therefore, since $V=\nabla\psi-B\nabla\eta$,
\begin{equation}\label{psiVB}
\begin{aligned}
\partial_t \psi+V\cdot \nabla \psi&=\partial_t\psi+\la \nabla\psi\ra^2-B\nabla\eta\cdot\nabla\psi\\
&=-g\eta+\mez \la\nabla\psi\ra^2+\frac{1}{2}(1+\la \nabla\eta\ra^2)B^2-B\nabla\eta\cdot\nabla\psi\\
&=-g\eta+\mez \la \nabla\psi-B\nabla\eta\ra^2-\mez B^2 \la \nabla\eta\ra^2+\frac{1}{2}(1+\la \nabla\eta\ra^2)B^2\\
&=-g\eta+\mez V^2+\mez B^2.
\end{aligned}
\end{equation}
The desired~$L^2$ estimate then follows from classical results (see Proposition~\ref{estclass}).
\end{proof}

\section{Contraction}\label{S:contraction}

In this section we prove a contraction estimate for the difference of two solutions which implies the uniqueness of solutions and a Lipschitz property in a lower norm ($\mathcal{H}^{s-1}$, compared to the $\mathcal{H}^s$ norm where the {\em a priori} estimates are established). This phenomenon is standard for quasi-linear PDE's. This choice of norm to establish the contraction property is the result of a compromise as on the one hand, the highest the norm is chosen the easiest the non linear analysis will be (as the norm controls more quantities), while some loss of derivatives are necessary (in particular as far as the Dirichlet--Neumann operator is concerned), see Remark~\ref{re.5.4}.

\begin{theo}\label{th.lipschitz}
Let~$(\eta_j,\psi_j)$,~$j=1,2$, be two solutions of~\eqref{system} such that 
$$
(\eta_j,\psi_j,V_j,B_j)\in 
C^0([0,T_0];H^{s+\mez}\times H^{s+\mez}\times H^s\times H^s),
$$
for some fixed~$T_0>0$,~$d\ge 1$ and~$s>1+d/2$. We also assume 
that the condition \eqref{hypt} holds for 
$0\le t\le T_0$ and that there exists a positive constant~$c$ 
such that for all~$0\le t\le T_0$ and for all~$x\in {\mathbf{R}}^d$, we have 
$\ma_j(t,x)\ge c$ for~$j=1,2$,~$t\in [0,T]$.
Set
$$
M_j\defn \sup_{t\in [0,T]} \lA (\eta_j,\psi_j,V_j,B_j)(t)\rA_{H^{s+\mez}\times H^{s+\mez} \times H^s\times H^s}, 
$$
$$
\deta\defn \eta_1-\eta_2, \quad \psi=\psi_1-\psi_2,\quad \dV\defn V_1-V_2, \quad \dB=\B_1-\B_2.
$$
Then we have 
\begin{multline}\label{eq.lipschitz}
\|(\eta, \psi, V, B)\| _{L^\infty((0,T); H^{s-\mez}\times H^{s-\mez}\times H^{s-1}\times H^{s-1})} \\
\leq \mathcal{K} (M_1, M_2) \|(\eta, \psi, V, B) 
\mid_{t=0}\| _{H^{s-\mez}\times H^{s-\mez}\times H^{s-1}\times H^{s-1}}.
\end{multline}
\end{theo}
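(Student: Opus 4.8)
The plan is to mimic the \emph{a priori} estimate of Section~\ref{S:6}, but applied to the system satisfied by the differences $(\eta,\psi,V,B)$ (with a slight abuse I keep the same letters for the differences as in the statement). First I would write down the equations for the differences. From~\eqref{system} for $(\eta_j,\psi_j)$ one gets, after subtracting,
\begin{align*}
&\partial_t\eta-G(\eta_1)\psi=\bigl(G(\eta_1)-G(\eta_2)\bigr)\psi_2,\\
&\partial_t\psi+g\eta+\text{(quadratic terms)}=\text{(differences of quadratic terms)},
\end{align*}
and the crucial input here is the contraction estimate for the Dirichlet--Neumann operator. Since the right-hand side will be measured in $H^{s-1}$ while the loss from Theorem~\ref{th.25} and the (to-be-proved) sharper Theorem~\ref{L:p60} involves only the $C^{\mez}$-type norm of $\eta_1-\eta_2$, one sees that the difference of the two Dirichlet--Neumann operators, acting on the smooth datum $\psi_2$, is controlled by $\mathcal{K}(M_1,M_2)\|\eta\|_{H^{s-\mez}}$. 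Similarly I would re-derive the difference analogues of the good-unknown equations~\eqref{premiere}--\eqref{seconde}: setting $U=V+T_{\zeta_1}B$ (with $\zeta_j=\nabla\eta_j$) and $\zeta=\nabla\eta$, one obtains
\begin{align*}
&(\partial_t+T_{V_1}\cdot\nabla)\langle D_x\rangle^{s-1}U+T_{a_1}\langle D_x\rangle^{s-1}\zeta=f_1,\\
&(\partial_t+T_{V_1}\cdot\nabla)\langle D_x\rangle^{s-1}\zeta=T_{\lambda_1}\langle D_x\rangle^{s-1}U+f_2,
\end{align*}
where every term in which $\eta_1,V_1,B_1$ were replaced by $\eta_2,V_2,B_2$ produces a source term that, by the paraproduct and symbolic-calculus estimates of Section~\ref{sec:2} together with the elliptic estimates of Section~\ref{sec.3}, is bounded by $\mathcal{K}(M_1,M_2)$ times the $H^{s-1}\times H^{s-\mez}$ norm of $(U,\zeta)$ (one derivative is lost relative to the $M_s$-level estimate, which is exactly why one works in $\mathcal{H}^{s-1}$ rather than $\mathcal{H}^s$).

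Next I would symmetrize exactly as in Proposition~\ref{psym}: introduce $\gamma_1=\sqrt{a_1\lambda_1}$, $q_1=\sqrt{a_1/\lambda_1}$, set $\theta=T_{q_1}\langle D_x\rangle^{s-1}\zeta$, and obtain
\begin{align*}
&\partial_t\langle D_x\rangle^{s-1}U+T_{V_1}\cdot\nabla\langle D_x\rangle^{s-1}U+T_{\gamma_1}\theta=F_1,\\
&\partial_t\theta+T_{V_1}\cdot\nabla\theta-T_{\gamma_1}\langle D_x\rangle^{s-1}U=F_2,
\end{align*}
with $\|(F_1,F_2)\|_{L^2\times L^2}\le\mathcal{K}(M_1,M_2)\,\|(\eta,\psi,V,B)\|_{H^{s-\mez}\times H^{s-\mez}\times H^{s-1}\times H^{s-1}}$, the commutator $[T_{q_1},\partial_t+T_{V_1}\cdot\nabla]$ being handled by Lemma~\ref{lemm:Dt} (using that $\partial_t q_1+V_1\cdot\nabla q_1$ is controlled via Proposition~\ref{prop:ma} applied to the first solution). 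Then the $L^2$ energy estimate of Lemma~\ref{estimationdeUs}, together with Gr\"onwall in time, gives
$$
\|\langle D_x\rangle^{s-1}U(t)\|_{L^2}+\|\theta(t)\|_{L^2}\le\mathcal{K}(M_1,M_2)\Bigl(\|(U,\theta)\mid_{t=0}\|_{L^2}+\int_0^t\|(\eta,\psi,V,B)(t')\|_{H^{s-\mez}\times H^{s-\mez}\times H^{s-1}\times H^{s-1}}\,dt'\Bigr).
$$

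Finally I would run the bootstrap of \S\ref{S:eVB} in the contraction setting. First estimate $(\eta,V,B)$ in lower norms ($H^{s-\mez}$, $H^{s-1}$, and $a=a_1-a_2$ in $C^\eps_*$ or $H^{s-\tdm}$) via the transport equations~\eqref{eq:B}--\eqref{eq:zeta} for the differences and Proposition~\ref{estclass}; then recover $\|\eta\|_{H^{s-\mez}}$ from $\|\theta\|_{L^2}$ by inverting $T_{q_1}$ up to a smoothing remainder as in Lemma~\ref{ecBVeta}; then recover $\|(V,B)\|_{H^{s-1}}$ from $\|U\|_{H^{s-1}}$ and the relation $G(\eta_1)B_1=-\operatorname{div}V_1+\gamma$ (Proposition~\ref{cancellation}) written for solution $1$, again inverting a paradifferential operator up to a remainder of order $-\mez$; then recover $\|\psi\|_{H^{s-\mez}}$ from $\nabla\psi=V+B_1\nabla\eta+(\B_1-\B_2)\nabla\eta_2$ plus an $L^2$ estimate for $\psi$ via the transport equation~\eqref{psiVB}. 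Closing the loop gives a bound of the form $N(t)\le\mathcal{K}(M_1,M_2)(N(0)+\int_0^tN(t')\,dt')$ for $N(t)=\|(\eta,\psi,V,B)(t)\|_{H^{s-\mez}\times H^{s-\mez}\times H^{s-1}\times H^{s-1}}$, and Gr\"onwall yields~\eqref{eq.lipschitz}. The main obstacle is the treatment of the Dirichlet--Neumann differences: one needs a contraction estimate for $G(\eta_1)-G(\eta_2)$ acting on functions with $H^{s}$-type regularity, with the loss falling only on the low-regularity norm of $\eta_1-\eta_2$ --- this is precisely the content of Theorem~\ref{L:p60} referred to in the plan of the paper, and every source term above ultimately relies on it (and on the paralinearization Proposition~\ref{coro:paraDN1}, applied to the \emph{differences} of the remainders $R(\eta_1)-R(\eta_2)$, where the regularity of the remainder is governed by the domain rather than by the data).
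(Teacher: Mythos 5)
Your overall architecture (Dirichlet--Neumann contraction, difference system for a good unknown, symmetrization, $L^2$ estimate, bootstrap back to the original unknowns) matches the paper's, but there is a genuine gap at the heart of the scheme: the Taylor-coefficient difference. Subtracting the two equations $(\partial_t+V_j\cdot\nabla)V_j+\ma_j\zeta_j=0$ produces, besides $\ma_2\dzeta$ (or $\ma_1\dzeta$), the term $\da\,\zeta_j$ with $\da=\ma_1-\ma_2$. The contraction estimate available for $\da$ at this regularity (obtained from the elliptic theory for the pressure difference, i.e.\ the same machinery as Theorem~\ref{L:p60}) only gives $\lA\da\rA_{H^{s-\tdm}}\le\K(M_1,M_2)N(T)$ --- it loses half a derivative exactly as the Dirichlet--Neumann contraction does --- so $\da\,\zeta_j$ cannot be absorbed into the source $f_1$ of your first equation, which must lie in $H^{s-1}$ for the $\lDx{s-1}$-level energy estimate. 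Your claim that every term where $(\eta_1,V_1,B_1)$ is replaced by $(\eta_2,V_2,B_2)$ is bounded by $\K(M_1,M_2)$ times the $H^{s-1}\times H^{s-\mez}$ difference norm glosses over precisely this term and is false as stated. The paper's way out (Lemma~\ref{lemm:symmind}) is algebraic rather than analytic: it uses the difference of the $B$-equations, $\partial_t\dB+V_1\cdot\nabla\dB+\dV\cdot\nabla B_2=\da$, to rewrite $\da\,\zeta_1=(\partial_t+V_1\cdot\nabla)(\zeta_1\dB)-\dB\,(\partial_t+V_1\cdot\nabla)\zeta_1+\cdots$, which forces the good unknown for the differences to be the true product $\dV+\zeta_1\dB$ (not $\dV+T_{\zeta_1}\dB$) and, after symmetrization, the mixed symbols $\ell=\sqrt{\lambda_1\ma_2}$, $\vartheta=T_{\sqrt{\ma_2}}\dzeta$; your choice $\sqrt{\ma_1\lambda_1}$, $\sqrt{\ma_1/\lambda_1}$ would reintroduce a $T_{\ma_1-\ma_2}\dzeta$ error of the same bad type. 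Without this reduction the scheme does not close.

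A second, smaller issue is the recovery of $(\dV,\dB)$ in $H^{s-1}$ at the end. You invoke Proposition~\ref{coro:paraDN1} ``applied to the differences of the remainders $R(\eta_1)-R(\eta_2)$'': no contraction estimate for the paralinearization remainder is proved in the paper (cf.\ Remark~\ref{re.5.4}, where even the Dirichlet--Neumann contraction is only obtained with a half-derivative loss), and it is not available. The paper avoids it entirely: differences of Dirichlet--Neumann operators always appear applied to a fixed smooth datum ($V_2$, $B_2$, $\psi_2$) and are handled by Theorem~\ref{L:p60}, while the $H^{s-1}$ bound for $\dB$ is obtained by an interior elliptic argument for $w=v-T_{b_2}\rho$, whose trace is the boundary good unknown $\dpsi-T_{B_2}\deta$, and $\dB$ is read off $\partial_z w\arrowvert_{z=0}$ (Proposition~\ref{lem.NN}); the divergence identity of Proposition~\ref{cancellation} is not used at the difference level. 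If you wish to keep your divergence route, you would at least have to reorganize it so that only $G(\eta_1)$ acts on the difference $\dB$ (paralinearized via Proposition~\ref{coro:paraDN1} at the lower regularity already controlled) and Theorem~\ref{L:p60} handles $(G(\eta_1)-G(\eta_2))B_2$, and then verify that the difference of the remainders $\gamma_1-\gamma_2$ of Proposition~\ref{cancellation} is controlled in $H^{s-\tdm}$; none of this is in your sketch.
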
 
Let us recall that 
\begin{equation}\label{BVzeta0}
\left\{
\begin{aligned}
&(\partial_{t}+V_j\cdot\partialx)\B_j=\ma_j-g,\\
&(\partial_t+V_j\cdot\partialx)V_j+\ma_j\zeta_j=0,\\
&(\partial_{t}+V_j\cdot\partialx)\zeta_j=G(\eta_j)V_j+ \zeta_j G(\eta_j)\B_j+\gamma_j,\quad \zeta_j=\nabla \eta_j,
\end{aligned}
\right.
\end{equation}
where~$\gamma_j$ is the remainder term given by \eqref{eq:zeta}. Let
$$
N(T)\defn \sup_{t\in [0,T]} \lA (\eta,\psi,V,B)(t)\rA_{H^{s-\mez}\times H^{s-\mez}\times H^{s-1}\times H^{s-1}}.
$$
Our goal is to prove an 
estimate of the form 
\begin{equation}\label{NM1M2}
N(T)\le \K(M_1, M_2) N(0) +T \, \mathcal{K}(M_1,M_2) N (T),%+\K(M_1,M_2)N(0),
\end{equation}
for some non-decreasing function~$\mathcal{K}$ depending only on~$s$ and~$d$. 
Then, by choosing~$T$ small enough, this implies~$N(T)\leq 2\K(M_1, M_2) N(0)~$ for~$T_1$ 
smaller than the minimum of~$T_0$ and~$1/2 \K(M_1,M_2)$, and iterating 
the estimate between~$[T_1, 2T_1]$,\dots,~$[T- T_1, T_1]$ implies Theorem~\ref{th.lipschitz}.

\subsection{Contraction for the Dirichlet-Neumann}
The first step in the proof of Theorem~\ref{th.lipschitz} is to prove a Lipschitz 
property for the Dirichlet-Neumann operator. 
This was already achieved in a very weak norm 
in Theorem~\ref{th.25}, and here we use elliptic theory to improve the result.

\begin{theo}\label{L:p60}Assume that~$s>1+\frac{d}{2}$. 
There exists a non-decreasing function~$\mathcal{F}$ such that, for all~$\eta_1,\eta_2\in H^{s+\mez}$ 
and all~$f\in H^s$, we have 
\begin{equation}
\lA \left[ G(\eta_1)-G(\eta_2)\right] f\rA_{H^{s-\tdm}}\le 
\mathcal{F} \bigl(\lA (\eta_1,\eta_2)\rA_{H^{s+\mez}}\bigr)\lA \eta_1-\eta_2\rA_{H^{s-\mez}}  \lA f\rA_{H^s}.
\end{equation}
\end{theo}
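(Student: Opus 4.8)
The plan is to adapt the proof of the continuity estimate for $G(\eta)$ (Theorem~\ref{coro:estiDN}) to the difference of two Dirichlet--Neumann operators, using the flattening diffeomorphism of \S\ref{s:flattening} and the elliptic estimates of Proposition~\ref{p1}, just as Theorem~\ref{th.25} did but working now one derivative higher. First I would fix $f\in H^s$, and for $j=1,2$ introduce $\rho_j$ via \eqref{diffeo} with a common small $\delta$, set $\widetilde\phi_j(x,z)=\phi_j(x,\rho_j(x,z))$ where $\phi_j$ is the variational solution of $\Delta_{x,y}\phi_j=0$ with $\phi_j\arrowvert_{y=\eta_j}=f$, and recall from \eqref{def2:DN} that $G(\eta_j)f=U_j\arrowvert_{z=0}$ with $U_j=\Lambda_1^j\widetilde\phi_j-\nabla_x\rho_j\cdot\Lambda_2^j\widetilde\phi_j$. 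The difference $v\defn\widetilde\phi_1-\widetilde\phi_2$ solves an equation of the form \eqref{dnint1F} with coefficients built from $\rho_1$ and a source term $F_0$ which is a sum of terms each containing a factor $\rho_1-\rho_2$ or $\nabla_{x,z}(\rho_1-\rho_2)$ multiplying a second derivative of $\widetilde\phi_2$; by the analogue of Lemma~\ref{rho:diffeo-s} applied to $\rho_1-\rho_2$ one gets $\|\nabla_{x,z}(\rho_1-\rho_2)\|_{X^{s-\mez}(I)}\le \mathcal F(\|(\eta_1,\eta_2)\|_{H^{s+\mez}})\|\eta_1-\eta_2\|_{H^{s-\mez}}$, and Proposition~\ref{p1} applied to $\widetilde\phi_2$ (with $F_0=0$, $\sigma=s-\mez$) controls its second derivatives.

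The core estimate to establish is then a Sobolev bound for $v$, namely
\begin{equation*}
\lA \nabla_{x,z} v\rA_{X^{s-\tdm}([z_0,0])}\le \mathcal{F}\bigl(\lA(\eta_1,\eta_2)\rA_{H^{s+\mez}}\bigr)\lA \eta_1-\eta_2\rA_{H^{s-\mez}}\lA f\rA_{H^s}.
\end{equation*}
This follows from Proposition~\ref{p1} applied to the equation solved by $v$, provided (i) the coefficients $\alpha_1,\beta_1,\gamma_1$ built from $\rho_1$ are estimated as in Lemma~\ref{lem.3.25}, (ii) the source term $F_0$ is estimated in $Y^{s-\tdm}([-1,0])$ by the right-hand side above, using the product rules in Sobolev spaces together with the difference estimate on $\rho_1-\rho_2$ and the $X^{s-\mez}$-bounds on the second derivatives of $\widetilde\phi_2$, and (iii) the low-regularity assumption $\|\nabla_{x,z}v\|_{X^{-\mez}([-1,0])}\le \mathcal F(\dots)\|\eta_1-\eta_2\|_{W^{1,\infty}}\|f\|_{H^\mez}$ is supplied by Lemma~\ref{dem:estvar} (the variational estimate already proved in the course of Theorem~\ref{th.25}). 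Note $H^{s-\mez}\hookrightarrow W^{1,\infty}$ since $s>1+d/2$, so the weak input is dominated by the right-hand side.

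Once $\nabla_{x,z}v$ is controlled in $X^{s-\tdm}$ near $z=0$, I would write, exactly as in \eqref{formeU},
\begin{equation*}
U_1-U_2=\Lambda_1^1 v+(\Lambda_1^1-\Lambda_1^2)\widetilde\phi_2-\nabla_x(\rho_1-\rho_2)\cdot\Lambda_2^1\widetilde\phi_1-(\nabla_x\rho_2)\cdot\Lambda_2^1 v-(\nabla_x\rho_2)(\Lambda_2^1-\Lambda_2^2)\widetilde\phi_2,
\end{equation*}
and estimate each term's $C^0_z([z_0,0];H^{s-\tdm})$-norm: the terms linear in $v$ use the bound just obtained together with the $X^{s-\mez}$-control of $\nabla_x\rho_2,\partial_z\rho_2$ and the product rule; the terms involving $\Lambda^1-\Lambda^2$ and $\rho_1-\rho_2$ use $\Lambda_k^1-\Lambda_k^2=\beta_k\partial_z$ with $\|\beta_k\|_{X^{s-\mez}}\le\mathcal F(\dots)\|\eta_1-\eta_2\|_{H^{s-\mez}}$ (a higher-regularity version of \eqref{diff-lambda}) and the $X^{s-\mez}$-control of $\nabla_{x,z}\widetilde\phi_j$ from Proposition~\ref{p1} applied to each $\widetilde\phi_j$ with data in $H^s$. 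Taking the trace at $z=0$ via Lemma~\ref{lem.3.11} (or directly the $C^0_z$ bound) yields $\|(G(\eta_1)-G(\eta_2))f\|_{H^{s-\tdm}}\le\mathcal F(\|(\eta_1,\eta_2)\|_{H^{s+\mez}})\|\eta_1-\eta_2\|_{H^{s-\mez}}\|f\|_{H^s}$, which is the claim. The main obstacle I anticipate is bookkeeping in step (ii): one must check that every summand of $F_0$ (each schematically of the form $(\text{coeff involving }\rho_1-\rho_2)\times(\text{second derivative of }\widetilde\phi_2)$, or $\alpha_1-\alpha_2$, etc. times $\nabla^2\widetilde\phi_2$) lands in $Y^{s-\tdm}$ with the right-hand side as bound — this requires carefully choosing, for each product, whether to place the factor in $L^1_z(H^{s-\tdm})$ or $L^2_z(H^{s-2})$ and invoking the product rule \eqref{pr} with admissible exponents, exactly the type of case analysis carried out in Lemma~\ref{UV} and Lemma~\ref{l3.8}, but now at the level of differences.
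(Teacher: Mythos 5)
Your overall architecture coincides with the paper's (flatten with $\rho_j$, write the equation satisfied by $v=\widetilde\phi_1-\widetilde\phi_2$ with coefficients built from $\rho_1$ and a source carrying a factor of $\rho_1-\rho_2$, apply Proposition~\ref{p1} with $\sigma=s-\tdm$, then estimate the five terms of $U_1-U_2$ as in \eqref{formeU} and take the trace). But step (iii) contains a genuine gap, and it is precisely the point where the whole difficulty of the theorem lies. You supply the weak input $\lA\nabla_{x,z}v\rA_{X^{-\mez}([-1,0])}$ required by Proposition~\ref{p1} by quoting Lemma~\ref{dem:estvar}, which bounds it by $\mathcal{F}(\cdots)\lA\eta_1-\eta_2\rA_{W^{1,\infty}}\lA f\rA_{H^{\mez}}$, and you then claim $H^{s-\mez}\hookrightarrow W^{1,\infty}$ because $s>1+d/2$. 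That embedding is false at this level of regularity: it requires $s-\mez>1+d/2$, i.e.\ $s>\tdm+d/2$, whereas the hypothesis only gives $s-\mez>\mez+d/2$, so $H^{s-\mez}$ embeds in $C^{\mez+\epsilon}_*$ but does not control the Lipschitz norm of $\eta_1-\eta_2$. This is exactly what Remark~\ref{re.5.4} of the paper points out: the right-hand side of Theorem~\ref{L:p60} does not dominate the right-hand side of Theorem~\ref{th.25}, so the contraction estimate of Theorem~\ref{th.25} cannot simply be recycled as the low-norm input. Without a substitute for (iii), Proposition~\ref{p1} leaves an uncontrolled additive term and the proof does not close.

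The fix (and this is what the paper's proof of Lemma~\ref{est:B12} actually does) is to re-run the variational $A_1,\dots,A_6$ estimate for $\nabla_{x,z}v$ with the norms distributed differently: the difference factors ($\beta_k$, $J_1-J_2$, $\nabla_{x,z}(\rho_1-\rho_2)$) are measured in $L^2$-based norms over $\widetilde\Omega$, which are controlled by $\lA\eta_1-\eta_2\rA_{H^{\mez}}\le\lA\eta_1-\eta_2\rA_{H^{s-\mez}}$, while the factors $\partial_z\widetilde u_2$, $\Lambda^2\widetilde u_2$, $\partial_z\underline{\widetilde f}$ are placed in $L^\infty(J\times\xR^d)$; the latter is legitimate only because $f\in H^s$ with $s-1>d/2$, via the elliptic regularity of Proposition~\ref{p1} applied to $\widetilde\phi_2$ (this is where the extra smoothness of $f$, absent in Theorem~\ref{th.25}, is spent). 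One then recovers $\lA\partial_z v\rA_{L^\infty(J,H^{-\mez})}$ from the equation (estimating $\partial_z^2v$ in $L^2(J,H^{-1})$) and Lemma~\ref{lem.3.11}, which yields the needed $X^{-\mez}$ bound proportional to $\lA\eta_1-\eta_2\rA_{H^{s-\mez}}\lA f\rA_{H^s}$. A secondary, more minor slip: the analogue of Lemma~\ref{rho:diffeo-s} applied to $\eta_1-\eta_2\in H^{s-\mez}$ gives $\nabla_{x,z}(\rho_1-\rho_2)$ (and the $\beta_k$ of $\Lambda^1_k-\Lambda^2_k=\beta_k\partial_z$) in $X^{s-\tdm}$, hence in $L^2(J,H^{s-1})$ as in \eqref{diff-lambda1}, not in $X^{s-\mez}$ as you wrote; with the correct index the bookkeeping still closes, but only at the $L^2_z$ level, which is why the paper concludes through the $L^2(J,H^{s-1})$ estimate of $U_1-U_2$ and Lemma~\ref{lem.3.11} rather than through a $C^0_z(H^{s-\tdm})$ bound term by term.
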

\begin{rema}\label{re.5.4}
We were unable to prove a similar estimate in a higher norm. On the other hand,  this estimate is in some sense stronger than Theorem~\ref{th.25}. Indeed, in view of Sobolev injections, the r.h.s.\ here does not control the Lipschitz norm of $(\eta_1 - \eta_2)$ which appears in Theorem~\ref{th.25}.
\end{rema}
\begin{proof}
The proof follows closely that of Theorem \ref{th.25} and we keep the notations $\rho_j$ (see \eqref{diffeo}), 
$\widetilde{\phi}_j(x,z) = \phi_j(x,\rho_j(x,z))$, 
$v= \widetilde{\phi}_1-\widetilde{\phi}_2$ introduced there. 
Recall also that we have set (see \eqref{Lambda-2})
$$
\Lambda^i=(\Lambda^i_1,\Lambda^i_2),\quad \Lambda_1^i = \frac{1}{\partial_z \rho_i}\partial_z, \quad \Lambda_2^i = \nabla_x - \frac{\nabla_x \rho_i}{\partial_z \rho_i}\partial_z.
$$

Notice that, using the smoothing property of the Poisson kernel, we have
\begin{equation}\label{diff-lambda1}
\left\{
 \begin{aligned}
 &(i) \quad \Lambda^1_k- \Lambda^2_k= \beta_k \partial_z, \quad \text{with }\supp\beta_k \subset \xR^d\times J,  \text{where }  J=[-1,0],\\
 &(ii) \quad \Vert \beta_k\Vert_{L^2(J, H^{s-1}(\xR^d))} \leq\mathcal{F}(\Vert(\eta_1,\eta_2)\vert_{H^{s+\mez}\times H^{s+\mez}})\Vert \eta_1-\eta_2\Vert_{H^{s-\mez}(\xR^d)}.
 \end{aligned}
 \right.
 \end{equation}
Recall that
\begin{equation}\label{Geta1j}
 G(\eta_j)f = U_j\arrowvert_{z=0}, \quad U_j= \Lambda_1^j \widetilde{\phi}_j - \nabla_x \rho_j\cdot \Lambda_2^j \widetilde{\phi}_j.
\end{equation}
Let us set $U = U_1 -U_2.$ According to Lemma~\ref{lem.3.11},Ê Theorem \ref{L:p60} will follow from the following estimate
\begin{equation}\label{est:U}
\Vert U \Vert_{L^2(J, H^{s-1} )} + \Vert \partial_z U \Vert_{L^2(J, H^{s-2} )} \leq \mathcal{F} \bigl(\lA (\eta_1,\eta_2)\rA_{H^{s+\mez} \times H^{s+\mez} }\bigr)\lA f\rA_{H^s} \lA \eta_1-\eta_2\rA_{H^{s-\mez}}.
 \end{equation}
According to \eqref{formeU} and \eqref{formedzU}, \eqref{est:U} will be a consequence of the following one
\begin{equation}\label{est:Bj}
\begin{aligned}
&\sum_{k=1}^5 \Vert B_k \Vert_{L^2(J, H^{s-1}) }  \leq    \mathcal{F} \bigl(\lA (\eta_1,\eta_2)\rA_{H^{s+\mez}\times H^{s+\mez}}\bigr) \lA f\rA_{H^s}\lA \eta_1-\eta_2\rA_{H^{s-\mez}} \quad \text{where}\\
&B_1  = \Lambda^1_1v,\quad B_2 = (\nabla_{x,z}\rho_2) \Lambda_2^2v, \quad B_3 = (\Lambda_1^1- \Lambda_1^2)\widetilde{\phi}_2, \quad B_4 = \nabla_{x,z}(\rho_1 - \rho_2)\Lambda_2^1\widetilde{\phi}_1,\\
& B_5 =  (\nabla_{x,z}\rho_2)  (\Lambda_1^1- \Lambda_1^2)\widetilde{\phi}_1.
 \end{aligned}
  \end{equation}
Since $\widetilde{\phi}_j$ is a variational solution,   Proposition \ref{p1} with $\sigma = s-1$ show that
$$ \Vert \nabla_{x,z}\widetilde{\phi}_j \Vert_{L^\infty(J, H^{s-1} )} + \Vert \Lambda ^j_k\widetilde{\phi}_j \Vert_{L^\infty(I, H^{s-1} )}\leq \mathcal{F}(\Vert \eta_j \Vert_{H^{s+\mez} }) \Vert f \Vert _{H^s }. $$
 Since $s>1 + \frac{d}{2},$ it follows the from \eqref{diff-lambda} that 
 \begin{equation}\label{B35}
  \sum_{l=3}^5 \Vert B_l\Vert_{L^2(J,H^{s-1} )} \leq\mathcal{F}(\Vert(\eta_1,\eta_2)\Vert_{H^{s+\mez}\times H^{s+\mez}})\Vert \eta_1-\eta_2\Vert_{H^{s-\mez}}\Vert f \Vert _{H^s}. 
\end{equation}
 Since 
 $$\Vert B_1\Vert_{L^2(J,H^{s-1})} + \Vert B_2\Vert_{L^2(J,H^{s-1})} \leq \mathcal{F}(\Vert(\eta_1,\eta_2)\vert_{H^{s+\mez}\times H^{s+\mez}})\Vert \nabla_{x,z} v \Vert_{L^2(I,H^{s-1})}$$
using the estimate \eqref{B35},  we see that \eqref{est:Bj} will be a consequence of the following lemma. Therefore  Theorem \ref{L:p60} will be proved if we prove the following result.

\begin{lemm}\label{est:B12}
We have
$$\Vert \nabla_{x,z} v \Vert_{L^2(J,H^{s-1})} \leq \mathcal{F}(\Vert(\eta_1,\eta_2)\Vert_{H^{s+\mez}\times H^{s+\mez}})\Vert \eta_1-\eta_2\Vert_{H^{s-\mez}}\Vert   f \Vert _{H^s}.$$
\end{lemm}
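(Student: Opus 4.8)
The plan is to estimate the difference $v = \widetilde\phi_1 - \widetilde\phi_2$ by using the variational characterization of each $\widetilde\phi_j$ (equivalently of $\widetilde u_j$), exactly as in the proof of Lemma~\ref{dem:estvar}, but now measuring everything in higher-order Sobolev norms in $x$ rather than in $L^2$. Recall that $v = \widetilde u_1 - \widetilde u_2$ satisfies the elliptic equation obtained by subtracting the two problems $(\Lambda_1^j)^2 v + (\Lambda_2^j)^2 v = \cdots$; after flattening, $v$ solves an equation of the form \eqref{dnint1F} with coefficients $\alpha_1,\beta_1,\gamma_1$ built from $\rho_1$, with source term $F_0$ of the schematic form
$$
F_0 = (\alpha_2-\alpha_1)\Delta \widetilde\phi_2 + (\beta_2-\beta_1)\cdot\nabla\partial_z\widetilde\phi_2 + (\gamma_1-\gamma_2)\partial_z\widetilde\phi_2,
$$
together with the boundary value $v\mid_{z=0} = \phi_1\mid_{y=\eta_1} - \phi_2\mid_{y=\eta_2}$ expressed in the flattened variables; since $\widetilde\phi_j\mid_{z=0} = f$ for both, this trace is $0$. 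So the plan is: first write $v$ as a variational solution with zero data and source $F_0$, then invoke Proposition~\ref{p1} with $\sigma = s-1$.

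The key steps, in order, are as follows. First I would establish the difference estimates for the flattening coefficients: using Lemma~\ref{rho:diffeo-s} and the fact that $\rho_1 - \rho_2$ depends linearly on $\eta_1-\eta_2$ through the smoothing operators $e^{\pm\delta z\langle D_x\rangle}$, one gets
$$
\|\nabla_{x,z}(\rho_1-\rho_2)\|_{X^{s-\mez}(J)} \le \mathcal{F}\bigl(\|(\eta_1,\eta_2)\|_{H^{s+\mez}\times H^{s+\mez}}\bigr)\|\eta_1-\eta_2\|_{H^{s-\mez}},
$$
hence the same bound (via the product/algebra rules of Lemma~\ref{lem.alg} and Lemma~\ref{lem.3.25}, and the structure of \eqref{alpha}) for $\alpha_1-\alpha_2$, $\beta_1-\beta_2$ in $X^{s-\mez}(J)$ and for $\gamma_1-\gamma_2$ in $X^{s-\tdm}(J)$. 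Second, I would control $F_0$ in $Y^{s-1}(J)$: since $\nabla_{x,z}\widetilde\phi_j \in X^{s-1}$ by Proposition~\ref{p1} applied to each $\widetilde\phi_j$ (with $\sigma=s-1$, the variational bound of Remark~\ref{rema:321} providing the $X^{-\mez}$ a priori input), and since second $z$-derivatives of $\widetilde\phi_j$ and mixed derivatives can be recovered from the equation $(\Lambda_1^j)^2\widetilde\phi_j + (\Lambda_2^j)^2\widetilde\phi_j = 0$ (so $\partial_z^2\widetilde\phi_j$ is a combination of $\Delta\widetilde\phi_j$, $\nabla\partial_z\widetilde\phi_j$ with coefficients in $X^{s-\mez}$), the product rules in Sobolev spaces — precisely as in Lemma~\ref{UV} and Lemma~\ref{es:F0p} — give
$$
\|F_0\|_{Y^{s-1}(J)} \le \mathcal{F}\bigl(\|(\eta_1,\eta_2)\|_{H^{s+\mez}\times H^{s+\mez}}\bigr)\|\eta_1-\eta_2\|_{H^{s-\mez}}\|f\|_{H^s}.
$$
Third, I would feed the zero boundary data, this $F_0$, and the $X^{-\mez}$ a priori bound on $\nabla_{x,z}v$ (which follows from Theorem~\ref{th.25}, or directly from the variational estimate \eqref{est2:var}) into Proposition~\ref{p1} with $\sigma = s-1$, obtaining $\nabla_{x,z}v \in X^{s-1}(I)$ with the claimed bound; in particular the $L^2(J;H^{s-1})$ piece is exactly Lemma~\ref{est:B12}.

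The main obstacle I anticipate is the careful bookkeeping in Step~2: one must check that each term in $F_0$, which is a product of a coefficient-difference (regularity $X^{s-\mez}$ or $X^{s-\tdm}$, and small, of size $\|\eta_1-\eta_2\|_{H^{s-\mez}}$) with a second derivative of $\widetilde\phi_2$ (regularity $X^{s-1}$ for the first derivatives, hence only $L^2_z H^{s-\tdm}$ after reconstructing second derivatives), lands in $Y^{s-1}(J) = L^1_z H^{s-1} + L^2_z H^{s-\tdm}$. This is a borderline matching of indices: one needs $s-1 > d/2$ type conditions so that the product rule \eqref{pr} applies, and must decide for each term whether to place it in the $L^1_z$ or the $L^2_z$ component of $Y^{s-1}$ — exactly the dichotomy handled in Lemma~\ref{UV}. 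The second derivative in $z$ of $\widetilde\phi_2$ appearing through $\partial_z^2\widetilde\phi_2 = -\alpha_2^{-1}(\,\cdots\,)$ must be treated by substituting the equation rather than differentiating directly, since $\partial_z^2\widetilde\phi_2$ a priori only lies in $L^2_z H^{s-\tdm}$; this is the place where the algebraic structure of \eqref{dnint1bis}--\eqref{alpha} is used, and where a sign or index error would most easily creep in. Once this is done correctly, the rest is a direct citation of Proposition~\ref{p1} and Lemma~\ref{lem.3.11}, and Theorem~\ref{L:p60} follows by combining Lemma~\ref{est:B12} with the already-established bounds \eqref{B35} on $B_3,B_4,B_5$ and the elementary bounds on $B_1,B_2$.
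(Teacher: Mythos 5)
Your overall skeleton is the right one (and is the paper's): $v=\widetilde\phi_1-\widetilde\phi_2$ has zero trace at $z=0$, solves the elliptic problem with coefficients $\alpha_1,\beta_1,\gamma_1$ and a source built from coefficient differences, and one concludes by Proposition~\ref{p1}. But there are two genuine gaps. First, your Step~1 estimates are too strong, and this breaks Step~2. Since $\eta_1-\eta_2$ is only controlled in $H^{s-\mez}$ (one derivative below the $H^{s+\mez}$ regularity of each $\eta_j$), Lemma~\ref{rho:diffeo-s} applied to the difference gives $\nabla_{x,z}(\rho_1-\rho_2)$ only in $X^{s-\tdm}(J)$ (its $L^2_z$-component being $L^2(J,H^{s-1})$), \emph{not} in $X^{s-\mez}(J)$; correspondingly $\alpha_1-\alpha_2,\beta_1-\beta_2\in L^2(J,H^{s-1})$ and $\gamma_1-\gamma_2\in L^2(J,H^{s-2})$, which is exactly \eqref{eq.6.4}. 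With these (correct) bounds the source $F$ only lands in $L^2(J,H^{s-2})$: e.g.\ for $(\alpha_2-\alpha_1)\Delta\widetilde\phi_2$ one has the coefficient in $H^{s-1}$ and $\Delta\widetilde\phi_2\in H^{s-2}$, and \eqref{pr} cannot produce $H^{s-\tdm}$ (the requirement $s_0\le\min(s_1,s_2)$ fails, and the alternative pairing would need $s-\tdm>d/2$, which is not guaranteed under $s>1+d/2$). Hence $F\in Y^{s-\tdm}(J)$ but \emph{not} $Y^{s-1}(J)$, so Proposition~\ref{p1} can only be applied with $\sigma=s-\tdm$, which is what the paper does; its $L^2_z$-component then gives exactly the claimed $L^2(J,H^{s-1})$ bound and nothing more. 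Your choice $\sigma=s-1$ would yield $\nabla_{x,z}v\in X^{s-1}$, a strictly stronger conclusion which the authors explicitly state they were unable to reach (Remark~\ref{re.5.4}); this is a sign that the index bookkeeping has overreached by a derivative.

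Second, your Step~3 input is mis-cited: Theorem~\ref{th.25} (and the variational estimate \eqref{est2:var}) bound $\nabla_{x,z}v$ by $\lA\eta_1-\eta_2\rA_{W^{1,\infty}}$, and under the standing hypothesis $s>1+d/2$ the norm $\lA\eta_1-\eta_2\rA_{H^{s-\mez}}$ does \emph{not} control the Lipschitz norm of the difference (again Remark~\ref{re.5.4}); so the needed $X^{-\mez}$ a priori bound with the right-hand side $\lA\eta_1-\eta_2\rA_{H^{s-\mez}}\lA f\rA_{H^s}$ does not follow from those results. This is precisely why the paper re-runs the variational argument with the modified difference bounds \eqref{diff-lambda1} to prove \eqref{est-v1}, and then uses the equation to bound $\partial_z^2v$ in $L^2(J,H^{-1})$ (estimate \eqref{est-v2}) so that Lemma~\ref{lem.3.11} gives the $C^0_z H^{-\mez}$ part of \eqref{est:nablav}. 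That re-derivation is a substantive part of the proof and cannot be replaced by a citation of Theorem~\ref{th.25}; once it is done and the source is placed in $Y^{s-\tdm}$, the rest of your argument (Proposition~\ref{p1}, then assembling $B_1,\dots,B_5$) is correct.
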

\begin{proof}
Notice that $v = \widetilde{\phi}_1-\widetilde{\phi}_2$ is a solution of the problem
 \begin{equation}\label{eq-v}
\partial_z^2 v +\alpha_1\Delta v + \beta_1 \cdot\partialx\partial_z v 
- \gamma_1 \partial_z v=F, \quad v\arrowvert_{z=0} =0
\end{equation}
where
$$
F=(\alpha_2-\alpha_1)\Delta \widetilde{\phi}_2+(\beta_2-\beta_1)\cdot\nabla\partial_z \widetilde{\phi}_2 -(\gamma_2-\gamma_1)\partial_z \widetilde{\phi}_2
$$
and~$\alpha_j$ are given by~\eqref{alpha}. We would like to apply Proposition \ref{p1} with $\sigma = s - \frac{3}{2}.$ To this end, according to \eqref{XY},  we shall  estimate the $L^2(J, H^{s-2}(\xR^d))$  norm of $F$ and the $X^{-\mez}(J)$ norm of $\nabla_{x,z} v.$ 

{\it{Estimate on $F$}}:
 Since $s>1+\frac{d}{2}$ (thus $2s-3>0$) we may apply \eqref{pr} with~$s_1 = s-2, s_2= s-1, s_0= s-2$. We get
\begin{align*}
\lA (\alpha_1-\alpha_2)\Delta \widetilde{\phi}_2\rA_{L^2 (J,H^{s-2} )}&\le K 
\lA \alpha_1-\alpha_2\rA_{L^2 (J,H^{s-1} )}\lA \Delta \widetilde{\phi}_2\rA_{L^\infty (J,H^{s-2} )},\\
\lA (\beta_1-\beta_2)\cdot \nabla\partial_z \widetilde{\phi}_2\rA_{L^2 (J,H^{s-2} )}
&\le K \lA \beta_1-\beta_2\rA_{L^2 (J,H^{s-1} )} 
\lA \nabla\partial_z \widetilde{\phi}_2\rA_{L^\infty (J,H^{s-2} )},\\
\lA (\gamma_1-\gamma_2)\partial_z \widetilde{\phi}_2\rA_{L^2 (J,H^{s-2} )}&\le K 
\lA \gamma_1-\gamma_2\rA_{L^2 (J,H^{s-2} )}\lA \partial_z \widetilde{\phi}_2\rA_{L^\infty (J,H^{s-1} )}.
\end{align*}
Then, using the product rule in Sobolev space~\eqref{pr}, 
and~\eqref{eq.rho1}, we obtain
\begin{multline}\label{eq.6.4}
\lA \alpha_1-\alpha_2\rA_{L^2 (J,H ^{s-1})}+\lA \beta_1-\beta_2\rA_{L^2 (J,H ^{s-1})}+
\lA \gamma_1-\gamma_2\rA_{L^2 (J,H ^{s-2})}\\
\le 
\mathcal{F}\bigl(\lA (\eta_1,\eta_2)\rA_{H^{s+\mez} \times H^{s+\mez}}\bigr) \lA\eta_1-\eta_2\rA_{H^{s-\mez}}.
\end{multline}
Moreover from  Proposition~\ref{p1} with~$\sigma=s-1$ we have
$$
\lA \nabla_{x,z} \widetilde{\phi}_j\rA_{L^\infty(J,H^{s-1})}
\le C(\lA \eta_j\rA_{H^{s+\mez}})\lA f\rA_{H^{s}}.
$$
It follows that
\begin{equation}\label{est:F}
\lA F\rA_{L^2_z(J,H^{s-2}_x)}\le \mathcal{F} \bigl(\lA (\eta_1,\eta_2)\rA_{H^{s+\mez}}\bigr) \lA \eta_1-\eta_2\rA_{H^{s-\mez}}\lA f\rA_{H^s}.
\end{equation}
 
{\it{Estimate of $ \lA \nabla_{x,z} v \rA_{X^{-\mez}(J)}, \, J= (-1,0).$}}
  
We claim that
\begin{equation}\label{est:nablav}
\lA \nabla_{x,z} v \rA_{X^{-\mez}(J)}\leq \mathcal{F} \bigl(\lA (\eta_1,\eta_2)\rA_{H^{s+\mez} \times H^{s+\mez}}\bigr) \lA \eta_1-\eta_2\rA_{H^{s-\mez}}\lA f\rA_{H^s}.
\end{equation}
 Since $\widetilde\phi_j = \widetilde{u}_j + \underline{\widetilde{f}}$ we have $v=\widetilde{u}_1 -\widetilde{u}_2.$ We begin by proving the following estimate.

There exists a non decreasing function $\mathcal{F}\colon \xR_+\rightarrow \xR_+$ such that
\begin{equation}\label{est-v1}
\Vert \nabla_{x,z} v\Vert_{L^2(J, L^2)} \leq  \mathcal{F} \bigl(\lA (\eta_1,\eta_2)\rA_{H^{s+\mez} \times H^{s+\mez}}\bigr) \lA \eta_1-\eta_2\rA_{H^{s-\mez}}\lA f\rA_{H^s}.
  \end{equation} 
 For this purpose we use the variational characterization of the solutions $u_i$. Setting  $X=(x,z)$ we have
\begin{equation}\label{equi-bis}
 \int_{\widetilde{\Omega}}\Lambda ^i\widetilde{u}_i\cdot\Lambda ^i\theta \, J_i\, dX = - \int_{\widetilde{\Omega}}\Lambda ^i \underline{\widetilde{f}} \cdot\Lambda ^i\theta \, J_i\, dX 
 \end{equation}
 for all $\theta \in H^{1,0}(\widetilde{\Omega}),$ where $J_i = \vert \partial_z \rho_i \vert.$
 
Making the difference between the two equations \eqref{equi-bis}, 
using \eqref{est-rho}  and taking $\theta = v = \widetilde{u}_1 -\widetilde{u}_2$  one can find a positive constant $C$ such that 
\begin{equation*}%\label{difference1}
\int_{\widetilde{\Omega}}\vert \Lambda ^1v\vert^2\, dX \leq C(A_1+\cdots +A_6)
\end{equation*}
where
\begin{equation*}%\label{difference2}
\left\{
\begin{aligned}
&A_1=   \int_{\widetilde{\Omega}}\vert (\Lambda^1 - \Lambda^2) 
\widetilde{u}_2 \vert \vert \Lambda^1 v\vert \,J_1\,  dX, \qquad   
&&A_2
=\int_{\widetilde{\Omega}}\vert (\Lambda^1 - \Lambda^2) v \vert \vert \Lambda^2 \widetilde{u}_2 \vert \,J_1 dX,  \\
&A_3 = \int_{\widetilde{\Omega}}\vert \Lambda^2\widetilde{u}_2\vert \vert  
\Lambda^2v \vert \,\vert J_1-J_2\vert\, dX, \qquad 
&&A_4 = \int_{\widetilde{\Omega}}
\vert (\Lambda^1 - \Lambda^2) \underline{\widetilde{f}}  
\vert \vert \Lambda^1 \widetilde{u}\vert \,J_1\, dX, \\
& A_5 =  \int_{\widetilde{\Omega}}\vert (\Lambda^1 - \Lambda^2) v 
\vert \vert \Lambda^2 \underline{\widetilde{f}}  \vert \,J_1\, dX,
\qquad 
&& A_6 =  
 \int_{\widetilde{\Omega}}\vert \Lambda^2  \widetilde{f} \vert \vert  \Lambda^2v \vert \,\vert J_1-J_2\vert\, dX .
\end{aligned}
\right.
 \end{equation*}
It follows from  the elliptic regularity theorem that 
\begin{equation*}
\begin{aligned}
 A_1 &\leq \Vert \Lambda^1 v\Vert_{L^2(\widetilde{\Omega})}\Vert \beta \Vert_{L^2( \widetilde{\Omega})} \Vert \partial_z \widetilde{u}_2 \Vert_{L^\infty(J, L^\infty(\xR^d))} \\
 &\leq \Vert \Lambda^1v \Vert_{L^2(\widetilde{\Omega})} \mathcal{F} (\Vert \eta_1\Vert_{H^{s+\mez}(\xR^d)} \Vert \eta_2\Vert_{H^{s+\mez}(\xR^d)}, \Vert \psi \Vert_{H^{s}(\xR^d)})\Vert \eta_1-\eta_2\Vert_{H^\mez (\xR^d)}.
 \end{aligned}
 \end{equation*}
 Noticing that $ \Lambda^1 - \Lambda^2  = \beta (\partial_z\rho_1)\Lambda^1_1$ where $\beta$ satisfies the estimate in \eqref{diff-lambda} we obtain
 $$A_2 \leq \Vert \partial_z \rho_1 \Vert_{L^\infty(\widetilde{\Omega})} \Vert \beta \Vert_{L^2(\widetilde{\Omega})}\Vert \Lambda^2 \widetilde{u}_2 \Vert_ {L^\infty(\widetilde{\Omega})}\Vert \Lambda^1v\Vert_{L^2(\widetilde{\Omega})}.$$
Using \eqref{est-rho}, \eqref{diff-lambda} and the elliptic regularity we obtain
$$
A_2 \leq  \Vert \Lambda^1v\Vert_{L^2(\widetilde{\Omega})}\mathcal{F} \bigl(\lA (\eta_1,\eta_2)\rA_{H^{s+\mez} \times H^{s+\mez}}\bigr) \lA \eta_1-\eta_2\rA_{H^{s-\mez}}\lA f\rA_{H^s}.
$$
Now we estimate $A_3$ as follows. We have
$$
A_3 \leq \Vert \Lambda^2\widetilde{u}_2 \Vert_ {L^\infty(\widetilde{\Omega})}\Vert \Lambda^2 v \Vert_ {L^ 2(\widetilde{\Omega})}\Vert J_1-J_2\Vert_{L^ 2(\widetilde{\Omega})}.
 $$
Then we observe that 
\begin{align*}
&\Vert J_1-J_2\Vert_{L^ 2(\widetilde{\Omega})}\leq C\Vert \eta_1-\eta_2\Vert_{H^\mez (\xR^d)}\\
& \Vert \Lambda^2 v  \Vert_ {L^ 2(\widetilde{\Omega})}\leq C \Vert \Lambda^1v\Vert_ {L^ 2(\widetilde{\Omega})}
 \end{align*}
and we use the elliptic regularity. 
To estimate $A_4$ and $A_5$ we recall 
that $\underline{\widetilde{f}} = e^{z\langle D_x \rangle}f.$ 
Then we have 
$$
\Vert \beta \partial_z \underline{\widetilde{f}} \Vert_{L^2(J\times \xR^d)}  \leq  \Vert \beta  \Vert_{L^2(I\times \xR^d) }\Vert \partial_z\underline{\widetilde{f}} \Vert_{L^\infty(J \times \xR^d) }.
 $$
Since $\Vert \partial_z\underline{\widetilde{f}} \Vert_{L^\infty(J\times \xR^d)} 
\leq \Vert   \partial_z  \underline{\widetilde{f}} \Vert_{L^\infty(J,H^{s-1}(\xR^d))} 
\leq \Vert f\Vert_{H^s(\xR^d)},$ using \eqref{diff-lambda} 
we obtain
$$
A_4+A_5 \leq \Vert \Lambda^1v\Vert_{L^2(\widetilde{\Omega})} 
\mathcal{F} \bigl(\lA (\eta_1,\eta_2)\rA_{H^{s+\mez}}\bigr) 
\lA \eta_1-\eta_2\rA_{H^{s-\mez}}\lA f\rA_{H^s}.
$$
The term $A_6$ is estimated like $A_3.$ Since $\mez< s-\mez$ 
this proves   \eqref{est-v1} . 
    
To complete the proof of \eqref{est:nablav} 
we have to estimate $\Vert \nabla_{x,z} v \Vert_{L^\infty(I, H^{-\mez})}.$ 
The estimate of 
 $\Vert \nabla_{x} v \Vert_{L^\infty(J, H^{-\mez})} $ follows 
 from \eqref{est-v1}Êand from Lemma \ref{lem.3.11}.
To estimate   $\Vert \partial_z v \Vert_{L^\infty(J, H^{-\mez})} $ 
we have to use \eqref{est-v1} and the equation satisfied by $v$. 
If we prove that 
 \begin{equation}\label{est-v2}
 \Vert \partial_z^2 v\Vert_{L^2(J, H^{-1})} 
 \leq \mathcal{F} \bigl(\lA (\eta_1,\eta_2)\rA_{H^{s+\mez} \times H^{s+\mez}}\bigr) \lA \eta_1-\eta_2\rA_{H^{s-\mez}}\lA f\rA_{H^s}.
  \end{equation} 
 the result will follow again from Lemma \ref{lem.3.11}. 
 Recall that $v$ satifies the equation \eqref{eq-v}.
  
It follows that we have 
\begin{equation}\label{est-v3}
\begin{aligned}
\Vert \partial_z^2 v  \Vert_{L^2(J, H^{-1})}
&\leq \Vert \alpha_1\Delta v \Vert_{L^2(J, H^{-1})} + \Vert \beta_1 \cdot\partialx\partial_z v\Vert_{L^2(J, H^{-1})}  \\
&\quad+ \Vert  \gamma_1 \partial_z v\Vert_{L^2(J, H^{-1})}  +\Vert  F\Vert_{L^2(J, H^{-1})}.
\end{aligned}
\end{equation}

Since $-1<s-2$ \eqref{est:F} yields 
$$
 \lA F\rA_{L^2 (J,H^{-1} )}\le\lA F\rA_{L^2(J,H^{s-2})}\le  \mathcal{F} \bigl(\lA (\eta_1,\eta_2)\rA_{H^{s+\mez} \times H^{s+\mez}}\bigr) \lA \eta_1-\eta_2\rA_{H^{s-\mez}}\lA f\rA_{H^s}.
 $$
On the other hand, since $s-\mez -1>0$ and $ -1< s-\mez -1- \frac{d}{2} $  \eqref{pr} show that we have
 \begin{align*}
 \Vert \alpha_1 \Delta v\Vert_{L^2(J, H^{-1})} &\leq \Vert \alpha_1 \Vert _{L^\infty(J, H^{s-\mez})} \Vert \nabla_x v \Vert_{L^2(J, L^2)}\\
 \Vert \beta_1 \cdot\partialx\partial_z v\Vert_{L^2(J, H^{-1})} &\leq \Vert \beta_1 \Vert _{L^\infty(J, H^{s-\mez})} \Vert \partial_z v \Vert_{L^2(J, L^2)}\\
\Vert  \gamma_1 \partial_z v\Vert_{L^2(J, H^{-1})} &\leq \Vert \gamma_1 \Vert _{L^\infty(J, H^{s-\frac{3}{2}})} \Vert \partial_z v \Vert_{L^2(J, L^2)}.
  \end{align*}
Using Lemma \ref{lem.3.25} and \eqref{est-v1} we obtain eventually \eqref{est-v2}.  

Now Lemma  \ref{est:B12} follows from \eqref{est:F}, \eqref{est:nablav} and Proposition \ref{p1} with $\sigma = s-\frac{3}{2} .$
 \end{proof}
%Finally 
Lemma \ref{est:B12} together with \eqref{est:Bj} prove \eqref{est:U} 
which in turn proves Proposition \ref{L:p60}.
\end{proof}

\subsection{Paralinearization of the equations}
We begin by noticing that, as in the proof of Lemma~\ref{L:estpsi}, it is enough to estimate 
$\deta,\dB,\dV$. Indeed, 
the estimate of the~$L^\infty([0,T];H^{s-1/2}_x)$-norm of~$\dpsi$ is in two elementary steps. 
Firstly, since~$V_j =\nabla \psi_j-B_j\nabla \eta_j$, one can estimate the~$L^\infty([0,T];H^{s-3/2})$-norm of 
$\nabla\dpsi$
from the identity
\begin{align*}
\nabla \dpsi = \dV +\dB \nabla \eta_1 +\B_2 \nabla \deta.
\end{align*}
On the other hand, 
the estimate of the~$L^\infty([0,T];L^2_x)$-norm of~$\dpsi$ follows from the equation \eqref{psiVB}.

An elementary calculation shows that the functions
$$
\zeta= \zeta_1 - \zeta_2, \quad V= V_1-V_2, \quad B=B_1- B_2
$$
satisfy the system of equations
\begin{equation}\label{BVzeta2}
\left\{
\begin{aligned}
&\partial_{t}\dB+V_1\cdot\partialx\dB+V\cdot \nabla B_2=\da ,\\
&\partial_t \dV+V_1\cdot\partialx\dV+\dV \cdot\nabla V_2 +\ma_2 \dzeta 
+\da \zeta_1=0,\\
&\partial_{t}\dzeta+V_2\cdot\partialx\dzeta+V\cdot \nabla\zeta_1=G(\eta_1)\dV+ \zeta_1 G(\eta_1)\dB+\dzeta G(\eta_2)B_2+R+\gamma,
\end{aligned}
\right.
\end{equation}
where
\begin{equation}\label{defi:Runi}
R=\left[ G(\eta_1)-G(\eta_2)\right] V_2 +\zeta_1 \left[ G(\eta_1)-G(\eta_2)\right] B_2,
\end{equation}
and~$\gamma= \gamma_1 - \gamma_2$,~$\gamma_j$ are given by~\eqref{eq:zeta}.

\begin{lemm}\label{lemm:symmind}
The differences~$\zeta,B,V$ satisfy a system of the form
\begin{equation}\label{syst:delta}
\left\{
\begin{aligned}
&(\partial_t +V_1\cdot\partialx)(\dV+\zeta_1 B)+\ma_2 \dzeta =f_1 ,\\
&(\partial_{t}+V_2\cdot\partialx)\dzeta-G(\eta_1)\dV- \zeta_1 G(\eta_1)\dB=f_2,
\end{aligned}
\right.
\end{equation}
for some remainders such that
$$
\lA (f_1,f_2)\rA_{L^{\infty}([0,T];H^{s-1}\times H^{s-\tdm})} \le \K(M_1,M_2)N(T).
$$
\end{lemm}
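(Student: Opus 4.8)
The plan is to derive the system \eqref{syst:delta} from \eqref{BVzeta2} by exactly the same paralinearization procedure that was used for the single-solution system in Proposition~\ref{prop:csystem2} and Proposition~\ref{psym}, but now tracking only the $L^\infty([0,T];H^{s-1}\times H^{s-3/2})$-norm of the differences, which is one half derivative below the symmetrized level used for a single solution and exactly one derivative below the $H^s$-scale of the \emph{a priori} estimates. First I would rewrite the first two equations of \eqref{BVzeta2} in paradifferential form: replace every product $V_1\cdot\nabla(\cdot)$, $V\cdot\nabla B_2$, $V\cdot\nabla V_2$, $a_2\zeta$, $a\zeta_1$ by the corresponding paraproduct plus a remainder, using Bony's formula $fg = T_f g + T_g f + R(f,g)$ together with \eqref{Bony3}, \eqref{niS} and \eqref{esti:quant1}. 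The point is that each remainder is estimated tamely: for instance $V\cdot\nabla B_2 = T_{\nabla B_2}\cdot V + T_V\cdot\nabla B_2 + \sum_j R(\partial_j B_2, V_j)$, and $\|T_{\nabla B_2}\cdot V\|_{H^{s-1}}\lesssim \|\nabla B_2\|_{C^{-1/2}_*}\|V\|_{H^{s-1/2}}\lesssim \|B_2\|_{H^s}N(T)$ and likewise for the $R$-term by \eqref{Bony3}; the paraproduct $T_V\cdot\nabla B_2$ is kept. Collecting the good unknown $U\defn \dV+T_{\zeta_1}\dB$ (as in \eqref{eq.U}) and using that $T_{\zeta_1}a_2 = T_{\zeta_1}(\partial_t B_1 + V_1\cdot\nabla B_1)$ after subtracting $T_{\zeta_1}(g)=0$, together with the commutator $[T_{\zeta_1},\partial_t+T_{V_1}\cdot\nabla]$ controlled by Lemma~\ref{lemm:Dt} (whose input $\|\partial_t\zeta_1+V_1\cdot\nabla\zeta_1\|_{L^\infty}$ is bounded via \eqref{identity:zeta} in terms of $M_1$), one arrives at the first line of \eqref{syst:delta} with $f_1$ satisfying the claimed bound. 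One must also absorb $a\zeta_1 = T_{\zeta_1}a + T_a\zeta_1 + R(a,\zeta_1)$ using \eqref{esti:a1} for $\|a_j-g\|_{H^{s-1/2}}$; here $a = a_1-a_2$ is estimated in $H^{s-1}$ (one derivative below) — this estimate for the difference of Taylor coefficients is the one genuinely new elliptic ingredient, obtained exactly as in Proposition~\ref{prop:ma} but run on the difference equation with coefficients differing by amounts controlled in $H^{s-1}$, and it is proved separately before this lemma.

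For the third equation of \eqref{BVzeta2}, the analysis mirrors Step~3 of the proof of Proposition~\ref{prop:csystem2}: write $V\cdot\nabla\zeta_1 = T_{\nabla\zeta_1}\cdot V + T_V\cdot\nabla\zeta_1 + R$, keep $T_{V_2}\cdot\nabla\zeta$, and estimate the $R(\eta)$-type terms. The only subtlety specific to the contraction setting is the term $R$ in \eqref{defi:Runi}: it involves the \emph{difference of two Dirichlet--Neumann operators} applied to the fixed functions $V_2, B_2$, and this is precisely where Theorem~\ref{L:p60} (the Lipschitz estimate for $G(\eta_1)-G(\eta_2)$ in $H^{s-3/2}$, one derivative below) is invoked: $\|[G(\eta_1)-G(\eta_2)]V_2\|_{H^{s-3/2}}\le \mathcal{F}(\|(\eta_1,\eta_2)\|_{H^{s+1/2}})\|\eta_1-\eta_2\|_{H^{s-1/2}}\|V_2\|_{H^s}$, and since $H^{s-3/2}$ is an algebra (because $s-3/2 > d/2$ is \emph{not} available — so one uses instead the tame product estimate \eqref{prS2} or \eqref{pr} with $\zeta_1\in H^{s-1/2}$) the term $\zeta_1[G(\eta_1)-G(\eta_2)]B_2$ satisfies the same bound. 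The remainder $\gamma=\gamma_1-\gamma_2$ is handled by the (presumably already established) Lipschitz dependence of the remainder in \eqref{eq:zeta} on $(\eta,\psi,V)$. The terms $G(\eta_1)\dV$ and $\zeta_1 G(\eta_1)\dB$ are kept on the left; one uses Theorem~\ref{coro:estiDN} with $\sigma = s-1/2$ to know $G(\eta_1)\dV\in H^{s-3/2}$ whenever $\dV\in H^{s-1/2}$, which is the regularity carried by $N(T)$. The term $\dzeta\, G(\eta_2)B_2$ is a lower-order product handled by \eqref{pr}.

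I expect the main obstacle to be bookkeeping the regularity indices so that every remainder genuinely lands in $H^{s-1}$ (first equation) or $H^{s-3/2}$ (second equation) with a bound of the form $\K(M_1,M_2)N(T)$ — in particular making sure no term requires $N(T)$ at the $H^s$-level, which would be circular. The delicate points are: (i) the difference-of-Taylor-coefficients estimate $\|a_1-a_2\|_{H^{s-1}}\le \K(M_1,M_2)N(T)$, which demands redoing the elliptic regularity of Section~\ref{S:tameP} on the difference system and carefully using that $\alpha_j,\beta_j,\gamma_j$ differ by quantities controlled in $H^{s-1}$ (as in \eqref{eq.6.4}) times the data; (ii) the use of Theorem~\ref{L:p60} for the term $R$, which is exactly why the contraction is proved one derivative below; and (iii) the commutator terms $[T_{\zeta_1},\partial_t+T_{V_1}\cdot\nabla]\dB$ and the analogous ones, which are controlled by Lemma~\ref{lemm:Dt} — note these require only $\|V_1\|_{C^{1+\eps}_*}$ and $\|\partial_t\zeta_1+V_1\cdot\nabla\zeta_1\|_{L^\infty}$, both bounded by $\mathcal{F}(M_1)$ via the Sobolev embedding and \eqref{identity:zeta}. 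Everything else is a routine repetition of the single-solution paralinearization, so I would present the proof by indicating the structural parallel with Propositions~\ref{prop:csystem2}–\ref{psym} and then spelling out only the three points above in detail.
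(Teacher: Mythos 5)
Your handling of the second equation of \eqref{syst:delta} is essentially the paper's: the term $R$ in \eqref{defi:Runi} is indeed the place where Theorem~\ref{L:p60} enters, the products $V\cdot\nabla\zeta_1$, $\dzeta\, G(\eta_2)B_2$ are treated by \eqref{pr} and Theorem~\ref{coro:estiDN}, and $\gamma_1-\gamma_2$ as in Proposition~\ref{prop:newS}. The gap is in the first equation, precisely at your point (i). You propose to keep the term $\da\,\zeta_1$ and to bound $\lA \ma_1-\ma_2\rA_{H^{s-1}}$ by $\K(M_1,M_2)N(T)$ by rerunning the elliptic analysis of Proposition~\ref{prop:ma} on the difference of the pressure problems. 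But $N(T)$ controls $\eta_1-\eta_2$ only in $H^{s-\mez}$: the boundary datum of the pressure difference is $g(\eta_1-\eta_2)\in H^{s-\mez}$ and the coefficient differences are controlled as in \eqref{eq.6.4}, so Proposition~\ref{p1} can only be applied with $\sigma=s-\tdm$, yielding $\ma_1-\ma_2$ in $H^{s-\tdm}$ — the same half-derivative loss as in Theorem~\ref{L:p60}, and indeed the paper later records exactly $\lA\da\rA_{L^\infty(I;H^{s-\tdm})}\le\K(M_1,M_2)N(T)$ and nothing stronger. With only $\da\in H^{s-\tdm}$, the term $\da\,\zeta_1$ lands in $H^{s-\tdm}$, so your $f_1$ is half a derivative short of the stated $H^{s-1}$ bound; this loss is fatal downstream, since the symmetrization of Lemma~\ref{symetr} applies $T_{\sqrt{\lambda_1}}$ (order $\mez$) to the first equation and genuinely needs $f_1\in H^{s-1}$.

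The paper avoids estimating $\da$ altogether. From the first line of the difference system \eqref{BVzeta2} one has $\da=\partial_t\dB+V_1\cdot\nabla\dB+V\cdot\nabla B_2$; substituting this expression into $\da\,\zeta_1$ and using the Leibniz identity
\begin{equation*}
(\partial_t\dB+V_1\cdot\nabla\dB)\,\zeta_1=(\partial_t+V_1\cdot\nabla)(\zeta_1\dB)-\dB\,(\partial_t+V_1\cdot\nabla)\zeta_1
\end{equation*}
simultaneously produces the good unknown $\dV+\zeta_1\dB$ and replaces the problematic term by $\dB\,(\partial_t+V_1\cdot\nabla)\zeta_1$, which is estimated in $H^{s-1}$ using the equation \eqref{BVzeta0} satisfied by $\zeta_1$ — a bound by $\mathcal{F}(M_1)$ alone, with no difference estimate required. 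This is also why the lemma keeps $\partial_t+V_1\cdot\nabla$ (and not $\partial_t+V_2\cdot\nabla$) in its first equation, a point the paper flags explicitly as the delicate algebraic reduction. Your plan misses this cancellation and substitutes for it an elliptic estimate that is not available at the required regularity; the rest (working with $T_{\zeta_1}\dB$ rather than the exact product $\zeta_1\dB$, and the commutator estimates via Lemma~\ref{lemm:Dt}) is routine and could be converted to the stated form by Proposition~\ref{lemPa}, but the bound on $f_1$ does not follow as written.
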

\begin{proof}
We begin by rewriting System~\eqref{BVzeta2} under the form
\begin{equation*}
\left\{
\begin{aligned}
&\partial_{t}\dB+V_1\cdot\partialx\dB=\da +R_1 ,\\
&\partial_t \dV+V_1\cdot\partialx\dV+\ma_2 \dzeta+ \da \zeta_1=R_2,\\
&\partial_{t}\dzeta+V_2\cdot\partialx\dzeta =G(\eta_1)\dV+ \zeta_1 G(\eta_1)\dB+R+\gamma+R_3,
\end{aligned}
\right.
\end{equation*}
where~$R$ is given by \eqref{defi:Runi},~$\gamma=\gamma_1-\gamma_2$ and
$$
R_1=-V\cdot \nabla B_2,\quad R_2=-\dV \cdot\nabla V_2 ,\quad 
R_3=- V\cdot \nabla\zeta_1+ \dzeta G(\eta_2)B_2.
$$
 From  Theorem \ref{L:p60} one has 
$$
\lA R\rA_{L^\infty(0,T;H^{s-\tdm})} \le \K(M_1,M_2) N(T).
$$
Similarly, proceeding as in the end of the proof of Proposition~\ref{prop:newS}, we have
$$
\lA \gamma\rA_{L^\infty(0,T;H^{s-\tdm})} \le \K(M_1,M_2) N(T).
$$
On the other hand, since~$s-1>d/2$,~$H^{s-1}$ is an algebra and
$$
\lA V\cdot \nabla B_2\rA_{H^{s-1}}\le K \lA V\rA_{H^{s-1}}\lA \nabla B_2\rA_{H^{s-1}}
\le K \lA V\rA_{H^{s-1}}\lA B_2\rA_{H^{s}}
$$
and similarly
$$
\lA V\cdot \nabla V_2\rA_{H^{s-1}}\le K \lA V\rA_{H^{s-1}}\lA V_2\rA_{H^{s}}.
$$
On the other hand, according to Theorem~\ref{coro:estiDN} we have 
$$
\lA G(\eta_2)B_2\rA_{H^{s-1}}
\le C(\lA \eta_2\rA_{H^{s+1/2}}) \lA B_2\rA_{H^s},
$$
and hence
$$
\lA \dzeta G(\eta_2)B_2\rA_{H^{s-\tdm}}\le C(\lA \eta_2\rA_{H^{s+1/2}}) \lA B_2\rA_{H^s} \lA \zeta\rA_{H^{s-\tdm}}.
$$
To estimate~$V\cdot \nabla \zeta_1$ we use the product rule \eqref{pr} to deduce
$$
\lA V\cdot \nabla \zeta_1\rA_{H^{s-\tdm}}\le K \lA V\rA_{H^{s-1}}\lA \nabla \zeta_1\rA_{H^{s-\tdm}}
\le  K \lA V\rA_{H^{s-1}}\lA \eta_1\rA_{H^{s+\mez}}.
$$
Therefore we have,
$$
\lA R_1\rA_{H^{s-1}}+\lA R_2\rA_{H^{s-1}}+\lA R_3\rA_{H^{s-\tdm}}\le 
C \left\{ \| \eta \|_{H^{s-\mez}}+\lA B\rA_{H^{s-1}}+\lA V\rA_{H^{s-1}}\right\},
$$
for some constant~$C$ depending only on~$\lA \eta_j\rA_{H^{s+\mez}},\lA B_j\rA_{H^s},\lA V_j\rA_{H^s}$.
The next step consists in transforming again the equation. 
We want to replace~$\ma\zeta_1$ in the second equation by 
$$
(\partial_{t}\dB+V_1\cdot\partialx\dB - R_1)\zeta_1.
$$
The idea is that this allows to factor out the convective derivative~$\partial_t +V_1\cdot \nabla~$. 
Writing
$$
(\partial_{t}\dB+V_1\cdot\partialx\dB)\zeta_1
=(\partial_{t}+V_1\cdot\partialx)(\dB \zeta_1)-\dB(\partial_{t}+V_1\cdot\partialx) \zeta_1
$$
we thus end up with
\begin{equation}\label{eq.p644}
(\partial_t +V_1\cdot\partialx)(\dV+\zeta_1 B)+\ma_2 \dzeta =R_1 \zeta_1 + \dB(\partial_{t}+V_1\cdot\partialx) \zeta_1+R_2.
\end{equation}
Since 
$$
(\partial_{t}+V_1\cdot\partialx)\zeta_1=G(\eta_1)V_1+ \zeta_1 G(\eta_1)\B_1+ \gamma_1,
$$
we have
$$
\lA (\partial_{t}+V_1\cdot\partialx)\zeta_1\rA_{H^{s-1}}\le \mathcal{F}\bigl(\lA (\eta_1,B_1,V_1)\rA_{H^{s+\mez}\times H^s\times H^s}\bigr).
$$
By using this estimate and our previous bounds for~$R_1,R_2$, we find
$$
\lA R_1 \zeta_1 + \dB(\partial_{t}+V_1\cdot\partialx) \zeta_1+R_2\rA_{H^{s-1}}\le 
C \left\{ \| \eta \|_{H^{s-\mez}}+\lA B\rA_{H^{s-1}}+\lA V\rA_{H^{s-1}}\right\},
$$
for some constant~$C$ depending only on 
$\lA \eta_j\rA_{H^{s+\mez}},\lA B_j\rA_{H^s},\lA V_j\rA_{H^s}$. 
Notice that here, as we used the equation satisfied by~$\zeta_1$, 
it was important to have~$(\partial_t+ V_1 \cdot \nabla )$ in the l.h.s. of~\eqref{eq.p644} 
and not~$(\partial_t+ V_2 \cdot \nabla )$, and this algebraic reduction required 
some care in the previous step.  
\end{proof}
\subsection{Estimates for the good unknown}
We now symmetrize System~\eqref{syst:delta}.  We   set~$I = [0,T].$
\begin{lemm} \label{symetr}
Set
$$
\lambda_1\defn \sqrt{(1+|\nabla \eta_1|^2)|\xi|^2-(\nabla\eta_1\cdot\xi)^2},
$$
and
$$
\ell \defn \sqrt{ \lambda_1 \ma_2},\quad \varphi\defn T_{\sqrt{\lambda_1}}(\dV+\zeta_1 B),\quad \vartheta\defn T_{\sqrt{\ma_2}} \dzeta.
$$
Then 
\begin{align}
&(\partial_t +T_{V_1}\cdot\partialx)\varphi + T_{\ell}\vartheta =g_1 ,\label{syst:delta5}\\
&(\partial_{t}+T_{V_2}\cdot\partialx)\vartheta -T_{\ell }\varphi =g_2,\label{syst:delta5bis}
\end{align}
where
$$
\lA (g_1,g_2)\rA_{L^{\infty}(I;H^{s-\tdm}\times H^{s-\tdm})} \le \K(M_1,M_2)N(T).
$$
\end{lemm}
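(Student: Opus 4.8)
The strategy is to conjugate the system \eqref{syst:delta} by the paradifferential factors $T_{\sqrt{\lambda_1}}$ and $T_{\sqrt{\ma_2}}$, exactly as in the proof of Proposition~\ref{psym}, but keeping track of the fact that here we work one derivative below the \emph{a priori} level, so all remainders need only be controlled in $H^{s-\tdm}$ by $\K(M_1,M_2)N(T)$. First I would apply $T_{\sqrt{\lambda_1}}$ to the first equation of \eqref{syst:delta} and $T_{\sqrt{\ma_2}}$ to the second, producing
$$
(\partial_t+T_{V_1}\cdot\partialx)\varphi+T_{\sqrt{\lambda_1}}T_{\ma_2}\dzeta = T_{\sqrt{\lambda_1}}f_1+\bigl[T_{\sqrt{\lambda_1}},\partial_t+T_{V_1}\cdot\partialx\bigr](\dV+\zeta_1 B)+(T_{V_1}-T_{\sqrt{\lambda_1}}V_1 T_{1/\sqrt{\lambda_1}}\cdots)\ldots
$$
— more precisely one writes $T_{\sqrt{\lambda_1}}(V_1\cdot\partialx w)=T_{V_1}\cdot\partialx(T_{\sqrt{\lambda_1}}w)+[\,T_{\sqrt{\lambda_1}},T_{V_1}\cdot\partialx\,]w+T_{\sqrt{\lambda_1}}(V_1-T_{V_1})\cdot\partialx w$, and the analogous identity for $T_{\sqrt{\ma_2}}$ with $V_2$. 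The commutator $[T_{\sqrt{\lambda_1}},\partial_t+T_{V_1}\cdot\partialx]$ is handled by Lemma~\ref{lemm:Dt1} applied with $m=\mez$: it requires controlling $\mathcal{M}^{\mez}_0(\partial_t\sqrt{\lambda_1}+V_1\cdot\nabla\sqrt{\lambda_1})$, which reduces via the chain rule to $\lA(\partial_t+V_1\cdot\nabla)\zeta_1\rA_{C^{\eps}_*}$, estimated by \eqref{identity:zeta} and the equations \eqref{BVzeta0}; similarly for $\sqrt{\ma_2}$ one needs $(\partial_t+V_2\cdot\nabla)\ma_2$, which is bounded by \eqref{esti:a2}.

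Next I would replace $T_{\sqrt{\lambda_1}}T_{\ma_2}$ by $T_{\sqrt{\lambda_1}\ma_2}=T_{\ell\sqrt{\ma_2}}$ using the symbolic calculus \eqref{esti:quant2} (loss of $\rho=\mez$, so the error maps $H^{s-\tdm}\to H^{s-1}$ — acceptable), and then write $T_{\ell\sqrt{\ma_2}}\dzeta = T_\ell(T_{\sqrt{\ma_2}}\dzeta)+ (T_{\ell\sqrt{\ma_2}}-T_\ell T_{\sqrt{\ma_2}})\dzeta = T_\ell\vartheta + (\text{order }\mez\text{ error})\dzeta$; since $\dzeta\in H^{s-\tdm}$ this error lies in $H^{s-2}\subset H^{s-\tdm}$ after absorbing, wait — one must be slightly careful: the gain of $\rho=\mez$ from $C^{\mez}$ regularity of the symbols exactly compensates the order-$\mez$ loss, so the net error is of order $0$ acting on $\dzeta\in H^{s-\tdm}$, which is fine. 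The same manipulation applied to $\zeta_1 G(\eta_1)\dB$ and $G(\eta_1)\dV$ in the second equation: here I would use Proposition~\ref{coro:paraDN1} (paralinearization $G(\eta_1)=T_{\lambda_1}+R(\eta_1)$ with $R(\eta_1)$ gaining $\eps$) to replace $G(\eta_1)\dV+\zeta_1 G(\eta_1)\dB$ by $T_{\lambda_1}(\dV+T_{\zeta_1}\dB)$ modulo $H^{s-1+\eps}$-remainders controlled by $\|\dV\|_{H^{s-1}}+\|\dB\|_{H^{s-1}}$, then recognize $\dV+T_{\zeta_1}\dB$ as the ``good unknown'' difference appearing under $\varphi$ (up to the commutator $[T_{\sqrt{\lambda_1}},T_{\zeta_1}]$, of order $-\mez$, again harmless). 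Applying $T_{\sqrt{\ma_2}}$ and commuting past $T_{\lambda_1}$ turns this into $T_\ell\varphi$ plus remainders.

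The remaining terms — $T_{\sqrt{\ma_2}}f_2$, the low-order terms $\dB(\partial_t+V_1\cdot\partialx)\zeta_1$ and $R_1\zeta_1,R_2$ absorbed into $f_1$, and the various paraproduct-vs-product errors $(V_j-T_{V_j})\cdot\nabla(\cdots)$ which by \eqref{Bony3}/\eqref{niS} are bounded by $\|V_j\|_{C^1_*}$ times the $H^{s-\tdm}$ norm of the difference — all collect into $g_1,g_2$ with the stated bound. I expect the main obstacle to be bookkeeping: verifying at each conjugation step that the regularity budget closes, i.e. that every symbolic-calculus error that loses $\mez$ a derivative is paid for either by the $\eps$ (resp.\ $\mez$) gain coming from the H\"older regularity of the coefficients $\lambda_1,\ma_2,\zeta_1$ or by the one-derivative slack between the $H^{s-1}/H^{s-\mez}$ norms we estimate and the $H^s/H^{s+\mez}$ norms of the $M_j$'s. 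In particular one must check that $\sqrt{\lambda_1},\sqrt{\ma_2}\in\Gamma^{1/2}_{\mez}$ and $\ell\in\Gamma^1_{\mez}$ with semi-norms bounded by $\K(M_1,M_2)$, which follows from $\eta_1\in H^{s+\mez}\hookrightarrow C^{1+\eps}_*$ (with $\eps>\mez$ possibly failing, so one uses the $C^{\mez}_*$-based symbol classes), Proposition~\ref{prop:ma}, and the Taylor sign lower bound $\ma_2\ge c$; and that the commutator with $\partial_t+T_V\cdot\partialx$ genuinely requires only $(\partial_t+V\cdot\nabla)$ of the symbols, which is the whole point of Lemma~\ref{lemm:Dt1}. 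Once the algebra is organized, no single estimate is hard — it is the accumulation that must be done carefully.
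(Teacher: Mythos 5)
Your plan is essentially the paper's own proof: starting from Lemma~\ref{lemm:symmind}, replace products by paraproducts via Proposition~\ref{lemPa}, paralinearize $G(\eta_1)$ via Proposition~\ref{coro:paraDN1} with $\eps=\mez$, conjugate by $T_{\sqrt{\lambda_1}}$ and $T_{\sqrt{\ma_2}}$ handling the convective-derivative commutators through Lemma~\ref{lemm:Dt1} together with \eqref{eq:zeta} and \eqref{esti:a2}, and recombine into $T_\ell\vartheta$, $T_\ell\varphi$ by the symbolic calculus \eqref{esti:quant2} with the $C^{\mez}_*$ regularity of $\zeta_1,\ma_2$, plus the comparison $\zeta_1 B$ versus $T_{\zeta_1}B$ from Proposition~\ref{lemPa}. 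The only blemishes are bookkeeping slips (the composition error $T_{\sqrt{\lambda_1}}T_{\ma_2}-T_{\sqrt{\lambda_1}\ma_2}$ is of order $0$, mapping $H^{s-\tdm}$ to $H^{s-\tdm}$, not to $H^{s-1}$, and your momentary ``$H^{s-2}\subset H^{s-\tdm}$'' is an inclusion in the wrong direction, though you correct yourself), which do not affect the argument.
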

\begin{proof}
We start from Lemma \ref{lemm:symmind}. By using Proposition~\ref{lemPa},  
one can replace~$V_1\cdot \partialx~$ by~$T_{V_1}\cdot \partialx$ and~$\ma_2\dzeta$ by~$T_{\ma_2}\dzeta$, modulo admissible remainders. 
It is found that 
\begin{equation}\label{syst:delta2}
(\partial_t +T_{V_1}\cdot\partialx)(\dV+\zeta_1 B)+T_{\ma_2} \dzeta =f'_1 ,\end{equation}
for some remainder~$f_1'$ such that
$$
\lA f'_1\rA_{L^{\infty}( I;H^{s-1})} \le \K(M_1,M_2)N(T).
$$
Similarly, one can replace 
$V_2\cdot\partialx$ by~$T_{V_2}\cdot \partialx$. 
According to Proposition~\ref{coro:paraDN1}, with~$\eps = \mez$, we have
$$
\|G(\eta_1) V - T_{\lambda_1}V \|_{L^\infty(I; H^{s-\frac 3 2})} 
+  \|G(\eta_1) B - T_{\lambda_1}B \|_{L^\infty(I; H^{s-\frac 3 2})}\leq \K(M_1) N(T),
$$
and according to Proposition~\ref{lemPa}, with~$\gamma = r = s -\frac 3 2, \mu= s- \mez$,
$$
\| \zeta_1 G( \eta_1) B- T_{\zeta_1} T_{\lambda_1}B \|_{L^\infty(I; H^{s-\frac 3 2})}\leq \K (M_1) N(T).
$$
We deduce 
\begin{equation}\label{syst:delta3}
(\partial_{t}+T_{V_2}\cdot\partialx)\dzeta-T_{\lambda_1}\dV- T_{\zeta_1}T_{\lambda_1}\dB=f'_2,
\end{equation}
where 
$$
\lA f'_2\rA_{L^{\infty}(I;H^{s-\tdm})} \le \K(M_1,M_2)N(T).
$$
Now, according to Lemma~\ref{lemm:Dt1},~\eqref{eq.lambda} and ~\eqref{eq:zeta}  and we find that
\begin{multline}
\| [ T_{\sqrt{ \lambda_1}}, ( \partial_t + T_{V_1} \cdot \nabla)]\|_{ H^{s-1} \rightarrow H^{s - \tdm}} \\
\leq \K (M_1) \bigl(\mathcal{M}_0^{\mez} ( \sqrt{ \lambda_1} )+ \mathcal{M}_0^{\mez} ( (\partial_t + V_1\cdot \nabla) \sqrt{ \lambda_1} )\bigr) \leq \K ' (M_1) 
\end{multline}
and similarly,  according to Lemma~\ref{lemm:Dt1} and~\eqref{esti:a2}, 
\begin{multline}
\| [ T_{\sqrt{ a_2}}, ( \partial_t + T_{V_2} \cdot \nabla)]\|_{ H^{s-1} \rightarrow H^{s - \tdm}} \\
\leq \K (M_2) \bigl(\mathcal{M}_0^{0} ( \sqrt{a_2} )+ \mathcal{M}_0^{0} ( (\partial_t + V_2\cdot \nabla) \sqrt{ a_2} )\bigr) \leq \K ' (M_2), 
\end{multline}
which implies 
\begin{align}\label{syst:delta4}
&(\partial_t +T_{V_1}\cdot\partialx)T_{\sqrt{\lambda_1}}(\dV+\zeta_1 B)+ T_{\sqrt{\lambda_1}}T_{\ma_2}\dzeta =f''_1 ,\\
&(\partial_{t}+T_{V_2}\cdot\partialx)T_{\sqrt{\ma_2}} \dzeta-T_{\sqrt{\ma_2}}\bigl(T_{\lambda_1}\dV- T_{\zeta_1}T_{\lambda_1}\dB\bigr)=f''_2,\label{syst:delta4bis}
\end{align}
where 
$$
\lA (f''_1,f''_2)\rA_{L^{\infty}(I;H^{s-\tdm})} \le \K(M_1,M_2)N(T).
$$
According to~\eqref{esti:quant2},~\eqref{eq.lambda} and~\eqref{esti:a1}, 
since~$s> 1 + \frac d 2$,
$$
T_{\sqrt{\lambda_1}} T_{a_2}- T_{\sqrt{\lambda_1a_2}}T_{\sqrt{ a_2}}\quad\text{is of order }0,
$$
which implies~\eqref{syst:delta5}. On the other hand, 
according to~\eqref{esti:quant2} and~\eqref{eq.lambda} 
the operators~$T_{\zeta_1}T_{\lambda_1}- T_{\lambda_1 \zeta_1}$ 
and~$T_{\lambda_1}T_{\zeta_1}- T_{\lambda_1 \zeta_1}$ 
are of order~$1/2$ (with norm controlled by~$\K (M_1)$, 
which allows to commute~$T_{\sqrt{\lambda_1}}$ and~$T_{\ma_2}$ 
in~\eqref{syst:delta4bis}). 
Now, according to ~Proposition~\ref{lemPa} (with~$\gamma =r = s - \mez, \mu= s-1$) 
$$ \| T_{\zeta_1} B- \zeta_1 B\|_{ H^{s- \mez}} \leq \K( M_1) \| B\|_{H^{s-1}}.
$$  
Which  implies~\eqref{syst:delta5bis} (using again~\eqref{esti:quant2}).
\end{proof}

Recall that we have set
\begin{equation}
N(T)\defn \sup_{t\in I} \lA (\eta,\psi,V,B)(t)\rA_{H^{s-\mez}\times H^{s-\mez}\times H^{s-1}\times H^{s-1}}.
 \end{equation}

\begin{lemm}\label{L63.a}
Set
$$
N'(T)\defn  \sup_{t\in I} \big\{\lA \vartheta(t)\rA_{H^{s-\tdm}}
+ \lA \varphi(t)\rA_{H^{s-\tdm}}\big \}.
$$
We have
\begin{equation}\label{esti:N'}
N'(T)\le \mathcal{K}(M_1,M_2)\bigl( N(0)+TN(T)\bigr).
%T C(M_1,M_2)\left\{ N(T)+N'(T)\right\}.
\end{equation}
\end{lemm}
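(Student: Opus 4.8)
The plan is to run a standard $L^2$-type energy estimate on the symmetrized system \eqref{syst:delta5}--\eqref{syst:delta5bis} for the good unknowns $(\varphi,\vartheta)$, but at the regularity level $H^{s-\tdm}$ rather than $L^2$. Concretely, I would apply $\lDx{s-\tdm}$ to both equations, set $\Phi\defn \lDx{s-\tdm}\varphi$ and $\Theta\defn \lDx{s-\tdm}\vartheta$, and commute $\lDx{s-\tdm}$ past $T_{V_j}\cdot\partialx$ and $T_\ell$. The commutator $[\lDx{s-\tdm},T_{V_j}\cdot\partialx]$ is of order $s-\tdm$ acting on $H^{s-\tdm}$ (by \eqref{esti:quant2}, with cost $\lA V_j\rA_{W^{1,\infty}}\les \K(M_j)$), so it sends $\varphi,\vartheta\in H^{s-\tdm}$ into $L^2$ with norm $\les \K(M_1,M_2) N(T)$; similarly $[\lDx{s-\tdm},T_\ell]$ is of order $s-\tdm-\mez$ since $\ell=\sqrt{\lambda_1 a_2}$ is a symbol of order $\mez$ which is $C^\mez_*$ in $x$ (using \eqref{esti:a1} and the bound on $\eta_1$ in $H^{s+\mez}$), hence maps $H^{s-\tdm}$ to $H^{\mez}\hookrightarrow L^2$ with the same admissible bound. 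Therefore
\begin{equation*}
(\partial_t+T_{V_1}\cdot\partialx)\Phi + T_\ell \Theta = G_1,\qquad
(\partial_t+T_{V_2}\cdot\partialx)\Theta - T_\ell \Phi = G_2,
\end{equation*}
with $\lA (G_1,G_2)(t)\rA_{L^2}\le \K(M_1,M_2)N(T)$ by Lemma~\ref{symetr} together with the commutator estimates just described.

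Next I would form $\frac{d}{dt}\bigl(\lA\Phi(t)\rA_{L^2}^2+\lA\Theta(t)\rA_{L^2}^2\bigr)$, exactly as in the proof of Lemma~\ref{estimationdeUs}. The two structural cancellations are the usual ones: first, $T_{V_j}\cdot\partialx$ is skew-symmetric on $L^2$ up to an operator bounded by $\lA V_j\rA_{W^{1,\infty}}\les \K(M_j)$ (point $(iii)$ of Theorem~\ref{theo:sc0}), which yields $|\langle T_{V_j}\cdot\partialx \Phi,\Phi\rangle|\les \K(M_j)\lA\Phi\rA_{L^2}^2$ and likewise for $\Theta$; second, the off-diagonal terms nearly cancel because $\langle T_\ell\Theta,\Phi\rangle - \langle T_\ell\Phi,\Theta\rangle = \langle (T_\ell-(T_\ell)^*)\Theta,\Phi\rangle$, and $T_\ell-(T_\ell)^*$ is of order $\mez-\mez=0$ by \eqref{esti:quant3} applied with $m=\mez$, $\rho=\mez$, with norm $\les M^{\mez}_{\mez}(\ell)\le \K(M_1,M_2)$. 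Hence
\begin{equation*}
\frac{d}{dt}\bigl(\lA\Phi\rA_{L^2}^2+\lA\Theta\rA_{L^2}^2\bigr)
\le \K(M_1,M_2)\bigl(\lA\Phi\rA_{L^2}^2+\lA\Theta\rA_{L^2}^2\bigr)
+ 2\bigl(\lA G_1\rA_{L^2}+\lA G_2\rA_{L^2}\bigr)\bigl(\lA\Phi\rA_{L^2}+\lA\Theta\rA_{L^2}\bigr).
\end{equation*}
Integrating from $0$ to $t\le T$, using $\lA (G_1,G_2)\rA_{L^2}\le \K N(T)$ and Gronwall, gives $N'(T)\le \K(M_1,M_2)\bigl(N'(0)+T N(T)\bigr)$, and since $N'(0)\le \K(M_1,M_2)N(0)$ (the operators $T_{\sqrt{\lambda_1}}$, $T_{\sqrt{a_2}}$ are bounded of order $0$, and $\zeta_1 B$ contributes $\lA\zeta_1\rA_{C^\mez_*}\lA B\rA_{H^{s-1}}\le \K(M_1)N(0)$ via the product/paraproduct rules), we obtain \eqref{esti:N'}; here I absorb the $T\cdot\K(M_1,M_2)\cdot T$-type term coming from Gronwall into the stated right-hand side since $T\le 1$.

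The main obstacle is bookkeeping the precise orders in the commutator and adjoint estimates so that every error term lands in $L^2$ with a bound of the form $\K(M_1,M_2)N(T)$ and no loss: in particular one must check that $\ell=\sqrt{\lambda_1 a_2}$ genuinely belongs to $\Gamma^{\mez}_{\mez}$ with semi-norms controlled by $(M_1,M_2)$ — this uses $\eta_1\in H^{s+\mez}\hookrightarrow C^{1+\mez}_*$ (so $\lambda_1\in\Gamma^1_{\mez}$) and the Taylor-coefficient bound $\lA a_2-g\rA_{H^{s-\mez}}$ from \eqref{esti:a1} together with the positivity $a_2\ge c$ which makes $\sqrt{a_2}$ a genuine $C^\mez_*$ function bounded below — and that commuting $\lDx{s-\tdm}$ with $T_\ell$ really gains the half-derivative needed to reach $L^2$ from $H^{s-\tdm}$. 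Everything else is the routine energy argument already carried out for the $L^2$ estimate of $(U_s,\theta_s)$, transplanted one notch lower in regularity; no new idea beyond the contraction-level paralinearization of Lemma~\ref{symetr} (which already encapsulates the delicate Dirichlet--Neumann contraction Theorem~\ref{L:p60}) is required.
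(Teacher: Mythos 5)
Your proposal is correct and is essentially the paper's own argument: commute $\lDx{s-\tdm}$ with the symmetrized system \eqref{syst:delta5}--\eqref{syst:delta5bis}, perform the $L^2$ energy estimate using that $(T_{V_j}\cdot\partialx)^*+T_{V_j}\cdot\partialx$ and $T_\ell-(T_\ell)^*$ are bounded on $L^2$ and that the commutators with $\lDx{s-\tdm}$ are of order $s-\tdm$, and then conclude via $N'(0)\le \K(M_1,M_2)N(0)$ and $N'(T)\le \K(M_1,M_2)N(T)$. One harmless bookkeeping slip: since $\ell=\sqrt{\lambda_1 \ma_2}$ is only $C^{\mez}_*$ in $x$ at this regularity (not Lipschitz), $[\lDx{s-\tdm},T_\ell]$ is of order $s-\tdm$ rather than $s-2$, but it still maps $H^{s-\tdm}$ into $L^2$ with a bound $\K(M_1,M_2)$, so your conclusion is unaffected (likewise $T_{\sqrt{\lambda_1}}$ is of order $\mez$, not $0$, which is exactly what the estimate $N'(0)\le\K N(0)$ needs).
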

\begin{proof}
We first prove that 
\begin{equation}\label{esti:N'-1}
N'(T)\le \mathcal{K}(M_1,M_2)\bigl( N'(0)+TN(T)+TN'(T)\bigr).
%T C(M_1,M_2)\left\{ N(T)+N'(T)\right\}.
\end{equation}
The desired estimate~\eqref{esti:N'} then follows from the fact that 
%Notice that according to~\eqref{esti:quant1} and the fact that since~$s> 1 + \frac d 2$,~$H^{s-1}({\mathbf{R}}^d)$ is an algebra, we have 
\begin{equation}\label{eq.N'}
N'(0) \le \K (M_1, M_2) N(0), \qquad N'(T)\le \K(M_1,M_2)N(T). 
\end{equation}
which follows from the continuity of 
paradifferential operators in the Sobolev spaces 
(see Theorem \eqref{theo:sc0}) and 
the fact that~$H^{s-1}({\mathbf{R}}^d)$ is an algebra since~$s>1+ \frac{d}{2}$.

The proof of \eqref{esti:N'-1} is based on a classical argument : we 
commute $\lDx{s-3/2}$ to \eqref{syst:delta5}--\eqref{syst:delta5bis} 
and perform an $L^2$ estimate. Then the key points are that (see point~$(iii)$ in Theorem~\ref{theo:sc0})
\begin{equation}\begin{aligned}
\lA (T_{V_j}\cdot \partialx)^* + T_{V_j}\cdot \partialx \rA_{L^2\rightarrow L^2}&\les \lA V_j\rA_{W^{1,\infty}},\\
\lA T_\ell -(T_\ell)^*\rA_{L^2\rightarrow L^2}&\le
\mathcal{F}\bigl( \lA (\eta_1,\eta_2)\rA_{W^{3/2,\infty}}\bigr)
\end{aligned}
\end{equation}
and that the commutators~$[T_{V_j} \cdot \nabla, \lDx{s-3/2}]~$ are,  according to~\eqref{esti:quant2}, of order~$s- \tdm$. 

Notice that since 
$(\varphi,\vartheta)\in C^1([0,T_0];H^{s-\tdm})$, we do not need to regularize the equations to justify the computations.
\end{proof}

Finally, let us notice that an elementary argument allows to control lower norms of~$(V,B)$ (and hence also of~$V+ \zeta_1B$): 
\begin{equation}\label{p64.8}
\lA (\dV,\dB)\rA_{L^\infty(I; H^{s-\frac{3}{2}})}\le \K(M_1,M_2)\bigl(N(0)+TN (T)\bigr).
\end{equation}
Indeed,  (the proof of)  Theorem \ref{L:p60} implies that  (with ~$\da=\ma_1-\ma_2$)
\begin{equation}
\lA \da \rA_{L^\infty(I; H^{s-\frac{3}{2}})}\le \K(M_1,M_2)N(T).
\end{equation}
Since~$\partial_{t}\dB+V_1\cdot\partialx\dB=\da-V\cdot \nabla B_2$, we have 
\begin{multline}
 \| B\|_{L^\infty(I;H^{s-2})} \leq \|B(0)\|_{H^{s-2}} + \int_0^T \Bigl(\| V_1 \cdot \nabla B\|_{H^{s-2}} + \| a\|_{H^{s-2}} + \| V \cdot \nabla B_2\|_{H^{s-2}}\Bigr) dt' \\
 \leq \|B(0)\|_{H^{s-2}}+ T \K (M_1, M_2) N(T).  
 \end{multline}
 Similarly, we have 
 \begin{equation}\label{p64.9} \| V\|_{L^\infty(I;H^{s-2})} \leq 
  \|V(0)\|_{H^{s-2}}+ T \K (M_1, M_2) N(T). 
 \end{equation}
 Now we have 
~$$V+ \zeta_1 B=  T_{\sqrt{ \lambda_1}^{-1}} \varphi + ( \text{Id} - T_{\sqrt{ \lambda_1}^{-1}}T_{\sqrt{\lambda_1}} ) (V+ \zeta_1 B),$$ where according to~\eqref{esti:quant2}, the operator~$\text{Id} - T_{\sqrt{ \lambda_1}^{-1}}T_{\sqrt{\lambda_1}}$ is of order$- 1/2$. Hence, we deduce from~\eqref{eq.N'}, \eqref{esti:N'}, ~\eqref{p64.8},~\eqref{p64.9} and a bootstrap argument 
 \begin{equation}\label{eq.p65.5}
  \| V+ \zeta_1 B\|_{L^\infty(I; H^{s-1})} \leq \K(M_1, M_2)\{ N(0) + T   N(T)\}.
  \end{equation}
\subsection{Back to the original unknowns}
Recall that $I = [0,T]$ (resp. $J= (-1,0)$) is an interval in the $t$ variable (resp. in the $z$ variable).   
\begin{lemm}\label{lem.N}
There holds
\begin{equation}\label{p64.10}
\| \eta \|_{L^\infty(I; H^{s-\mez})}
\le  \K(M_1, M_2) \{N(0)+  T N(T)\}.
\end{equation}
\end{lemm}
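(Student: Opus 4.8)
The plan is to recover $\eta = \eta_1-\eta_2$ in $H^{s-\mez}$ by a transport estimate, exactly mirroring the way Lemma~\ref{ecBV} handled $\eta$ in the a priori estimates, but now at the lower regularity level appropriate for contraction. The starting point is the evolution equation for $\zeta = \zeta_1-\zeta_2 = \nabla\eta$ coming from the third line of~\eqref{BVzeta2}, or, more conveniently, the scalar equation obtained by differencing $\partial_t\eta_j = B_j - V_j\cdot\nabla\eta_j$:
\begin{equation*}
\partial_t\eta + V_1\cdot\nabla\eta = B - V\cdot\nabla\eta_2,\qquad \eta\arrowvert_{t=0}=\eta_1(0)-\eta_2(0).
\end{equation*}
Here the source term is $F(\eta) := B - V\cdot\nabla\eta_2$. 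We want to apply the Sobolev transport estimate~\eqref{eq.iii} of Proposition~\ref{estclass} with $\sigma = s-\mez$ (which is $\le s$, so the estimate applies, using $V_1\in L^1(I;H^s)$ with norm controlled by $M_1$).

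\textbf{Key steps.} First I would bound the source term in $L^1(I;H^{s-\mez})$. Since $s-1>d/2$, the space $H^{s-1}$ is an algebra, and $H^{s-\mez}$ satisfies the tame/product estimate~\eqref{pr}: with $s_1 = s-1$, $s_2 = s-\mez$, $s_0 = s-\mez$ we get $\lA V\cdot\nabla\eta_2\rA_{H^{s-\tdm}} \le K\lA V\rA_{H^{s-1}}\lA \eta_2\rA_{H^{s+\mez}}$, hence in particular a bound in $H^{s-\mez}$ is not quite what comes for free — so more carefully I would use $\lA V\rA_{H^{s-1}}\lA\nabla\eta_2\rA_{H^{s-\mez}}$, controlled by $N(T)$ times $M_2$. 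The term $B$ is directly bounded by $\lA B\rA_{H^{s-1}}\le N(T)$; but we need $H^{s-\mez}$ control of $B$, which is \emph{not} part of $N(T)$. This is where I expect the real work: one must instead estimate $\zeta = \nabla\eta$ in $H^{s-\tdm}$ via its own transport equation, or better, use the good-unknown machinery already built. Indeed, from Lemma~\ref{symetr} and Lemma~\ref{L63.a} we control $\vartheta = T_{\sqrt{\ma_2}}\dzeta$ in $H^{s-\tdm}$ by $\mathcal{K}(M_1,M_2)(N(0)+TN(T))$, and since $\ma_2\ge c>0$ one inverts the paramultiplication: writing $\dzeta = T_{1/\sqrt{\ma_2}}\vartheta + (I - T_{1/\sqrt{\ma_2}}T_{\sqrt{\ma_2}})\dzeta$, the operator $I - T_{1/\sqrt{\ma_2}}T_{\sqrt{\ma_2}}$ is of order $-\mez$ (by~\eqref{esti:quant2} and Proposition~\ref{prop:ma}, which gives $\ma_2\in C^\mez_*$ uniformly), so
\begin{equation*}
\lA \dzeta\rA_{H^{s-\tdm}} \le \mathcal{F}(M_2)\lA\vartheta\rA_{H^{s-\tdm}} + \mathcal{F}(M_2)\lA\dzeta\rA_{H^{s-2}},
\end{equation*}
and the last term is controlled by $\mathcal{K}(M_1,M_2)(N(0)+TN(T))$ exactly as in~\eqref{p64.8}--\eqref{p64.9} (transport estimate for $\zeta$ at regularity $s-2$). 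This yields $\lA\nabla\eta\rA_{L^\infty(I;H^{s-\tdm})}\le \mathcal{K}(M_1,M_2)(N(0)+TN(T))$.

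\textbf{Conclusion.} It then remains to upgrade $\nabla\eta\in H^{s-\tdm}$ to $\eta\in H^{s-\mez}$, i.e.\ to control the missing low-frequency ($L^2$) part of $\eta$. This follows from the transport equation $\partial_t\eta + V_1\cdot\nabla\eta = B - V\cdot\nabla\eta_2$ combined with the $L^2$ estimate~\eqref{eq.ii}: the source is bounded in $L^2$ by $\lA B\rA_{L^2} + \lA V\rA_{L^\infty}\lA\nabla\eta_2\rA_{L^2}\lesssim N(T)$ (here $\lA V\rA_{L^\infty}\lesssim\lA V\rA_{H^{s-1}}$ since $s-1>d/2$), so $\lA\eta\rA_{L^\infty(I;L^2)}\le \mathcal{K}(M_1,M_2)(N(0)+TN(T))$. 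Combining the $L^2$ bound with the $H^{s-\tdm}$ bound on $\nabla\eta$ gives $\lA\eta\rA_{L^\infty(I;H^{s-\mez})}\le \mathcal{K}(M_1,M_2)(N(0)+TN(T))$, which is~\eqref{p64.10}. The main obstacle is the bookkeeping of which norms of the $B$-type and $\ma$-type differences are actually available at the contraction level: one cannot afford $H^{s-\mez}$ on $B$, only $H^{s-1}$, so the $\mez$-derivative gap must be bridged through the symmetrized good-unknown $\vartheta$ together with the parabolic/elliptic gain already encoded in Lemma~\ref{L:p60} and Proposition~\ref{coro:paraDN1}, rather than by a naive product estimate.
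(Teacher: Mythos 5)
Your proof is correct in substance, and its core step coincides with the paper's: both control $\nabla\eta$ in $H^{s-\tdm}$ by writing $\nabla \eta = T_{1/\sqrt{\ma_2}}\vartheta + \bigl(I-T_{1/\sqrt{\ma_2}}T_{\sqrt{\ma_2}}\bigr)\nabla\eta$, using that the second operator is of order $-\mez$ together with the bound \eqref{esti:N'} on $\vartheta$, and then feeding in a lower-order estimate on $\eta$ and a bootstrap. Where you differ is in how that lower-order estimate is produced. The paper integrates $\partial_t\eta_j=G(\eta_j)\psi_j$ in time and invokes the Dirichlet--Neumann contraction estimate of Theorem~\ref{L:p60}, obtaining $\| \eta \|_{L^\infty(I;H^{s-2})}\le \|\eta(0)\|_{H^{s-2}}+T\,\K(M_1,M_2)N(T)$; this route uses only the $\psi$-component of $N$ and the already established Theorem~\ref{L:p60}. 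You instead use the transport equation $\partial_t\eta+V_1\cdot\nabla\eta=B-V\cdot\nabla\eta_2$ and Proposition~\ref{estclass}; this is equally legitimate, since $N(T)$ controls the differences $B,V$ in $H^{s-1}$ and $V\cdot\nabla\eta_2$ lies in $H^{s-1}$ by \eqref{pr}, and it yields the low-frequency ($L^2$) control of $\eta$ by the same token. Two small repairs are needed in your write-up: the tentative product estimate with $s_0=s-\mez$, $s_1=s-1$ violates the condition $s_0\le s_1$ in \eqref{pr} (you noticed this and rightly abandoned the $\sigma=s-\mez$ transport estimate); and the ``transport estimate for $\zeta$ at regularity $s-2$'' is not licit as stated when $d=1$ and $s<2$, because Proposition~\ref{estclass} requires $\sigma\ge 0$ --- it is cleaner to run the transport estimate for $\eta$ itself at level $\sigma=s-1\ge 0$, which controls $\|\nabla\eta\|_{H^{s-2}}$ and the $L^2$ part simultaneously, after which your argument closes exactly as in the paper.
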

\begin{proof}
>From the equation~$\partial_t \eta_j = G(\eta_j)\psi_j$ we have,
$$
\eta(t) = \eta (0)+ \int_0^t G(\eta_1) \psi(t') dt' 
+ \int_0^t \big( G(\eta_1) - G(\eta_2)\big)\psi_2(t') dt' ,
$$
from which we deduce according to  Theorem \ref{L:p60},
\begin{equation}\label{eq.ap} \| \eta\|_{L^\infty(I; H^{s- 2})} \leq \|\eta(0) \|_{H^{s- 2}} + T \K (M_1, M_2) \| \eta\|_{L^\infty(I; H^{s- \mez})}.
\end{equation}
Let~$R = Id - T_{\frac{1}{\sqrt{a_2}}} T_{\sqrt{a_2}}$, which, according to~\eqref{esti:quant2} and~\eqref{esti:a1} is an operator of order~$- \mez$ (with norm estimated by~$\K ( M_2)$).  We have 
$$ \nabla \eta = R \nabla \eta + T_{\frac{1}{\sqrt{a_2}}} \vartheta.$$
Therefore we deduce from~\eqref{eq.ap}, \eqref{esti:N'} and a bootstrap argument,
$$
\lA \nabla \eta\rA_{L^\infty(I; H^{s-\frac{3}{2}})}
\leq \K(M_1, M_2) \bigl(N(0) + T N(T)\bigr).
$$
Combining with~\eqref{eq.ap} gives Lemma~\ref{lem.N}.
\end{proof}

We are now ready to estimate $(V,B)$.
\begin{prop}\label{lem.NN}
There holds
\begin{equation}\label{p64.10bis}
\| (V,B) \|_{L^\infty(I; H^{s-1})}
\le  \K(M_1, M_2) \big\{N(0)+  T N(T)\big\}.
\end{equation}
\end{prop}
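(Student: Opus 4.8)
The plan is to mimic the proof of Lemma~\ref{ecBVs} in the \emph{a priori} estimates section, but now tracking differences and using the contraction results already established. The starting point is the identity $U = V + T_\zeta B$ applied to the first solution, i.e.\ we set $U \defn \dV + T_{\zeta_1}\dB$ (the "good unknown" for the difference), whose $L^\infty(I;H^{s-1})$-norm is controlled by $\K(M_1,M_2)\{N(0)+TN(T)\}$ thanks to~\eqref{eq.p65.5}. The goal is then to recover $\dB$, and hence $\dV = U - T_{\zeta_1}\dB$, from $U$ by inverting an elliptic relation, exactly as in Lemma~\ref{ecBVs}.

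First I would take the divergence of $U = \dV + T_{\zeta_1}\dB$ and use the relation between $V$ and $B$ from Proposition~\ref{cancellation}, written for both solutions and subtracted. Since $G(\eta_j)B_j = -\cn V_j + \gamma_j$, the difference gives $\cn \dV = -G(\eta_1)\dB - \dB\,(\text{correction involving }G(\eta_1)-G(\eta_2))B_2 + (\gamma_1-\gamma_2)$ up to lower order; more precisely one writes $G(\eta_1)B_1 - G(\eta_2)B_2 = G(\eta_1)\dB + (G(\eta_1)-G(\eta_2))B_2$ and uses Theorem~\ref{L:p60} to bound $\lA (G(\eta_1)-G(\eta_2))B_2\rA_{H^{s-\tdm}}$ by $\K(M_1,M_2)N(T)$. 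The remainder $\gamma_1-\gamma_2$ is estimated in $H^{s-\tdm}$ by $\K(M_1,M_2)N(T)$ as in the end of the proof of Proposition~\ref{prop:newS} (cf.\ its use in Lemma~\ref{lemm:symmind}). Then, as in Lemma~\ref{ecBVs}, one paralinearizes: $\cn \dV + T_{\cn\zeta_1}\dB + T_{\zeta_1}\cdot\nabla\dB = \cn U$, and $G(\eta_1)\dB = T_{\lambda_1}\dB + R(\eta_1)\dB$ with $\lA R(\eta_1)\dB\rA_{H^{s-\tdm}}\le \mathcal{F}(\lA\eta_1\rA_{H^{s+\mez}})\lA\dB\rA_{H^{s-1}}$ by Proposition~\ref{coro:paraDN1} applied with $\sigma_0 = s-\mez$ and $\eps = \mez$. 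Collecting terms gives $T_q\dB = \cn U - T_{\cn\zeta_1}\dB - (G(\eta_1)-G(\eta_2))B_2 + (\text{remainders})$ where $q \defn -\lambda_1 + i\zeta_1\cdot\xi \in \Gamma^1_{1/2}$ with $M^1_{1/2}(q^{-1})\le \K(M_1)$.

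Next I would invert $T_q$ by writing $\dB = T_{1/q}T_q\dB + (I - T_{1/q}T_q)\dB$, where by Theorem~\ref{theo:sc0} the operator $I - T_{1/q}T_q$ is of order $-\mez$ with norm $\le \K(M_1)$. Since $T_{\cn\zeta_1}$ is of order $\mez$ (because $\cn\zeta_1 = \Delta\eta_1 \in H^{s-\tdm}\subset C^{-\mez}_*$), the operator $R_{-\epsilon}\defn T_{1/q}(-T_{\cn\zeta_1} - R(\eta_1)) + (I - T_{1/q}T_q)$ is of order $-\mez$, and one obtains $\dB = W + R_{-\epsilon}\dB$ with $W$ built from $\cn U$, $(G(\eta_1)-G(\eta_2))B_2$ and the remainders, so $\lA W\rA_{H^{s-1}}\le \K(M_1,M_2)\{N(0)+TN(T)\}$. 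The term $R_{-\epsilon}\dB$ is controlled because we already know $\lA\dB\rA_{L^\infty(I;H^{s-\tdm})}$ is bounded by $\K(M_1,M_2)\{N(0)+TN(T)\}$ from~\eqref{p64.8}, and $R_{-\epsilon}$ gains $\mez$ derivative; hence $\lA\dB\rA_{H^{s-1}}\le \K(M_1,M_2)\{N(0)+TN(T)\}$. Finally $\dV = U - T_{\zeta_1}\dB$ satisfies the same estimate since $\lA T_{\zeta_1}\dB\rA_{H^{s-1}}\lesssim \lA\zeta_1\rA_{L^\infty}\lA\dB\rA_{H^{s-1}}$.

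The main obstacle, as in Lemma~\ref{ecBVs}, is making sure the operator orders line up so that every error term is genuinely lower order: one must check that $\eps = \mez$ is an admissible choice in Proposition~\ref{coro:paraDN1} (which requires $\mez < s - \mez - d/2$, i.e.\ $s > 1 + d/2$, exactly our hypothesis) so that $R(\eta_1)$ maps $H^{s-\mez}$ to $H^{s-\tdm}$, and that the contraction estimate for $G(\eta_1)-G(\eta_2)$ from Theorem~\ref{L:p60} lands in $H^{s-\tdm}$ — which is precisely the norm needed here. The bootstrap/absorption structure (using the already-known weaker bound~\eqref{p64.8} on $\lA(\dV,\dB)\rA_{H^{s-\tdm}}$ to close the estimate at level $H^{s-1}$) is the standard device and causes no real difficulty once the orders are correct. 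I would also need Lemma~\ref{lem.N} (the $H^{s-\mez}$ bound on $\eta$, already proved) to control $\lA\zeta_1 - \zeta_2\rA_{H^{s-\tdm}}$ wherever it appears in the remainders.
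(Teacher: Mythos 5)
Your route is genuinely different from the paper's. You adapt the \emph{a priori} argument of Lemma~\ref{ecBVs} to differences: take the divergence of $U=V+T_{\zeta_1}B$ (with $V=V_1-V_2$, $B=B_1-B_2$), use Proposition~\ref{cancellation} for both solutions, paralinearize $G(\eta_1)$ and invert $T_q$ by $T_{1/q}$, absorbing the error through the weak bound \eqref{p64.8} and feeding in Theorem~\ref{L:p60}, \eqref{eq.p65.5} and Lemma~\ref{lem.N}. The paper never applies the Dirichlet--Neumann operator, nor its paralinearization, to the difference $B$: it introduces the interior good unknown $w=v-T_{b_2}\rho$, observes that $w\arrowvert_{z=0}=\psi-T_{B_2}\eta$ is controlled in $H^{s}$ via \eqref{eq.p65.5} (Lemma~\ref{est:trace}), shows that $w$ solves an elliptic equation $L_1w=F$ with $F$ estimated in $L^2_zH^{s-3/2}$, applies the elliptic regularity Proposition~\ref{p1} with $\sigma=s-1$ (Lemma~\ref{reg:w}), and recovers $B$ from the exact formula expressing it through $\partial_z w\arrowvert_{z=0}$, $b_2$ and $\rho$.

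The reason for this detour shows up precisely at your absorption step, and this is where your proposal has a genuine gap. To close at the level $H^{s-1}$ you must control $T_{1/q}R(\eta_1)B$ in $H^{s-1}$ by quantities already bounded by $\K(M_1,M_2)\{N(0)+TN(T)\}$. If you estimate $R(\eta_1)B$ through $\|B\|_{H^{s-1}}$ (Proposition~\ref{coro:paraDN1} with $\sigma=s-1$, $\eps=\mez$), the resulting term $\K(M_1,M_2)\|B\|_{H^{s-1}}$ carries no small factor and cannot be absorbed. To use instead the weak bound \eqref{p64.8}, as you propose ("$R_{-\epsilon}$ gains $\mez$ derivative"), you need $R(\eta_1)$ to map $H^{s-3/2}$ into $H^{s-2}$, i.e.\ Proposition~\ref{coro:paraDN1} with $\sigma=s-3/2$; this requires $s-3/2\ge \mez$, which fails for $d=1$ and $3/2<s<2$ (then $H^{s-3/2}$ even lies below the variational threshold $H^{\mez}$, so no remainder estimate of this kind is available in the paper, and none is obvious). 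The same low-norm obstruction reappears when you estimate the difference of the remainders $\gamma$ of Proposition~\ref{cancellation} — a contraction estimate the paper does not state (it only asserts the analogous one for the $\gamma$ of \eqref{eq:zeta} in Lemma~\ref{lemm:symmind}) and whose natural proof again consumes $\|B\|_{H^{\mez}}$, not dominated by $\|B\|_{H^{s-3/2}}$ when $s<2$. With the bookkeeping you indicate (in particular upgrading $(G(\eta_1)-G(\eta_2))B_2$ and all $\eta_1-\eta_2$ occurrences through Lemma~\ref{lem.N} so as to reach $N(0)+TN(T)$ rather than $N(T)$), your argument does go through when $s\ge 2$, hence in particular for $d\ge 2$; but it leaves unproved the range $d=1$, $1+\frac d2<s<2$ covered by the proposition, which is exactly the low-regularity regime the paper's interior elliptic argument is designed to handle.
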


The proof will require several preliminary Lemmas. 
We begin by noticing that it is enough to estimate $B$. Indeed, if
$$
\| B \|_{L^\infty(I; H^{s-1})}
\le  \K(M_1, M_2) \big\{N(0)+  T N(T)\big\},
$$
then, by using the triangle inequality, the estimate \eqref{eq.p65.5}Ê
for $V+\zeta_1 B$ implies that $V$ satisfies the desired estimate.

 Let~$v=\widetilde{\phi}_1-\widetilde{\phi}_2$, where $\widetilde{\phi}_j$ 
 is the harmonic extension in $\tilde\Omega$ of the function  $\psi_j$ and set 
$$ b_2\defn\frac{\partial_z \widetilde{\phi}_2}{\partial_z \rho_2},\quad 
w= v - T_{b_2}\rho.$$

 We notice that 
 \begin{equation}\label{trace:w}
 w\arrowvert_{z=0} = \psi - T_{B_2} \eta.
\end{equation}
We first state the following result.
  \begin{lemm}\label{est:trace}
We have 
\begin{equation}
\| \psi - T_{B_2} \eta \|_{L^\infty(I; H^{s})}
\le  \K(M_1, M_2)\big\{N(0)+  T N(T)\big\}.
\end{equation}
\end{lemm}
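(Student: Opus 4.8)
The quantity $w = v - T_{b_2}\rho$ is designed so that its trace on $z=0$ is exactly $\psi - T_{B_2}\eta$, and so that $w$ satisfies an elliptic equation to which the contraction machinery of Lemma~\ref{est:B12} (together with Proposition~\ref{p1}) can be applied at regularity $\sigma = s-1$. The plan is therefore as follows. First I would record the equations satisfied by $v$ and by $T_{b_2}\rho$ separately, then subtract to obtain the equation for $w$. Recall $v = \widetilde\phi_1 - \widetilde\phi_2$ satisfies \eqref{eq-v}, i.e.\ $\partial_z^2 v + \alpha_1\Delta v + \beta_1\cdot\nabla_x\partial_z v - \gamma_1 \partial_z v = F$ with $v\arrowvert_{z=0}=0$ and $F$ bounded in $L^2(J;H^{s-2})$ by $\mathcal{F}(\|(\eta_1,\eta_2)\|_{H^{s+\mez}})\|\eta_1-\eta_2\|_{H^{s-\mez}}\|f\|_{H^s}$ from \eqref{est:F}. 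Applying the \emph{same} parabolic operator $\partial_z^2 + \alpha_1\Delta + \beta_1\cdot\nabla_x\partial_z - \gamma_1\partial_z$ to $T_{b_2}\rho$, one uses that $\rho$ itself is (essentially) harmonic in the flattened variables (more precisely $(\Lambda_1^2+\Lambda_2^2)\rho = 0$, which after dividing by $a$ gives $\partial_z^2\rho + \alpha\Delta\rho + \beta\cdot\nabla_x\partial_z\rho - \gamma\partial_z\rho = 0$), and that $b_2 = \partial_z\widetilde\phi_2/\partial_z\rho_2 = \Lambda_1^2\widetilde\phi_2$ is, up to harmless factors, a derivative of the variational solution $\widetilde\phi_2$, hence controlled in $X^{s-1}$ by Proposition~\ref{p1}. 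The commutators $[\partial_z^2 + \alpha_1\Delta + \cdots, T_{b_2}]$ produce terms which, by the paradifferential symbolic calculus (Theorem~\ref{theo:sc0}) and the product/paraproduct rules, are bounded in $L^2(J;H^{s-2})$ by $\mathcal{F}(M_1,M_2)\big(N(0)+TN(T)\big)$ once one inserts the already-established control of $\nabla_{x,z}v$ in $X^{-\mez}(J)$ from \eqref{est:nablav} and of $\|\eta_1-\eta_2\|_{H^{s-\mez}}$, and of $b_2$ from the elliptic estimates.

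Second, I would check the low-regularity \emph{a priori} bound needed to run Proposition~\ref{p1}: namely that $\nabla_{x,z}w$ is finite in $X^{-\mez}(J)$ with the contraction-type bound $\mathcal{K}(M_1,M_2)\big(N(0)+TN(T)\big)$. For $v$ this is exactly \eqref{est:nablav}; for $T_{b_2}\rho$ one uses that $T_{b_2}$ maps $H^{\mu}$ into itself (with norm $\lesssim \|b_2\|_{L^\infty}$) uniformly in $z$, and that $\nabla_{x,z}\rho$ is controlled in $X^{s-\mez}(J)$ through \eqref{eq.rho1}, while the difference $\rho = \rho_1-\rho_2$ contributes the gain $\|\eta_1-\eta_2\|_{H^{s-\mez}}$. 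Combining these, $w$ satisfies the hypotheses of Proposition~\ref{p1} at index $\sigma = s - 1$ (legitimate since $-\mez \le s-1 \le s - \mez$, using $s>1+d/2$), with right-hand side in $Y^{s-1}(J) \supset L^2(J;H^{s-2})$ and data $w\arrowvert_{z=0} = \psi - T_{B_2}\eta$. Proposition~\ref{p1} then gives, for any $z_0\in(-1,0)$,
\begin{equation*}
\|\nabla_{x,z}w\|_{X^{s-1}([z_0,0])} \le \mathcal{F}(M_1,M_2)\big\{\|\psi - T_{B_2}\eta\|_{H^{s}} + \mathcal{K}(M_1,M_2)(N(0)+TN(T))\big\}.
\end{equation*}

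Third comes the closing of the loop. Taking the trace at $z=0$ and using the parabolic-trace Lemma~\ref{lem.3.11} with $s\mapsto s-1$, the left-hand side controls $\| (\psi - T_{B_2}\eta)\arrowvert_{z=0}\|_{H^{s-1}}$ — but this is too weak on its own. The point is to instead use the equation to convert the estimate on $\nabla_{x,z}w$ in $X^{s-1}$ near $z=0$ together with an elementary transport/time-integration estimate for $\psi - T_{B_2}\eta$ itself. Concretely, as in the proof of Lemma~\ref{L:estpsi} and Lemma~\ref{lem.N}, I would derive a transport equation $(\partial_t + V_1\cdot\nabla)(\psi - T_{B_2}\eta) = (\text{good terms})$, obtain an $L^\infty(I;H^{s-2})$ (or $L^2$) bound of the form $\|\psi(0) - T_{B_2(0)}\eta(0)\|_{\cdot} + T\mathcal{K}(M_1,M_2)N(T)$, and then \emph{bootstrap}: the elliptic estimate upgrades the spatial regularity from $H^{s-2}$ to $H^{s}$ at the cost of $\mathcal{K}(M_1,M_2)(N(0)+TN(T))$, exactly as $R = \mathrm{Id} - T_{1/\sqrt{a_2}}T_{\sqrt{a_2}}$ was used in Lemma~\ref{lem.N}. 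The main obstacle I anticipate is the bookkeeping in the second step: verifying that \emph{all} commutator and paraproduct error terms arising from $[\,\partial_z^2 + \alpha_1\Delta + \beta_1\cdot\nabla_x\partial_z - \gamma_1\partial_z\,,\,T_{b_2}\,]$ and from the fact that $T_{b_2}\rho$ only \emph{approximately} solves the homogeneous equation genuinely land in $L^2(J;H^{s-2})$ with the contraction gain $\|\eta_1-\eta_2\|_{H^{s-\mez}}$ or $N(0)+TN(T)$ — this requires carefully tracking which factor carries the "difference" smallness and using that $s>1+d/2$ so that $H^{s-1}$ is an algebra and $H^{s-\mez}\subset C^{1+\eps}_*$. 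Everything else is a direct transcription of arguments already carried out for Theorem~\ref{L:p60} and Lemma~\ref{lem.N}.
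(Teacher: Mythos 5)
There is a genuine gap, and it sits exactly at the step you flag as "closing the loop". Your elliptic route is circular: Proposition~\ref{p1} applied to $w=v-T_{b_2}\rho$ is a Dirichlet-type estimate, so the $H^{s}$ norm of the boundary datum $w\arrowvert_{z=0}=\psi-T_{B_2}\eta$ enters as an \emph{input} on the right-hand side (as your own displayed inequality shows), and no interior elliptic or trace estimate (Lemma~\ref{lem.3.11}) can return more regularity for that Dirichlet datum than was fed in — you correctly note you only recover $H^{s-1}$. The proposed repair (a transport bound in $H^{s-2}$ followed by an "elliptic upgrade" to $H^{s}$, "exactly as $R=\mathrm{Id}-T_{1/\sqrt{a_2}}T_{\sqrt{a_2}}$ was used in Lemma~\ref{lem.N}") has no mechanism behind it: in Lemma~\ref{lem.N} the high frequencies of $\nabla\eta$ are recovered from the \emph{boundary} energy quantity $\vartheta=T_{\sqrt{a_2}}\zeta$, already controlled by \eqref{esti:N'}, at the price of a remainder of order $-\mez$; here you would need a two-derivative gain and you never identify a controlled boundary quantity whose leading part is $\psi-T_{B_2}\eta$. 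Note also that in the paper the logical order is the reverse of yours: Lemma~\ref{est:trace} is proved \emph{first} and then supplies the boundary datum needed to run Proposition~\ref{p1} on $w$ in Lemma~\ref{reg:w}; it cannot be deduced from that elliptic step.

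The missing idea is purely algebraic and one line long: using $\nabla\psi_j=V_j+B_j\nabla\eta_j$ and Bony's decomposition, one writes $\nabla(\psi-T_{B_2}\eta)=V+\zeta_1 B+(B_2-T_{B_2})\nabla\eta-T_{\nabla B_2}\eta$. The main term $V+\zeta_1 B$ is exactly the quantity controlled in $H^{s-1}$ by \eqref{eq.p65.5} (itself a consequence of the symmetrized energy estimate), while the two remainders are of lower order and are handled with \eqref{Bony} and \eqref{niS} together with the bound \eqref{p64.10} on $\eta$ in $H^{s-\mez}$ and the a priori bound on $B_2\in H^{s}$; the low frequencies are covered by \eqref{p64.8}. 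Your proposal never invokes the identity $\nabla\psi_j=V_j+B_j\nabla\eta_j$ nor the estimate \eqref{eq.p65.5}, which are the actual content of the lemma, so as written the argument does not close.
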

\begin{proof}
Indeed, the low frequencies are estimated by \eqref{p64.8}, 
while for the high frequencies, we write
\begin{align*}
\partialx (\psi-T_{B_2}\eta)&=\nabla \psi -T_{B_2}\nabla \eta-T_{\nabla B_2}\eta\\
&=\nabla \psi_1-\nabla\psi_2 -T_{B_2}\nabla \eta-T_{\nabla B_2}\eta\\
&=V_1+B_1\nabla\eta_1-V_2-B_2\nabla\eta_2-T_{B_2}\nabla\eta-T_{\nabla B_2}\eta\\
&=V+(B_1-B_2)\nabla\eta_1+B_2(\nabla\eta_1-\nabla\eta_2)-T_{B_2}\nabla\eta-T_{\nabla B_2}\eta\\
&=V + \zeta_1 B + ( B_2 - T_{B_2}) \nabla \eta - T_{\nabla B_2} \eta,
\end{align*}
where we used that, by definition, $\nabla\psi_j=V_j+B_j\nabla\eta_j$ and $\zeta_1=\nabla\eta_1$.

The main term $ V + \zeta_1 B$ is estimated 
using~\eqref{eq.p65.5}, while the two other terms are 
estimated using~\eqref{p64.10}, the {\em a priori} 
estimate on $B_2$ and the product rules~\eqref{Bony}  and~\eqref{niS}. 
\end{proof}

We next relate $w$, $\rho$ and $B$.
\begin{lemm}We have
$$
B=\Bigl[ \frac{1}{\partial_z\rho_1}\Bigl( \partial_z w -(b_2-T_{b_2})\partial_z\rho
+T_{\partial_z b_2}\rho\Bigr)\Bigr]\Big\arrowvert_{z=0}.
$$
\end{lemm}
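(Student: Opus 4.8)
The plan is to derive the stated formula for $B$ by a direct computation, starting from the definition $B = B_1 - B_2$ together with the identity $B_j = (\partial_y\phi_j)\arrowvert_{y=\eta_j}$ expressed in the flattened variables. Recall from \S\ref{s:flattening} that $\widetilde{\phi}_j(x,z) = \phi_j(x,\rho_j(x,z))$ and that $(\partial_y\phi_j)(x,\rho_j(x,z)) = (\Lambda_1^j\widetilde{\phi}_j)(x,z) = \frac{1}{\partial_z\rho_j}\partial_z\widetilde{\phi}_j$; evaluating at $z=0$ gives $B_j = \bigl(\frac{1}{\partial_z\rho_j}\partial_z\widetilde{\phi}_j\bigr)\arrowvert_{z=0}$. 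In particular $b_2 = \frac{\partial_z\widetilde{\phi}_2}{\partial_z\rho_2}$ is precisely the function whose trace on $z=0$ is $B_2$, so $B = \bigl(\frac{1}{\partial_z\rho_1}\partial_z\widetilde{\phi}_1\bigr)\arrowvert_{z=0} - b_2\arrowvert_{z=0}$.

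First I would eliminate $\widetilde{\phi}_1$ in favor of $v = \widetilde{\phi}_1 - \widetilde{\phi}_2$ and $\widetilde{\phi}_2$, writing $\partial_z\widetilde{\phi}_1 = \partial_z v + \partial_z\widetilde{\phi}_2 = \partial_z v + b_2\,\partial_z\rho_2$. Hence
\begin{equation*}
\partial_z\rho_1\, B = \bigl(\partial_z v + b_2\,\partial_z\rho_2\bigr)\arrowvert_{z=0} - \bigl(b_2\,\partial_z\rho_1\bigr)\arrowvert_{z=0} = \bigl(\partial_z v - b_2\,\partial_z\rho\bigr)\arrowvert_{z=0},
\end{equation*}
using $\partial_z\rho = \partial_z\rho_1 - \partial_z\rho_2$. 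The next step is to introduce the paraproduct $T_{b_2}$ via the good-unknown substitution $w = v - T_{b_2}\rho$, which gives $\partial_z v = \partial_z w + \partial_z(T_{b_2}\rho) = \partial_z w + T_{\partial_z b_2}\rho + T_{b_2}\partial_z\rho$. Substituting,
\begin{equation*}
\partial_z\rho_1\, B = \bigl(\partial_z w + T_{\partial_z b_2}\rho + T_{b_2}\partial_z\rho - b_2\,\partial_z\rho\bigr)\arrowvert_{z=0} = \bigl(\partial_z w - (b_2 - T_{b_2})\partial_z\rho + T_{\partial_z b_2}\rho\bigr)\arrowvert_{z=0}.
\end{equation*}
Dividing by $\partial_z\rho_1$ (which is bounded below by Lemma~\ref{rho:diffeo}) yields the claimed identity.

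There is essentially no analytic obstacle here: the statement is a bookkeeping identity, and the only points requiring a word of care are (i) that the paraproduct $T_{b_2}$ acts in the $x$-variable only, so it commutes with $\partial_z$ up to the term $T_{\partial_z b_2}\rho$ coming from the Leibniz rule, and (ii) that all traces at $z=0$ are well-defined in the appropriate Sobolev class, which follows from the elliptic regularity statements already established (Proposition~\ref{p1}, Theorem~\ref{th.23}, Lemma~\ref{lem.3.11}) applied to $\widetilde{\phi}_j$ and to $v$. The identity~\eqref{trace:w} for $w\arrowvert_{z=0}$ is obtained by the same manipulation restricted to $z=0$, using $\widetilde{\phi}_j\arrowvert_{z=0} = \psi_j$ and $\rho\arrowvert_{z=0} = \eta_1 - \eta_2 = \eta$, so that $w\arrowvert_{z=0} = \psi - T_{b_2\arrowvert_{z=0}}\eta = \psi - T_{B_2}\eta$. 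I would present the computation in the two short displays above and remark that this is the analogue, at the level of differences, of the good-unknown reduction used in the {\em a priori} estimates.
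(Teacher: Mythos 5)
Your computation is correct and follows essentially the same route as the paper: express $B_1-B_2=\bigl(\tfrac{\partial_z\widetilde{\phi}_1}{\partial_z\rho_1}-\tfrac{\partial_z\widetilde{\phi}_2}{\partial_z\rho_2}\bigr)\arrowvert_{z=0}$ in terms of $\partial_z v$ and $b_2\,\partial_z\rho$, then substitute $v=w+T_{b_2}\rho$ and use that $T_{b_2}$ acts only in $x$, so $\partial_z(T_{b_2}\rho)=T_{\partial_z b_2}\rho+T_{b_2}\partial_z\rho$. The only difference is cosmetic (you clear the factor $\partial_z\rho_1$ first rather than splitting via $\tfrac{1}{\partial_z\rho_1}-\tfrac{1}{\partial_z\rho_2}$), so no further comment is needed.
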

\begin{proof}
Write
\begin{align*}
B_1-B_2&=\frac{\partial_z \widetilde{\phi}_1}{\partial_z \rho_1}
-\frac{\partial_z \widetilde{\phi}_2}{\partial_z \rho_2}\Big\arrowvert_{z=0}\\
&=\frac{1}{\partial_z\rho_1}\bigl(\partial_z\widetilde{\phi}_1-\partial_z\widetilde{\phi}_2\bigr)
+\bigr(\frac{1}{\partial_z\rho_1}-\frac{1}{\partial_z\rho_2}\Bigr)\partial_z\widetilde{\phi}_2\Big\arrowvert_{z=0}\\
&=\frac{1}{\partial_z\rho_1}\partial_z v
-\frac{1}{\partial_z\rho_1}\frac{\partial_z \widetilde{\phi}_2}{\partial_z \rho_2}\partial_z\rho\Big\arrowvert_{z=0}
\end{align*}
and replace $v$ by $w+T_{b_2}\rho$ in the last expression.
\end{proof}

\begin{lemm}\label{esti:b2012}
Recall that $ b_2\defn\frac{\partial_z \widetilde{\phi}_2}{\partial_z \rho_2}.$ For $k=0,1,2$, we have
$$
\lA \partial_z^k b_2\rA_{C^0([-1,0], L^\infty(I, H^{s-\mez-k}))}\le C \lA \psi_2\rA_{H^{s+\mez}},
$$
for some constant $C$ depending only on $\lA \eta_2\rA_{H^{s+\mez}}$. 
\end{lemm}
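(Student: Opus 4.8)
The plan is to recognize $b_2 = \partial_z\widetilde\phi_2/\partial_z\rho_2$ as a quotient of two functions that we have already controlled, and to chase the regularity through the elliptic estimates of Proposition~\ref{p1}. Recall that $\widetilde\phi_2(x,z)=\phi_2(x,\rho_2(x,z))$ is a variational solution of $\Delta_{x,y}\phi_2=0$ with trace $\psi_2$ on $\{z=0\}$, so by Remark~\ref{rema:321} it satisfies assumption~\eqref{assu:var}, and Proposition~\ref{p1} applied with $F_0=0$ and $\sigma=s-1$ gives, for any $z_0\in(-1,0)$,
$$
\lA \nabla_{x,z}\widetilde\phi_2\rA_{X^{s-1}([z_0,0])}\le \mathcal{F}(\lA\eta_2\rA_{H^{s+\mez}})\lA\psi_2\rA_{H^{s}}\le C(\lA\eta_2\rA_{H^{s+\mez}})\lA\psi_2\rA_{H^{s+\mez}}.
$$
In particular $\partial_z\widetilde\phi_2\in C^0_z([z_0,0];H^{s-\mez})$, which is the $k=0$ statement once combined with the lower bound $\partial_z\rho_2\ge\min(1,h/2)$ from Lemma~\ref{rho:diffeo} and the fact (Lemma~\ref{rho:diffeo-s}, i.e. \eqref{eq.rho1}) that $\partial_z\rho_2-h\in C^0_z([z_0,0];H^{s-\mez})$ with norm controlled by $\lA\eta_2\rA_{H^{s+\mez}}$. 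Then I would invoke that $H^{s-\mez}(\xR^d)$ is an algebra (since $s-\mez>d/2$) together with the smooth-function estimate \eqref{esti:F(u)} applied to $t\mapsto 1/t$ near the positive range $[\min(1,h/2),\max(1,3h/2)]$ to conclude $1/\partial_z\rho_2\in C^0_z([z_0,0];H^{s-\mez})$ with the stated bound; multiplying gives the $k=0$ case.

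For $k=1$ I would differentiate in $z$: $\partial_z b_2 = \partial_z^2\widetilde\phi_2/\partial_z\rho_2 - \partial_z\widetilde\phi_2\,\partial_z^2\rho_2/(\partial_z\rho_2)^2$. The term $\partial_z^2\rho_2$ lies in $C^0_z([z_0,0];H^{s-\tdm})$ by \eqref{eq.rho1} (differentiating the expression for $\rho_2$; the bottom piece is affine in $z$), and $1/(\partial_z\rho_2)^2\in C^0_z([z_0,0];H^{s-\mez})$ as above. The genuinely new input is $\partial_z^2\widetilde\phi_2\in C^0_z([z_0,0];H^{s-\tdm})$: this comes from the equation \eqref{dnint1-b} itself, since $\partial_z^2\widetilde\phi_2 = -\alpha\Delta\widetilde\phi_2-\beta\cdot\nabla_x\partial_z\widetilde\phi_2+\gamma\partial_z\widetilde\phi_2$, and each summand is controlled in $C^0_z([z_0,0];H^{s-\tdm})$ using the bound on $\nabla_{x,z}\widetilde\phi_2$ in $X^{s-1}\subset C^0_z(H^{s-1})$, the coefficient estimates of Lemma~\ref{lem.3.25} ($\alpha-h^2,\beta\in X^{s-\mez}$, $\gamma\in X^{s-\tdm}$), and the product rule \eqref{pr} in Sobolev spaces (with a half-derivative to spare in the $\alpha\Delta$ and $\beta\cdot\nabla\partial_z$ terms, since they cost one full derivative and drop one level of regularity). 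A final multiplication in the algebra-with-loss setting (using \eqref{prZ2} or just \eqref{pr}) yields $\partial_z b_2\in C^0_z([z_0,0];H^{s-\tdm})$.

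The $k=2$ case is obtained the same way by differentiating once more: $\partial_z^2 b_2$ is a polynomial expression in $\partial_z^3\widetilde\phi_2$, $\partial_z^2\widetilde\phi_2$, $\partial_z\widetilde\phi_2$, the derivatives $\partial_z^j\rho_2$ for $j\le 3$, and $1/\partial_z\rho_2$. The one missing ingredient, $\partial_z^3\widetilde\phi_2\in C^0_z([z_0,0];H^{s-\frac52})$, is produced by differentiating \eqref{dnint1-b} in $z$ once: $\partial_z^3\widetilde\phi_2 = -\partial_z(\alpha\Delta\widetilde\phi_2+\beta\cdot\nabla_x\partial_z\widetilde\phi_2-\gamma\partial_z\widetilde\phi_2)$, where the $z$-derivatives of the coefficients satisfy $\partial_z\alpha,\partial_z\beta\in C^0_z(H^{s-\tdm})$ and $\partial_z\gamma\in C^0_z(H^{s-\frac52})$ (again from \eqref{eq.rho1} and \eqref{pr}), and one more derivative of $\widetilde\phi_2$ is absorbed by the extra half-degree of regularity available at this level. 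All these estimates are uniform in $z\in[-1,0]$ — on $[z_0,0]$ via Proposition~\ref{p1}, and on $[-1,z_0]$ one may in fact use Lemma~\ref{regell}, which gives $\phi_2\in H^k$ for all $k$ away from the surface, so $\widetilde\phi_2$ and all its $z$-derivatives are bounded there in any $H^\mu$ by $\lA\psi_2\rA_{H^{\mez}}$. Assembling the three cases gives the claim, with the constant depending only on $\lA\eta_2\rA_{H^{s+\mez}}$.

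\textbf{Main obstacle.} The delicate point is the bookkeeping of regularity levels: one must check at each differentiation in $z$ that the loss incurred by the second-order operators $\alpha\Delta$ and $\beta\cdot\nabla_x\partial_z$ (which cost a full derivative and lose $\mez$ in Sobolev scale via the product rule) is exactly compensated by the $\mez$-gain in the interior parabolic regularity — i.e. that $\nabla_{x,z}\widetilde\phi_2\in L^2_z(H^{\mu+\mez})$ hidden inside the $X^\mu$ norm is what allows $\partial_z^{k+1}\widetilde\phi_2\in C^0_z(H^{s-\mez-k})$ rather than merely $H^{s-1-k}$. Making this precise requires either a short induction on $k$ using the equation, or invoking Proposition~\ref{p1} at the correct $\sigma$ after commuting $\partial_z$ past the equation; the rest is the routine algebra and composition estimates recorded in Section~\ref{sec:2}.
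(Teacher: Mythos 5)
Your overall strategy coincides with the paper's (elliptic regularity from Proposition~\ref{p1} for $\nabla_{x,z}\widetilde\phi_2$, then the equation \eqref{dnint1-b} together with the Sobolev product rules to control $\partial_z^2\widetilde\phi_2$ and $\partial_z^3\widetilde\phi_2$, the derivatives of $\rho_2$ being read off from its definition), but your first step is off by half a derivative, and it is precisely the half derivative the lemma is about. Applying Proposition~\ref{p1} with $\sigma=s-1$ only gives $\nabla_{x,z}\widetilde\phi_2\in X^{s-1}([z_0,0])=C^0_z([z_0,0];H^{s-1})\cap L^2_z([z_0,0];H^{s-\mez})$; this does \emph{not} imply $\partial_z\widetilde\phi_2\in C^0_z([z_0,0];H^{s-\mez})$, so the sentence ``which is the $k=0$ statement'' does not follow, and the deficit propagates: from $C^0_zH^{s-1}$ the equation would only yield $\partial_z^2\widetilde\phi_2\in C^0_zH^{s-2}$, etc., i.e.\ every case of the lemma would come out half a derivative short. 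The remedy you sketch in your last paragraph --- that the $L^2_z H^{\mu+\mez}$ component of the $X^\mu$ norm upgrades the continuous-in-$z$ regularity --- does not work: feeding $u=\partial_z\widetilde\phi_2\in L^2_zH^{s-\mez}$ and $\partial_z u\in L^2_zH^{s-\frac32}$ into Lemma~\ref{lem.3.11} merely returns $C^0_zH^{s-1}$, which you already had.

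The fix is simply to use the full strength of the hypothesis $\psi_2\in H^{s+\mez}$ (which is exactly what appears on the right-hand side of the lemma) and apply Proposition~\ref{p1} at the endpoint $\sigma=s-\mez$, which is allowed since $-\mez\le\sigma\le s-\mez$, with $F_0=0$ and Remark~\ref{rema:321}; this gives $\nabla_{x,z}\widetilde\phi_2\in C^0_zH^{s-\mez}$ with bound $\mathcal{F}(\|\eta_2\|_{H^{s+\mez}})\|\psi_2\|_{H^{s+\mez}}$, and is what the paper does. With this corrected base case, the rest of your argument is sound and matches the paper's (very terse) proof: the quotient by $\partial_z\rho_2$ is handled by the lower bound \eqref{rhokappa}, the estimate \eqref{eq.rho1} and the composition estimate \eqref{esti:F(u)}; then differentiating \eqref{dnint1-b} in $z$, using Lemma~\ref{lem.3.25} for $\alpha,\beta,\gamma$ and the product rule \eqref{pr}, yields $\partial_z^2\widetilde\phi_2\in C^0_zH^{s-\frac32}$ and $\partial_z^3\widetilde\phi_2\in C^0_zH^{s-\frac52}$, hence the cases $k=0,1,2$. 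One further small point: Proposition~\ref{p1} gives the bound on $[z_0,0]$ with a constant depending on $z_0$, while the lemma is stated on $[-1,0]$; your idea of invoking interior regularity away from the surface for $z\in[-1,z_0]$ is the right one, though Lemma~\ref{regell} as stated only covers the strips $-\mez\le a<b\le-\frac15$, so a word is needed there (the paper's own proof is silent on this as well).
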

\begin{proof}
We estimate $\nabla_{x,z}\widetilde{\phi}_2$ in 
$C^0([-1,0], L^\infty(I, H^{s-\mez}))$ 
by using the elliptic regularity 
(see Proposition~\ref{p1} and Remark~\ref{rema:321}). 
Now, using the equation satisfied by $\widetilde{\phi}_2$ and the product rule in Sobolev spaces, we successively estimate $\partial_z^2\widetilde{\phi}_2$ and $\partial_z^3\widetilde{\phi}_2$. 
This proves the lemma since the derivatives of $\rho_2$ are estimated directly from the definition of $\rho_2$.
\end{proof}

Notice that $\eta$ and hence $\rho$ are estimated in $L^\infty(I; H^{s-\mez})$ (see~\eqref{p64.10}). 
Now, use Lemma~\ref{esti:b2012} and  
Proposition~\ref{lemPa} (applied with $s>1+d/2$, $\gamma=s-1$, $r=s-1/2$, $\mu=s-3/2$) to obtain
$$
\lA (b_2-T_{b_2})\partial_z\rho\rA_{H^{s-1}}\les \lA b_2\rA_{H^{s-\mez}}\lA \eta\rA_{H^{s-1}}.
$$
Now, \eqref{boundpara} implies that
$$
\lA T_{\partial_z b_2}\rho\rA_{H^{s-1}}\les \lA b_2\rA_{H^{s-1/2}}\lA \eta\rA_{H^{s-1}},
$$
and hence, to complete the proof of the Proposition \ref{lem.NN}, it remains only to estimate $\partial_z w\arrowvert_{z=0}$ 
in $L^\infty(I,H^{s-1})$. This is the purpose of the following result.

 \begin{lemm}\label{reg:w}
 For $t\in [0,T]$ we have   
 \begin{equation}
\| \nabla_{x,z} w  \|_{C^0([-1,0],   H^{s-1}) }
\le  \K(M_1, M_2) \big\{N(0)+  T N(T)\big\}.
\end{equation}
\end{lemm}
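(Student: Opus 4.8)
\textbf{Proof plan for Lemma \ref{reg:w}.}
The plan is to exploit that $w = v - T_{b_2}\rho$ satisfies a parabolic-type elliptic equation in $\widetilde{\Omega}$ for which we can apply the elliptic regularity machinery of Proposition~\ref{p1}, but now at the level of the \emph{difference}, so that every coefficient or source contribution carries a factor $\|\eta_1-\eta_2\|_{H^{s-\mez}}$ or is controlled by $N(T)$. First I would write down the equation for $v=\widetilde{\phi}_1-\widetilde{\phi}_2$, namely \eqref{eq-v}, and recall from Lemma~\ref{est:B12} that $\|\nabla_{x,z}v\|_{L^2(J,H^{s-1})}\le \mathcal{F}(\|(\eta_1,\eta_2)\|_{H^{s+\mez}})\|\eta_1-\eta_2\|_{H^{s-\mez}}\|f\|_{H^s}$, applied here with $f=\psi$ (and noting that the $H^s$-norm of $\psi$ is controlled by $N(T)$ after interpolation against the $H^{s-\mez}$ bound and the {\em a priori} bound, or directly by passing through $V,B,\eta$). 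The point is that Lemma~\ref{est:B12} is exactly an $L^2_z H^{s-1}_x$ statement for $\nabla_{x,z}v$; what we still need is the upgrade to $C^0_z H^{s-1}_x$ and an estimate for $\partial_z w$.

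Next I would compute the equation satisfied by $w$. Since $(\partial_z^2+\alpha_1\Delta+\beta_1\cdot\nabla\partial_z-\gamma_1\partial_z)v=F$ with $F$ estimated in $L^2_z H^{s-2}_x$ by \eqref{est:F}, and since $\rho$ solves a similar (homogeneous, up to lower order) equation because $\rho_j$ are close to the respective harmonic lifts, applying the operator $\partial_z^2+\alpha_1\Delta+\beta_1\cdot\nabla\partial_z-\gamma_1\partial_z$ to $T_{b_2}\rho$ produces, by the paradifferential commutator rules (Theorem~\ref{theo:sc0}) together with the coefficient bounds of Lemma~\ref{lem.3.25} and the bounds on $b_2$ from Lemma~\ref{esti:b2012}, a right-hand side of the form $G_0 \in Y^{s-3/2}(J)$ with $\|G_0\|_{Y^{s-3/2}(J)}\le \K(M_1,M_2)\{N(0)+TN(T)\}$; the reason the gain over $v$ alone is available is that $w\arrowvert_{z=0}=\psi-T_{B_2}\eta$ is controlled in $H^s$ by Lemma~\ref{est:trace}, i.e. the boundary trace of $w$ is one derivative smoother than that of $v$ (which is merely $\psi\in H^s$). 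Then Proposition~\ref{p1}, applied with $\sigma=s-\tdm$ on an interval $[z_0,0]$, together with the {\em a priori} input $\|\nabla_{x,z}w\|_{X^{-\mez}(J)}$ (which follows from the $X^{-\mez}$-bound on $v$ in \eqref{est:nablav} and the boundedness of $T_{b_2}$), yields
$$
\|\nabla_{x,z}w\|_{X^{s-\tdm}([z_0,0])}\le \K(M_1,M_2)\{N(0)+TN(T)\}.
$$
Because $s-\tdm = (s-1)-\mez$, the space $X^{s-\tdm}([z_0,0])=C^0_zH^{s-\tdm}_x\cap L^2_zH^{s-1}_x$ — which is \emph{not} quite $C^0_zH^{s-1}_x$. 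To reach $C^0_z H^{s-1}_x$ for $\nabla_{x,z}w$ I would instead run the induction in Proposition~\ref{Linduction} one extra half-step, i.e. apply Proposition~\ref{p1} with $\sigma=s-1$: the boundary trace of $w$ is in $H^s=H^{(s-1)+1}$, the source $G_0$ is in $Y^{s-1}(J)$ (same estimates, one notch higher, using $s>1+d/2$ so that $s-1>d/2$ and the product/commutator rules still close), and the {\em a priori} hypothesis $\mathcal{H}_{s-\tdm}$ has just been verified; this gives $\nabla_{x,z}w\in X^{s-1}([z_0,0])\subset C^0_z H^{s-1}_x$ on a possibly smaller interval, and a covering/cutoff argument in $z$ as in \S\ref{S:PPLind} extends it to all of $[-1,0]$.

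The main obstacle I anticipate is bookkeeping the source term $G_0$ obtained by commuting the elliptic operator past the paraproduct $T_{b_2}$ and past $\rho$: one must check that every term either inherits the factor $\|\eta_1-\eta_2\|_{H^{s-\mez}}$ (from $v$, from $\beta_k$ as in \eqref{diff-lambda1}, or from differences of the coefficients $\alpha_j,\beta_j,\gamma_j$ as in \eqref{eq.6.4}) or is of the form (bounded coefficient) $\times$ (already-controlled quantity), so that the final bound is genuinely $\K(M_1,M_2)\{N(0)+TN(T)\}$ and not merely $\K(M_1,M_2)N(T)$. In particular the terms $\partial_z T_{b_2}\rho$ and $T_{\partial_z b_2}\rho$ must be handled with Proposition~\ref{lemPa} and \eqref{boundpara} exactly as in the paragraph preceding this lemma in the text, and the contribution of $[\,\alpha_1\Delta+\beta_1\cdot\nabla\partial_z\,,\,T_{b_2}\,]\rho$ requires the sharp symbolic-calculus estimate \eqref{esti:quant2} with the $H^{s-\mez}$-regularity of $b_2$ from Lemma~\ref{esti:b2012}. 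Once $\|\partial_z w\arrowvert_{z=0}\|_{H^{s-1}}$ is under control, combining with the two displayed estimates just above Lemma~\ref{reg:w} (for $(b_2-T_{b_2})\partial_z\rho$ and $T_{\partial_z b_2}\rho$) and dividing by $\partial_z\rho_1\ge \min(1,h/2)>0$ completes the estimate of $B$, and hence, via \eqref{eq.p65.5}, of $V$, finishing Proposition~\ref{lem.NN}.
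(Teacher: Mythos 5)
Your plan starts from the right objects (the good unknown $w=v-T_{b_2}\rho$, the trace estimate of Lemma~\ref{est:trace}, and Proposition~\ref{p1} with $\sigma=s-1$), but it has a genuine gap exactly at the decisive step: the claim that the interior source of the equation for $w$ lies in $Y^{s-1}(J)$ ``by the product/commutator rules, one notch higher''. The right-hand side of the equation for $v$ is $(\gamma_1-\gamma_2)\partial_z\widetilde{\phi}_2+F_1$, and by \eqref{eq.6.4} the difference $\gamma_1-\gamma_2$ is controlled only in $L^2(J,H^{s-2})$ (it carries two derivatives of $\rho_1-\rho_2$ while $\eta_1-\eta_2$ is only in $H^{s-\mez}$). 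Hence this term lies in $L^2(J,H^{s-2})\subset Y^{s-\tdm}(J)$ and in no better $Y$-space, and commuting $L_1=\partial_z^2+\alpha_1\Delta+\beta_1\cdot\nabla\partial_z-\gamma_1\partial_z$ past the paraproduct $T_{b_2}$ cannot improve it. The gain you attribute solely to the smoother boundary trace of $w$ is only half of what subtracting $T_{b_2}\rho$ buys; the essential other half is an exact cancellation in the interior. Since $\gamma_j=\frac{1}{\partial_z\rho_j}P_j\rho_j$, i.e.\ $L_j\rho_j=0$ exactly by the definition \eqref{alpha} (not ``approximately, because $\rho_j$ is close to a harmonic lift''), one has $\gamma_1-\gamma_2=\frac{1}{\partial_z\rho_2}L_1\rho+F_2$ with $F_2$ as in \eqref{eq::F2} controlled in $L^2(J,H^{s-\tdm})$; using $\partial_z\widetilde{\phi}_2=b_2\,\partial_z\rho_2$ the worst term becomes $(\gamma_1-\gamma_2)\partial_z\widetilde{\phi}_2=b_2L_1\rho+(\partial_z\widetilde{\phi}_2)F_2$, and after Bony's decomposition $b_2L_1\rho=T_{b_2}L_1\rho+T_{L_1\rho}b_2+R(b_2,L_1\rho)$ the piece $T_{b_2}L_1\rho$ is cancelled by $L_1T_{b_2}\rho$ modulo a commutator controlled via Lemma~\ref{esti:b2012} and \eqref{boundpara}. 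Only after this cancellation is the source $L_1w=F_1+(\partial_z\widetilde{\phi}_2)F_2+F_3+F_4+F_5$ in $L^2(J,H^{s-\tdm})\subset Y^{s-1}(J)$, so that Proposition~\ref{p1} with $\sigma=s-1$ applies; without it your argument stalls at $X^{s-\tdm}$, which is precisely the obstruction you notice but do not resolve.

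A secondary issue: Lemma~\ref{est:B12}, \eqref{est:F} and \eqref{est:nablav} were proved for the difference of two extensions of the \emph{same} boundary datum $f$, whereas here $\widetilde{\phi}_j$ extends $\psi_j$ with $\psi_1\neq\psi_2$, so you cannot quote them ``with $f=\psi$''; the corresponding bounds have to be re-derived in this setting (as the paper does, estimating the coefficient differences by \eqref{eq.6.4} and converting $\|\eta_1-\eta_2\|_{H^{s-\mez}}$ into $\K(M_1,M_2)\{N(0)+TN(T)\}$ through Lemma~\ref{lem.N}). This is fixable, but as written the citations are out of scope.
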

\begin{proof} To prove this estimate, we are going to show that $w$ 
satisfies an elliptic equation in the variables $(x,z)$ to which we may 
apply the results of Proposition~\ref{p1}.  We have 
\begin{equation*}
\partial_z^2 v +\alpha_1\Delta v + \beta_1 \cdot\partialx\partial_z v 
- \gamma_1 \partial_z v=(\gamma_1-\gamma_2)\partial_z \widetilde{\phi}_2+F_1,
\end{equation*}
where (see \eqref{eq-v})
$$
F_1=(\alpha_2-\alpha_1)\Delta \widetilde{\phi}_2+(\beta_2-\beta_1)\cdot\nabla\partial_z\widetilde{\phi}_2.
$$
We claim that for $t\in [0,T]$
\begin{equation}\label{est::F1}
\Vert F_1(t,\cdot) \Vert_{ L ^2(J,H^{s-\frac{3}{2}} )} \leq   \K(M_1, M_2) \{N(0)+  T N(T)\}. 
\end{equation}
The two terms in $F_1$ are estimated by the same way. 
We will only consider the first one. 
Using the product rule \eqref{pr} with $s_0 = s-\frac{3}{2}, s_1 = s-1, s_2 = s-\frac{3}{2}$ 
we can write for fixed $t$,
$$\Vert(\alpha_2-\alpha_1) \Delta \widetilde{\phi}_2\Vert_{L^2(J, H^{s-\frac{3}{2}}) }  \leq C\Vert  \alpha_2-\alpha_1   \Vert_{L^2(J, H^{s-1})}  \Vert  \Delta \widetilde{\phi}_2\Vert_{L^\infty(J, H^{s-\frac{3}{2}})}.
$$
Then we use \eqref{eq.6.4}, Proposition \ref{p1} 
with $\sigma = s-\mez$ and Lemma \ref{lem.N} 
to conclude that the term above is estimated by the right hand side of \eqref{est::F1}.
 
Now we introduce the operators
$$
P_j\defn \partial_z^2+\alpha_j\Delta+\beta_j\cdot\partialx\partial_z, \qquad L_j = P_j - \gamma_j \partial_z, \qquad (j=1,2).
$$
With these notations we have $\gamma_j = \frac{1}{\partial_z \rho_j}P_j \rho_j$ and 
\begin{equation}\label{L1vG1}
L_1 v=(\gamma_1-\gamma_2)\partial_z \widetilde{\phi}_2+F_1.
\end{equation}
Moreover
\begin{align*}
\gamma_1-\gamma_2&=\frac{1}{\partial_z\rho_1}P_1 \rho_1   - \frac{1}{\partial_z\rho_2}P_2 \rho_2  = \frac{1}{\partial_z\rho_2}P_1 \rho_1  + \Bigl(\frac{1}{\partial_z\rho_1}-\frac{1}{\partial_z\rho_2}\Bigr)P_1 \rho_1 - \frac{1}{\partial_z\rho_2}P_2 \rho_2 \\
&=\frac{1}{\partial_z\rho_2} P_1  \rho + \frac{1}{\partial_z \rho_2}(P_1- P_2) \rho_2 + \Bigl(\frac{1}{\partial_z\rho_1}-\frac{1}{\partial_z\rho_2}\Bigr)
P_1\rho_1 \\
&=\frac{1}{\partial_z\rho_2}P_1\rho
+\Bigl(\frac{1}{\partial_z\rho_1}-\frac{1}{\partial_z\rho_2}\Bigr)
P_1\rho_1+F_2,
\end{align*}
where
\begin{equation}\label{eq::F2}
F_2=\frac{1}{\partial_z\rho_2}\Bigl((\alpha_1-\alpha_2)\Delta \rho_2
+(\beta_1-\beta_2)\cdot\partialx\partial_z\rho_2\Bigr).
\end{equation}
 Now we observe that
$$
\Bigl(\frac{1}{\partial_z\rho_1}-\frac{1}{\partial_z\rho_2}\Bigr)
P_1\rho_1=-\frac{\partial_z \rho}{\partial_z\rho_2} \frac{P_1\rho_1}{\partial_z\rho_1}
=-\frac{\partial_z \rho}{\partial_z\rho_2}\gamma_1,
$$
which implies  
$$
\gamma_1-\gamma_2=\frac{1}{\partial_z\rho_2}P_1\rho-\frac{\partial_z \rho}{\partial_z\rho_2}\gamma_1
+F_2=\frac{1}{\partial_z\rho_2}L_1\rho+F_2.
$$
Plugging this into \eqref{L1vG1} yields
\begin{equation}\label{eq::1v}
L_1 v -b_2(L_1 \rho)=F_1+(\partial_z\widetilde{\phi}_2) F_2.
\end{equation}
We claim that for fixed $t$ we have
\begin{equation}\label{est::F2}
\lA (\partial_z\widetilde{\phi}_2) F_2 (t,\cdot) \rA_{ L ^2(J,H^{s-\frac{3}{2}} )} \leq   \K(M_1, M_2) \{N(0)+  T N(T)\}. 
\end{equation}
Indeed we first use the product rule \eqref{pr} to write
$$
\lA (\partial_z\widetilde{\phi}_2) F_2 (t,\cdot) \rA_{ L ^2(J,H^{s-\frac{3}{2}} )} 
\leq \Vert (\partial_z\widetilde{\phi}_2)   (t,\cdot) \Vert_{ L ^2(J,H^{s-\mez} )}\lA   F_2 (t,\cdot) \rA_{ L ^2(J,H^{s-\frac{3}{2}} )}.
$$
By the elliptic regularity the first term in the right hand side is bounded by $\mathcal{K}(M_2).$ 
It is therefore sufficient to bound the second one. 
We have, for fixed $t$
 $$\lA \frac{1}{\partial_z\rho_2} (\alpha_1-\alpha_2)\Delta \rho_2\rA_{L^2(J,H^{s-\frac{3}{2}})} 
 \leq \mathcal{K}(M_2) \Vert  \alpha_1-\alpha_2  \Vert_{L^2(J,H^{s-1})} \Vert  \Delta \rho_2\Vert_{L^\infty(J,H^{s-\frac{3}{2}})}.$$
 Using \eqref{eq.6.4} and \eqref{eq.rho1} we see that the 
 right hand side is bounded by the right hand side of \eqref{est::F2}. 
 The second term in $F_2$ is estimated by the same way.
 
To estimate ~$v-T_{b_2}\rho$ we paralinearize in writing
\begin{equation}\label{eq::2v}
 b_2(L_1 \rho)=T_{b_2}L_1 \rho
+T_{L_1\rho}b_2 +F_3.
 \end{equation}
 
We claim that for $t\in [0,T]$
\begin{equation}\label{est::F3}
\Vert F_3(t,\cdot) \Vert_{ L ^2(J,H^{s-\frac{3}{2}} )} \leq   \K(M_1, M_2) \{N(0)+  T N(T)\}. 
\end{equation}
To prove it we shall use \eqref{Bony} 
with $\alpha = s-\mez, \beta = s- 2.$ Then $\alpha + \beta -\frac{d}{2} > s-\frac{3}{2}.$ 
It follows that, for fixed $z$ and $t$ we have
$$
\Vert F_3(t, \cdot, z)\Vert_{H^{s- \frac{3}{2}}} \leq C \Vert b_2 \Vert_{H^{s-\mez}} \Vert L_1 \rho \Vert_{H^{s-2}}. 
$$
Therefore
$$
\Vert F_3(t, \cdot)\Vert_{L^2(J,H^{s- \frac{3}{2}})} 
\leq C \Vert b_2 \Vert_{L^\infty(J,H^{s-\mez})} \Vert L_1 \rho \Vert_{L^2(J,H^{s-2})}.
$$
Now as we have seen before we have $\Vert b_2(t\cdot) \Vert_{L^\infty(J,H^{s-\mez})}\leq \mathcal{K}(M_2)$ 
and due to the smoothing of the Poisson kernel $\Vert L_1 \rho \Vert_{L^2(J,H^{s-2})} \leq \mathcal{K}(M_1)\Vert \eta \Vert_{H^{s-\mez}}.$ 
The estimate \eqref{est::F3} thus follows from \eqref{p64.10}.

Setting $F_4 =  T_{L_1\rho}b_2$ we claim that for fixed $t$ we have 
\begin{equation}\label{est::F4}
\Vert F_4(t,\cdot) \Vert_{ L ^2(J,H^{s-\frac{3}{2}} )} \leq   \K(M_1, M_2) \{N(0)+  T N(T)\}. 
\end{equation}
To see this we use \eqref{boundpara} with $s_0 = s-\frac{3}{2}, s_1 = s-2,s_2 = s-\mez.$ 
We get
$$
\Vert F_4(t,\cdot) \Vert_{ L ^2(J,H^{s-\frac{3}{2}})}
\leq \Vert L_1 \rho(t,\cdot) \Vert_{L^2(J, H^{s-2})} \Vert b_2(t,\cdot) \Vert_{L^\infty(J, H^{s-\mez})}
$$
and \eqref{est::F4} follows from estimates used above.
 
Now according to \eqref{eq::1v}, \eqref{eq::2v} we have 
$$
L_1v -T_{b_2} L_1 \rho = F_1 +(\partial_z\widetilde{\phi}_2)F_2 + F_3 +F_4.
$$
We claim that we have 
$$
L_1 T_{b_2}\rho = T_{b_2}L_1 \rho - F_5
$$
with 
\begin{equation*}
\Vert F_5(t,\cdot) \Vert_{ L ^2(J,H^{s-\frac{3}{2}} )} \leq   \K(M_1, M_2) \{N(0)+  T N(T)\}. 
\end{equation*}
To see this we use \eqref{esti:b2012} and \eqref{boundpara}. It follows then that we have
$$
L_1 w = L_1(v -T_{b_2}\rho) =  F_1 +(\partial_z\widetilde{\phi}_2)F_2 + F_3 +F_4 +F_5 := F
$$
where $\Vert F(t,\cdot) \Vert_{ L ^2(J,H^{s-\frac{3}{2}})}$ is bounded by the right hand side of \eqref{est::F4}. 
 
Using \eqref{trace:w} and Lemma \ref{est:trace} we may then apply to $w$ 
Proposition \ref{p1} with $\sigma = s-1$ to conclude the proof of Lemma \ref{reg:w} and thus that of Proposition \ref{lem.NN}.
\end{proof}
 
\section{Well-posedness of the Cauchy problem}\label{S:regul}

Here we conclude the proof of Theorem~\ref{theo:Cauchy} about the 
Cauchy theory 
for the system
\begin{equation}\label{system:A1}
\left\{
\begin{aligned}
&\partial_t \eta+G(\eta)\psi=0,\\
&\partial_{t}\psi+g \eta
+ \frac{1}{2}\la\partialx  \psi\ra^2  -\frac{1}{2}
\frac{\bigl(\partialx   \eta\cdot\partialx \psi +G(\eta) \psi \bigr)^2}{1+|\partialx  \eta|^2} = 0.
\end{aligned}
\right.
\end{equation}

We previously proved the uniqueness of solutions (see~Theorem~\ref{th.lipschitz}). 
To complete the proof of Theorem~\ref{theo:Cauchy}, 
%it remains to prove the existence. 
we prove here the existence of solutions to the water waves system %~\eqref{system:A1} 
as limits of smooth solutions to 
approximate systems. 
This approach has been detailed in \cite{ABZ1}, where 
we considered the problem with surface tension. 

%The analysis is actually easier without surface tension. 
%One reason is that with surface tension, we needed in~\cite{ABZ1} to use some mollifiers 
%with various properties (since we need good estimates for commutators with the principal part of the %operator). 
%Here it is possible 
%to use a simpler regularization of the equations 
%since the reduced paradifferential 
%system involves only operator of order less than or equal to~$1$. 

To explain the scheme of the proof, we first consider the case without bottom ($\Gamma=\emptyset$). 
Then 
we know, 
from previous results (see Wu~\cite{WuInvent,WuJAMS}, Lannes~\cite{LannesJAMS}, 
Lindblad~\cite{LindbladAnnals}), that the Cauchy problem 
is well-posed for smooth initial data. 
Then, one can obtain the existence of smooth approximate 
solutions in a straightforward way : by smoothing the initial data. 
Namely, denote by~$J_\eps$ 
the usual Friedrichs mollifiers, defined by 
$J_\eps = \jmath(\eps D_x)$ where~$\jmath \in C^\infty_0({\mathbf{R}}^d)$,~$0\le \jmath\le 1$, is such that 
$$
\jmath(\xi)=1\quad \text{for }\la \xi\ra\le 1,\qquad 
\jmath(\xi)=0\quad\text{for }\la \xi\ra\ge 2.
$$
Set~$\psi_0^\eps=J_\eps\psi_0$ and~$\eta_0^\eps=J_\eps\eta_0$. Then 
$(\psi_0^\eps,\eta_0^\eps)\in H^\infty({\mathbf{R}}^d)^2$ and the Cauchy problem for~\eqref{system:A1} 
has a unique smooth solution~$(\psi^\eps,\eta^\eps)$ defined on some time interval~$[0,T_\eps^*)$. 
It follows from 
Proposition~\ref{prop:apriori} that there exists a function~$\mathcal{F}$ such that, for all 
$\eps\in (0,1]$ and all~$T<T_\eps$, we have
\begin{equation}\label{apriorieps}
M_s^\eps(T)\le \mathcal{F}\bigl(\mathcal{F}(M_{s,0})+T\mathcal{F}\bigl(M_s^\eps(T)\bigr)\bigr),
\end{equation}
with obvious notations. Then by standard argument, 
we infer that the lifespan of~$(\eta_\eps,\psi_\eps)$ is 
bounded from below by a positive time~$T_0$ 
independent of~$\eps$ 
and that we have uniform estimates 
on~$[0,T_0]$. 
The fact that one can pass to the limit in the equations follows 
from the previous contraction estimates (see~\eqref{eq.lipschitz}), 
which allows us to prove that~$(\eta_\eps,\psi_\eps,B_\eps,V_\eps)$ 
 is a Cauchy sequence (this argument has been explained in~\cite{ABZ1}). Notice that these estimates were proved under the assumption $a(t)>a_0/2$. This actually follows from the {\em a priori} bound 
 \eqref{apriorieps}, \eqref{esti:a1}, \eqref{esti:a2} and a bootstrap method. Then, it remains to prove that the limit solution 
has the desired regularity properties. Again, this follows from the analysis in~\cite{ABZ1}. 

%In the case with a general bottom, to apply the strategy explained above, 
%the only remaining point is to prove that, for smooth enough initial data, 
%the Cauchy problem 
%has a smooth solution. This can be proved using 
%a parabolic regularization of the equations. For the sake of conciseness, we omit %the details. 

In the case with a general bottom, to apply the strategy explained above, 
the only remaining point is to prove that, for smooth enough initial data, 
the Cauchy problem 
has a smooth solution. 
Namely, for our purposes it is enough to prove the following weak well-posedness result 
(where we 
allow an arbitrarily large loss of~$N$ derivatives).

\begin{prop}\label{HFS}
For all~$s_0>0$ there exists~$N>0$ such that the following result holds. 
Consider an initial data~$(\eta_0,\psi_0)\in H^{s_0+N}({\mathbf{R}}^d)^2$ 
such that, initially, the Taylor coefficient 
$\ma$ is bounded from below by $a_0>0$.  Then, there exists~$T>0$ 
and a solution~$(\eta,\psi)\in C^1([0,T];H^{s_0}({\mathbf{R}}^d))^2$ to~$\eqref{system:A1}$ 
satisfying~$(\eta(0),\psi(0))=(\eta_0,\psi_0)$ and $a\ge  a_0 /2$.
\end{prop}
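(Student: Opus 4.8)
\textbf{Proof strategy for Proposition~\ref{HFS}.}
The plan is to reduce the Cauchy problem for the water waves system with a general rough bottom to a symmetric quasilinear system for which a smooth solution can be constructed by a standard parabolic regularization (Friedrichs) scheme, at the price of a fixed but arbitrarily large loss of derivatives. First I would fix $s_0>0$ and choose $s$ (and later $N$) large enough so that all the results of Sections~\ref{sec.3} and~\ref{S:6} apply; in particular take $s>1+d/2$, and think of the target regularity as $H^{s_0}$ while we work with data in $H^{s_0+N}$. The key preliminary observation is that, thanks to Theorem~\ref{coro:estiDN}, Proposition~\ref{coro:paraDN1}, Proposition~\ref{prop:newS}, Proposition~\ref{cancellation} and the estimates on the Taylor coefficient in Proposition~\ref{prop:ma}, the unknowns $(\eta,\psi,V,B)$ (equivalently $(\eta,\zeta,V,B)$ with $\zeta=\nabla\eta$) satisfy the paralinearized, symmetrizable system established in Propositions~\ref{prop:csystem2} and~\ref{psym}. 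The Taylor sign condition $a\ge a_0>0$ at $t=0$ makes the symmetrizer $q=\sqrt{a/\lambda}$ and the symbol $\gamma=\sqrt{a\lambda}$ well defined and elliptic; by continuity this persists on a short time interval, and a bootstrap argument (using \eqref{esti:a1}, \eqref{esti:a2} and the \emph{a priori} bound \eqref{apriori}) shows that one can arrange $a\ge a_0/2$ on $[0,T]$ for $T$ small.

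Next I would set up the regularized system. Following the scheme sketched at the end of Section~\ref{S:regul} and detailed in \cite{ABZ1}, introduce the Friedrichs mollifier $J_\eps=\jmath(\eps D_x)$ and consider the approximate system obtained by inserting $J_\eps$ in the right places (for instance replacing $G(\eta)\psi$ by $J_\eps G(J_\eps\eta)J_\eps\psi$, and mollifying the quadratic nonlinearity similarly), with mollified initial data $(\eta_0^\eps,\psi_0^\eps)=(J_\eps\eta_0,J_\eps\psi_0)\in H^\infty$. For fixed $\eps>0$ the right-hand side is a smooth (locally Lipschitz) vector field on a ball of $H^{s_0+N}(\mathbf{R}^d)^2$ — here one uses the continuity statement of Theorem~\ref{th.23}, the Lipschitz dependence of $\eta\mapsto G(\eta)$ from Theorems~\ref{th.25} and~\ref{L:p60}, and the fact that $H^{s_0+N}$ is an algebra — so the Cauchy–Lipschitz theorem in this Banach space gives a unique maximal solution $(\eta^\eps,\psi^\eps)\in C^1([0,T_\eps^*);H^{s_0+N})^2$, and the Taylor condition $a^\eps\ge a_0/2$ holds on a (possibly $\eps$-dependent) subinterval.

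The heart of the argument is then the uniform-in-$\eps$ estimate. One shows that the energy $M_s^\eps(T)$ built on $(\eta^\eps,\psi^\eps,V^\eps,B^\eps)$ in $H^{s+\mez}\times H^{s+\mez}\times H^s\times H^s$ satisfies the same \emph{a priori} inequality \eqref{apriorieps} as in the bottomless case: this is because the mollifiers $J_\eps$ are uniformly bounded on every Sobolev space, commute with constant-coefficient Fourier multipliers, and produce only commutator terms $[J_\eps,\cdot]$ that are uniformly bounded of one order lower — exactly the kind of terms already absorbed in the proof of Proposition~\ref{prop:apriori}, and the symmetrization of Proposition~\ref{psym} together with the commutator Lemma~\ref{lemm:Dt} still applies. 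From \eqref{apriorieps} and a standard continuity (bootstrap) argument one gets a time $T_0>0$ independent of $\eps$ and a uniform bound $\sup_{\eps}M_s^\eps(T_0)\le C$; combining with \eqref{esti:a1}–\eqref{esti:a2} one also propagates $a^\eps\ge a_0/2$ on $[0,T_0]$, so $T_\eps^*>T_0$ for all $\eps$. Finally, the contraction estimate of Theorem~\ref{th.lipschitz} (applied to the approximate solutions, whose proof goes through verbatim with the extra uniformly bounded mollifier terms) shows that $(\eta^\eps,\psi^\eps,V^\eps,B^\eps)$ is a Cauchy sequence in $C^0([0,T_0];H^{s-\mez}\times H^{s-\mez}\times H^{s-1}\times H^{s-1})$; its limit solves \eqref{system:A1}, lies in $C^0([0,T_0];H^{s+\mez}\times\cdots)$ by weak-$*$ compactness, and belongs to $C^1([0,T_0];H^{s_0})$ once $N$ is fixed so that $s-\mez\ge s_0$ and one reads off $\partial_t\eta,\partial_t\psi$ from the equations. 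This yields the proposition with, say, $N$ determined by $s-\mez=s_0$ (plus the fixed gap needed in the elliptic and symbolic-calculus estimates), and $a\ge a_0/2$ as claimed.

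The step I expect to be the main obstacle is verifying that the \emph{a priori} estimate \eqref{apriorieps} is genuinely uniform in $\eps$: one must check that every place where the proof of Proposition~\ref{prop:apriori} exploits an exact algebraic cancellation — notably the identity $G(\eta)B=-\cn V+\gamma$ (Proposition~\ref{cancellation}), the structure of the good unknown $U=V+T_\zeta B$, and the symmetric structure of \eqref{systreduit1}–\eqref{systreduit2} — survives the insertion of $J_\eps$ up to commutators that are bounded uniformly in $\eps$ by \emph{lower-order} norms already controlled. This is routine but delicate, since the regularity margins are thin; the safest route, as in \cite{ABZ1}, is to mollify as few quantities as possible (essentially only what is needed for the ODE to make sense in $H^{s_0+N}$) and to treat all resulting $[J_\eps,\cdot]$ terms as part of the ``admissible remainders'' in the spirit of the set $\mathcal{R}$ of Lemma~\ref{lemm:Dt}.
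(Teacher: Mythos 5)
Your overall scheme (regularize, solve the regularized problem for fixed $\eps$, prove uniform estimates, pass to the limit) is the same in spirit, but your regularization is genuinely different from the paper's: you mollify the nonlinearity with Friedrichs operators $J_\eps$ and solve an ODE in $H^{s_0+N}$ by Cauchy--Lipschitz, whereas the paper adds a viscous term $\eps\Delta$ to both equations (system \eqref{A2}), keeps the nonlinearity untouched, and solves the approximate problem by a fixed point using the parabolic smoothing of $e^{\eps t\Delta}$ together with Theorem~\ref{coro:estiDN}. This is not a cosmetic difference, because the whole difficulty of Proposition~\ref{HFS} is concentrated in the step you defer as ``routine but delicate'': proving estimates uniform in $\eps$. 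The a priori estimate of Proposition~\ref{prop:apriori} is not a statement about an abstract symmetrizable system; it rests on the exact Euler/Bernoulli structure through Proposition~\ref{prop:newS} (the evolution equations for $B$, $V$, $\zeta$ obtained by the chain rule along $\partial_t\eta=B-V\cdot\nabla\eta$) and through the very definition of the Taylor coefficient $a=-\partial_yP|_{y=\eta}$. Once you insert $J_\eps$ into the equations, the interface is no longer transported by the (mollified) velocity, the identity $\partial_t\eta=B-V\cdot\nabla\eta$ and the pressure identity fail, and it is no longer clear what object plays the role of $a$, nor why it stays bounded below, nor how to bound its advective derivative -- and these are exactly the quantities the symmetrizer $q=\sqrt{a/\lambda}$ and Lemma~\ref{lemm:Dt} need. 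Your appeal to \eqref{esti:a1}--\eqref{esti:a2} is circular here, since those estimates are proved for exact solutions. So, as written, the uniform estimate \eqref{apriorieps} for your mollified system is asserted rather than obtained, and this is precisely the crux.

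The paper resolves this by exploiting the specific form of the parabolic regularization: it re-derives the equations for $B$, $V$, $\zeta$ for \eqref{A2}, which produces an explicit modified Taylor coefficient $\tilde a=g+\partial_tB+V\cdot\nabla B-\eps\Delta B$ (see \eqref{defi:aeps}) and explicit $O(\eps)$ viscous terms; it then controls $\tilde a$ and $\partial_t\tilde a$ by expressing $\partial_tB$ through the shape-derivative formula \eqref{DNshape} and working at strongly non-critical regularity (the estimates \eqref{esti:tildea-s1}--\eqref{esti:tildea-s0}, with $s_0\ll s_1\ll s_2$ -- this is where the arbitrarily large loss $N$ is used), and it closes the energy estimate for $(U,\theta)$ using the ellipticity of the matrix operator containing $\Delta$ and $[T_q,\Delta]T_{q^{-1}}$. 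None of these devices appears in your sketch, and their analogues for a $J_\eps$-mollified system would have to be built from scratch. A second, smaller gap: you invoke Theorem~\ref{th.lipschitz} to show the approximate solutions form a Cauchy sequence, but that theorem compares two exact solutions of \eqref{system}; comparing solutions of two different mollified systems produces additional $(J_{\eps_1}-J_{\eps_2})$ error terms that must be estimated separately. The paper avoids this entirely by passing to the limit in \eqref{A2} via the uniform bounds, weak compactness, and the continuity of the Dirichlet--Neumann operator. In short: your route could plausibly be carried out, but the step you label as routine is where the actual proof lives, and the paper's parabolic regularization is chosen precisely because it makes that step tractable.
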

\begin{proof}
We use a parabolic regularization of the equations 
and seek solutions of the Cauchy problem~\eqref{system:A1} as limits of solutions of the following 
approximate systems:
\begin{equation}\label{A2}
\left\{
\begin{aligned}
&\partial_{t}\eta-G(\eta)\psi=\eps \Delta\eta,\\
&\partial_{t}\psi+g \eta
+ \frac{1}{2}\la\partialx  \psi\ra^2  -\frac{1}{2}
\frac{\bigl(\partialx   \eta\cdot\partialx \psi +G(\eta) \psi \bigr)^2}{1+|\partialx  \eta|^2}
= \eps \Delta\psi,
\\
& (\eta,\psi)\arrowvert_{t=0}=(J_\eps \eta_{0},J_\eps \psi_0),
\end{aligned}
\right.
\end{equation}
where~$\eps \in ]0,1]$ is a small parameter and 
\begin{equation}\label{defi:Beps}
B=\frac{\partialx \eta \cdot\partialx \psi+G(\eta) \psi}{1+|\partialx  \eta|^2}.
\end{equation}
Write \eqref{A2} under the form 
$$
u=e^{\eps t\Delta}u_0+\int_0^t e^{\eps (t-\tau)\Delta} \mathcal{A}[u(\tau)]\, d\tau,\quad 
u=(\eta,\psi).
$$
To solve this Cauchy problem we use two ingredients. Firstly we have 
$$
\lA e^{\eps t\Delta} f\rA_{L^2([0,T];H^{s+1})}\le K(\eps) \lA f\rA_{H^s},
$$
together with a dual estimate, and secondly (for $s$ large enough)
$$
\lA \mathcal{A}[u]\rA_{L^2([0,T];H^{s-1})}\le \sqrt{T}
\mathcal{F}(\lA u\rA_{L^\infty([0,T];H^{s})})\lA u\rA_{L^\infty([0,T];H^s)},
$$
which follows from Theorem~\ref{coro:estiDN} and the product rule in Sobolev spaces. Then, 
given $\eps>0$ and $(\eta_0,\psi_0)\in (H^{s}(\xR^d))^2$ with $s$ large enough, 
by applying the Banach fixed point theorem in 
$L^\infty([0,T];H^s)\cap L^2([0,T];H^{s+1})$, one gets that, 
for $T$ small enough possibly depending on $\eps$, the 
Cauchy problem for~\eqref{A2} 
has a smooth solution. Moreover, the maximal time of existence $T_\eps$ 
satisfies: either $T_\eps=+\infty$ or
$$
\limsup _{t\rightarrow T_\eps} \lA (\eta,\psi)(t,\cdot)\rA_{H^{s}\times H^s} = +\infty.
$$
We have to prove that these solutions exist for a time 
interval independent of $\eps\in ]0,1]$. To do so, we begin by computing the equations satisfied by $B$ and 
$V=\nabla\psi-B\nabla \eta$. 

We first observe that, 
since $G(\eta)\psi=B-V\cdot \nabla\eta$, we have
\begin{equation}\label{eq:epseta}
 \partial_t \eta+V\cdot\nabla \eta =B+\eps\Delta\eta.
\end{equation}
On the other hand, as in~\eqref{psiVB}, we have
\begin{equation}\label{eq:epspsi}
\partial_t \psi+V\cdot \nabla \psi
=-g\eta+\mez |V|^2-\mez B^2+\eps\Delta\psi.
\end{equation}
We next write that
\begin{align*}
\partial_t V +V\cdot \nabla V
&=(\partial_t +V\cdot \nabla)(\nabla\psi-B\nabla\eta)\\
&=\nabla (\partial_t\psi +V\cdot \nabla\psi)-(\partial_t B+V\cdot \nabla B)\nabla\eta
-B\nabla( \partial_t \eta+V\cdot\nabla \eta)+R
\end{align*}
where $R=(R_1,\ldots,R_d)$ with 
$$
R_k=-\sum_j \partial_k V_j \partial_j \psi+B\sum_j\partial_k V_j\partial_j\eta.
$$
Then, it follows from \eqref{eq:epseta} and \eqref{eq:epspsi} that
$$
\partial_t V+V\cdot \nabla V=\nabla \Bigl( -g\eta+\mez |V|^2-\mez B^2+\eps\Delta\psi\Bigr)
-(\partial_t B+V\cdot \nabla B)\nabla\eta+B\nabla \bigl( B+\eps\Delta\eta\bigr)+ R.
$$
Now, observing that $R+\mez \nabla |V|^2=0$ and simplifying,
$$
\partial_t V+V\cdot\nabla V 
+(g+\partial_t B+V\cdot\nabla B)\nabla \eta
=\eps\nabla \Delta\psi- \eps B\nabla\Delta\eta
$$
which in turn implies that
$$
\partial_t V+V\cdot\nabla V +\tilde a \zeta =\eps\Delta V+2\eps(\nabla\B\cdot\nabla)\zeta,
$$
where $\zeta=\nabla\eta$ and
\begin{equation}\label{defi:aeps}
\tilde a=g+\partial_t B+V\cdot\nabla B-\eps\Delta B.
\end{equation}
Now, for $s>5/2+d/2$, from the proof of \eqref{parastep1a} (using Lemma~\ref{lemm:Dt1} instead 
of Lemma~\ref{lemm:Dt} as in the proof of Lemma~\ref{lemm:symmind}), we obtain that 
$U=V+T_{\zeta}B$ satisfies
\begin{equation}\label{eq:Ureg}
\partial_t U+T_V\cdot\nabla U+T_{\tilde a}\zeta-\eps\Delta U -\eps T_{\nabla B}\cdot\nabla\zeta
=F_1
\end{equation}
where
$$
\lA F_1\rA_{L^\infty_t (H^s)}\le \mathcal{F}(\lA (\psi,\eta,V,B,\tilde{a})\rA_{L^\infty_t(H^{s+1/2}\times 
H^{s+1/2}\times H^s\times H^s\times C^{1/2})}).
$$
Also, as in the proof of Proposition~\ref{psym}, we find that $\theta=T_{q}\zeta$ (where $q=\sqrt{\tilde{a}/\lambda}$) satisfies
$$
\partial_t\theta+T_V\cdot \nabla\theta-T_\gamma U-\eps\Delta\theta
-\eps \bigl[ T_q ,\Delta\bigr] T_{q^{-1}}\theta =F_2,
$$
where $F_2$ is estimated by means of Theorem~\ref{theo:sc0} and \eqref{lemm:Dt1}:
$$
\lA F_2\rA_{L^\infty_t (H^s_x)}\le
\mathcal{F}(\lA (\psi,\eta,V,B,\tilde{a},\partial_t\tilde a+V\cdot\nabla\tilde a)
\rA_{L^\infty_t(H^{s+1/2}\times H^{s+1/2}\times H^s\times H^s\times C^{1/2}\times L^\infty)}).
$$
Using the unknown $\theta$, one can rewrite \eqref{eq:Ureg} as
\begin{equation}\label{eq:Ureg2}
\partial_t U+T_V\cdot\nabla U+T_{\tilde a}\zeta-\eps\Delta U -\eps T_{\nabla B}\cdot\nabla
T_{q^{-1}}\theta
=\tilde F_1
\end{equation}
where $\tilde{F}_1=F_1+\eps T_{\nabla B}\cdot\nabla(T_q T_{q^{-1}}-I)\zeta$ and it follows from 
Theorem~\ref{theo:sc0} that
$$
\lA T_{\nabla B}\cdot\nabla(T_q T_{q^{-1}}-I)\zeta\rA_{H^s}
\les \lA \nabla B\rA_{L^\infty}M^{1/2}_{3/2}(q)M^{-1/2}_{3/2}(q^{-1})
\lA \eta\rA_{H^{s+1/2}}.
$$ 
Since $M^{1/2}_{3/2}(q)M^{-1/2}_{3/2}(q^{-1})\le C(\lA \eta\rA_{C^{5/2}},\lA \tilde{a}\rA_{C^{3/2}})$ we obtain
$$
\big\| \tilde F_1\big\|_{L^\infty_t (H^s)}\le \mathcal{F}(\lA (\psi,\eta,V,B,\tilde{a})\rA_{L^\infty_t(H^{s+1/2}\times 
H^{s+1/2}\times H^s\times H^s\times C^{3/2})}).
$$
Now, since the operator
$$
\begin{pmatrix} \Delta  & T_{\nabla B}\cdot\nabla T_{q^{-1}} \\
0 & \Delta+\bigl[ T_q ,\Delta\bigr] T_{q^{-1}} \end{pmatrix}
$$
is elliptic (being a perturbation of $\Delta$ of order $3/2$), 
we estimate $(U,\theta)$ in $L^\infty_t(H^s\times H^s)$ 
by commuting the equation with $(I-\Delta)^{s/2}$ and performing an $L^2$ estimate. 
To do so it remains only to prove that
$$
\lA (\tilde{a},\partial_t\tilde a+V\cdot\nabla \tilde a)
\rA_{L^\infty_t( C^{3/2}\times L^\infty)}\le 
\mathcal{F}\Bigl(\lA (\psi,\eta,V,B)\rA_{L^\infty_t(H^{s+1/2}\times H^{s+1/2}\times H^s\times H^s)}\Bigr).
$$
Here this is much easier than in the above analysis. 
Indeed, since we do not need to work with critical regularity, 
it is enough to prove that there exist $s_2\ge s_1\ge 3/2+s_0$ with $s_0>d/2$ and $s_2\le s$
(possibly $s_0\ll s_1\ll s_2$) such that
\begin{align}
\lA \tilde{a}-g
\rA_{L^\infty_t( H^{s_1})}&\le 
\mathcal{F}\Bigl(\lA (\eta,\psi)\rA_{L^\infty_t(H^{s_2}\times H^{s_2})}\Bigr),\label{esti:tildea-s1}\\
\lA \partial_t\tilde a\rA_{L^\infty_t(H^{s_0})}&\le 
\mathcal{F}\Bigl(\lA (\eta,\psi)\rA_{L^\infty_t(H^{s_2}\times H^{s_2})}\Bigr).\label{esti:tildea-s0}
\end{align}
To prove \eqref{esti:tildea-s1} we express $\tilde a -g$ in terms of $\eta,\psi$ (as noticed in~\cite{LannesLivre}). 
To do so, we use two observations. 
Firstly, by definition of $B$ (see~\eqref{defi:Beps}), we have
$$
\partial_t B=
\frac{1}{1+|\partialx  \eta|^2}\Bigl(\partialx \partial_t\eta \cdot\partialx \psi
+\partialx \eta \cdot\partialx \partial_t\psi+\partial_t G(\eta) \psi\Bigr)
-\frac{2\nabla\partial_t \eta\cdot\nabla\eta}{1+|\partialx  \eta|^2}B.
$$
Secondly, we have
\begin{equation}\label{DNshape}
\partial_tG(\eta)\psi=G(\eta)(\partial_t\psi-B\partial_t \eta)-\cnx(V\partial_t\eta).
\end{equation}
The formula \eqref{DNshape} is proved by Lannes 
for smooth bottoms (see~\cite{LannesJAMS}) or infinite depth~(\cite{LannesLivre}). 
In the case with a general bottom considered here, this follows from~\cite{ABZ-Bertinoro}. 
Then, replacing $\partial_t \psi$ and $\partial_t\eta$ by their expressions computed using System~\eqref{A2}, 
we are in position to express $\partial_t B$ in terms of $\eta,\psi$ only. 
Setting the result thus obtained in~\eqref{defi:aeps} 
we get an expression of $\tilde{a}-g$ in terms of $\eta,\psi$ only. 
The estimate~\eqref{esti:tildea-s1} then follows from the 
nonlinear estimates in Sobolev spaces 
(see~\eqref{prS2} and~\eqref{esti:F(u)}) and repeated uses 
of the estimate 
for the Dirichlet-Neumann proved in Theorem~\ref{coro:estiDN}. 
Repeating this reasoning, we get an expression of $\partial_t\tilde{a}$ 
in terms of $\eta,\psi$ only and 
hence~\eqref{esti:tildea-s0}. 

Up to now, this gives uniform estimates in $\eps$ for~$(U,\theta)$ in~$L^\infty([0,T];H^s\times H^s)$, as long as $\tilde a(t)$ is bounded from below by $a_0 /2$.  
Moreover, as in the proof of Lemma~\ref{ecBV} and Lemma~\ref{ecBVeta}, it follows from 
\eqref{esti:tildea-s0} that one can estimate 
$(\eta,B,V)$ in $L^\infty([0,T]; H^s\times H^{s_0}\times H^{s_0})$. 
Now we can recover estimates for the unknowns 
$(\eta,\psi,\B,V)$ in~$L^\infty([0,T];H^{s+\mez}\times H^{s+\mez}\times H^s\times H^{s})$ 
by a bootstrap argument as in the proof of Lemma~\ref{ecBVs}.  

Using these uniform estimates, we get by a bootstrap argument that the solutions to~\eqref{A2} exist on 
a time interval independent of $\eps$ (and satisfy $a(t) > a_0/2$ according to~\eqref{esti:tildea-s1} \eqref{esti:tildea-s0}) on this time interval uniform estimates. 
To conclude it remains to pass to the limit in~\eqref{A2}. For this we use again 
the uniform estimates which hold as long as the Taylor coefficient $a(t)$ is bounded from below by $a_0/2$ (and the equation to bound the time derivatives) 
to extract subsequences converging weakly and the fact that the Dirichlet-Neumann operator, though 
nonlocal, passes to the limit. %thanks to Theorem~\ref{th.24}. 
Eventually, we use again a bootstrap argument to control the Taylor coefficient using \eqref{esti:a1} and \eqref{esti:a2}.
\end{proof}

\end{document}